\documentclass[11pt,reqno]{amsproc}

\usepackage{amsmath,amsfonts,amssymb,amsthm}
\usepackage[abbrev,lite,nobysame]{amsrefs}
\usepackage{mathrsfs}
\usepackage{graphics,graphicx}
\usepackage[dvipsnames]{xcolor}
\usepackage{times}
\usepackage{bbm}
\usepackage[margin=1in]{geometry}

\usepackage{enumerate}
\usepackage{enumitem}
\usepackage{mathtools}

\usepackage{longfbox}
%for using the longfbox enviroment to define \comment

% \usepackage{mathtools}
% \mathtoolsset{showonlyrefs}

% \usepackage{cancel}
% \usepackage{dsfont}
% \usepackage{stmaryrd}
\usepackage{tikz}
\usetikzlibrary{patterns,calc,decorations.markings}

% \definecolor{wine-stain}{rgb}{0.7,0,0}

% COLORS

% \newcommand{\saut}{\par\leavevmode\par}

% \usepackage[colorlinks=true, pdfstartview=FitV, linkcolor=BrickRed,citecolor=black, urlcolor=black]{hyperref}

\usepackage[]{hyperref}
\hypersetup{
    colorlinks=true,       % false: boxed links; true: colored links
    linkcolor=BrickRed,          % color of internal links
    citecolor=black,        % color of links to bibliography
    filecolor=magenta,      % color of file links
    urlcolor=cyan           % color of external links
}

\usepackage{tcolorbox}

\usepackage{nicematrix}

\definecolor{blush}{rgb}{0.87, 0.36, 0.51}

 \usepackage[toc]{appendix}

\newcommand\e{{\rm e}}
\newcommand\dd{{\rm d}}
\def\Re{{\rm Re}} 
\def\Im{{\rm Im}}

\newcommand{\R}{\mathbb{R}}
\newcommand{\Z}{\mathbb{Z}}
\newcommand{\T}{\mathbb{T}}

\newcommand{\N}{\mathbb{N}}
\newcommand{\C}{\mathbb{C}}

\newcommand{\eps}{\varepsilon}

\newcommand{\vertiii}[1]{{\left\vert\kern-0.25ex\left\vert\kern-0.25ex\left\vert #1 
		\right\vert\kern-0.25ex\right\vert\kern-0.25ex\right\vert}}

\newtheoremstyle{mystyle}%                % Name
  {}%                                     % Space above
  {}%                                     % Space below
  {\itshape}%                                     % Body font
  {}%                                     % Indent amount
  {\bfseries}%                            % Theorem head font
  {.}%                                    % Punctuation after theorem head
  { }%                                    % Space after theorem head, ' ', or \newline
  {}%                                     % Theorem head spec (can be left empty, meaning `normal')

  \newtheoremstyle{mystyle2}%                % Name
  {}%                                     % Space above
  {}%                                     % Space below
  {}%                                     % Body font
  {}%                                     % Indent amount
  {\bfseries}%                            % Theorem head font
  {.}%                                    % Punctuation after theorem head
  { }%                                    % Space after theorem head, ' ', or \newline
  {}%                                     % Theorem head spec (can be left empty, meaning `normal')

\theoremstyle{mystyle}

\newtheorem{Thm}{Theorem}[section]
\newtheorem{Cor}[Thm]{Corollary}
\newtheorem{Prop}[Thm]{Proposition}
\newtheorem{Lem}[Thm]{Lemma}

\theoremstyle{mystyle2}
\newtheorem{Def}[Thm]{Definition}
\newtheorem{Rem}[Thm]{Remark}

\numberwithin{equation}{section}

\makeatletter

%subsection formatting
\renewcommand\subsubsection{\@startsection{subsubsection}{3}%
\normalparindent{.5\linespacing\@plus.7\linespacing}{-.5em}
{\normalfont\bfseries}}

%tableofcontents [put \hfil instead of \dotfill for not dots]

\def\@tocline#1#2#3#4#5#6#7{\relax
  \ifnum #1>\c@tocdepth % then omit
  \else
    \par \addpenalty\@secpenalty\addvspace{#2}%
    \begingroup \hyphenpenalty\@M
    \@ifempty{#4}{%
      \@tempdima\csname r@tocindent\number#1\endcsname\relax
    }{%
      \@tempdima#4\relax
    }%
    \parindent\z@ \leftskip#3\relax \advance\leftskip\@tempdima\relax
    \rightskip\@pnumwidth plus4em \parfillskip-\@pnumwidth
    #5\leavevmode\hskip-\@tempdima
      \ifcase #1
       \or\or \hskip 1em \or \hskip 2em \else \hskip 3em \fi%
      #6\nobreak\relax
    \dotfill\hbox to\@pnumwidth{\@tocpagenum{#7}}\par
    \nobreak
    \endgroup
  \fi}
\makeatother

%David

\begin{document}

\title[Stability analysis for active Brownian particle models]{\vspace*{-3cm}Stability analysis for active Brownian particle models}

\author[M. Coti Zelati]{Michele Coti Zelati}
\address{Department of Mathematics, Imperial College London, SW7 2AZ, UK}
\email{m.coti-zelati@imperial.ac.uk}

\author[L. Ertzbischoff]{Lucas Ertzbischoff}
\address{CEREMADE, CNRS, Université Paris-Dauphine, PSL Research University, 75016 Paris, France}
\email{ertzbischoff@ceremade.dauphine.fr}

\author[D. Gérard-Varet]{David Gérard-Varet}
\address{Université Paris Cité and Sorbonne Université, CNRS, Institut de Mathématiques de Jussieu-Paris Rive Gauche (IMJ-PRG), F-75013, Paris, France}
\email{david.gerard-varet@u-paris.fr}

 \begin{abstract} 
      We carry out a comprehensive linear stability analysis of active Brownian particle systems around a constant homogeneous state. These scalar models, being important prototypes for the continuous description of active matter, are Fokker–Planck type equations in position–orientation and are known to exhibit motility-induced phase separation. We fully characterize the linear stability and instability regimes, with an explicit threshold depending on the effective speed of the particles. In this way, we rigorously confirm a conjecture on phase separation originating in the physics and applied literature. Our sharp and quantitative (in)stability results are valid both in the non-diffusive case and in the case of small angular diffusion. In the stable non-diffusive regime, we uncover a mixing mechanism reminiscent of Landau damping for the Vlasov equation, albeit with significantly weaker decay. This decay is non-integrable in time and gives rise to substantial mathematical difficulties; in particular, it prevents the use of classical perturbative arguments to treat the case of small angular diffusion.
 \end{abstract}

\maketitle

\tableofcontents

\section{Introduction, motivations and main statements}\label{SectionIntro-Intro}

In this paper, we study stability and instability phenomena in models of active matter. Active matter systems consist of self-propelled particles that convert chemical energy into mechanical work, and they encompass a wide variety of physical and biological systems such as colloidal suspensions and bacterial populations. In contrast to systems of passive particles, active matter is typically far from thermodynamic equilibrium and displays a broad range of collective behaviors.

Among these, motility-induced phase separation (MIPS) is a central phenomenon: self-propelled particles spontaneously segregate into dense and dilute regions despite the absence of attractive interparticle forces. An oversimplified explanation for MIPS is the following feedback mechanism: particles slow down in high-density regions, thereby spending more time where they move more slowly, which leads to further accumulation. 
This phenomenon has been extensively investigated in the physics literature---see, among many others, \cites{CatesTailleur2013active, CatesTailleur2015motility, o2023introduction, speck2015dynamical, bauerle2018self, fily2012athermal}---and has been linked to instability features of active matter systems, both at the microscopic particle level and at the kinetic or macroscopic scale.

\subsection{Active Brownian particle model}
At the kinetic level, a standard toy model for active Brownian particles is the Fokker--Planck type equation
\begin{align}\label{eq:fulleq-INTRO}
\partial_t f + \mathrm{div}_x \big[ f\, v(\rho_f)\, e(\theta) \big] = \nu\, \partial^2_{\theta} f,
\end{align}
where $f=f(t,x,\theta)\in\mathbb{R}^+$ denotes the particle distribution function. Setting $\T \simeq \R / 2\pi \Z$, the spatial variable is $x\in\T^2$, and the orientation variable is $\theta\in\T$. 
We write 
$$
e(\theta)\vcentcolon=(\cos(\theta),\sin(\theta)),
\qquad 
\rho_f(t,x)\vcentcolon=\int_0^{2\pi} f(t,x,\theta)\, \mathrm{d}\theta,
$$
and normalize $f$ so that 
$$
\phi_f \vcentcolon= \frac{1}{2\pi}\int_{\T^2}\rho_f(x)\,\mathrm{d}x
$$
is the solid volume fraction of the particles. 
The constant $\nu\ge 0$ represents the rotational diffusion coefficient (the inverse Péclet number), modeling random reorientation.

The function $v:\mathbb{R}^+\to\mathbb{R}^+$ encodes the effective particle speed and depends only on the local density. Throughout the paper we assume the structural condition
\begin{align}\label{assumption-velocity}
v\in\mathscr{C}^1([0,1];[0,1]), 
\qquad 
v((0,1))\subset (0,1), 
\qquad 
v'<0,
\end{align}
that is, the speed is strictly decreasing in the density. Under this assumption, \eqref{eq:fulleq-INTRO} may also be written as
\begin{equation} \label{eq:fulleq-INTRO-bis}
\partial_t f + v(\rho_f)\, e(\theta)\cdot \nabla_x f 
    + v'(\rho_f)\, f\, e(\theta)\cdot \nabla_x \rho_f 
    = \nu\, \partial^2_{\theta} f.
\end{equation}
The mechanism represented by \eqref{eq:fulleq-INTRO} is precisely the feedback mechanism discussed earlier: particles move with orientation $e(\theta)$, but their speed decreases when the local density is higher. This corresponds to a repulsive, purely density-dependent ``quorum sensing'' interaction \cite{CatesTailleur2015motility}. A frequent modelling choice, depending on the physical context, is the linear law $v(\rho_f)=1-\rho_f$ (see \cite{fily2012athermal}).

\medskip

Equation \eqref{eq:fulleq-INTRO} has appeared in several works on active Brownian particles, see for instance \cites{CatesTailleur2013active, o2023introduction}, and stands as an important prototype for the understanding of phase separation. A formal derivation from a microscopic system with repulsive interactions can be found in \cite{Bruna-models}, while a full hydrodynamic limit from an Active Lattice Gas (leading to a richer system than \eqref{eq:fulleq-INTRO}) has been obtained in \cite{mason2023hydrodynamics}, building on \cite{Erignoux-full}.

\medskip

A key observation in the physics literature (see e.g. \cites{CatesTailleur2013active, CatesTailleur2015motility, o2023introduction}) is that motility-induced phase separation is typically preceded by a linear instability of the homogeneous state. As a preliminary step toward understanding MIPS from a rigorous mathematical point of view, we therefore consider perturbations around the steady state
\begin{equation}\label{eq:equilibrium}
 f^\star = \frac{\phi}{2\pi}, 
\qquad 
\rho_{f^\star}=\phi, 
\qquad 
\phi\in(0,1).   
\end{equation}
The linearized equation for a perturbation $f$ around $f^\star$ is
\begin{align}\label{eq:linearizedEQ-intro-full}
  \partial_t f 
  + v(\phi)\, e(\theta)\cdot \nabla_x f 
  + \frac{\phi v'(\phi)}{2\pi}\, e(\theta)\cdot \nabla_x \rho  = \nu\, \partial^2_{\theta} f , \quad \rho = \int_0^{2\pi} f d\theta,
\end{align}
and in the non-diffusive regime $\nu=0$ it reduces to
\begin{align}\label{eq:linearizedEQ-intro-full-inviscid}
  \partial_t f
  + v(\phi)\, e(\theta)\cdot \nabla_x f
  + \frac{\phi v'(\phi)}{2\pi}\, e(\theta)\cdot \nabla_x \rho = 0 , \quad \rho = \int_0^{2\pi} f d\theta.
\end{align}
Understanding the stability properties of \eqref{eq:linearizedEQ-intro-full} and \eqref{eq:linearizedEQ-intro-full-inviscid} is the main objective of this work, shedding light on the onset of phase separation in active matter models. Broadly speaking, we identify a sharp, explicitly computable criterion, depending only on the value of the homogeneous state $\phi$, which completely characterizes the linear stability or instability of the dynamics. In particular, our analysis establishes rigorously the instability condition for motility-induced phase separation derived in the physics literature \cites{ CatesTailleur2015motility, o2023introduction}, and supported by numerical simulations \cite{Bruna-models}.

In the non-diffusive case $\nu=0$, we will actually obtain explicit formulas that allow for a precise spectral description and the identification of the sharp instability condition. One of our goals will be to show that the non-diffusive stable/unstable behavior as well as the former criterion persists and remains sharp when $0<\nu\ll 1$, despite the lack of explicit structure. It is one of the main analytical challenges of the paper: as we will explain later on, our arguments will be genuinely non-perturbative in $\nu$.  

\begin{Rem}\label{rem_neglect_space_diffusion}
   In \eqref{eq:fulleq-INTRO} and \eqref{eq:linearizedEQ-intro-full} we have omitted translational diffusion in the $x$-variable. As will be shown later, such a term can be included in the linearized analysis with no additional difficulty.
\end{Rem}

\subsection*{Notation}
For quantities $A$ and $B$, we write $A\lesssim B$ if $A\le cB$ for some universal constant $c>0$. 
If the dependence on a parameter $m$ needs to be explicit, we write $\lesssim_m$.
For a smooth periodic function $\mathrm{g}:\T^2\to\mathbb{R}^d$, its Fourier coefficients are denoted by
\[
\mathrm{g}_k 
\vcentcolon= 
\frac{1}{(2\pi)^2}
\int_{\T^2} \e^{-ik\cdot x}\, \mathrm{g}(x)\,\mathrm{d}x,
\qquad k\in\mathbb{Z}^2.
\]

\subsection{Main statements}\label{SectionIntro-Results}

Our two main theorems identify the sign of the quantity
\[
\partial_\rho (\rho v)\big\vert_{\rho=\phi}
\]
as the fundamental parameter governing linear (in)stability. Consistently with the formal analyses in \cites{o2023introduction, CatesTailleur2015motility}, we will prove that:
\begin{itemize}
    \item if $\phi \in (0,1)$ satisfies $\partial_\rho (\rho v)\big\vert_{\rho=\phi} > 0$, then the homogeneous state is linearly stable;
    \item if $\phi \in (0,1)$ is such that $\partial_\rho (\rho v)\big\vert_{\rho=\phi} < 0$, then linear instability arises and signals a phase--transition mechanism.
\end{itemize}
To our knowledge, a complete and quantitative description of the stability and instability regions for active matter models of the form \eqref{eq:fulleq-INTRO}, even at the linearized level, has not previously been available from a rigorous mathematical standpoint.

The mapping $\rho \mapsto F(\rho)=\rho v(\rho)$ naturally plays the role of a flux (see Remark~\ref{rem-crowdmotion} for an interesting connection with traffic--flow models). When $F$ is hump-shaped, the previous alternative corresponds to the following situation: there exists a threshold $\phi^\star \in (0,1)$ for which stability is obtained for $\phi<\phi^\star$, while instability holds for $\phi>\phi^\star$. For instance, for the widely used affine law $v(\rho)=1-\rho$, the criterion reduces to a simple low- versus high-density threshold at $\phi^*=\tfrac12$, in agreement with the numerical stability diagrams reported in \cite{Bruna-models}.

\medskip

Before turning to the precise statements, recall that for the linearized equations \eqref{eq:linearizedEQ-intro-full}--\eqref{eq:linearizedEQ-intro-full-inviscid}, the spatial average (the $k=0$ Fourier mode) simply evolves according to a conservation law when $\nu=0$ and according to the heat equation when $\nu>0$. In both cases it plays no role in the stability analysis, and we therefore focus on the nonzero modes $k\neq 0$.

\medskip

Our first theorem concerns the spectral instability of \eqref{eq:linearizedEQ-intro-full} and \eqref{eq:linearizedEQ-intro-full-inviscid}. Roughly speaking, this corresponds to the existence of exponentially growing mode solutions in the regime 
$\partial_\rho (\rho v)\vert_{\rho=\phi}<0$.

\begin{Thm}[Linear instability]\label{thm-instability} 
Assume that \eqref{assumption-velocity} holds, and let 
$\phi \in (0,1)$ satisfy the instability condition
\begin{align}\label{instab-condition-thm}
    \partial_\rho (\rho v)\big\vert_{\rho=\phi}<0.
\end{align}
Then the following hold:
\begin{itemize}

    \item \underline{Case $\nu=0$}: there exists an exponentially growing mode solution to the linearized equation \eqref{eq:linearizedEQ-intro-full-inviscid}. More precisely, for any 
    $k \in \Z^2 \setminus \{0\}$, there exists a solution of the form
    \begin{align}\label{eq:growing-mode-result_k}
        f(t,x,\theta)=\e^{\lambda t} \e^{ik \cdot x}\, f_k(\theta),
    \end{align}
    where $f_k$ is smooth and
    \begin{align*}
        \lambda = c_\phi\, |k|,
    \end{align*}
    for some constant $c_\phi \in \R^+ \setminus \{0\}$ independent of $k$.

    \item \underline{Case $\nu>0$}: for sufficiently small angular diffusion, the same unstable behaviour persists for the linearized equation \eqref{eq:linearizedEQ-intro-full}. More precisely, there exist constants $\nu_0>0$ and $C_{0,\phi}>0$ such that for any 
    $k \in \Z^2 \setminus \{0\}$ and any 
    $\nu \in (0, |k| \nu_0)$, there exists a solution of the form
    \begin{align}\label{eq:growing-mode-result-bis}
        f(t,x,\theta)=\e^{\lambda t} \e^{ik \cdot x}\, f_{k,\nu}(\theta),
    \end{align}
    with $\mathrm{Re}(\lambda)>0$, where $f_{k,\nu}$ is smooth and where $\lambda$ satisfies the quantitative estimate
    \begin{align*}
        |\lambda - c_\phi |k|| \,\leq\, C_{0,\phi}\, \nu.
    \end{align*}

\end{itemize}
\end{Thm}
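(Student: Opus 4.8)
The plan is to treat both cases by the same scheme — reduce the search for a growing mode to locating a zero of a scalar dispersion function — with the non-diffusive case fully explicit, serving as the reference point for a quantitative perturbation argument in $\nu$.

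\emph{Non-diffusive case.} I would insert the ansatz \eqref{eq:growing-mode-result_k} into \eqref{eq:linearizedEQ-intro-full-inviscid}; writing $\rho_k:=\int_0^{2\pi}f_k\,\dd\theta$ this yields the algebraic identity $\big(\lambda+iv(\phi)\,k\cdot e(\theta)\big)f_k(\theta)=-\tfrac{\phi v'(\phi)}{2\pi}\,i(k\cdot e(\theta))\,\rho_k$. Since $\rho_k=0$ forces $f_k\equiv0$, I look for $\rho_k\neq0$, normalized to $\rho_k=1$; as soon as I have a root $\lambda$ with $\mathrm{Re}\,\lambda>0$ the prefactor never vanishes, so $f_k(\theta)=-\tfrac{\phi v'(\phi)}{2\pi}\,i(k\cdot e(\theta))\big(\lambda+iv(\phi)\,k\cdot e(\theta)\big)^{-1}$ is smooth (indeed real-analytic) in $\theta$, and the only remaining constraint is $\int_0^{2\pi}f_k\,\dd\theta=1$. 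Using $k\cdot e(\theta)=|k|\cos(\theta-\theta_k)$, the identity $\tfrac{iw}{\lambda+iv(\phi)w}=\tfrac1{v(\phi)}\big(1-\tfrac{\lambda}{\lambda+iv(\phi)w}\big)$, and the classical integral $\int_0^{2\pi}\frac{\dd\theta}{p+q\cos\theta}=\frac{2\pi}{\sqrt{p^2-q^2}}$ (principal branch, valid for $\mathrm{Re}\,p>0$), this constraint becomes the dispersion relation
\begin{equation*}
1=\beta\left(1-\frac{\lambda}{\sqrt{\lambda^2+v(\phi)^2|k|^2}}\right),\qquad \beta:=-\frac{\phi v'(\phi)}{v(\phi)}>0 ,
\end{equation*}
with the principal branch of the square root. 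Now \eqref{instab-condition-thm} is exactly $\beta>1$; solving gives $\lambda/\sqrt{\lambda^2+v(\phi)^2|k|^2}=1-\beta^{-1}=:\gamma\in(0,1)$, and squaring leaves the two candidates $\pm\gamma v(\phi)|k|/\sqrt{1-\gamma^2}$, of which only $\lambda=c_\phi|k|$ with $c_\phi:=\gamma v(\phi)/\sqrt{1-\gamma^2}>0$ satisfies the equation with the correct branch; this is the desired growing mode, and $c_\phi$ depends only on $\phi$. Differentiating the right-hand side at that point gives $-\beta\,v(\phi)^2|k|^2(\lambda^2+v(\phi)^2|k|^2)^{-3/2}\neq0$, so the zero is simple — a fact I will reuse.

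\emph{Diffusive case: reduction and setup.} First I would scale out $|k|$: with $\lambda=|k|\mu$, $\tilde\nu:=\nu/|k|$, and coordinates rotated so $k/|k|=(1,0)$, the mode equation for \eqref{eq:linearizedEQ-intro-full} divided by $|k|$ becomes $\big(\mu+iv(\phi)\cos\theta-\tilde\nu\,\partial_\theta^2\big)f_{k,\nu}=-\tfrac{\phi v'(\phi)}{2\pi}\,i\cos\theta\,\rho_{k,\nu}$, which is $|k|$-independent, and the hypothesis $\nu\in(0,|k|\nu_0)$ reads $\tilde\nu\in(0,\nu_0)$. Set $A_\mu:=\mu+iv(\phi)\cos\theta$ (a multiplication operator) and $B:=-\partial_\theta^2$. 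For $\mu$ in a fixed compact set $K\subset\{\mathrm{Re}\,\mu>0\}$ with $c_\phi\in K$ and all $\tilde\nu\ge0$, the numerical range of $A_\mu+\tilde\nu B$ lies in $\{\mathrm{Re}\,z\ge\min_K\mathrm{Re}\,\mu>0\}$ (since $B\ge0$ is self-adjoint), so $A_\mu+\tilde\nu B$ is boundedly invertible on $L^2(\T)$ with norm $\le(\min_K\mathrm{Re}\,\mu)^{-1}$, uniformly in $\tilde\nu$; for $\tilde\nu>0$ it is moreover elliptic with smooth coefficients, whence $f_{k,\nu}\in C^\infty(\T)$ by bootstrap. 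Arguing as before, a growing mode exists iff there is $\mu\in K$ with $\Phi_{\tilde\nu}(\mu)=0$, where
\begin{equation*}
\Phi_{\tilde\nu}(\mu):=-\frac{\phi v'(\phi)}{2\pi}\int_0^{2\pi}\Big[(A_\mu+\tilde\nu B)^{-1}\big(i\cos(\cdot)\big)\Big](\theta)\,\dd\theta-1 .
\end{equation*}
By analyticity of the resolvent, $\Phi_{\tilde\nu}$ is holomorphic in $\mu$ near $K$, and $\Phi_0$ (its $\tilde\nu=0$ instance) is precisely the non-diffusive dispersion function above, with its simple zero at $\mu=c_\phi$.

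\emph{Diffusive case: quantitative perturbation, and the obstacle.} The crux is the bound $\sup_{\mu\in K}|\Phi_{\tilde\nu}(\mu)-\Phi_0(\mu)|\lesssim_\phi\tilde\nu$. I would get it from the resolvent identity $(A_\mu+\tilde\nu B)^{-1}-A_\mu^{-1}=-\tilde\nu\,A_\mu^{-1}B(A_\mu+\tilde\nu B)^{-1}$: pairing against the constant function $1$ and using that $A_\mu^{-1}$ is multiplication by the smooth function $m_\mu:=(\mu+iv(\phi)\cos\theta)^{-1}$, I transfer the two derivatives of $B$ onto $m_\mu$ by integrating by parts on $\T$, which gives
\begin{equation*}
\Phi_{\tilde\nu}(\mu)-\Phi_0(\mu)=-\frac{\phi v'(\phi)}{2\pi}\,\tilde\nu\int_0^{2\pi}\big(\partial_\theta^2 m_\mu\big)(\theta)\,\Big[(A_\mu+\tilde\nu B)^{-1}\big(i\cos(\cdot)\big)\Big](\theta)\,\dd\theta .
\end{equation*}
Since $m_\mu$ and its derivatives are bounded uniformly for $\mu\in K$ and $\|(A_\mu+\tilde\nu B)^{-1}\|_{L^2\to L^2}$ is bounded uniformly in $\tilde\nu\ge0$, Cauchy–Schwarz closes the estimate. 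This is exactly the point where one must \emph{not} Neumann-expand in $\tilde\nu$ or bound $A_\mu^{-1}B(A_\mu+\tilde\nu B)^{-1}$ in operator norm (that quantity is only $O(1)$): the decisive $O(\tilde\nu)$ gain comes from integrating the extra derivatives by parts against the fixed smooth weight $m_\mu$ produced by the $\rho$-coupling, and I expect this uniform-down-to-$\tilde\nu=0$ control to be the only genuinely delicate point. With it in hand, I shrink $K$ to a closed disc $\{|\mu-c_\phi|\le r\}\subset\{\mathrm{Re}\,\mu>0\}$ on whose boundary $|\Phi_0|\ge\delta>0$ (possible by simplicity of the zero); Rouché's theorem then gives, for $\tilde\nu$ small, a unique zero $\mu_{\tilde\nu}$ of $\Phi_{\tilde\nu}$ in that disc, and combining the local lower bound $|\Phi_0(\mu)|\gtrsim|\mu-c_\phi|$ with $|\Phi_0(\mu_{\tilde\nu})|=|\Phi_0(\mu_{\tilde\nu})-\Phi_{\tilde\nu}(\mu_{\tilde\nu})|\lesssim_\phi\tilde\nu$ forces $|\mu_{\tilde\nu}-c_\phi|\lesssim_\phi\tilde\nu$. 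Choosing $\nu_0$ small enough that $\mathrm{Re}\,\mu_{\tilde\nu}>0$ throughout, setting $\lambda:=|k|\mu_{\tilde\nu}$ and undoing the scaling yields the eigenvalue with $\mathrm{Re}\,\lambda>0$ and $|\lambda-c_\phi|k||\le C_{0,\phi}\nu$, with $f_{k,\nu}$ smooth as already noted; everything else is the explicit $\nu=0$ computation and soft functional analysis.
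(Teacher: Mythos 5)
Your proposal is correct and follows essentially the same route as the paper: reduce to a scalar dispersion relation, evaluate it in closed form for $\nu=0$ (your $\int_0^{2\pi}\frac{\dd\theta}{p+q\cos\theta}=\frac{2\pi}{\sqrt{p^2-q^2}}$ is the same computation the paper does via $t=\tan(\theta/2)$ and its integral lemma), and then locate the diffusive zero by Rouché after an $O(\nu)$ comparison of the two dispersion functions, using simplicity of the inviscid zero to get the quantitative $|\lambda-c_\phi|k||\lesssim\nu$ bound. The one cosmetic difference is in proving $|\Phi_{\tilde\nu}-\Phi_0|\lesssim\tilde\nu$: you use the second resolvent identity plus integration by parts against the smooth multiplier $m_\mu$, whereas the paper runs an $L^2$ energy estimate on the equation for $h_\nu-h$; both hinge on the same uniform-in-$\nu$ resolvent bound and the smoothness of the inviscid resolvent, so they are interchangeable.
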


\begin{Rem}\label{rmk-THM-instab}
Several remarks are now in order.
\begin{enumerate}
\item As already mentioned, the instability condition \eqref{instab-condition-thm} recovered in Theorem~\ref{thm-instability} coincides with the criterion derived in the physics literature (see, for instance, \cites{o2023introduction, CatesTailleur2015motility}). We shall later prove that this condition is in fact sharp at the linear level; see Theorem~\ref{thm-stability} below. For example, when the velocity law is affine, $v(\rho)=1-\rho$, the instability criterion becomes $\phi>1/2$, in agreement with the numerical observations of \cite{Bruna-models}.

\item Theorem~\ref{thm-instability} in particular implies that the linearized operator around any $\phi\in(0,1)$ satisfying \eqref{instab-condition-thm} possesses an unbounded unstable spectrum. This strong form of spectral instability can be attributed to the loss of a spatial derivative generated by the feedback term $f\,e(\theta)\cdot\nabla_x\rho$ in \eqref{eq:fulleq-INTRO-bis}. The existence of unstable modes of the form \eqref{eq:growing-mode-result_k}–\eqref{eq:growing-mode-result-bis}, with growth rate proportional to the spatial frequency (up to an $\mathcal{O}(\nu)$ correction), in fact implies that \eqref{eq:fulleq-INTRO} is linearly ill-posed in all Sobolev and Gevrey classes (but well-posed in analytic regularity).

When $\nu=0$, one may apply the framework of \cite{HKN} to deduce nonlinear ill-posedness in Sobolev regularity for \eqref{eq:fulleq-INTRO}. In particular, one may show failure of Hölder continuity of the flow map from $H^s$ to $L^2$, for arbitrarily large $s$ and arbitrarily small times. See also \cite{bianchini2025ill} for a recent application of these techniques. When $\nu>0$, nonlinear ill-posedness persists: at least the flow map fails to be Lipschitz from $H^s$ to $L^2$; see \cites{GuoTice,DavidToan} for results in this direction.

\item At the linear level with $\nu=0$, it is natural to notice an analogy between \eqref{eq:linearizedEQ-intro-full-inviscid} and the kinetic Vlasov--Benney equation for $f(t,x,v)$, linearized around a spatially homogeneous profile $\mu(v)$:
\begin{align*}
    \partial_t f + v \cdot \nabla_x f 
    = \nabla_v \mu \cdot \nabla_x \rho, 
    \qquad 
    \rho(t,x)=\int_{\R^d} f(t,x,v)\,\mathrm{d}v.
\end{align*}
The corresponding nonlinear model,
\begin{align*}
    \partial_t f + v \cdot \nabla_x f 
    - \nabla_x \rho_f \cdot \nabla_v f = 0,
    \qquad 
    \rho_f(t,x)=\int_{\R^d} f(t,x,v)\,\mathrm{d}v,
\end{align*}
has attracted significant attention in recent years as a notable example of a singular Vlasov equation. Questions of linear and nonlinear well-posedness (in finite regularity) for such systems are governed by the Penrose stability condition from plasma physics; see, for instance, \cites{BardosBesse, BardosNouri, HKN, HKR}. 

\item As noted in Remark~\ref{rem_neglect_space_diffusion}, we have omitted spatial diffusion for simplicity. Its inclusion is straightforward: one may consider
\begin{equation}\label{MIPS_with_diff}
\begin{aligned}
    \partial_t f 
    + v(\phi)e(\theta)\cdot \nabla_x f 
    + \frac{\phi v'(\phi)}{2\pi}\, e(\theta) \cdot \nabla_x \rho
    = \nu\, \partial_\theta^2 f + \kappa\, \Delta_x f,
\end{aligned}
\end{equation}
with $\kappa>0$. This model is considered in \cite{Bruna-models} in the special case $\kappa=\nu$, motivated by an adimensionalization based on the length scale $L=\sqrt{D_T/D_R}$, where $D_T$ and $D_R$ denote translational and rotational diffusion coefficients, respectively. In general, however, one may select a different characteristic spatial scale (e.g.\ the typical scale of the initial data), resulting in \eqref{MIPS_with_diff} with two distinct coefficients $\kappa$ and $\nu$.

In this setting, Theorem~\ref{thm-instability} still holds, except that the unstable eigenmodes become
\begin{align*}
    f(t,x,\theta)
    = \e^{(\lambda - \kappa |k|^2)\, t} \e^{ik\cdot x}\, f_{k}(\theta),
    \qquad \mathrm{Re}(\lambda)>0.
\end{align*}
In particular, if $\kappa < c_\phi - C_{0,\phi}\nu$, then this solution remains unstable for all wave numbers $k\in\Z^2$ satisfying
\[
    |k|
    < \sup\big\{ |k'| : \kappa  |k'|^2 < c_\phi  |k'| - C_{0,\phi}\nu  \big\}.
\]
At sufficiently high spatial frequencies, the instability is suppressed---a manifestation of the smoo\-thing effect of spatial diffusion. This aligns with the fact that, for $\kappa>0$, both the linearized and nonlinear equations become well-posed. In this small-diffusion regime, one can additionally transfer linear spectral instability to nonlinear Lyapunov instability by exploiting the parabolic regularization of the semigroup; see \cites{Shatah-Strauss, Fried-Strauss-Vishik, Fried-Pavlovic-Shvydkoy}.
\end{enumerate}
\end{Rem}
When the flux-sign condition 
\begin{equation}\label{stab-condition-thm}
  \partial_\rho (\rho v)\big\vert_{\rho=\phi} > 0
\end{equation}
holds, our second main result establishes the linear stability of the homogeneous equilibrium.

\begin{Thm}[Linear stability]\label{thm-stability}
Assume that \eqref{assumption-velocity} holds, and let $\phi \in (0,1)$ satisfy \eqref{stab-condition-thm}.
Then the following conclusions hold:

\begin{itemize}

    \item \underline{Case $\nu=0$.}  
    There exists no exponentially growing mode for the linearized equation \eqref{eq:linearizedEQ-intro-full-inviscid}. More precisely, if a solution of the form \eqref{eq:growing-mode-result_k} exists, then necessarily $\mathrm{Re}(\lambda)=0$.  
    Furthermore, there exists $C_\phi>0$ such that for any solution $f$ to \eqref{eq:linearizedEQ-intro-full-inviscid} with initial data $f^{\mathrm{in}}$, and for any Fourier mode $k \in \mathbb{Z}^2$, one has
    \begin{equation}\label{bound-rhoTHM-LANDADAMPING}
        |\rho_k(t)| \leq \frac{C_\phi}{\sqrt{1 + |k|  t}} 
         \| f^{\mathrm{in}}_k \|_{H^1_\theta},
        \qquad t \ge 0.
    \end{equation}

    \item \underline{Case $\nu>0$.}  
    There exist constants $\eta_0,\nu_0>0$, depending on $\phi$, and $C=C(\eta_0,\nu_0,\phi)>0$ such that the following holds. For any $k \in \mathbb{Z}^2\setminus\{0\}$ and any $\nu |k|^{-1} \le \nu_0$, every solution $f$ to the linearized equation \eqref{eq:linearizedEQ-intro-full} with initial data $f^{\mathrm{in}}$ satisfies
    \begin{align}
        \label{bound-rhoTHM}
        |\rho_k(t)| 
        &\le 
        C\left(1 + \nu^{-5/4} |k|^{5/4}\right)
        \e^{-\eta_0 \nu t}\,
        \| f^{\mathrm{in}}_k \|_{L^2_\theta}, \qquad t \ge 0 \\[0.3em]
        \label{bound-fTHM}
        \| f_k(t) \|_{L^2_\theta}
        &\le 
        C\left(1 + \nu^{-7/4} |k|^{7/4}\right)
        \e^{-\eta_0 \nu t}\,
        \| f^{\mathrm{in}}_k \|_{L^2_\theta} \qquad t \ge 0.
    \end{align}
    % In particular, for $t \ge \nu^{-5/4}|k|^{5/4}$ and $t \ge \nu^{-7/4}|k|^{7/4}$ respectively, the prefactors can be replaced by $C(1+t)$.
    
\end{itemize}
\end{Thm}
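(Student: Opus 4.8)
\emph{Set-up and the non-diffusive dispersion function.}
Fix $k\in\Z^2\setminus\{0\}$ and work on the $k$-th spatial Fourier mode; since $\partial_\theta^2$ and $\theta\mapsto e(\theta)\cdot k$ transform covariantly under a simultaneous rotation of $k$ and of the angular variable, we may assume $e(\theta)\cdot k=|k|\cos\theta$. Writing $a:=v(\phi)\in(0,1)$ and $b:=\phi v'(\phi)<0$, equations \eqref{eq:linearizedEQ-intro-full}--\eqref{eq:linearizedEQ-intro-full-inviscid} for the mode $k$ read $\partial_t f_k+ia|k|\cos\theta\,f_k+\tfrac{ib|k|}{2\pi}\cos\theta\,\rho_k=\nu\,\partial_\theta^2 f_k$ with $\rho_k=\int_0^{2\pi}f_k\,\dd\theta$, and the stability hypothesis \eqref{stab-condition-thm} is exactly $r:=|b|/a\in(0,1)$. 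When $\nu=0$, integrating \eqref{eq:linearizedEQ-intro-full-inviscid} along its ($\theta$-frozen) characteristics and feeding the Duhamel formula back into the definition of $\rho_k$ yields the scalar Volterra equation
\[
\rho_k(t)=S_k(t)+\int_0^t\mathcal K_k(t-s)\,\rho_k(s)\,\dd s,\qquad S_k(t):=\int_0^{2\pi}\e^{-ia|k|\cos\theta\,t}\,f^{\mathrm{in}}_k(\theta)\,\dd\theta,
\]
with $\mathcal K_k(\tau)=-b|k|\,J_1(a|k|\tau)$, the Bessel function arising from $\int_0^{2\pi}\cos\theta\,\e^{-ia|k|\cos\theta\,\tau}\,\dd\theta=-2\pi i\,J_1(a|k|\tau)$; rescaling $t\mapsto|k|t$ removes $|k|$ from the kernel and from the target rate, so one may take $|k|=1$. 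Laplace transforming gives $\widehat{\rho_k}(p)=\widehat{S_k}(p)/\mathcal D_k(p)$ with dispersion function $\mathcal D_k(p)=1-r+r\,p/\sqrt{p^2+a^2|k|^2}$, the branch being the one analytic on $\{\Re p>0\}$ with $\mathcal D_k\to1$ at $+\infty$. As $p\mapsto p^2$ maps the right half-plane onto $\C\setminus(-\infty,0]$ and $\zeta\mapsto\zeta/(\zeta+a^2|k|^2)$ carries $\C\setminus(-\infty,0]$ into $\C\setminus\big((-\infty,0]\cup[1,\infty)\big)$, the principal square root of $p^2/(p^2+a^2|k|^2)$ has positive real part, whence $\Re\,\mathcal D_k(p)\ge1-r>0$ on $\{\Re p>0\}$; a direct check on $\{\Re p=0\}$ gives $|\mathcal D_k(i\omega)|\ge1-r$ as well, $\mathcal D_k$ blowing up like $(p\mp ia|k|)^{-1/2}$ --- with nonzero coefficient, as $r\neq0$ --- at the branch points $p=\pm ia|k|$. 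This Penrose-type condition is precisely what \eqref{stab-condition-thm} buys: $\mathcal D_k$ has no zero with $\Re p\ge0$, so (recalling that the spatial average is trivial) any solution of the form \eqref{eq:growing-mode-result_k} has $\Re\lambda=0$.

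\emph{Quantitative decay when $\nu=0$.}
Split $\rho_k=S_k+\mathcal K_k\ast\rho_k$. For the source, a stationary-phase expansion at the nondegenerate critical points $\theta\in\{0,\pi\}$ of $\cos\theta$, using $H^1_\theta\hookrightarrow L^\infty_\theta\cap W^{1,1}_\theta$, gives $|S_k(t)|\lesssim_\phi(1+|k|t)^{-1/2}\|f^{\mathrm{in}}_k\|_{H^1_\theta}$, already matching \eqref{bound-rhoTHM-LANDADAMPING}. For the remainder one Laplace transforms to $\widehat{\mathcal K_k}\,\widehat{S_k}/\mathcal D_k$ and slides the inversion contour onto the imaginary axis: since $\widehat{\mathcal K_k}(i\omega)=\mathcal O(\omega^{-2})$ at infinity (which is the point of splitting off $S_k$, whose own Laplace transform only decays like $\omega^{-1}$), this integrand is absolutely integrable. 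The sole obstruction to fast decay is at $\omega=\pm a|k|$, where both $\widehat{S_k}$ --- through the square-root edge singularity of the push-forward of $f^{\mathrm{in}}_k\,\dd\theta$ under $\theta\mapsto a|k|\cos\theta$ --- and $\widehat{\mathcal K_k}$ blow up like $(p\mp ia|k|)^{-1/2}$, so their product has the \emph{resonant} size $(p\mp ia|k|)^{-1}$; the matching $(p\mp ia|k|)^{-1/2}$ blow-up of $\mathcal D_k$, together with the uniform bound $|\mathcal D_k|\ge1-r$ away from those points --- i.e.\ the stability condition --- downgrades this to the integrable $(p\mp ia|k|)^{-1/2}$, and the endpoint analysis returns $|\mathcal K_k\ast\rho_k(t)|\lesssim_\phi(1+|k|t)^{-1/2}\|f^{\mathrm{in}}_k\|_{H^1_\theta}$. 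This is the Landau-damping-type mechanism of the abstract, with its characteristic non-integrable $t^{-1/2}$ rate.

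\emph{Strategy when $\nu>0$.}
Now the transport--diffusion propagator $\e^{t(\nu\partial_\theta^2-ia|k|\cos\theta)}$ is no longer explicit, so the Volterra kernel $\mathcal K_{k,\nu}$ and dispersion function $\mathcal D_{k,\nu}(z)=1-\widehat{\mathcal K_{k,\nu}}(z)$ are known only qualitatively. I would obtain \eqref{bound-rhoTHM}--\eqref{bound-fTHM} from a quantitative Gearhart--Pr\"uss argument for the $k$-th linearized operator $L_{k,\nu}$, needing (i) the spectral bound $\sigma(L_{k,\nu})\subset\{\Re z<-\eta_0\nu\}$ and (ii) a resolvent bound on the line $\{\Re z=-\eta_0\nu\}$ whose dependence on $(\nu,|k|)$ is responsible for the prefactors $1+\nu^{-5/4}|k|^{5/4}$ (for $\rho_k$) and $1+\nu^{-7/4}|k|^{7/4}$ (for $f_k$). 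Since the $\rho_k$-feedback term is rank one, a Sherman--Morrison identity expresses $(z-L_{k,\nu})^{-1}$ through the transport--diffusion resolvent $(z-\nu\partial_\theta^2+ia|k|\cos\theta)^{-1}$ and the scalar $\mathcal D_{k,\nu}(z)$, reducing (i)--(ii) to a lower bound for $|\mathcal D_{k,\nu}|$ on that line together with resolvent estimates for the non-self-adjoint operator $-ia|k|\cos\theta+\nu\partial_\theta^2$ --- the latter via hypocoercive energy estimates away from the turning points $\theta\in\{0,\pi\}$ and an Airy/parabolic-cylinder local model near them (from $\cos\theta\approx1-\theta^2/2$, the balance $\nu/\theta^2\sim|k|\theta^2$ selecting the angular scale $\theta\sim(\nu/|k|)^{1/4}$ and the resolvent gain $(\nu|k|)^{-1/2}$ at the band edge).

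\emph{The main obstacle.}
The crux, and the reason no $\nu$-perturbative argument can work, is that the limiting dispersion function $\mathcal D_{k,0}$ \emph{degenerates} precisely at $z=\pm ia|k|$ --- there $1/\mathcal D_{k,0}$ vanishes rather than being bounded below --- so near those points $\mathcal D_{k,\nu}$ can be controlled neither by a $\nu$-independent constant nor by comparison with $\mathcal D_{k,0}$ in any norm that sees a neighbourhood of $\pm ia|k|$. One must instead show that the angular diffusion \emph{resolves} the branch-point singularity of $\widehat{\mathcal K_{k,0}}$ --- at scale $\sim(\nu|k|)^{1/2}$ in the spectral variable and $\sim(\nu/|k|)^{1/4}$ in $\theta$ --- turning it into a large but finite quantity, of size a negative fractional power of $\nu|k|$, while keeping $\mathcal D_{k,\nu}$ away from zero with a bound that degrades only polynomially in $\nu$. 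Propagating this through the Gearhart--Pr\"uss estimate with all constants tracked, and at the same time keeping the spectral bound uniform down to $\Re z=-\eta_0\nu$, is the main technical work. This is where I expect the real difficulty to lie: the Airy local models must be glued to the hypocoercive region with constants sharp enough not to destroy the $\e^{-\eta_0\nu t}$ rate, and the resonant $(p\mp ia|k|)^{-1}$ behaviour of $\widehat{\mathcal K_{k,0}}\,\widehat{S_k}$ must be shown to be tamed by $\mathcal D_{k,\nu}$ uniformly across the transition scale. Finally, the extra factor $(\nu^{-1}|k|)^{1/2}$ relating the $f_k$- and $\rho_k$-prefactors combines the $|k|$ weight of the coupling term $\tfrac{ib|k|}{2\pi}\cos\theta$ with the $(\nu|k|)^{-1/2}$ resolvent gain near the band edge, i.e.\ it is the cost of reconstructing the full distribution from its angular average through the Duhamel formula against the singularly perturbed transport--diffusion semigroup.
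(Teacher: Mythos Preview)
Your $\nu=0$ analysis is essentially correct and runs parallel to the paper's: you obtain the same dispersion function (the paper's $D_\zeta$ is your $\mathcal D_k$ after the rescaling $r=2\pi\zeta$), and the absence of zeros with $\Re p\ge0$ is exactly Proposition~\ref{Prop-instalinear-inviscid}. For the decay, the paper takes a slightly different route: instead of analysing the singularity structure of $\widehat{\mathcal K_k}\widehat{S_k}/\mathcal D_k$ on the imaginary axis, it inverts $1/D_\zeta-1$ explicitly by deforming the Bromwich contour onto the branch cut $i[-1,1]$, obtaining a closed-form oscillatory integral for the Green kernel $G(t)$ and proving $|G(t)|\lesssim(1+t)^{-3/2}$ directly. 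The convolution $\rho=S+G\ast S$ then gives \eqref{bound-rhoTHM-LANDADAMPING}. Your endpoint argument is morally the same phenomenon --- the $\tfrac12$-singularity of $\mathcal D_k$ at $\pm ia|k|$ cancelling one of the two $\tfrac12$-singularities in the numerator --- but the paper's explicit formula makes the $t^{-3/2}$ integrability of $G$ rigorous without further work.

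For $\nu>0$ your proposed route (Gearhart--Pr\"uss plus Airy/parabolic-cylinder local models at the turning points) is substantially heavier than what the paper actually does, and you have not executed it. The paper avoids all microlocal analysis. The key observation is that the non-vanishing of $\mathcal D_{k,\nu}$ on $\{\Re\lambda\ge0\}$ follows from a short \emph{energy identity} on the resolvent equation $(\lambda+i\sin\theta-\nu\partial_\theta^2)f=i\sin\theta$: multiplying by $\bar f$ and by $f$ after conjugation yields, when $\mathcal D_{k,\nu}(\lambda)=0$, the relation $\Re(\lambda)\int f=\Re(\lambda)\int|f|^2+\nu\int|\partial_\theta f|^2$, which together with Cauchy--Schwarz forces $\int f<2\pi$, contradicting $\int f=1/\zeta>2\pi$. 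This is Lemma~\ref{LM:stability-cond-Laplacekernel-REALPART-positive}; no Airy analysis, no scales $(\nu/|k|)^{1/4}$. To push into the strip $\{-c\nu\le\Re\lambda<0\}$ the paper uses a soft compactness argument (Lemma~\ref{LM:Laplace-bound-Im<R}): if the lower bound failed along a sequence, one would extract a limit in which $\partial_\theta f\to0$, forcing $f$ to be constant and then $\zeta=\tfrac{1}{2\pi}$. The quantitative lower bound $|\mathcal D_{k,\nu}|\gtrsim\nu$ that results, combined with the enhanced-dissipation decay of the kernel and a Plancherel argument on the weighted Volterra equation, gives the prefactors $\nu^{-5/4}$ and $\nu^{-7/4}$ directly. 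Your diagnosis of \emph{why} a perturbative argument in $\nu$ fails is correct, but the cure the paper found is far more elementary than the one you sketch.
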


\begin{Rem} \label{rem_thm2}
We gather here several comments regarding Theorem~\ref{thm-stability}.
\begin{enumerate}

\item  
Combined with Theorem~\ref{thm-instability}, which establishes linear instability of the homogeneous state $\phi$ when  
\[
\partial_\rho (\rho v)\big\vert_{\rho=\phi}<0,
\]
Theorem~\ref{thm-stability} shows that the threshold  
\[
\partial_\rho (\rho v)\big\vert_{\rho=\phi}=0
\]
is sharp for linear (in)stability.  
For instance, for the affine velocity law $v(\rho)=1-\rho$, the instability threshold is exactly $\phi^*=1/2$.

\item 
The non-diffusive decay estimate \eqref{bound-rhoTHM-LANDADAMPING} should be interpreted as a manifestation of a \emph{mixing} mechanism intrinsic to the model: solutions generate increasingly fine oscillations in the angular variable, producing a transfer of energy from low to high Fourier modes in $\theta$.  
This leads to decay in time of averaged quantities such as the spatial density, even though no genuine damping mechanism is present in the equation.  
Mixing-induced decay mechanisms play a central role in several areas:
Landau damping for the Vlasov--Poisson equation (on the torus) \cites{MouhotVillani, BMM-torus, GNR}, synchronization in Kuramoto models \cite{DFGV18}, and inviscid damping near shear flows in fluid mechanics \cites{BM15,BCZV19,WZZ18,WZZ19}.  
In our setting, this decay is very weak (in particular not integrable): estimate \eqref{bound-rhoTHM-LANDADAMPING} is expected to be essentially sharp even for highly regular initial data.  Indeed, solutions to the (Fourier transformed) ``free transport'' equation, 
\[
\partial_t g_k + i v(\phi)\, k \cdot  e(\theta) g_k = 0
\]
with generic $H^1_\theta$ initial data already exhibit the same $(1+t)^{-1/2}$ decay of the density (thanks a stationary phase argument, see Lemma~\ref{LM:inviscid-decay-FREETRANSPORT}).  
This originates from the presence of nondegenerate critical points in the phase  
\[
\T  \ni\theta \mapsto -i v(\phi) k \cdot e(\theta),
\]
in contrast with the usual Euclidean transport phase $\R^d  \ni \mathrm{v} \mapsto -i k \cdot \mathrm{v}$, for which higher regularity of the data yields much faster decay.

\item  
In the diffusive case $\nu>0$, it is likely that the decay estimates \eqref{bound-rhoTHM}--\eqref{bound-fTHM} are not optimal.  
Regarding the exponential decay rate, classical enhanced-diffusion estimates for transport--diffusion equations suggest an optimal rate of order  $\nu^{1/2} |k|^{1/2}$,
rather than $\nu$.  
Indeed, results from \cites{MicheleJacob, Wei, MicheleThierry} imply that $\nu^{1/2}|k|^{1/2}$ is the expected rate when the reaction term  
$\frac{\phi v'(\phi)}{2\pi} e(\theta)\cdot \nabla_x \rho$  
is absent from \eqref{eq:linearizedEQ-intro-full}.  

Likewise, the prefactors in \eqref{bound-rhoTHM}--\eqref{bound-fTHM} likely remain suboptimal, producing a fractional derivative loss greater than~$1$.  
If an improved decay rate of the form $\nu^{1/2}|k|^{1/2}$ (or more generally $\nu^\alpha |k|^\beta$ with $\beta>0$) were available, Gevrey regularization at positive times would immediately compensate for this derivative loss.  
Moreover, if a spatial diffusion term $-\kappa \Delta_x f$ were added, the factor $\e^{-\kappa |k|^2 t}$ would provide instantaneous strong smoothing for all $t>0$.

\item  
One of the main technical challenges in establishing \eqref{bound-rhoTHM}--\eqref{bound-fTHM} is the extreme weakness of the mixing produced by the transport field, which depends on a single angular variable and possesses a critical point.  
As shown by \eqref{bound-rhoTHM-LANDADAMPING}, this mixing yields at best a decay of order $(1+t)^{-1/2}$ for the spatial density.
This is one of the main mathematical obstruction to a standard perturbative approach when $\nu \ll 1$.  
We refer to Section~\ref{SectionIntro-Proof} for further discussion.

 \item  
Nonlinear stability in the regime $\partial_\rho (\rho v)\big\vert_{\rho=\phi}>0$ is a natural next question.  
Retaining only angular diffusion, the equation \eqref{eq:fulleq-INTRO} appears to lose one derivative in $x$, and its nonlinear well-posedness may already be delicate.  
As in stability problems for fluid or plasma models (see, e.g.,~\cites{BGM-navierstokes, MZ-navier-stokes, WZ-navier-stokes, B-VPFK, CLT} and references therein), an important goal would be to obtain a nonlinear stability threshold quantifying the basin of attraction of the homogeneous state in terms of the diffusion parameter.  
Such a result has recently been proved for a related active particle system in \cite{CZHGV-nonlin}.  
We plan to address the nonlinear problem in future work.
\end{enumerate}
\end{Rem}

Finally, as a byproduct of our analysis, we obtain a complete spectral description of the non-diffusive linearized operator, in the spirit of \cite{degond1986spectral} for the Vlasov--Poisson equation.  
We refer to Theorem~\ref{thm-SPECTRUM} for details.

To conclude, let us mention that Theorems~\ref{thm-instability}--\ref{thm-stability} also remain valid for a variant of the main equation \eqref{eq:fulleq-INTRO}, considered for instance in \cites{Bruna-models, AGSregularity, ASregularity}.  
The strategy followed to prove the former theorems is robust enough to be adapted and the analysis is similar to the one used for \eqref{eq:fulleq-INTRO}, though with additional technicalities.  
We state the corresponding results and outline the proof in Section~\ref{Appendix-Variant-eq}.

\subsection{Related works}\label{SectionIntro-Literature}
The toy model \eqref{eq:fulleq-INTRO} for active Brownian particles, studied at the continuous level, has recently attracted significant attention from the mathematical community. The existence and uniqueness of global weak solutions in the case of full diffusion and an affine density-dependent speed $v(\rho)=1-\rho$ was established in \cite{bruna-wellposedness-fulldiff}. Questions of uniqueness and regularity at the nonlinear level have been addressed in \cites{AGSregularity, ASregularity, BS} for more involved variants of the system with full diffusion, including stability results under sufficiently strong diffusion. 

Regarding the (in)stability mechanisms associated with phase separation, a recent contribution is \cite{Bruna-models}, where \eqref{eq:fulleq-INTRO} corresponds to \emph{Model~2} in their notation. The authors investigate the linearized dynamics around the same homogeneous equilibrium  \eqref{eq:equilibrium},
again with the affine choice $v(\rho)=1-\rho$,  and in the fully diffusive setting (diffusion in both space and orientation).  
More precisely, they solve analytically the eigenvalue problem associated with the \emph{symmetric part} of the linearized operator, identifying a threshold $\phi_{\mathrm{sym}} < \tfrac12$ for the stability of this symmetric evolution. As emphasized in \cite{Bruna-models}, while stability of the symmetric operator implies linear stability of the full model for $\phi < \phi_{\mathrm{sym}}$, the converse direction is not true: instability of the symmetric part does not, in general, imply instability of the full operator.

A numerical investigation of the full linearized operator is also performed in \cite{Bruna-models}, yielding a detailed stability/instability diagram in terms of the parameters $\phi$ and the Péclet number (which corresponds, up to rescaling, to the inverse of the rotational diffusion coefficient $\nu$). Notably, their numerical computations confirm the sharp threshold $\phi=\tfrac12$ that we rigorously establish in Theorems~\ref{thm-instability}--\ref{thm-stability}.  
We also refer to the numerical stability analysis in \cite{mason2023hydrodynamics}, which considers a more complete active Brownian particle model.

At a broader level, there has been a growing interest in understand stability or instability properties of active matter systems through the lens of mixing and enhanced dissipation phenomena. Recent works such as \cites{OhmShelley, albritton2023stabilizing, CZHGV, CZHGV-nonlin} provide rigorous results in that direction for the so-called Doi-Saintillan-Shelley, investigating the (in)stability of the uniform distribution of particles. We also refer to \cite{GuHe} for some result in the same spirit for the kinetic Vicsek equation.

\begin{Rem}\label{rem-crowdmotion}
    Let us mention\footnote{We would like to thank Charlotte Perrin for explaining this analogy to us.} a formal connection between the above instability dichotomy in active matter (depending on the constant-density equilibrium being perturbed) and well-known PDE models for crowd motion or traffic flow. In standard traffic flow systems, such as the LWR model \cites{lighthill1955kinematic, richards1956shock}, the quantity $F(\rho) = \rho\, v(\rho)$ is referred to as the flux function. Typically, one expects that before the critical density at which the flux reaches its maximum (i.e.\ when $F'(\rho)>0$), the system is in a \emph{free-flow} phase; whereas beyond this threshold (i.e.\ when $F'(\rho)<0$), a \emph{congested} phase is observed. We also note that the prescribed speed law $v(\rho)$ is chosen empirically from real data; an affine decreasing speed is a common modeling choice in that context as well.
\end{Rem}

\section{Strategy of proof}\label{SectionIntro-Proof}

The proofs of Theorems~\ref{thm-instability} and~\ref{thm-stability} rely on a detailed spectral analysis of the linearized dynamics around the homogeneous equilibrium.  
Before turning to the construction of unstable modes or decay estimates, it is convenient to exploit the invariances of the system and perform a natural rescaling.  
This reduces the problem to a one-parameter family of one-dimensional equations on the angular variable, which contain all the relevant stability information while stripping away inessential geometric and dimensional factors.

\subsection{Reduction} \label{Section:reductionScaling-STAB-MODELA}
We begin by writing the evolution equation for the spatial Fourier coefficients $f_k$ of the solution $f$ of \eqref{eq:linearizedEQ-intro-full}. For $t>0$ and $\theta\in\T$, we have
 \begin{equation}\label{eq:evolFouriercoeff-viscous-MODELA}
\begin{cases}
      \partial_t f_k+ v(\phi)i k \cdot e(\theta) f_k -\nu \partial_{ \theta}^2 f_k=  -\frac{\phi v'(\phi)}{2 \pi} i k \cdot e(\theta) \rho_k, \\[1mm]
f_k(0,\theta)=f^{\mathrm{in}}_k(\theta),
\end{cases}
     \qquad  \rho_k(t)=\int_{0}^{2\pi} f_k(t,\theta) \, \mathrm{d}\theta.
\end{equation}
For $k=0$, the equation reduces to the one-dimensional heat equation when $\nu >0$ (and to a trivial conserved dynamics when $\nu=0$), so from now on we restrict to wavevectors $k\neq 0$.
To simplify the analysis, we perform a reduction based on symmetries and a natural rescaling.

\smallskip

\noindent\textbf{Rotational invariance.} 
If $f_k[g]$ denotes the solution of the first equation of \eqref{eq:evolFouriercoeff-viscous-MODELA} with initial datum $g(\theta)$, then for any rotation $R_\alpha\in SO_2(\R)$ we have
\[
    f_{R_\alpha k}\big[g(\cdot-\alpha)\big](t,\theta)
    = f_k[g](t,\theta-\alpha).
\]
Thus, without loss of generality, we may assume that $k$ points in the vertical direction, namely $k = (0,|k|)$.
The transport term then becomes $ik\!\cdot\! e(\theta)= i|k| \sin\theta$, and the equation takes the form
\[
    \partial_t f_k
    + v(\phi)\, i|k|\sin\theta\, f_k
    - \nu\, \partial_\theta^2 f_k
    = -\frac{\phi v'(\phi)}{2\pi}\, i|k|\sin\theta\, \rho_k ,
\]
for a new unknown still denoted $f_k$.

\smallskip

\noindent\textbf{Rescaling.} 
Define the rescaled variables
\begin{equation}
    t' \vcentcolon= v(\phi)|k|\, t,
    \qquad
    k' \vcentcolon= \frac{k}{|k|},
    \qquad
    \nu' \vcentcolon= \frac{\nu}{v(\phi)|k|},
\end{equation}
the rescaled unknown
\[
    f'(t',\theta) \vcentcolon= f_k(t,\theta),
\]
and the parameter
\begin{equation}\label{def:zeta}
\zeta \vcentcolon= -\frac{\phi v'(\phi)}{2\pi\, v(\phi)}.
\end{equation}
Since $\partial_t f_k = v(\phi)|k|\, \partial_{t'} f'$, dividing the reduced equation by $v(\phi)|k|$ yields
\[
    \partial_{t'} f'
    + i\sin\theta\, f'
    - \nu'\, \partial_\theta^2 f'
    = \zeta\, i\sin\theta\, \rho',
    \qquad
    \rho'(t') = \int_\T f'(t',\theta)\, \mathrm{d}\theta.
\]
Dropping the primes for readability, we arrive at the reduced diffusive equation
\begin{equation}\label{eq:rescaled eq2-MODELA}
    \partial_t f
    + i\sin\theta\, f
    - \nu\, \partial_\theta^2 f
    = \zeta\, i\sin\theta\, \rho,
    \qquad
    \rho(t)=\int_\T f(t,\theta)\, \mathrm{d}\theta,
\end{equation}
and, in the non-diffusive case,
\begin{equation}\label{eq:rescaled eq2-MODELA-inv}
    \partial_t f
    + i\sin\theta\, f
    = \zeta\, i\sin\theta\, \rho,
    \qquad
    \rho(t)=\int_\T f(t,\theta)\, \mathrm{d}\theta.
\end{equation}
Note that, since $v'(\phi)<0$ (see Assumption \eqref{assumption-velocity}), we have $\zeta>0$. Furthermore, $\zeta> \frac{1}{2\pi}$ (resp. $\zeta< \frac{1}{2\pi}$) if and only if $\partial_\rho (\rho v)\big\vert_{\rho=\phi}
<0$ (resp. $\partial_\rho (\rho v)\big\vert_{\rho=\phi}
>0$).

\medskip

We will henceforth work with the reduced equations \eqref{eq:rescaled eq2-MODELA} and \eqref{eq:rescaled eq2-MODELA-inv}.  
It is straightforward to verify that Theorems~\ref{thm-instability} and~\ref{thm-stability} follow from the corresponding spectral and damping statements established for these reduced systems.

\begin{Thm}[Linear instability - reduced]\label{thm-instability-reduced} 
Let $\zeta$ defined as in \eqref{def:zeta}. If $\zeta > \frac{1}{2\pi}$, then the following conclusions hold:
\begin{itemize}

    \item \underline{Case $\nu=0$}: there exists a  solution to  \eqref{eq:rescaled eq2-MODELA-inv} of the form 
    \begin{align}\label{eq:growing-mode-result}
        f(t,\theta)=\e^{\lambda_\zeta t} \mathrm{f}_\zeta(\theta),
    \end{align}
    where $\mathrm{f}_\zeta$ is smooth and $\lambda_\zeta \in \R^+ \setminus \{0\}$. 
    
    \item \underline{Case $\nu>0$}: there exist constants $\nu_0>0$ and $C_0 >0$ such that for any any 
    $\nu \in (0, \nu_0)$, there exists a solution to  \eqref{eq:rescaled eq2-MODELA} of the form
    \begin{align}
        f(t,\theta)=\e^{\lambda^\nu_\zeta t} \mathrm{f}^\nu_\zeta(\theta),
    \end{align}
    with $\mathrm{Re}(\lambda_{\zeta,\nu})>0$, where $\mathrm{f}^\nu_\zeta$ is smooth and where $\lambda^\nu_\zeta$ satisfies the quantitative estimate
    \begin{align*}
        |\lambda_\zeta^\nu - \lambda_\zeta | \,\leq\, C_0 \, \nu.
    \end{align*}
\end{itemize}
\end{Thm}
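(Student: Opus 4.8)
The plan is to look for growing modes of the reduced equation by turning the eigenvalue problem into a scalar (Penrose-type) dispersion relation. Start with the non-diffusive case $\nu=0$. Plugging the ansatz $f(t,\theta)=\e^{\lambda t}\mathrm{f}_\zeta(\theta)$ into \eqref{eq:rescaled eq2-MODELA-inv} gives, with $\rho=\int_\T \mathrm{f}_\zeta$,
\[
  (\lambda + i\sin\theta)\,\mathrm{f}_\zeta(\theta) = \zeta\, i\sin\theta\,\rho .
\]
For $\Re\lambda>0$ the prefactor $\lambda+i\sin\theta$ never vanishes, so we may divide and write $\mathrm{f}_\zeta(\theta)=\zeta\rho\,\dfrac{i\sin\theta}{\lambda+i\sin\theta}$. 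Integrating over $\T$ and cancelling $\rho$ (which must be nonzero, else $\mathrm{f}_\zeta\equiv 0$) produces the dispersion relation
\[
  1 = \zeta \int_\T \frac{i\sin\theta}{\lambda+i\sin\theta}\,\dd\theta
    = \zeta\int_\T\Bigl(1 - \frac{\lambda}{\lambda+i\sin\theta}\Bigr)\dd\theta
    = 2\pi\zeta - \zeta\lambda\int_\T\frac{\dd\theta}{\lambda+i\sin\theta}.
\]
The remaining integral is explicitly computable by residues (write $z=\e^{i\theta}$; $\int_\T (\lambda + i\sin\theta)^{-1}\dd\theta = 2\pi/\sqrt{\lambda^2+1}$ with the branch of the square root that is $\sim\lambda$ as $\lambda\to+\infty$). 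Thus the dispersion relation becomes the clean algebraic identity
\[
  \mathcal{D}(\lambda):=2\pi\zeta - \frac{2\pi\zeta\,\lambda}{\sqrt{\lambda^2+1}} = 1 .
\]
I would then study $\mathcal{D}$ on the positive real axis: $\mathcal{D}(0)=2\pi\zeta>1$ (using the instability hypothesis $\zeta>\frac1{2\pi}$) and $\mathcal{D}(\lambda)\to 0$ as $\lambda\to+\infty$, while $\mathcal{D}$ is continuous and strictly decreasing there; by the intermediate value theorem there is a unique $\lambda_\zeta\in(0,\infty)$ with $\mathcal{D}(\lambda_\zeta)=1$. In fact one can solve in closed form: $\frac{\lambda}{\sqrt{\lambda^2+1}} = 1-\frac{1}{2\pi\zeta}$ gives $\lambda_\zeta = \dfrac{2\pi\zeta-1}{\sqrt{(2\pi\zeta)^2-(2\pi\zeta-1)^2}} = \dfrac{2\pi\zeta-1}{\sqrt{4\pi\zeta-1}}>0$. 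Setting $\mathrm{f}_\zeta(\theta)=\zeta\,\dfrac{i\sin\theta}{\lambda_\zeta+i\sin\theta}$ (normalising $\rho=1$, which is automatically consistent by construction), one checks $\mathrm{f}_\zeta$ is smooth on $\T$ since $\lambda_\zeta+i\sin\theta$ stays away from $0$, and that $\int_\T\mathrm{f}_\zeta=1$ follows from $\mathcal D(\lambda_\zeta)=1$. This settles $\nu=0$.

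For $\nu>0$ the natural strategy is perturbative: seek $\mathrm{f}^\nu_\zeta$ with $\int_\T\mathrm{f}^\nu_\zeta=1$ solving $(\lambda+i\sin\theta-\nu\partial_\theta^2)\mathrm{f}=\zeta i\sin\theta$, i.e. $\mathrm{f} = \zeta\,(\lambda+i\sin\theta-\nu\partial_\theta^2)^{-1}(i\sin\theta)$, and impose the scalar constraint
\[
  \Phi(\lambda,\nu):=\zeta\int_\T (\lambda+i\sin\theta-\nu\partial_\theta^2)^{-1}(i\sin\theta)\,\dd\theta = 1 .
\]
The operator $\lambda+i\sin\theta-\nu\partial_\theta^2$ is invertible on $\Ld^2_\theta$ for $\Re\lambda>0$ (its numerical range lies in $\{\Re z\ge\Re\lambda\}$) with a bound uniform on compact subsets of $\{\Re\lambda>0\}$, and it depends analytically on $(\lambda,\nu)$; hence $\Phi$ is analytic near $(\lambda_\zeta,0)$ and $\Phi(\lambda,0)=\mathcal D(\lambda)$. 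Since $\partial_\lambda\mathcal D(\lambda_\zeta)=\partial_\lambda\Phi(\lambda_\zeta,0)\neq 0$ (strict monotonicity above, or a direct computation), the analytic implicit function theorem gives a unique branch $\nu\mapsto\lambda^\nu_\zeta$, analytic for small $\nu$, with $\lambda^{0}_\zeta=\lambda_\zeta$ and $|\lambda^\nu_\zeta-\lambda_\zeta|\le C_0\nu$ from the Lipschitz bound furnished by the implicit function theorem; continuity of $\Re\lambda^\nu_\zeta$ and $\Re\lambda_\zeta>0$ give $\Re\lambda^\nu_\zeta>0$ after shrinking $\nu_0$. Finally $\mathrm{f}^\nu_\zeta=\zeta(\lambda^\nu_\zeta+i\sin\theta-\nu\partial_\theta^2)^{-1}(i\sin\theta)$ is smooth by elliptic regularity on $\T$ (the second-order term dominates), and the normalisation $\int_\T\mathrm{f}^\nu_\zeta=1$ is exactly the equation $\Phi(\lambda^\nu_\zeta,\nu)=1$, so the ansatz genuinely solves \eqref{eq:rescaled eq2-MODELA}.

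The computation of the $\nu=0$ integral and the algebra are routine; the only genuinely delicate point is the $\nu>0$ step, where I must verify (i) uniform invertibility of $\lambda+i\sin\theta-\nu\partial_\theta^2$ with quantitative resolvent bounds near $\lambda_\zeta$, so that $\Phi$ is well defined and analytic up to $\nu=0$, and (ii) that $\Phi(\cdot,\nu)\to\mathcal D(\cdot)$ together with $\partial_\lambda\Phi\to\partial_\lambda\mathcal D$ in a neighbourhood of $\lambda_\zeta$, which is what makes the implicit function theorem applicable with a uniform radius; the $\mathcal O(\nu)$ rate then comes for free. A mild subtlety is confirming $\partial_\lambda\Phi(\lambda_\zeta,0)\neq 0$: this is immediate from $\mathcal D'(\lambda_\zeta)=-2\pi\zeta(\lambda^2+1)^{-3/2}\big|_{\lambda_\zeta}\neq 0$.
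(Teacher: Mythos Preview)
Your plan is correct and matches the paper's approach closely. For $\nu=0$ you derive exactly the dispersion relation of Proposition~\ref{Prop-iifcond-growingmode}, compute the integral by residues as in Lemma~\ref{LM:formula-dispersionD}, and locate the positive root as in Proposition~\ref{Prop-instalinear-inviscid}; your monotonicity/IVT argument is a fine alternative to the direct algebraic solution.

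For $\nu>0$ your scheme coincides with Section~\ref{Section-instab-diffusive}: your $\Phi(\lambda,\nu)-1$ is the paper's $-D^\nu_\zeta(\lambda)$, and your nondegeneracy check $\partial_\lambda\mathcal D(\lambda_\zeta)\neq 0$ is exactly the paper's ``vanishing order $m=1$'' computation. The only stylistic difference is that the paper invokes Rouch\'e's theorem after establishing the quantitative bound $|D^\nu_\zeta(\lambda)-D_\zeta(\lambda)|\le C\nu$ (Proposition~\ref{Prop-compa-reldispersion}), whereas you phrase it via the implicit function theorem. One caution: your assertion that $\Phi$ is ``analytic in $(\lambda,\nu)$ up to $\nu=0$'' is not automatic, since the operator family changes order at $\nu=0$; what is actually needed (and what you correctly flag as the delicate verification) is holomorphy in $\lambda$ for each fixed $\nu\ge 0$ together with the uniform bound $|\Phi(\lambda,\nu)-\Phi(\lambda,0)|\le C\nu$ on a disk around $\lambda_\zeta$. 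The paper obtains this bound by a direct energy estimate on the difference $h_\nu-h$ of the two resolvent equations, which is the cleanest way to close your step (i)--(ii). Once you have that quantitative bound, Rouch\'e (or the holomorphic-in-$\lambda$ IFT with parameter) gives the root at distance $O(\nu)$ without needing joint $C^1$ regularity in $\nu$.
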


\begin{Thm}[Linear stability - reduced]\label{thm-stability-reduced}
Let $\zeta$ defined as in \eqref{def:zeta}. If  $\zeta < \frac{1}{2\pi}$, then the following conclusions hold:

\begin{itemize}

    \item \underline{Case $\nu=0$.}  
    There exists no exponentially growing mode for the linearized equation \eqref{eq:rescaled eq2-MODELA-inv}. More precisely, if a solution of the form \eqref{eq:growing-mode-result} exists, then necessarily $\mathrm{Re}(\lambda_\zeta)=0$.  
    Furthermore, there exists $C_\zeta >0$ such that for any solution $f$ to \eqref{eq:rescaled eq2-MODELA-inv} with initial data $f^{\mathrm{in}}$, one has
    \begin{equation}\label{bound-rhoTHM-LANDADAMPING-reduced}
        |\rho(t)| \leq \frac{C_\zeta}{\sqrt{1 + t}} 
         \| f^{\mathrm{in}}\|_{H^1_\theta},
        \qquad t \ge 0.
    \end{equation}

    \item \underline{Case $\nu>0$.}  
    There exist constants $\eta_0,\nu_0>0$, depending on $\eta$, and $C=C(\eta_0,\nu_0,\zeta)>0$ such that the following holds. For any $\nu \le \nu_0$, every solution $f$ to the linearized equation \eqref{eq:rescaled eq2-MODELA} with initial data $f^{\mathrm{in}}$ satisfies
    \begin{align}
        \label{bound-rhoTHM-reduced}
        |\rho(t)| 
        &\le 
        C\left(1 + \nu^{-5/4}\right)
        \e^{-\eta_0 \nu t}\,
        \| f^{\mathrm{in}} \|_{L^2_\theta}, \qquad t\ge 0,\\[0.3em]
        \label{bound-fTHM-reduced}
        \| f(t) \|_{L^2_\theta}
        &\le 
        C\left(1 + \nu^{-7/4}\right)
        \e^{-\eta_0 \nu t}\,
        \| f^{\mathrm{in}} \|_{L^2_\theta}, \qquad t\ge 0.
    \end{align}
    % In particular, for $t \ge \nu^{-5/4}|k|^{5/4}$ and $t \ge \nu^{-7/4}|k|^{7/4}$ respectively, the prefactors can be replaced by $C(1+t)$.
\end{itemize}
\end{Thm}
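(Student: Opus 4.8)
\emph{Reduction to a scalar Volterra equation.} The plan is to exploit that the forcing $\zeta\,i\sin\theta\,\rho$ in \eqref{eq:rescaled eq2-MODELA}--\eqref{eq:rescaled eq2-MODELA-inv} depends on $f$ only through $\rho(t)=\int_\T f\,\dd\theta$. Writing $A_\nu := \nu\,\partial_\theta^2 - i\sin\theta$ for the free generator (so $A_0$ is multiplication by $-i\sin\theta$) and $\rho^{\mathrm{in}}:=\int_\T f^{\mathrm{in}}\,\dd\theta$, Duhamel's formula turns the equation into the convolution Volterra equation
\begin{equation*}
\rho(t) = \rho^{\mathrm{free}}_\nu(t) + \int_0^t K_\nu(t-s)\,\rho(s)\,\dd s,
\qquad
\rho^{\mathrm{free}}_\nu(t) := \int_\T (\e^{tA_\nu}f^{\mathrm{in}})\,\dd\theta,
\quad
K_\nu(t) := \zeta\int_\T (\e^{tA_\nu}(i\sin\cdot))\,\dd\theta,
\end{equation*}
hence, after a Laplace transform in time ($\widehat g(p)=\int_0^\infty\e^{-pt}g(t)\,\dd t$), $\widehat\rho = \widehat{\rho^{\mathrm{free}}_\nu}/D_\nu$ with dispersion function $D_\nu(p):=1-\zeta\int_\T (p-A_\nu)^{-1}(i\sin\cdot)\,\dd\theta$. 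For $\nu=0$ this is fully explicit: $\e^{tA_0}g=\e^{-it\sin\theta}g$ gives $K_0(t)=2\pi\zeta\,J_1(t)$ (Bessel function), $\rho^{\mathrm{free}}_0(t)=\int_\T\e^{-it\sin\theta}f^{\mathrm{in}}\,\dd\theta$, and $D_0(p)=(1-2\pi\zeta)+2\pi\zeta\,p(p^2+1)^{-1/2}$. For $\nu>0$ I would complement this with the Sherman--Morrison identity for the full (rank-one perturbed) generator $L_\nu f=A_\nu f+\zeta\,i\sin\theta\int_\T f\,\dd\theta$,
\begin{equation*}
(p-L_\nu)^{-1}g=(p-A_\nu)^{-1}g+\frac{\zeta}{D_\nu(p)}\Big(\int_\T(p-A_\nu)^{-1}g\,\dd\theta\Big)(p-A_\nu)^{-1}(i\sin\cdot),
\end{equation*}
which reduces every estimate below to resolvent bounds for $A_\nu$ together with lower bounds on $|D_\nu|$.

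\emph{Absence of growing modes ($\nu=0$).} A mode $\e^{\lambda t}\mathrm f_\zeta$ with $\mathrm f_\zeta\not\equiv 0$ and $\mathrm{Re}\,\lambda\neq 0$ necessarily has $\int_\T\mathrm f_\zeta\,\dd\theta\neq 0$ (else $(\lambda+i\sin\theta)\mathrm f_\zeta\equiv 0$ forces $\mathrm f_\zeta\equiv 0$), hence solves $D_0(\lambda)=0$, i.e. $\lambda(\lambda^2+1)^{-1/2}=1-\tfrac{1}{2\pi\zeta}$. When $\zeta<\tfrac1{2\pi}$ the right-hand side is a negative real number, while $g(\lambda):=\lambda(\lambda^2+1)^{-1/2}$ takes no value in $(-\infty,0]$ on $\{\mathrm{Re}\,\lambda>0\}$: $g(\lambda)=c\le0$ forces $\lambda^2=c^2/(1-c^2)$, hence $\lambda$ real (and then $g<0\Rightarrow\lambda<0$), or $\lambda\in i\R$, or impossible ($c^2=1$) --- never $\mathrm{Re}\,\lambda>0$. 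The same identity shows $D_0$ has no zero on $\{\mathrm{Re}\,p=0\}$ either, and since $D_0(p)\to1$ as $|p|\to\infty$ while $|D_0(p)|\to\infty$ as $p\to\pm i$, one gets $c_\zeta:=\inf_{\mathrm{Re}\,p\ge0}|D_0(p)|>0$; this gives the first assertion of the $\nu=0$ case.

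\emph{Decay when $\nu=0$.} From $\widehat\rho=\widehat{\rho^{\mathrm{free}}_0}/D_0$ I would write $\rho=\rho^{\mathrm{free}}_0+R_0*\rho^{\mathrm{free}}_0$ with $\widehat{R_0}=D_0^{-1}-1$, and show $|R_0(t)|\lesssim_\zeta(1+t)^{-3/2}$: on $\{\mathrm{Re}\,p=0\}$ the function $\widehat{R_0}(i\xi)=2\pi\zeta\widehat{J_1}(i\xi)/D_0(i\xi)$ is $C^\infty$ away from $\xi=\pm1$, converges to $-1$ (with a $|\xi\mp1|^{1/2}$ correction) at $\xi=\pm1$, and is $O(|\xi|^{-2})$ at infinity, so the square-root branch points at $\xi=\pm1$ contribute a $t^{-3/2}$ decay via stationary phase in the Fourier inversion, the rest being rapidly decaying. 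In particular $R_0\in L^1(\R^+)$ with $\int_T^\infty|R_0|\lesssim T^{-1/2}$; combining with $|\rho^{\mathrm{free}}_0(t)|\lesssim(1+t)^{-1/2}\|f^{\mathrm{in}}\|_{H^1_\theta}$ (Lemma~\ref{LM:inviscid-decay-FREETRANSPORT}: nondegenerate stationary phase at the critical points $\theta=\pm\tfrac\pi2$ of $\sin$) and splitting the convolution at $t/2$ yields \eqref{bound-rhoTHM-LANDADAMPING-reduced}.

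\emph{Decay when $0<\nu\ll1$, and the main obstacle.} Here no explicit structure is available and the argument must be genuinely uniform in $\nu$. I would first establish enhanced-dissipation resolvent estimates for $A_\nu$: $\mathrm{spec}(A_\nu)\subset\{\mathrm{Re}\,p\le-c\nu\}$, and for $\mathrm{Re}\,p\ge-\eta_0\nu$ the quantities $\|(p-A_\nu)^{-1}\|_{L^2_\theta\to L^2_\theta}$, $\|(p-A_\nu)^{-1}(i\sin\cdot)\|_{L^2_\theta}$ and $\int_\T(p-A_\nu)^{-1}(\cdot)\,\dd\theta$ are controlled, with bounds that deteriorate only near the resonant values $p=\mp i=-i\sin(\pm\tfrac\pi2)$. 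Near those points the rescaling $\theta=\pm\tfrac\pi2+\nu^{1/4}z$, $p\pm i=\nu^{1/2}w$ turns $A_\nu$ into the complex harmonic oscillator $\nu^{1/2}(\partial_z^2+\tfrac i2 z^2)$, whose spectrum lies in a sector strictly away from $i\R$: this produces the local $O(\nu^{-1/2})$-type bounds and, applied to the corresponding local dispersion factor, shows it is zero-free for the relevant spectral parameters, so that $D_\nu$ stays bounded away from $0$ near $p=\pm i$ (in fact $|D_\nu|\sim\nu^{-1/4}$ there). Away from $p=\pm i$, $(p-A_\nu)^{-1}\to(p-A_0)^{-1}$ uniformly and $|D_\nu|\gtrsim c_\zeta$; hence $|D_\nu|\gtrsim c_\zeta$ on all of $\{\mathrm{Re}\,p\ge-\eta_0\nu\}$. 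Shifting the Bromwich contour to $\{\mathrm{Re}\,p=-\eta_0\nu\}$ (legitimate since $\widehat\rho$ is analytic and decays there, the explicit $\rho^{\mathrm{in}}/p$ tail being absorbed by the residue at $p=0$) gives $|\rho(t)|\le\e^{-\eta_0\nu t}\tfrac1{2\pi}\int_\R|\widehat\rho(-\eta_0\nu+i\xi)|\,\dd\xi$, and estimating this integral over the resonant windows $|\xi\mp1|\lesssim\nu^{1/2}$, the bulk and the tail $|\xi|\gg1$ yields the prefactor $1+\nu^{-5/4}$ in \eqref{bound-rhoTHM-reduced}; the same scheme applied to $(p-L_\nu)^{-1}f^{\mathrm{in}}$ via Sherman--Morrison, with the extra factor $\|(p-A_\nu)^{-1}(i\sin\cdot)\|_{L^2_\theta}\lesssim\nu^{-1/2}$ (no angular average to absorb it), gives \eqref{bound-fTHM-reduced}. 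I expect the main obstacle to be precisely this $\nu$-uniform lower bound on $|D_\nu|$ near $\xi=\pm1$: it cannot be obtained perturbatively from $\nu=0$, because the $\nu=0$ density decay is only $(1+t)^{-1/2}$ --- non-integrable, so a Duhamel/bootstrap treatment of the rank-one coupling fails --- and must instead be extracted from genuine non-self-adjoint spectral information about the rescaled complex-oscillator model near the critical points of $\sin\theta$; bookkeeping the $\nu$-powers through the various regimes to reach the stated (likely non-optimal) prefactors is the other technically heavy point.
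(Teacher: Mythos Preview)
Your $\nu=0$ argument is essentially the paper's: the explicit dispersion formula, the absence of zeros when $\zeta<\tfrac{1}{2\pi}$ via the sign of the principal square root, the Green kernel $R_0=\mathcal L^{-1}(D_0^{-1}-1)$ with $(1+t)^{-3/2}$ decay coming from the square-root branch points at $\pm i$, and the convolution splitting at $t/2$ all match Sections~\ref{Section-spectral-inviscid}--\ref{Section-Landau-damping}.

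For $\nu>0$ you take a genuinely different route. The paper does \emph{not} use any boundary-layer or complex-oscillator analysis near $p=\pm i$. Instead it argues directly on the resolvent equation $(\lambda+i\sin\theta-\nu\partial_\theta^2)f=i\sin\theta$: an energy identity (integrate, then multiply by $\overline f$ and take real parts) shows that if $D_\nu(\lambda)=0$ with $\mathrm{Re}\,\lambda\ge 0$ then $\int f<2\pi$, forcing $\zeta>\tfrac{1}{2\pi}$ --- a contradiction. The quantitative lower bound $|D_\nu|\gtrsim\nu$ on the thin strip $\{-c\nu\le\mathrm{Re}\,\lambda<0,\ |\mathrm{Im}\,\lambda|\le 1+\sqrt2\}$ is then obtained by a soft compactness argument (if $\|\partial_\theta f_{\lambda_n,\nu_n}\|\to0$ along a sequence with $D_{\nu_n}(\lambda_n)\to0$, pass to the limit in the equation and reach $\zeta=\tfrac{1}{2\pi}$). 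This yields only $|D_\nu|\gtrsim\nu$ rather than your hoped-for $|D_\nu|\gtrsim c_\zeta$, which is why the paper's decay rate is $e^{-\eta_0\nu t}$ rather than the conjectured $e^{-c\nu^{1/2}t}$; but it is elementary and requires no microlocal model.

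Your sketch has a real gap precisely where you claim a perturbative step. You localize the non-perturbative difficulty at $p=\pm i$ only, and assert that ``away from $p=\pm i$, $(p-A_\nu)^{-1}\to(p-A_0)^{-1}$ uniformly''. But the spectrum of $A_0$ is the whole segment $i[-1,1]$, not just its endpoints: for $p=i\xi$ with $|\xi|<1$ away from $\pm1$, the multiplier $\xi+\sin\theta$ vanishes at \emph{regular} points $\theta_0$ with $\cos\theta_0\neq0$, so $(p-A_0)^{-1}$ is singular there too, and the convergence of $D_\nu$ to the (limiting-absorption) boundary value of $D_0$ on that part of the cut is itself a non-perturbative statement --- exactly the type of $L^1_t$ comparison between $K_\nu$ and $K_0$ that fails because $K_0\sim t^{-1/2}$ is not integrable. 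Your complex-oscillator rescaling treats only the quadratic degeneration at the critical points; the interior of the cut would need its own (linear-profile) boundary-layer analysis, and you would then have to show the matched asymptotics produce a uniform lower bound on $|D_\nu|$ across all regimes. That program may well succeed and could yield sharper constants than the paper's, but it is substantially more work than your sketch suggests, and the ``uniformly perturbative away from $\pm i$'' shortcut does not hold as written.
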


In the next paragraph we present the main ideas underlying the proofs of Theorems~\ref{thm-instability-reduced}--\ref{thm-stability-reduced}, together with a brief outline of the article.

\subsection{Main steps of the proof}
We begin in \textbf{Section~\ref{Section-spectral-inviscid}} with the search for unstable modes in the non-diffusive problem, i.e.\ system~\eqref{eq:rescaled eq2-MODELA-inv}. We look for solutions of the form
\begin{align}\label{def:eigenmode-intro}
    f(t,\theta)=\e^{\lambda t} \mathrm{f}(\theta),
    \qquad \Re (\lambda) \neq 0.
\end{align}
We then obtain a necessary and sufficient condition on $\lambda$: eigenvalues must solve the \emph{non-diffusive dispersion relation} $D_\zeta(\lambda) = 0$, where
\begin{equation}\label{eq:disprel}
     D_\zeta(\lambda)
    \vcentcolon= 1 - i \zeta \int_0^{2\pi}
        \frac{\sin(\theta)}{\lambda + i \sin(\theta)}
    \, \mathrm{d}\theta .   
\end{equation}
Remarkably, use of the residue theorem allows to derive a very explicit expression for $D_\zeta$:
    \begin{align}\label{def:rescaledDispersionRelation}
D_\zeta(\lambda)=\frac{(1-2\pi \zeta)\sqrt{1+\frac{1}{\lambda^2}}+2\pi \zeta}{\sqrt{1+\frac{1}{\lambda^2}}}\, ,
\end{align}
from which one can completely determine its zeros.  
In the regime $\zeta > \frac{1}{2\pi}$, one finds non-trivial zeros $\lambda_\zeta$ with $\mathrm{Re}(\lambda_\zeta)\neq 0$, producing an unstable mode; while in the regime $\zeta < \frac{1}{2\pi}$ no zeros exist, yielding spectral stability.  
This essentially proves Theorems~\ref{thm-instability-reduced} and \ref{thm-stability-reduced} in the non-diffusive case $\nu=0$, except for the decay estimate \eqref{bound-rhoTHM-LANDADAMPING-reduced}.  
\medskip

Next, \textbf{Section~\ref{Section-Landau-damping}} is devoted to establishing the polynomial decay rate \eqref{bound-rhoTHM-LANDADAMPING-reduced} in the case $\nu=0$ and $\zeta < \frac{1}{2\pi}$ (the stable regime).  
Applying the Laplace transform in time  to the equation satisfied by $f$, and using computations analogous to those in Section~\ref{Section-spectral-inviscid}, we arrive at the algebraic identity
\[
    \mathcal{L}[\rho](\lambda)
    = \frac{1}{D_\zeta(\lambda)}\, F(\lambda),
    \qquad \Re (\lambda) \gg 0,\qquad k\in \mathbb{Z}^2,
\]
where $\mathcal{L}$ denotes the Laplace transform in time, $F$ depends only on the initial data and on the resolvent of the free-transport operator, and $D_\zeta$ is given in \eqref{eq:disprel}.  
Inverting the Laplace transform yields a convolution representation
\[
\rho(t)
    = S(t) +  \int_0^t G(t-\tau)\, S(\tau)\, \mathrm{d}\tau ,
\]
where $S$ is the contribution generated by the free-transport dynamics, and $G$ is the inverse Laplace transform of $\frac{1}{D_\zeta} - 1$.

As explained earlier, the free-transport term $S$ is easily controlled via a stationary phase argument (see Lemma~\ref{LM:inviscid-decay-FREETRANSPORT}), giving  the decay
\[
    |S(t)| \;\lesssim\; \frac{1}{\sqrt{1+t}}\,
        \| f^{\mathrm{in}} \|_{H^1_\theta},
    \qquad t>0.
\]
Thus the main difficulty lies in establishing the decay of $G$. A suitable contour deformation, avoiding the branch cut $i[-1,1]$ (in the spirit of a limiting absorption argument), leads to an oscillatory integral representation of~$G$:
\begin{align}\label{eq:formulaGreen}
        G(t)=4\pi \zeta \int_{-1}^1 \e^{i s  t} \sqrt{1-s^2} \frac{s}{(4\pi \zeta-1)s^2 + (1-2\pi \zeta)^2} \, \mathrm{d}s.
        \end{align} 
Although the integral exhibits endpoint singularities at $s=\pm1$, a careful decomposition into a regular part and a thin boundary layer allows us to exploit the oscillatory structure and infer the decay estimate
\[
    |G(t)|
    \;\lesssim\;
    \frac{1}{(1+t)^{3/2}} .
\]
Finally, combining the convolution formula for $\rho$, the $t^{-1/2}$ decay for $S$, and the $t^{-3/2}$ decay for $G$, we obtain the polynomial Landau damping rate stated in Theorem~\ref{thm-stability} for $\nu=0$.

\medskip

In \textbf{Section \ref{Section-instab-diffusive}}, we aim at proving that the non-diffusive instability obtained in Section \ref{Section-spectral-inviscid}, holding for $\zeta > \frac{1}{2\pi}$, remains valid for \eqref{eq:rescaled eq2-MODELA}
at least in the weak diffusive regime $0 <\nu \ll 1$. This is the result contained in Theorem \ref{thm-instability-reduced} in the case $\nu>0$.

At this stage, explicit expressions are not available anymore because rotational diffusion has been reintroduced. To prove that instability still persists, we set up a natural perturbative approach in the small diffusion regime $\nu \rightarrow 0$. The main starting point is  to derive a diffusive dispersion relation $   D_\zeta^\nu(\lambda)=0$ where 
\begin{align*}
    D_\zeta^\nu(\lambda)=1- i\zeta \int_0^{2\pi} (\lambda+i\sin(\theta) -\nu \partial_\theta^2)^{-1}[\sin](\theta) \, \mathrm{d}\theta.
\end{align*}
We do not have a very explicit formula for this function. Still, we are able to show that 
\begin{align*}
    \left\vert D_\zeta^\nu(\lambda) - D_\zeta(\lambda) \right\vert \lesssim \nu
\end{align*}
for small enough $\nu>0$ and for $\lambda$  in the vicinity of the non-diffusive unstable root $\lambda_\zeta > 0$. Obtaining such inequality is based on a direct energy estimate at the level of the resolvent equations. To conclude, we rely on Rouché's theorem and the analyticity of the previous dispersion relations in $\lambda$, yielding the existence of a zero $\lambda^\nu_\zeta$ of $D_\zeta^\nu$ close to $\lambda_\zeta$ for $\nu>0$ small enough. This strategy is inspired by previous works \cites{DDGM, HG} on linear instability of boundary-layers type models. Quantifying the distance between the new diffusive unstable eigenvalue  and the former non-diffusive one is then a consequence of the order of vanishing of $D_\zeta$, which is easily computed.

\medskip

In \textbf{Section \ref{Section-stab-diffusive}}, we turn to the proof of the  linear stability for \eqref{eq:rescaled eq2-MODELA} when $\zeta < \frac{1}{2\pi}$, that is Theorem \ref{thm-stability-reduced} in the case $\nu>0$, and from which we can infer that the threshold $\zeta = \frac{1}{2\pi}$ is the correct sharp threshold. The main goal is to derive some decay estimates precisely quantified in terms of $\nu$. Our approach is based on the study of a Volterra equation on the density.  This is  fairly standard for such type of equations  \cites{B-VPFK,CLT,albritton2023stabilizing,CZHGV}. It reads 
\begin{align*}
    \rho(t)-\zeta \int_0^t \mathrm{K}_{\nu}(t-s) \rho(s) \, \mathrm{d}s = S(t),
\end{align*}
with a kernel and a source terms defined by \begin{align*}
    \mathrm{K}_{\nu}(t)=\int_0^{2\pi} \left(\e^{-\mathrm{L}_{\nu} t} i \sin \right)(\theta) \, \mathrm{d}\theta, \qquad 
    S(t)=\int_0^{2\pi} \left(\e^{-\mathrm{L}_{\nu} t} f^{\mathrm{in}} \right)(\theta) \, \mathrm{d}\theta.
\end{align*}
where $L_\nu = i \sin(\theta) - \nu \partial^2_\theta$ is the advection-diffusion operator. A key estimate in the rest of the proof is the following exponential decay of the semigroup $\e^{-\mathrm{L}_{\nu} t}$:
\begin{align*}
\Vert \e^{-\mathrm{L}_{\nu}t} \Vert_{\mathcal{L}(L^2_\theta)} \leq C \e^{-\eps \nu^{1/2}t}, \qquad t>0, \  \nu \ll 1,
\end{align*}
for some universal constants $C,\eps>0$, referred to as an enhanced-dissipation estimate (see e.g. \cites{MicheleJacob, MicheleMathiasTarek, MicheleThierry}).
%The main feature of the latter is the time threshold $\nu^{-1/2}$ of exponential decay of the solution, that is shorter than the natural diffusive timescale $\nu^{-1}$ when $\nu \ll 1$, and is typical for transport-diffusion operator like $L_\nu$. Roughly speaking, the main mechanism stems from a transfer from low to high frequencies (in the $\theta$ variable) due to transport, where the diffusion in $\theta$ acts even more efficiently. It can for instance be derived by hypocoercivty methods, in the spirit of \cite{villani2009hypocoercivity}, and we refer for instance to \cite{MicheleThierry} for a robust framework. 
This same exponential decay in time passes easily  to the kernel $K_\nu$ and source $S$ above. The main point is to infer from it the decay estimate on $\rho$ expressed in \eqref{bound-rhoTHM-reduced}. The estimate \eqref{bound-fTHM-reduced} then follows directly, by combining Duhamel's formula and the enhanced dissipation estimate.

\medskip
The proof of \eqref{bound-rhoTHM-reduced} relies crucially on the two properties:
\begin{align}\label{Intro-stab-cond}
    1-\zeta \mathcal{L}[\mathrm{K}_{\nu}](\lambda) \neq 0, \qquad \lambda \in \lbrace \mathrm{Re} \geq -\delta \rbrace,
\end{align}
and 
\begin{align}\label{Intro-stab-cond-quantitative}
    \vert 1-\zeta \mathcal{L}[\mathrm{K}_{\nu}](\lambda) \vert  \geq m, \qquad \lambda \in \lbrace -\delta \le \mathrm{Re} <0 \rbrace,
\end{align}
that we show to hold for $\delta = c\nu$ and $m = c' \nu$, where $c,c'$ are positive constant independent of $\nu$. 
Following the classical theory of the Volterra equation, for which a concise reminder is recorded in Appendix \ref{Appendix-Volterra}, one can deduce from \eqref{Intro-stab-cond} a bound of the form  $|\rho(t)| \le C_\nu e^{- c \nu t}$, while the quantitative inequality \eqref{Intro-stab-cond-quantitative} allows to quantify $C_\nu$ in terms of $\nu$, leading to \eqref{bound-rhoTHM-reduced}.  
Echoing Remark \ref{rem_thm2}-(3), the limitations $\delta = c \nu$ and $m = c' \nu$ needed to  show  \eqref{Intro-stab-cond-quantitative} explain the final decay rates and prefactors in the estimates \eqref{bound-rhoTHM-reduced}--\eqref{bound-fTHM-reduced} from Theorem \ref{thm-stability-reduced}. We are in particular unable to reach the values\footnote{Such improvement of \eqref{Intro-stab-cond}--\eqref{Intro-stab-cond-quantitative} would rather yield a decay estimate like $\vert \rho(t) \vert \lesssim (1+\nu^{-3/4}) \e^{-\eps \nu^{1/2} t}\Vert f^{\mathrm{in}} \Vert_{L^2_\theta}$. 
This estimate, that we conjecture to be optimal, seems out of reach within our approach.} $\delta \sim \nu^{1/2}$ and $m \sim \nu^{1/2}$. 

\medskip
We stress that the derivation of \eqref{Intro-stab-cond}--\eqref{Intro-stab-cond-quantitative}  is one of the main contributions of our paper, notably because our approach is not  of a perturbative nature: it means that we do not rely on a comparison between the case $\nu=0$ and $\nu \rightarrow 0$ to prove that the stability condition \eqref{Intro-stab-cond} holds for $\nu>0$ small enough. This contrasts with the strategy devised for instance in \cites{CLT, CZHGV}.  
%which relies on  good mixing decay estimates for analogues of our viscous kernel $K_\nu$, up to the characteristic enhanced dissipation time. 
In our case, the limiting non-diffusive kernel $K_{\mid \nu=0}$ at most displays a very slow decay of order $(1+t)^{-1/2}$, which is not integrable for large times. This weak decay is likely to be sharp, since it is based on the standard decay of 1d Bessel function. This lack of time integrability prevents from obtaining a good estimate in $L^1_t(\R^+)$ for the difference between the viscous kernel $K_\nu$ and its non-diffusive counterpart, an estimate that is crucial to the proofs in \cites{CLT, CZHGV}.  
%Note  that the former expected bound \eqref{eq:bound-hope-density} on the density consistently blows up when $\nu \rightarrow 0$.
We therefore adopt a more direct point of view to prove \eqref{Intro-stab-cond}-\eqref{Intro-stab-cond-quantitative}, using the identity
\begin{align*}
        \mathcal{L}[\mathrm{K}_{\nu}](\lambda) =\int_0^{2\pi} f_{\lambda,\nu} \mathrm{d}\theta,
    \end{align*}
where $(\lambda + L_\nu) f_{\lambda , \nu} = i \sin(\theta)$. 
We derive a refined set of  estimates for this latter equation in the region $\{\Re (\lambda) \ge - c \nu\}$, distinguishing into several sub-regions. This allows to derive the expected  lower bounds on $|1 - \zeta \mathcal{L}[K_\nu]|$. 
We refer to \textbf{Section \ref{Section-stab-diffusive}} for all details.

\section{Spectral study without diffusion}\label{Section-spectral-inviscid}
We begin the study of the linearized equations in the absence of spatial or rotational diffusion, that is assuming $\nu=0$ in the rest of this section. We therefore focus on \eqref{eq:rescaled eq2-MODELA-inv}.

We introduce the following non-diffusive dispersion relation that will prove to be central in our analysis of the non-diffusive case.
\begin{Def}\label{def-dispersion-rel-inviscid}
    For $\lambda \in \C \setminus i[-1 , 1]$, we define 
    \begin{align}\label{eq:dispersion-rel-inviscid}
        D_\zeta(\lambda)\vcentcolon=1-i\zeta\int_0^{2\pi} \frac{\sin(\theta)}{\lambda+i\sin(\theta)} \, \mathrm{d}\theta,
    \end{align}
    where we recall that the parameter $\zeta>0$ has been introduced in \eqref{def:zeta}.
\end{Def}
By standard holomorphy under the integral, $\lambda \mapsto D_\zeta(\lambda)$ is holomorphic on $\C \setminus i[-1, 1 ]$. Furthermore, by a change of variable and periodicity, we have the symmetry property $D_\zeta(\lambda)=D_\zeta(-\lambda)$. The importance of $D_\zeta$ is given by the following: 

\medskip

%We now perform a standard normal mode analysis by looking for unstable plane wave solutions for the linearized equation \eqref{eq:rescaled eq2-MODELA-inv}.
\begin{Prop}\label{Prop-iifcond-growingmode}
For \eqref{eq:rescaled eq2-MODELA-inv} to admit a non-trivial solution of the form 
\begin{align}\label{eq:growing-mode-inviscid}
    f(t,\theta)=\e^{\lambda t}  \mathrm{f}(\theta),  \: \lambda \notin i[-1,1],
\end{align}
 it is necessary and sufficient that
    \begin{align*}
        D_\zeta(\lambda)=0.
    \end{align*}
\end{Prop}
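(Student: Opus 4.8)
The plan is to substitute the ansatz \eqref{eq:growing-mode-inviscid} into \eqref{eq:rescaled eq2-MODELA-inv} and reduce the existence of such a mode to an algebraic condition on $\lambda$. Since $\rho(t)=\int_\T f(t,\theta)\,\mathrm{d}\theta=\e^{\lambda t}r$ with $r\vcentcolon=\int_\T \mathrm{f}(\theta)\,\mathrm{d}\theta$, dividing the equation by $\e^{\lambda t}$ gives the pointwise identity
\[
(\lambda+i\sin\theta)\,\mathrm{f}(\theta)=\zeta\, i\sin\theta\, r,\qquad \theta\in\T.
\]
The crucial structural observation is that the hypothesis $\lambda\notin i[-1,1]$ forces $\lambda+i\sin\theta\neq 0$ for every $\theta$, so this identity is equivalent to the explicit formula
\[
\mathrm{f}(\theta)=\zeta\, i\, r\,\frac{\sin\theta}{\lambda+i\sin\theta},
\]
which expresses $\mathrm{f}$ entirely in terms of the single scalar $r$.

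For the necessity, I would integrate the last formula over $\T$ and use the definition of $r$: this yields $r=\zeta\, i\,r\int_0^{2\pi}\frac{\sin\theta}{\lambda+i\sin\theta}\,\mathrm{d}\theta$, that is $r\,D_\zeta(\lambda)=0$ with $D_\zeta$ as in \eqref{eq:dispersion-rel-inviscid}. If the mode is non-trivial then $r\neq 0$ — indeed $r=0$ would give $(\lambda+i\sin\theta)\mathrm{f}(\theta)=0$, hence $\mathrm{f}\equiv 0$ by non-vanishing of $\lambda+i\sin\theta$ — and therefore $D_\zeta(\lambda)=0$. For the sufficiency, assuming $D_\zeta(\lambda)=0$ I would set $r=1$ and \emph{define} $\mathrm{f}$ by the explicit formula above; then $\mathrm{f}$ is smooth (quotient of smooth functions with non-vanishing denominator) and not identically zero, and by construction $\int_\T\mathrm{f}\,\mathrm{d}\theta=\zeta\, i\int_0^{2\pi}\frac{\sin\theta}{\lambda+i\sin\theta}\,\mathrm{d}\theta=1-D_\zeta(\lambda)=1=r$, so the self-consistency constraint between $\mathrm{f}$ and its own mean is satisfied. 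Reversing the computation shows that $\e^{\lambda t}\mathrm{f}(\theta)$ solves \eqref{eq:rescaled eq2-MODELA-inv}.

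I do not expect a genuine analytic obstacle here: the argument is a short direct computation whose only delicate point is the self-consistency of the fixed-point-type construction — namely that the spatial mean of the $\mathrm{f}$ we write down indeed equals the prescribed value $r$ — which is precisely the place where the equation $D_\zeta(\lambda)=0$ enters, together with the elementary verifications that $\mathrm{f}$ is smooth and non-trivial.
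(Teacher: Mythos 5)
Your proof is correct and follows essentially the same route as the paper: substitute the mode ansatz, solve for $\mathrm{f}$ using the non-vanishing of $\lambda+i\sin\theta$, integrate to reduce to the scalar relation $r\,D_\zeta(\lambda)=0$, and rule out $r=0$ (necessity), then reverse the construction with $r=1$ (sufficiency). The "self-consistency" framing you emphasize is exactly what the paper verifies by checking $\int_\T \mathrm{f}\,\mathrm{d}\theta = 1$, so there is nothing to add or correct.
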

\begin{proof}
By looking for a solution of \eqref{eq:rescaled eq2-MODELA-inv} under the form \eqref{eq:growing-mode-inviscid}, we observe that it has to satisfy
 \begin{align}\label{eq:Fourier-inviscid}
     (\lambda+i\sin(\theta))\mathrm{f}(\theta)=i\zeta \sin(\theta) \rho, \qquad \rho =\int_{0}^{2\pi} \-\mathrm{f}(\theta) \, \mathrm{d} \theta.
 \end{align}
After division and integration of \eqref{eq:Fourier-inviscid}, we also get from \eqref{def-dispersion-rel-inviscid} that
\begin{align*}
    D_\zeta(\lambda) \rho=0.
\end{align*}
Now, because of \eqref{eq:Fourier-inviscid},  we cannot have $\rho=0$ (using the equation \eqref{eq:Fourier-inviscid}, it would imply $\mathrm{f}(\theta)=0$ for all $\theta$). Hence we have $D_\zeta(\lambda)=0$. 

Conversely, let us suppose that $D_\zeta(\lambda)=0$ for some $ \lambda \notin i[-1,1]$. By definition, we have
\begin{align*}
    1=i\zeta \int_0^{2\pi} \frac{\sin(\theta)}{\lambda+i\sin(\theta)} \, \mathrm{d}\theta,
\end{align*}
therefore, if we define $\mathrm{f}(\theta)$ by
\begin{align*}
\mathrm{f}(\theta)&\vcentcolon=i\zeta \frac{\sin(\theta)}{\lambda+i\sin(\theta)},
\end{align*}
we get $ \rho=\int_{0}^{2\pi} \mathrm{f}(\theta) \, \mathrm{d} \theta=1$ and then
\begin{align*}
    (\lambda+i\sin(\theta))\mathrm{f}(\theta)=i\zeta \sin(\theta) \rho, \qquad \rho =\int_{0}^{2\pi} \-\mathrm{f}(\theta) \, \mathrm{d} \theta.
\end{align*}
This precisely means that the function $f(t,x,\theta)\vcentcolon=\e^{\lambda t}  \mathrm{f}(\theta)$ is solution to \eqref{eq:rescaled eq2-MODELA-inv}, which concludes the proof.
\end{proof}

To analyze the zeros of the dispersion relation $D_\zeta(\lambda) = 0$, we will rely on the following lemma.
\begin{Lem}\label{LM:formula-dispersionD}
For all  $\lambda \in \C \setminus i[-1, 1]$, $D_\zeta(\lambda)$ is given by the formula  \eqref{def:rescaledDispersionRelation}, in which  $\sqrt{\cdot}$ is the principal determination of the square-root, that is defined and holomorphic on $\C \setminus \R_-$ and that has a positive real part.
\end{Lem}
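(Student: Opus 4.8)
The plan is to evaluate the integral in \eqref{eq:dispersion-rel-inviscid} explicitly by a residue computation and then to match the result with the claimed formula through a careful choice of branch. First I would use the elementary splitting
\[
\frac{\sin\theta}{\lambda+i\sin\theta}=\frac{1}{i}\left(1-\frac{\lambda}{\lambda+i\sin\theta}\right),
\]
which gives $D_\zeta(\lambda)=1-2\pi\zeta+\zeta\lambda\, J(\lambda)$ with $J(\lambda):=\int_0^{2\pi}\frac{\mathrm{d}\theta}{\lambda+i\sin\theta}$. With the substitution $z=\e^{i\theta}$, $\sin\theta=\frac{z-z^{-1}}{2i}$, $\mathrm{d}\theta=\frac{\mathrm{d}z}{iz}$, the integral $J(\lambda)$ becomes $\frac{2}{i}\oint_{|z|=1}\frac{\mathrm{d}z}{z^2+2\lambda z-1}$. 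The denominator has roots $z_\pm=-\lambda\pm\sqrt{\lambda^2+1}$ with $z_+z_-=-1$, so $|z_+|\,|z_-|=1$; moreover a root lies on the unit circle exactly when $\lambda=-i\sin\phi$ for some $\phi$, i.e.\ exactly when $\lambda\in i[-1,1]$. Hence for $\lambda\in\C\setminus i[-1,1]$ precisely one root $z_{\mathrm{in}}$ lies in $|z|<1$, and the residue theorem applies.

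Calling $w_{\mathrm{in}}:=z_{\mathrm{in}}+\lambda$, the relation $z_{\mathrm{in}}^2+2\lambda z_{\mathrm{in}}-1=0$ rewrites as $w_{\mathrm{in}}^2=\lambda^2+1$, and the residue at $z_{\mathrm{in}}$ equals $\frac{1}{2(z_{\mathrm{in}}+\lambda)}=\frac{1}{2w_{\mathrm{in}}}$. Thus $J(\lambda)=\frac{2\pi}{w_{\mathrm{in}}}$ and
\[
D_\zeta(\lambda)=(1-2\pi\zeta)+\frac{2\pi\zeta\lambda}{w_{\mathrm{in}}}.
\]
It then suffices to show that $w_{\mathrm{in}}=\lambda\sqrt{1+\lambda^{-2}}$ with the principal determination of the square root: since $\lambda^2(1+\lambda^{-2})=\lambda^2+1$, one gets $\frac{\lambda}{w_{\mathrm{in}}}=\frac{1}{\sqrt{1+\lambda^{-2}}}$, and putting everything over $\sqrt{1+\lambda^{-2}}$ yields exactly \eqref{def:rescaledDispersionRelation}.

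The remaining point — the only genuine subtlety, the rest being routine residue calculus — is the identification of branches, which I would settle by analytic continuation on the connected domain $\C\setminus i[-1,1]$. One checks that $1+\lambda^{-2}=\frac{\lambda^2+1}{\lambda^2}$ lies in $\R_{\le 0}$ (or is undefined) precisely for $\lambda\in i[-1,1]$, so $g(\lambda):=\lambda\sqrt{1+\lambda^{-2}}$ is holomorphic on $\C\setminus i[-1,1]$ with $g(\lambda)^2=\lambda^2+1$ and with $\sqrt{1+\lambda^{-2}}$ of positive real part there. On the same domain the roots $z_\pm$ never collide (that would force $\lambda^2+1=0$, i.e.\ $\lambda=\pm i$) and the interior root cannot migrate across $|z|=1$, so $z_{\mathrm{in}}$, hence $w_{\mathrm{in}}$, is holomorphic on $\C\setminus i[-1,1]$ and satisfies $w_{\mathrm{in}}^2=\lambda^2+1$. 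Two holomorphic functions on a connected domain with equal squares differ by a global sign, so $w_{\mathrm{in}}=\pm g$; testing $\lambda\to+\infty$ along the positive real axis, where $z_{\mathrm{in}}\to0$ forces $w_{\mathrm{in}}\sim\lambda$ while $g(\lambda)\sim\lambda$, fixes the sign to be $+$. This gives $w_{\mathrm{in}}=\lambda\sqrt{1+\lambda^{-2}}$ on all of $\C\setminus i[-1,1]$ and finishes the proof. I expect the main obstacle to be exactly this bookkeeping: one must tie the chosen square root of $\lambda^2+1$ simultaneously to ``the pole inside the unit disk'' and to the principal determination appearing in the stated formula, rather than guessing it.
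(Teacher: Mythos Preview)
Your proof is correct and reaches the same formula; the overall strategy (reduce to computing $J(\lambda)=\int_0^{2\pi}\frac{\mathrm d\theta}{\lambda+i\sin\theta}$ by residues, then pin down the branch by analytic continuation on the connected domain $\C\setminus i[-1,1]$) is the same as the paper's.

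The only real difference is the route to $J(\lambda)$. The paper applies the Weierstrass substitution $t=\tan(\theta/2)$ to land on $\int_\R \frac{\mathrm dt}{t^2+2(i/\lambda)t+1}$, which it then evaluates via a separate lemma (proved by residues on a special ray plus analytic continuation in the parameter $z=i/\lambda$). You instead use $z=\e^{i\theta}$ and compute the residue directly on the unit circle, tracking which root lies inside. Your path is a bit more self-contained (no auxiliary lemma needed) and your treatment of the branch identification---two holomorphic square roots of $\lambda^2+1$ on a connected domain, fixed by a single asymptotic check---is slightly more explicit than the paper's, which hides the branch choice inside the lemma. Conversely, the paper's modular approach isolates a clean real-line integral that can be reused elsewhere. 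Neither buys a substantive advantage here; both are standard textbook computations.
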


\begin{Rem}
    In the case of an affine decreasing speed $v(\rho)=1-\rho$, $\zeta = \frac{\phi}{2\pi (1-\phi)}$ and \eqref{def:rescaledDispersionRelation} simplifies into
    \begin{align*}
        D_\zeta(\lambda)= \frac{1-2\phi}{1-\phi}+\frac{\phi}{1-\phi}\frac{1}{\sqrt{1+\frac{1 }{\lambda^2}}}.
    \end{align*}
We have not been aware of such an explicit formula in the literature.
\end{Rem}

\begin{proof}[Proof of Lemma \ref{LM:formula-dispersionD}]
Let us compute the integral 
$$ \mathcal{I} \vcentcolon= \int_0^{2\pi} \frac{ \sin(\theta)}{\lambda+i\sin(\theta)} \, \mathrm{d}\theta
=\frac{2\pi}{i}-\frac{\lambda}{i}\int_{-\pi}^{\pi} \frac{ 1}{\lambda+ i \sin(\theta)} \, \mathrm{d}\theta,
 $$
and by changing variables with $$t=\tan\left( \frac{\theta}{2} \right), \qquad \mathrm{d}\theta=\frac{2}{1+t^2} \, \mathrm{d}t, \qquad \sin(\theta)=\frac{2t}{1+t^2},$$ 
we get \begin{align*}
\mathcal{I} =\frac{2\pi}{i }-\frac{2\lambda}{i}\int_{-\infty}^{\infty} \frac{ 1}{\lambda(1+t^2)+2 i t} \, \mathrm{d}t =\frac{2\pi}{i}-\frac{2}{i}\mathcal{J}, 
\end{align*}
where $$\mathcal{J} \vcentcolon= \int_{-\infty}^{\infty} \frac{ 1}{t^2+2i t/\lambda +1} \, \mathrm{d}t.$$
Note that for all $\lambda \in \C\setminus i[-1 , 1]$, there holds $i/\lambda  \in \C\setminus\left((-\infty,-1] \cup [1,+\infty)\right)$. We can therefore apply Lemma \ref{LM:integral-rationalfrac} in the Appendix  with $z = \frac{i}{\lambda}$ that yields
\begin{align*}
\mathcal{J}=\frac{\pi}{\sqrt{1+\frac{1}{\lambda^2}}}, \qquad \mathcal{I} = \frac{2\pi}{i}-\frac{2}{i}\frac{\pi}{\sqrt{1+\frac{1}{\lambda^2}}},
\end{align*}
and then leads to 
\begin{align*}
\mathcal{D}_\zeta(\lambda )=1-2\pi \zeta  \left[1-\frac{1}{\sqrt{1+\frac{1}{\lambda^2}}} \right] = 1-2\pi \zeta+\frac{2\pi \zeta}{\sqrt{1+\frac{1}{\lambda^2}}},
\end{align*}
that is
\begin{align*}
D_\zeta(\lambda)=\frac{(1-2\pi \zeta)\sqrt{1+\frac{1}{\lambda^2}}+2\pi \zeta}{\sqrt{1+\frac{1}{\lambda^2}}},
\end{align*}
as claimed.
\end{proof}
We are now interested in the roots of $D_\zeta(\lambda)$. Roughly speaking, when $\zeta<\frac{1}{2\pi}$, it has no roots where it is defined. When $ \zeta>\frac{1}{2\pi}$, two roots having a nonzero real part exist, hence the existence of growing modes with a growth in time.
\begin{Prop}\label{Prop-instalinear-inviscid}
We have the following alternative:
\begin{itemize}
    \item if $\zeta>\frac{1}{2\pi}$, then $\lambda \mapsto D_\zeta(\lambda)$ has exactly two distinct zeros in $\C \setminus i[-1, 1]$, given by  
    \begin{align*}
\lambda_\zeta^\pm&=\mp\frac{ 1-2\pi \zeta}{2 \pi^{1/2} \zeta^{1/2}} \in \R^\pm. 
    \end{align*}
    \item if $\zeta<\frac{1}{2\pi}$, then  $\lambda \mapsto D_\zeta(\lambda)$ has no zeros in $\C \setminus i[-1, 1] $.
\end{itemize}    
\end{Prop}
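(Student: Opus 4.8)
The plan is to read the zeros off directly from the closed form of $D_\zeta$ provided by Lemma~\ref{LM:formula-dispersionD}. On $\C\setminus i[-1,1]$ one has $1+1/\lambda^2\in\C\setminus\R_-$, so the denominator $\sqrt{1+1/\lambda^2}$ in \eqref{def:rescaledDispersionRelation} is finite and nonvanishing there; hence $D_\zeta(\lambda)=0$ is equivalent to the vanishing of the numerator, namely $(1-2\pi\zeta)\sqrt{1+1/\lambda^2}+2\pi\zeta=0$. Since $\zeta\neq\tfrac1{2\pi}$ in both cases of the statement, this rearranges into the single scalar equation $\sqrt{1+1/\lambda^2}=A_\zeta$ with $A_\zeta:=\dfrac{2\pi\zeta}{2\pi\zeta-1}$.

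For the stable regime $\zeta<\tfrac1{2\pi}$, the number $A_\zeta$ is a strictly negative real. But the principal square root has positive real part wherever it is defined, so it can never equal a negative real number; hence $\sqrt{1+1/\lambda^2}=A_\zeta$ has no solution, and $D_\zeta$ has no zero in $\C\setminus i[-1,1]$. This settles the second bullet.

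For the unstable regime $\zeta>\tfrac1{2\pi}$, one has $A_\zeta>1$, which lies in the range of the principal square root, so squaring is admissible provided one checks the sign afterwards. Squaring yields $1+1/\lambda^2=A_\zeta^2$, hence a short computation gives
\[
    \frac1{\lambda^2}=A_\zeta^2-1=\frac{(2\pi\zeta)^2-(2\pi\zeta-1)^2}{(2\pi\zeta-1)^2}=\frac{4\pi\zeta-1}{(2\pi\zeta-1)^2},
\]
so that $\lambda^2$ is a strictly positive real number. This produces exactly two candidate roots, the real nonzero numbers $\lambda_\zeta^\pm$ of the statement (one in $\R^+$, one in $\R^-$, since $2\pi\zeta-1\neq0$), which therefore lie in $\C\setminus i[-1,1]$. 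One then checks that they genuinely solve the unsquared equation — i.e.\ that $\sqrt{1+1/\lambda^2}=+A_\zeta$ and not $-A_\zeta$ at these points — which is immediate because $1+1/\lambda^2=A_\zeta^2>0$ and the principal square root of a positive real is positive. The evenness $D_\zeta(\lambda)=D_\zeta(-\lambda)$ recorded after Definition~\ref{def-dispersion-rel-inviscid} is consistent with (and could itself be used to generate) the $\pm$ pairing.

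The whole argument is an elementary computation once Lemma~\ref{LM:formula-dispersionD} is available; the only point requiring genuine care is the branch bookkeeping for the square root, used twice — in the stable case, to argue that $A_\zeta$ lies \emph{outside} the range of $\sqrt{\,\cdot\,}$, which is what rules out all complex roots and not merely the real ones; and in the unstable case, to ensure that passing to the squared equation neither drops a genuine root nor introduces a spurious one. I do not anticipate any deeper obstacle.
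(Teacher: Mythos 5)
Your approach is essentially identical to the paper's: reduce $D_\zeta(\lambda)=0$ to the vanishing of the numerator in \eqref{def:rescaledDispersionRelation}, use the fact that the principal square root has positive real part to rule out roots when $\zeta<\tfrac1{2\pi}$, and square to find the real roots when $\zeta>\tfrac1{2\pi}$. The case analysis and branch bookkeeping match, and your explicit verification that squaring is reversible (because $1+1/\lambda^2>0$ at the candidate roots) is the same observation the paper makes implicitly via the ``if and only if''.

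There is, however, a gap in your last paragraph: you correctly compute
\[
\frac1{\lambda^2}=\frac{4\pi\zeta-1}{(2\pi\zeta-1)^2},
\qquad\text{hence}\qquad
\lambda^2=\frac{(2\pi\zeta-1)^2}{4\pi\zeta-1},
\qquad
\lambda=\pm\frac{2\pi\zeta-1}{\sqrt{4\pi\zeta-1}},
\]
and then assert that these are ``the real nonzero numbers $\lambda_\zeta^\pm$ of the statement''. But the statement reads $\lambda_\zeta^\pm=\mp\frac{1-2\pi\zeta}{2\pi^{1/2}\zeta^{1/2}}$, which squares to $\frac{(2\pi\zeta-1)^2}{4\pi\zeta}$, with denominator $4\pi\zeta$ rather than $4\pi\zeta-1$. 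Your algebra and the statement disagree, and you should have flagged this rather than identifying the two. (The paper's proof contains the same slip: it passes from $1+\tfrac1{\lambda^2}=\tfrac{4\pi^2\zeta^2}{(1-2\pi\zeta)^2}$ to ``$\lambda^2=\tfrac{(1-2\pi\zeta)^2}{4\pi\zeta}$'' ``after some simplifications'', whereas $4\pi^2\zeta^2-(1-2\pi\zeta)^2=4\pi\zeta-1$. So your arithmetic is in fact correct and appears to expose a misprint in the Proposition and in the paper's proof — but a careful proof cannot simply declare that its computed value coincides with a stated value when the two differ.)
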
 
\begin{proof}
By the symmetry property $D_\zeta(\lambda) = D_\zeta(-\lambda)$, it is enough to look for zeros of the dispersion relation for $\lambda \in \lbrace \mathrm{Re} \ge 0 \rbrace$. According to Lemma \ref{LM:formula-dispersionD} and formula \eqref{def:rescaledDispersionRelation}, we have
\begin{align*}
\mathcal{D}_\zeta(\lambda)=0 \Longleftrightarrow (1-2\pi \zeta)\sqrt{1+\frac{1}{\lambda^2}}=-2\pi \zeta.
\end{align*}
As $\zeta>0$, the right-hand side is negative. As a consequence, if $ \zeta <\frac{1}{2\pi}$, we observe that the former equation has no solutions since the real part of the left-hand side is non-negative. If $ \zeta >\frac{1}{2\pi}$, there is a solution of the equation if and only if
\begin{align*}
1+\frac{1}{\lambda^2}=\frac{4\pi^2 \zeta^2}{(1-2\pi \zeta)^2},
\end{align*}
that is after some simplifications
\begin{align*}
\lambda^2=\frac{(1-2\pi \zeta)^2}{4\pi \zeta}.
\end{align*}
Since $\mathrm{Re}(\lambda)>0$, we consider the positive root which yields
\begin{align*}
\lambda =-\frac{ 1-2\pi \zeta}{2 \pi^{1/2} \zeta^{1/2}}.
\end{align*}
 It concludes the proof. 
\end{proof}

As a conclusion, and more precisely thanks to Proposition \ref{Prop-instalinear-inviscid}, we have obtained the first part of Theorem \ref{thm-instability-reduced} and of Theorem \ref{thm-stability-reduced} for the absence of unstable eigenmode, in the case $\nu=0$.

\medskip

To conclude this section, we provide a non-diffusive spectral picture for the projected linearized problem, in the following sense. We define the linearized operator
\begin{align*}
    A_\zeta\vcentcolon=-i \sin(\theta)f+i\zeta  \sin(\theta)\rho, \qquad \rho=\int_0^{2\pi} f(\theta) \, \mathrm{d}\theta
\end{align*}
on $L^{2}_\theta$. It is clear that $A_{\zeta}$ is  bounded  on $L^2_\theta$. Its spectrum can be described as follows.

\begin{Thm}\label{thm-SPECTRUM}
If $\zeta \neq \frac{1}{2\pi}$, then the spectrum of $L_\zeta$ is 
    \begin{align*}
    \mathrm{Sp}(A_\zeta)=\big\lbrace i\tau \mid \tau \in [-1, 1] \big\rbrace \cup \lbrace \lambda \in \C \setminus i \R \mid D_{\zeta}(\lambda)=0 \rbrace,
\end{align*} 
and is symmetric with respect to $z \mapsto -z$. If $\lambda \notin \mathrm{Sp}(A_\zeta)$, the resolvent $R(\lambda)\vcentcolon=(\lambda-L_{\zeta})^{-1}$ satisfies for all $H \in L^2_\theta$
    \begin{align}\label{eq:formula-RESOLVENT}
        [R(\lambda)H](\theta)=\frac{H(\theta)}{\lambda+i\sin(\theta)} + \zeta D_\zeta(\lambda)^{-1}\left(\int_0^{2\pi} \frac{H(\theta')}{\lambda+i\sin(\theta')} \, \mathrm{d}\theta'\right) \frac{i \sin(\theta)}{\lambda+i\sin(\theta)}
        ,
    \end{align}
    and 
    \begin{align}\label{eq:formula-RESOLVENT-average}
        \int_0^{2\pi} [R(\lambda)H](\theta) \, \mathrm{d}\theta=D_\zeta(\lambda)^{-1}\int_0^{2\pi} \frac{H(\theta')}{\lambda+i\sin(\theta')} \, \mathrm{d}\theta'.
        \end{align}
\end{Thm}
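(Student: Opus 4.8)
The plan is to regard $A_\zeta$ as a rank-one perturbation of a multiplication operator, locate its essential spectrum, reduce the rest of the spectrum to the zeros of $D_\zeta$ already classified in Proposition~\ref{Prop-instalinear-inviscid}, and then read off the resolvent identity and the symmetry by a direct computation. Concretely, I would write $A_\zeta = M_\zeta + K$, where $(M_\zeta f)(\theta) = -i\sin(\theta) f(\theta)$ is multiplication by a bounded function and $Kf = i\zeta \sin(\theta)\int_0^{2\pi} f\,\mathrm{d}\theta$ is a bounded rank-one operator. The spectrum of $M_\zeta$ equals the essential range of $\theta\mapsto-i\sin\theta$, namely $i[-1,1]$, and consists entirely of essential spectrum (no eigenvalues, since each level set of $\sin$ has zero measure). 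For $\lambda\notin i[-1,1]$ the function $(\lambda+i\sin\theta)^{-1}$ lies in $L^\infty_\theta$ with norm $\mathrm{dist}(\lambda,i[-1,1])^{-1}$, so $M_\zeta-\lambda$ is boundedly invertible and $A_\zeta-\lambda = (M_\zeta-\lambda)\big(I+(M_\zeta-\lambda)^{-1}K\big)$ is Fredholm of index zero. By stability of the essential spectrum under compact (here rank-one) perturbations, $\sigma_{\mathrm{ess}}(A_\zeta)=\sigma_{\mathrm{ess}}(M_\zeta)=i[-1,1]\subseteq\mathrm{Sp}(A_\zeta)$; as a hands-on alternative, for $\tau\in(-1,1)$ one builds a normalized singular sequence from two bumps concentrating at the two points where $\sin\theta=\tau$, arranged so that $\int f_n=0$, giving $(A_\zeta-i\tau)f_n\to0$ with $f_n\rightharpoonup0$, and one fills in the endpoints by closedness of the spectrum. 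Since $A_\zeta-\lambda$ is Fredholm of index zero off the band, $\lambda\in\mathrm{Sp}(A_\zeta)$ there if and only if $A_\zeta-\lambda$ fails to be injective, i.e.\ $\lambda$ is an eigenvalue.

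Next, the eigenvalue equation $A_\zeta f=\lambda f$ is precisely \eqref{eq:Fourier-inviscid}, namely $(\lambda+i\sin\theta)f = i\zeta\sin\theta\,\rho$ with $\rho=\int_0^{2\pi}f\,\mathrm{d}\theta$. If $\rho=0$ then $f=0$; otherwise, normalizing $\rho=1$ forces $f(\theta)=i\zeta\sin\theta/(\lambda+i\sin\theta)$, and the compatibility condition $\int f=1$ is exactly $D_\zeta(\lambda)=0$, while conversely this $f$ is an eigenvector whenever $D_\zeta(\lambda)=0$ --- this is the content of Proposition~\ref{Prop-iifcond-growingmode}. Hence the spectrum of $A_\zeta$ off $i[-1,1]$ is $\{\lambda:D_\zeta(\lambda)=0\}$, which by Proposition~\ref{Prop-instalinear-inviscid} equals $\{\lambda_\zeta^+,\lambda_\zeta^-\}$ when $\zeta>\tfrac1{2\pi}$ and is empty when $\zeta<\tfrac1{2\pi}$. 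Evaluating formula \eqref{def:rescaledDispersionRelation} at $\lambda=i\tau$ with $|\tau|>1$ yields a value whose numerator is $s+2\pi\zeta(1-s)>0$, where $s=\sqrt{1-\tau^{-2}}\in(0,1)$, so $D_\zeta$ has no purely imaginary zeros; therefore $\{D_\zeta=0\}=\{\lambda\in\C\setminus i\R:D_\zeta(\lambda)=0\}$, and combining with the previous step gives the announced description of $\mathrm{Sp}(A_\zeta)$.

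For the resolvent, take $\lambda\notin\mathrm{Sp}(A_\zeta)$, i.e.\ $\lambda\notin i[-1,1]$ and $D_\zeta(\lambda)\neq0$, and solve $(\lambda-A_\zeta)f=H$ directly: this reads $(\lambda+i\sin\theta)f = H+i\zeta\sin\theta\,\rho_f$, so $f(\theta)=H(\theta)/(\lambda+i\sin\theta)+\zeta\rho_f\,i\sin\theta/(\lambda+i\sin\theta)$. Integrating in $\theta$ and using $\zeta\int_0^{2\pi}i\sin\theta/(\lambda+i\sin\theta)\,\mathrm{d}\theta = 1-D_\zeta(\lambda)$ (immediate from Definition~\ref{def-dispersion-rel-inviscid}) gives $\rho_f\,D_\zeta(\lambda)=\int_0^{2\pi}H(\theta')/(\lambda+i\sin\theta')\,\mathrm{d}\theta'$, which is \eqref{eq:formula-RESOLVENT-average}; substituting back yields \eqref{eq:formula-RESOLVENT}. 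The right-hand side defines a bounded operator on $L^2_\theta$ thanks to the $L^\infty$-bound on $(\lambda+i\sin\theta)^{-1}$ and $D_\zeta(\lambda)\neq0$, and reversing the manipulations shows it is a genuine two-sided inverse of $\lambda-A_\zeta$. Finally, the unitary $U\colon f(\theta)\mapsto f(\theta+\pi)$ satisfies $U^{-1}A_\zeta U=-A_\zeta$ (since $\sin(\theta+\pi)=-\sin\theta$ and the average is $U$-invariant), so $\mathrm{Sp}(A_\zeta)=-\mathrm{Sp}(A_\zeta)$; equivalently, one invokes $D_\zeta(\lambda)=D_\zeta(-\lambda)$ together with the symmetry of $i[-1,1]$.

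The proof carries no deep difficulty: the only point needing genuine care is the claim that the whole band $i[-1,1]$ lies in the spectrum and is unaffected by the nonlocal term $K$. The clean route is stability of the essential spectrum under compact perturbations; the concrete alternative is the singular-sequence construction, whose one subtlety is arranging $\int f_n=0$ so that the rank-one contribution disappears in the limit (with the endpoints $\tau=\pm1$ handled by closedness of $\mathrm{Sp}(A_\zeta)$). Everything else is bookkeeping resting on Propositions~\ref{Prop-iifcond-growingmode}--\ref{Prop-instalinear-inviscid} and formula~\eqref{def:rescaledDispersionRelation}.
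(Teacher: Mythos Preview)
Your proof is correct and reaches the same conclusion as the paper, but the organization differs. The paper proceeds purely by hand: for $\lambda\notin i[-1,1]$ with $D_\zeta(\lambda)\neq0$ it solves the resolvent equation directly (obtaining \eqref{eq:formula-RESOLVENT} and the bound in $L^2_\theta$), which immediately gives $\mathrm{Sp}(A_\zeta)\subset i[-1,1]\cup\{D_\zeta=0\}$; for the reverse inclusion it invokes Proposition~\ref{Prop-iifcond-growingmode} at the zeros of $D_\zeta$ and builds, for each $\sigma\in[-1,1]$, a Weyl sequence from a \emph{single} bump $\psi_n=n^{1/2}\psi(n(\theta-\theta_0))$ concentrating at one preimage $\theta_0$ of $\sigma$. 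The nonlocal term is harmless without any zero-average trick, since $\big|\int\psi_n\big|\le|\mathrm{supp}(\psi_n)|^{1/2}\|\psi_n\|_{L^2_\theta}\lesssim n^{-1/2}\to0$; this is simpler than your two-bump construction and handles the endpoints $\sigma=\pm1$ directly. Your route via the rank-one decomposition $A_\zeta=M_\zeta+K$, stability of the essential spectrum, and the Fredholm alternative is a clean conceptual packaging that explains \emph{why} the computation works (off the band, spectrum equals point spectrum), whereas the paper's argument is more elementary and self-contained. Both derivations of the resolvent formula and the symmetry are essentially identical.
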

In view of Proposition \ref{Prop-instalinear-inviscid}, a direct consequence of the former theorem in the stable case is the following.
\begin{Cor}\label{coro-stable-SPECTRUM}
    If $\zeta < \frac{1}{2\pi}$, then 
\begin{align*}
    \mathrm{Sp}(A_{\zeta})=\big\lbrace i\tau \mid \tau \in [-1 , 1] \big\rbrace.
\end{align*}
\end{Cor}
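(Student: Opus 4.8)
The corollary is a direct consequence of Theorem~\ref{thm-SPECTRUM} combined with Proposition~\ref{Prop-instalinear-inviscid}. The plan is the following. Since the hypothesis $\zeta < \frac{1}{2\pi}$ in particular implies $\zeta \neq \frac{1}{2\pi}$, Theorem~\ref{thm-SPECTRUM} applies and gives the spectral decomposition
\[
\mathrm{Sp}(A_\zeta)=\big\lbrace i\tau \mid \tau \in [-1,1] \big\rbrace \cup \big\lbrace \lambda \in \C \setminus i\R \mid D_\zeta(\lambda)=0 \big\rbrace .
\]
It therefore only remains to show that the second set on the right-hand side is empty. Here one observes the set inclusion $\C \setminus i\R \subset \C \setminus i[-1,1]$, the latter being precisely the domain on which $D_\zeta$ is defined and holomorphic (Definition~\ref{def-dispersion-rel-inviscid} and Lemma~\ref{LM:formula-dispersionD}). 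By Proposition~\ref{Prop-instalinear-inviscid}, in the regime $\zeta < \frac{1}{2\pi}$ the function $D_\zeta$ has no zeros at all in $\C \setminus i[-1,1]$; a fortiori it has no zeros in the smaller set $\C \setminus i\R$. Hence the second set vanishes, and we conclude $\mathrm{Sp}(A_\zeta)=\lbrace i\tau \mid \tau \in [-1,1]\rbrace$, as claimed.

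There is essentially no obstacle in this argument beyond correctly matching the domains: the content of the statement is entirely carried by Theorem~\ref{thm-SPECTRUM} (whose proof is the substantive part) and by the zero-counting of Proposition~\ref{Prop-instalinear-inviscid}. If desired, one may also record the interpretation that the continuous band $i[-1,1]$ corresponds to the (essential) spectrum coming from the multiplication operator $-i\sin\theta$, unaffected by the rank-one perturbation $\zeta i\sin\theta\,\rho$, while the potential point spectrum off the imaginary axis is governed by the dispersion relation $D_\zeta = 0$ and is empty precisely when $\zeta < \frac{1}{2\pi}$, consistently with the non-diffusive stability statement of Theorem~\ref{thm-stability-reduced}.
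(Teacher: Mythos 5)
Your argument is correct and is exactly the paper's (implicit) proof: the corollary is stated there as a direct consequence of Theorem~\ref{thm-SPECTRUM} together with the absence of zeros of $D_\zeta$ on $\C\setminus i[-1,1]$ from Proposition~\ref{Prop-instalinear-inviscid}, and your domain-matching observation $\C\setminus i\R\subset\C\setminus i[-1,1]$ is precisely the only point that needs checking.
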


\begin{proof}[Proof of Theorem \ref{thm-SPECTRUM}]
Our approach is reminiscent of the one of \cite{degond1986spectral} for the Vlasov-Poisson system.

\medskip

\noindent \textbf{Step 1.} 
Let $\lambda \in \C$ such that $\mathrm{Re}(\lambda) \neq 0$ with $D_\zeta(\lambda) \neq 0$ or $\lambda=i \sigma \in i\R$ with $\vert \sigma \vert> 1$ (in this latter case, by Proposition \ref{Prop-instalinear-inviscid}, $D_\zeta(\lambda) \neq 0$ as well). First, we study the resolvent equation
\begin{align*}
    (\lambda-A_\zeta)f=H,
\end{align*}
for some given $H \in L^2_\theta$, which reads
\begin{align*}
    \left(\lambda + i\sin(\theta) \right)f -i\zeta  \rho=H, \qquad \rho=\int_0^{2\pi} f(\theta) \, \mathrm{d}\theta.
\end{align*}
Since $\lambda \notin i\mathrm{Ran}(\sin)=i [-1, 1]$, we can divide and it is equivalent to
\begin{align*}
    f=i\zeta\frac{\sin(\theta)}{\lambda+i\sin(\theta)}\rho +\frac{H}{\lambda+i\sin(\theta)}
\end{align*}
By integrating in $\theta$, we get
\begin{align*}
    D_\zeta(\lambda)\rho=\int_0^{2\pi} \frac{H(\theta')}{\lambda+i\sin(\theta')} \, \mathrm{d}\theta'
\end{align*}
where $D_\zeta(\lambda)$ is the dispersion relation. Hence, for such $\lambda$, we finally obtain thanks to the two previous relations: 
\begin{align*}
f(\theta)= i\zeta D_\zeta(\lambda)^{-1}\left(\int_0^{2\pi} \frac{H(\theta')}{\lambda+i\sin(\theta')} \, \mathrm{d}\theta'\right) \frac{ \sin(\theta)}{\lambda+i\sin(\theta)}
+\frac{H(\theta)}{\lambda+i\sin(\theta)},
\end{align*}
which is in $L^2_\theta$. Conversely, the former formula defines an $L^2_{\theta}$ function satisfying the resolvent equation with source $H$, when $\mathrm{Re}(\lambda) \neq 0$ with $D_\zeta(\lambda) \neq 0$, or $\lambda=i \tau \in i\R$ with $\vert \tau \vert> 1$. Furthermore, we have a bound of the form
    \begin{align*}
\Vert f \Vert_{L^2_{\theta}} \lesssim_{\lambda}  \Vert H \Vert_{L^2_{\theta}}.
\end{align*}
All in all, we have proven that we can define the resolvent operator (given by the former formula, that is $f=R(\lambda)H$) when  $\mathrm{Re}(\lambda) \neq 0$ with $D_\zeta(\lambda) \neq 0$ or $\lambda=i \sigma \in i\R$ with $\vert \sigma \vert> 1$, that is
\begin{align*}
        \mathrm{Sp}(A_{\zeta})\subset \big\lbrace i\sigma \mid \sigma \in [-1, 1] \big\rbrace \cup \lbrace \lambda \in \C \setminus i \R \mid D_{\zeta}(\lambda)=0 \rbrace.
    \end{align*}
In particular, the formula \eqref{eq:formula-RESOLVENT} holds and the formula \eqref{eq:formula-RESOLVENT-average} is then deduced by integrating in $\theta$.

\medskip

\noindent \textbf{Step 2.} 
Let us show the reverse inclusion. If $D_\zeta(\lambda)=0$ for some $\lambda \in \C \setminus i \R$, we have already proven in Proposition \ref{Prop-iifcond-growingmode} that there exists a non-trivial $f$ such that $A_{\zeta} f=\lambda f$ therefore $\lambda \in \mathrm{Sp}(A_\zeta)$. If $\lambda= i\sigma $ with $\sigma \in [-1,1]$, let us also show that $\lambda \in \mathrm{Sp}(A_\zeta)$ by constructing a Weyl sequence for $\lambda$, that is, a sequence of normalized $(\psi_n)$ such that $\Vert (\lambda-A_\zeta)\psi_n \Vert_{L^2_\theta} \rightarrow 0$ as $n \rightarrow + \infty$.
% Always true: a Weyl sequence always produce something in the spectrum (if it was self-adjoint it would be an equivalence, with the essential spectrum)
First, there exists $\theta_0 \in [0,2\pi]$ such that $\lambda+i \sin(\theta_0)=0$, as $\sigma \in \mathrm{Ran}(\sin)$. Furthermore, we have $\partial_\theta(\lambda+i \sin(\theta))_{\theta=\theta_0} \neq 0$. Now pick $\psi \in \mathscr{C}^\infty_c(-\frac{1}{2}, \frac{1}{2})$ such that $\Vert \psi \Vert_{L^2(\R)}= 1$. 
Let us define
\begin{align*}
    \psi_n\vcentcolon= n^{1/2} \psi (n(\theta-\theta_0)), \qquad n \in \N,
\end{align*}
such that, for $n$ large enough, we have  $\mathrm{supp}(\psi_n) \subset (\theta_0-1/2n, \theta_0+1/2n) \subset [0, 2\pi)$ (so that we extend it by periodicity) and $\Vert \psi_n \Vert_{L^2_\theta}=1$. 
Using the notation $\rho_n=\int_0^{2\pi} \psi_n \, \mathrm{d}\theta$, we directly compute by Hölder inequality
\begin{align*}
    \Vert (\lambda-A_\zeta)\psi_n \Vert_{L^2_\theta} &\lesssim \Vert (\lambda + i \sin(\cdot))\psi_n\Vert_{L^2_\theta}+ \Vert k \cdot e(\cdot) \rho_n\Vert_{L^2_\theta} \\
    & \lesssim \Vert  \lambda + i \sin(\cdot) \Vert_{L^\infty_\theta(\mathrm{supp}(\psi_n))}\Vert \psi_n \Vert_{L^2_\theta}   + \vert \mathrm{supp}(\psi_n)\vert^{1/2} \Vert \psi_n \Vert_{L^2_\theta} \\
    &\lesssim \frac{1}{n}+ \frac{1}{n^{1/2}},
\end{align*}
where we have used that $\vert \lambda + i \sin(\theta) \vert \lesssim \vert \theta-\theta_0\vert \leq n^{-1} $ on the support of $\psi_n$, by Taylor formula. It therefore yields the desired sequence, and concludes the proof.
\end{proof}

\section{Mixing estimates in the non-diffusive case}\label{Section-Landau-damping}
In this section, we continue the study of equation \eqref{eq:rescaled eq2-MODELA-inv} in order to capture the decay in time of $\rho=\int_0^{2\pi} f(\theta) \, \mathrm{d}\theta$. As highlighted in the introduction (see Remark \ref{rem_thm2} after Theorem \ref{thm-stability}), our approach aims at finding some decay mechanism through (phase) mixing. In the more classical context of linearized Vlasov equations \cites{MouhotVillani,CZHGV, Faou-Rousset1, Faou-Rousset2, DFGV18}, this decay is obtained by writing a Volterra equation on $\rho$ with integrable kernel. It is then well-known that spectral stability implies decay, see for instance Theorem \ref{thm-Paley-Wiener} in Appendix \ref{Appendix-Volterra}. In our context, the kernel would only decay like $O(1/\sqrt{t})$, which forbids to apply the classical theory of Volterra equation. We therefore proceed through more explicit calculations at the level of the Laplace transform.

\medskip

First  a direct energy estimate in $L^2_\theta$ on the equation \eqref{eq:rescaled eq2-MODELA-inv} satisfied by $f$ (mutlipliying by $\overline{f}$, integrating and taking the real part) combined with the fact that $\vert \rho(t) \vert \lesssim \Vert f(t) \Vert_{L^2_\theta}$ leads to 
\begin{align*}
    \vert \rho(t) \vert \lesssim \Vert f(t) \Vert_{L^2_\theta} \lesssim \e^{C_\zeta t} \|f^{\mathrm{in}}\|_{L^2_\theta}, \qquad C_\zeta>0,
\end{align*}
by Grönwall's lemma. This proves that the Laplace transforms of $f$ 
%from \R^+ to L^2_\theta
and of $\rho$:
\begin{align*}
    \mathcal{L}[f](\lambda,\theta)=\int_0^{+\infty} \e^{-\lambda t} f(t,\theta) \, \mathrm{d}t, \quad  \mathcal{L}[\rho](\lambda)=\int_0^{+\infty} \e^{-\lambda t} \rho(t) \, \mathrm{d}t
\end{align*}
are well-defined for $\lambda \in \lbrace \mathrm{Re}>C_\zeta\rbrace$.
By considering the Laplace transform of the equation satisfied by $f$, we obtain, as in the proof of Proposition \ref{Prop-iifcond-growingmode}:
\begin{align*}
    (\lambda+ i\sin(\theta))\mathcal{L}[f](\lambda, \theta)-i\zeta \sin(\theta)\mathcal{L}[\rho](\lambda) =f^{\mathrm{in}},
\end{align*}
hence
\begin{align*}
   \mathcal{L}[f](\lambda, \theta)= i\zeta\frac{\sin(\theta)}{\lambda+ i\sin(\theta)}\mathcal{L}[\rho](\lambda)+\frac{f^{\mathrm{in}}}{\lambda+ i\sin(\theta)}.
\end{align*}
Integrating in $\theta$, we get
\begin{align*}
    D_\zeta(\lambda) \mathcal{L}[\rho](\lambda)=F(\lambda), \qquad \mathrm{Re}(\lambda)> C_\zeta,
\end{align*}
where $D_\zeta(\lambda)$ is the dispersion relation already introduced in Definition \ref{def-dispersion-rel-inviscid} (that is defined for $\lambda$ in $\C \setminus i[-1, 1]$), and where
\begin{align*}
    F(\lambda)\vcentcolon=\int_0^{2\pi} \frac{f^{\mathrm{in}}(\theta)}{\lambda+i \sin(\theta)} \, \mathrm{d}\theta, \qquad \lambda \in \C \setminus i[-1, 1].
\end{align*}
We know from Proposition \ref{Prop-instalinear-inviscid} that in the stable case $\zeta<1/2\pi$, the function $D_{\zeta}$ has no zero in $\C \setminus i[-1, 1]$.
Let us first collect some basic facts about the function $1/D_\zeta$.
\begin{Lem}\label{LM:prop-GreenKernel}
    If $\zeta<\frac{1}{2\pi}$, then we have the following:
    \begin{itemize}
        \item the function $\lambda \mapsto \frac{1}{D_\zeta\left(\lambda \right)}$ is well-defined and holomorphic on $\C \setminus i[-1, 1]$. Furthermore, there holds
        \begin{align}\label{eq:formula-1/D}
    \frac{1}{D_\zeta(\lambda)}=\frac{\sqrt{1+\frac{1}{\lambda^2}}}{(1-2\pi \zeta)\sqrt{1+\frac{1}{\lambda^2}}+2\pi \zeta}, \qquad \lambda \in \C \setminus i[-1, 1].
\end{align}
\item there holds
\begin{align}\label{eq:asymptotics-1/D}
    \frac{1}{D_\zeta(\lambda)}=1+\mathcal{O}(\vert \lambda \vert^{-2} ), \qquad \vert \lambda \vert \rightarrow + \infty.
\end{align}
    \end{itemize}
\end{Lem}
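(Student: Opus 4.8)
The statement to prove is Lemma~\ref{LM:prop-GreenKernel}, which records two basic facts about $1/D_\zeta$ in the stable regime $\zeta<\frac{1}{2\pi}$: (i) the explicit formula \eqref{eq:formula-1/D} together with holomorphy on $\C\setminus i[-1,1]$, and (ii) the asymptotic expansion \eqref{eq:asymptotics-1/D} as $|\lambda|\to+\infty$. Both are essentially computational consequences of the closed form \eqref{def:rescaledDispersionRelation} for $D_\zeta$ established in Lemma~\ref{LM:formula-dispersionD}, so the proof is short.

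\begin{proof}[Proof of Lemma~\ref{LM:prop-GreenKernel}]
Assume $\zeta<\frac{1}{2\pi}$. By Lemma~\ref{LM:formula-dispersionD}, for every $\lambda\in\C\setminus i[-1,1]$ we have
\[
D_\zeta(\lambda)=\frac{(1-2\pi\zeta)\sqrt{1+\tfrac{1}{\lambda^2}}+2\pi\zeta}{\sqrt{1+\tfrac{1}{\lambda^2}}},
\]
where $\sqrt{\cdot}$ is the principal square root, holomorphic with positive real part on $\C\setminus\R_-$. For $\lambda\in\C\setminus i[-1,1]$ one has $i/\lambda\in\C\setminus\bigl((-\infty,-1]\cup[1,+\infty)\bigr)$, hence $1+\tfrac1{\lambda^2}=1-(i/\lambda)^2\in\C\setminus\R_-$, so $\sqrt{1+\tfrac1{\lambda^2}}$ is well-defined, nonzero and holomorphic there; in particular the denominator $\sqrt{1+\tfrac1{\lambda^2}}$ of $D_\zeta$ never vanishes. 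By Proposition~\ref{Prop-instalinear-inviscid}, since $\zeta<\frac{1}{2\pi}$, the numerator $(1-2\pi\zeta)\sqrt{1+\tfrac1{\lambda^2}}+2\pi\zeta$ has no zero on $\C\setminus i[-1,1]$ either: indeed $D_\zeta$ has no zero there, and $D_\zeta$ vanishes exactly when its numerator does. Consequently $1/D_\zeta$ is well-defined and holomorphic on $\C\setminus i[-1,1]$, and taking the reciprocal of the displayed formula yields
\[
\frac{1}{D_\zeta(\lambda)}=\frac{\sqrt{1+\tfrac{1}{\lambda^2}}}{(1-2\pi\zeta)\sqrt{1+\tfrac{1}{\lambda^2}}+2\pi\zeta},
\qquad \lambda\in\C\setminus i[-1,1],
\]
which is \eqref{eq:formula-1/D}.

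For the asymptotics, fix the branch of $\sqrt{\cdot}$ as above and write $w=1/\lambda^2$, so that $w\to 0$ as $|\lambda|\to+\infty$ (with $1+w\in\C\setminus\R_-$ for $|\lambda|$ large). The principal square root is holomorphic near $1$ with $\sqrt{1+w}=1+\tfrac12 w+\mathcal{O}(w^2)$, hence
\[
\sqrt{1+\tfrac{1}{\lambda^2}}=1+\mathcal{O}(|\lambda|^{-2}),\qquad |\lambda|\to+\infty.
\]
Substituting into \eqref{eq:formula-1/D}, the numerator is $1+\mathcal{O}(|\lambda|^{-2})$ and the denominator is $(1-2\pi\zeta)+2\pi\zeta+\mathcal{O}(|\lambda|^{-2})=1+\mathcal{O}(|\lambda|^{-2})$, which is bounded away from $0$ for $|\lambda|$ large. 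Therefore
\[
\frac{1}{D_\zeta(\lambda)}=\frac{1+\mathcal{O}(|\lambda|^{-2})}{1+\mathcal{O}(|\lambda|^{-2})}=1+\mathcal{O}(|\lambda|^{-2}),\qquad |\lambda|\to+\infty,
\]
which is \eqref{eq:asymptotics-1/D}. This completes the proof.
\end{proof}

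The only point requiring a little care — and the nearest thing to an obstacle — is justifying that the numerator of $D_\zeta$ has no zero on $\C\setminus i[-1,1]$ when $\zeta<\frac{1}{2\pi}$ (so that $1/D_\zeta$ is genuinely holomorphic there and not merely meromorphic); this is immediate from Proposition~\ref{Prop-instalinear-inviscid}, together with the fact that the denominator $\sqrt{1+\lambda^{-2}}$ of $D_\zeta$ is itself non-vanishing and holomorphic on that domain. Everything else is a direct manipulation of the explicit formula \eqref{def:rescaledDispersionRelation} and the Taylor expansion of the principal square root at $1$.
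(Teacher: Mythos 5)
Your proof is correct and follows essentially the same route as the paper: obtain the closed form from Lemma~\ref{LM:formula-dispersionD}, use Proposition~\ref{Prop-instalinear-inviscid} to rule out zeros when $\zeta<\frac{1}{2\pi}$, and Taylor-expand the principal square root at infinity. The only differences are cosmetic refinements (you make explicit that the numerator of $D_\zeta$ is non-vanishing so that $1/D_\zeta$ is genuinely holomorphic rather than merely meromorphic, and you substitute $w=1/\lambda^2$ rather than writing the expansion with $|\lambda|^{-2}$ in the coefficients), which slightly tighten but do not alter the argument.
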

\begin{proof}
    For the first point, let us recall from Lemma \ref{LM:formula-dispersionD} that the following formula holds
for $\lambda \in \C \setminus i[-1, 1]$
\begin{align*}
D_\zeta(\lambda)=
\frac{(1-2\pi \zeta)\sqrt{1+\frac{1}{\lambda^2}}+2\pi \zeta}{\sqrt{1+\frac{1}{\lambda^2}}},
\end{align*}
and that the former expression does not vanish on $\C \setminus i[-1, 1]$ in view of Proposition \ref{Prop-instalinear-inviscid}, when $ \zeta<\frac{1}{2\pi}$. Furthermore, since $\lambda \mapsto \sqrt{1+\frac{1}{\lambda^2}}$ is holomorphic on $\C \setminus i[-1, 1]$, we infer the same for $D_\zeta$ and eventually for $1/D_\zeta$. The claimed formula follows. For the second point, we perform a Taylor expansion when $\vert \lambda \vert \rightarrow +\infty$ to obtain 
\begin{align*}
 \frac{1}{D_\zeta(\lambda)} 
 %& = \frac{v(\phi)\sqrt{1 + \frac{v(\phi)^2}{z^2}}}{-\phi v'(\phi) + \partial_\rho (\rho v)_{\mid \rho=\phi} \sqrt{1 + \frac{(v(\phi)^2}{z^2}}}
 &= \frac{1+\frac{1}{2 \vert \lambda\vert^2}+\mathcal{O}(\vert \lambda\vert^{-4})}{2\pi \zeta+ (1-2\pi \zeta) (1+\frac{1}{2\vert \lambda \vert ^2}+\mathcal{O}(\vert \lambda\vert^{-4}))}
 %&= \frac{v(\phi)}{-\phi v'(\phi)+\partial_\rho (\rho v)_{\mid \rho=\phi}} \frac{1+\frac{v(\phi)}{2z^2}+O(\lambda^{-4})}{1+\frac{\partial_\rho (\rho v)_{\mid \rho=\phi}}{2 \vert z \vert^2}+O(\vert z\vert ^{-4})}\\
 = \frac{1+\frac{1}{2\vert \lambda\vert^2}+\mathcal{O}(\vert \lambda \vert^{-4})}{1+\frac{\frac{1}{2}-\pi \zeta} {\vert \lambda \vert^2}+\mathcal{O}(\vert \lambda\vert ^{-4})}
\end{align*}
and therefore
\begin{align*}
    \frac{1}{D_\zeta(\lambda)}=1+\mathcal{O}(\vert \lambda \vert^{-2} ), \qquad \vert \lambda \vert \rightarrow + \infty.
\end{align*}
This concludes the proof.
\end{proof}
We write the relation $D_\zeta(\lambda) \mathcal{L}[\rho](\lambda)=F(\lambda)$ under the form 
\begin{align} \label{Laplace_Volterra}
    \mathcal{L}[\rho](\lambda)=F(\lambda) + \left( \frac{1}{D_\zeta(\lambda)} - 1 \right) F(\lambda). 
\end{align}
From Lemma \ref{LM:prop-GreenKernel}, we know that $\left( \frac{1}{D_\zeta(\lambda)} - 1 \right) = O(|\lambda|^{-2})$ at infinity. It is also easily seen that $F(\lambda) = O(|\lambda|^{-1})$ at infinity. It follows that the inverse Laplace transforms 
\begin{align}\label{def:Green-Kernel1}
    G(t)\vcentcolon=\frac{1}{2\pi i}\int_{\Re(\lambda)=r_0}
\e^{\lambda t}\,\left( \frac{1}{D_\zeta(\lambda)} - 1 \right) \,\dd\lambda,
\end{align}
and 
$$ S(t) \vcentcolon=  \frac{1}{2\pi i}\int_{\Re(\lambda)=r_0}
\e^{\lambda t}\, F(\lambda) \,\dd\lambda,   $$
are well-defined for any $r_0  > C_\zeta$ and this definition does not depend on  $r_0>C_\zeta$ by standard holomorphy property. Note that the integral defining $F$ is only semi-convergent, and could be expressed in terms of the inverse Fourier transform of the $L^2$ function $\xi \rightarrow F(r_0+i\xi)$. Moreover, relation \eqref{Laplace_Volterra} yields 
\begin{align}\label{convol-LandauDamping1}
\rho(t)
= S(t) + \int_0^t G(t-\tau) S(\tau) \, \mathrm{d}\tau.
\end{align} 
The point is now to get some information on the decay of the source $S$ and of the Green kernel $G$. First, $F$ is easily recognized to be  the integral in $\theta$ of the Laplace transform of  $t \rightarrow \e^{- i \sin(\theta)t} f^{\mathrm{in}}(\theta)$ (solution of free transport), so that its inverse Laplace transform is 
$$ S(t)=\int_0^{2\pi}  \e^{- i \sin(\theta)t} f^{\mathrm{in}}(\theta)\mathrm{d}\theta.$$ 
We recognize a simple oscillatory integral, from which the decay of $S(t)$ can be easily inferred thanks to the standard one–dimensional stationary phase principle (see e.g. \cite{SteinShakarchi}*{Chapter 8}): the phase $\theta \mapsto t\sin(\theta)$ has indeed two critical points at $\theta=\pi/2$ and $\theta=3\pi/2$, which are non-degenerate.  The polynomial decay in time of $S(t)$ is recorded in the following simple lemma. 
\begin{Lem}\label{LM:inviscid-decay-FREETRANSPORT}
There exists $C>0$ such that there holds
   \begin{align*}
    \vert S(t) \vert \leq \frac{C}{\sqrt{1+ t}} \Vert f^{\mathrm{in}} \Vert_{H^1_\theta}, \qquad t>0.
\end{align*} 
\end{Lem}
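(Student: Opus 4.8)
The plan is to estimate the oscillatory integral
\[
S(t)=\int_0^{2\pi}\e^{-i t\sin\theta}f^{\mathrm{in}}(\theta)\,\dd\theta
\]
by the standard one-dimensional stationary phase method. First I would dispose of the trivial regime $0<t\le 1$: there the claimed bound is immediate since $|S(t)|\le \|f^{\mathrm{in}}\|_{L^1_\theta}\lesssim\|f^{\mathrm{in}}\|_{H^1_\theta}$ (using $2\pi$-periodicity and Cauchy--Schwarz on $\T$), so the $\sqrt{1+t}$ in the denominator only matters when $t\ge 1$, and it then suffices to prove $|S(t)|\lesssim t^{-1/2}\|f^{\mathrm{in}}\|_{H^1_\theta}$ for $t\ge 1$.

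For $t\ge 1$, the phase $\varphi(\theta)=-\sin\theta$ has exactly two critical points on $\T$, at $\theta=\pi/2$ and $\theta=3\pi/2$, and both are nondegenerate since $\varphi''(\pi/2)=1\neq 0$ and $\varphi''(3\pi/2)=-1\neq 0$. The plan is to use a smooth partition of unity $1=\chi_1(\theta)+\chi_2(\theta)+\psi(\theta)$ on $\T$, where $\chi_1,\chi_2$ are supported in small fixed neighborhoods of $\pi/2$ and $3\pi/2$ respectively, and $\psi$ is supported away from both critical points. On $\operatorname{supp}\psi$ one has $|\varphi'(\theta)|=|\cos\theta|\ge c_0>0$, so a single integration by parts, writing $\e^{it\varphi}=\frac{1}{it\varphi'}\partial_\theta(\e^{it\varphi})$ and moving the derivative onto $\psi f^{\mathrm{in}}/\varphi'$, gives a contribution bounded by $\frac{C}{t}\big(\|f^{\mathrm{in}}\|_{L^1_\theta}+\|(f^{\mathrm{in}})'\|_{L^1_\theta}\big)\lesssim t^{-1}\|f^{\mathrm{in}}\|_{H^1_\theta}$, which decays faster than needed. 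For the two critical-point pieces, the plan is to invoke the classical van der Corput / stationary phase estimate in the form valid for $H^1$ (rather than smooth) amplitudes: since $|\varphi''|\ge 1/2$ on the supports of $\chi_j$, the operator norm bound
\[
\Big|\int_\T \e^{it\varphi(\theta)}\,\chi_j(\theta)\,a(\theta)\,\dd\theta\Big|\le \frac{C}{\sqrt{t}}\,\|a\|_{H^1_\theta}
\]
holds (one can either quote \cite{SteinShakarchi}*{Chapter 8} after an approximation argument, or prove it directly: split $\{|\theta-\theta_j|\le t^{-1/2}\}$, where one estimates crudely by $t^{-1/2}\|\chi_j a\|_{L^\infty}\lesssim t^{-1/2}\|a\|_{H^1}$ via the Sobolev embedding $H^1(\T)\hookrightarrow L^\infty(\T)$, against $\{|\theta-\theta_j|> t^{-1/2}\}$, where one integrates by parts once against the nonvanishing $\varphi'$ — note $|\varphi'(\theta)|\gtrsim|\theta-\theta_j|$ near $\theta_j$ — and bounds the resulting boundary and bulk terms, the bulk term producing $\int_{t^{-1/2}}^{c}\frac{\dd r}{t r^2}\sim t^{-1/2}$ after using $|\varphi''|\le C$ and Cauchy--Schwarz to handle the $a'$ term). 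Summing the three contributions yields $|S(t)|\lesssim t^{-1/2}\|f^{\mathrm{in}}\|_{H^1_\theta}$ for $t\ge 1$, which together with the easy small-time bound gives the lemma.

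The only genuinely delicate point is that the amplitude $f^{\mathrm{in}}$ is merely in $H^1_\theta$, not smooth, so one cannot integrate by parts twice; the fix is the observation above that $H^1(\T)\hookrightarrow L^\infty(\T)$ and that a single integration by parts combined with the scale $t^{-1/2}$ for the ``resonant window'' already suffices to close the estimate with exactly one derivative on $f^{\mathrm{in}}$. Everything else is routine bookkeeping with the fixed partition of unity and the explicit lower bounds $|\cos\theta|\gtrsim|\theta-\theta_j|$, $|\sin\theta|\gtrsim 1$ near the critical points.
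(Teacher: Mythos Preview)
Your proposal is correct and follows exactly the approach the paper indicates: the paper does not give a detailed proof but simply records the lemma as a direct consequence of the one-dimensional stationary phase principle (citing \cite{SteinShakarchi}*{Chapter 8}), noting that the phase $\theta\mapsto -t\sin\theta$ has two nondegenerate critical points at $\pi/2$ and $3\pi/2$. Your write-up supplies the details the paper omits, in particular the care needed because the amplitude $f^{\mathrm{in}}$ is only $H^1_\theta$ rather than smooth; the split into a resonant window of width $t^{-1/2}$ (handled via $H^1(\T)\hookrightarrow L^\infty(\T)$) and a single integration by parts outside is the standard and correct way to close the estimate with exactly one derivative.
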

It therefore remains to understand the decay in time of the Green kernel $G(t)$ defined in \eqref{def:Green-Kernel1}. We claim that is enough to prove that, if $\zeta<\frac{1}{2\pi}$, then
\begin{align}\label{eq:target-decayGreen}
    \left\vert G(t) \right\vert \lesssim \frac{1}{(1+ t)^{3/2}}, \qquad t >0,
\end{align}
to conclude the proof of Theorem \ref{thm-stability-reduced} when $\nu =0$. Indeed, if this holds, we can combine it with the decay estimate for $S(t)$ provided by Lemma \ref{LM:inviscid-decay-FREETRANSPORT} and get for all $t>0$
\begin{align*}
    \left\vert \int_0^t G(t-\tau) S(\tau) \, \mathrm{d}\tau\right\vert \lesssim \Vert \rho^{\mathrm{in}} \Vert_{H^1_\theta} \int_0^t \frac{1}{(1+(t-\tau))^{3/2}}\frac{1}{(1+ \tau)^{1/2}} \, \mathrm{d}\tau.
\end{align*}
By the following splitting, we can infer
\begin{align*}
    &\int_0^t \frac{1}{(1+(t-\tau))^{3/2}}\frac{1}{(1+ \tau)^{1/2}} \, \mathrm{d}\tau \\
    & \qquad\leq  \int_0^{t/2} (1+t-\tau)^{-3/2} (1+\tau)^{-1/2} \mathrm{d}s+\int_{t/2}^{t} (1+t-\tau)^{-3/2} (1+\tau)^{-1/2} \mathrm{d}\tau \\
    &\qquad \lesssim (1+t)^{-3/2}\int_0^{t/2} (1+\tau)^{-1/2} \mathrm{d}\tau+(1+t)^{-1/2}\int_{t/2}^{t} (1+t-\tau)^{-3/2} \mathrm{d}\tau \\
    &\qquad\lesssim (1+t)^{-3/2} (1+t)^{1/2} +(1+t)^{-1/2}\int_{0}^{+\infty} (1+r)^{-3/2} \mathrm{d}r \\
    & \qquad\lesssim (1+ t)^{-1/2}.
\end{align*}
All in all, thanks to \eqref{convol-LandauDamping1}, this would prove that 
\begin{align*}
    \left\vert \rho(t) \right\vert \lesssim \frac{\Vert f^{\mathrm{in}} \Vert_{H^1_\theta}}{(1+ t)^{1/2}}, \qquad t >0,
\end{align*}
and therefore would achieve the proof of Theorem \ref{thm-stability-reduced} in the case $\nu=0$.

\medskip

We will now focus on obtaining the decay estimate \eqref{eq:target-decayGreen} for $G(t)$. It relies on an explicit formula, obtained by deformation of the contour of integration. 
\begin{Lem}\label{LM:formulaGreen}
     If $\zeta < \frac{1}{2\pi}$ then for all $t>0$, $G(t)$ is given by the formula \eqref{eq:formulaGreen}. 
\end{Lem}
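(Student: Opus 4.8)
The plan is to start from the definition
\[
G(t)=\frac{1}{2\pi i}\int_{\Re(\lambda)=r_0}
\e^{\lambda t}\,\Big( \frac{1}{D_\zeta(\lambda)} - 1 \Big) \,\dd\lambda,
\]
valid for any $r_0>C_\zeta$, and to deform the vertical contour leftward onto the branch cut $i[-1,1]$. Recall from Lemma~\ref{LM:prop-GreenKernel} that $\lambda\mapsto 1/D_\zeta(\lambda)-1$ is holomorphic on $\C\setminus i[-1,1]$ (using $\zeta<\tfrac1{2\pi}$, so there are no poles) and decays like $\mathcal{O}(|\lambda|^{-2})$ at infinity. Hence I would first justify that the integral over $\{\Re\lambda=r_0\}$ equals the integral over a contour that hugs the segment $i[-1,1]$: close the contour with two large half-circles in $\{\Re\lambda<r_0\}$ and let the radius go to infinity; the $\mathcal{O}(|\lambda|^{-2})$ decay kills the arc contributions, and no residues are picked up. The remaining contribution is the jump of $1/D_\zeta$ across the cut, i.e.\ the integral of the difference of the boundary values from the right ($\Re\lambda\to 0^+$) and the left ($\Re\lambda\to 0^-$) along $\lambda=is$, $s\in(-1,1)$.

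The second step is to compute that jump explicitly. Writing $\lambda=is+\eps$ (resp.\ $is-\eps$) with $s\in(-1,1)$ and $\eps\downarrow 0$, one has $1+\frac{1}{\lambda^2}=1-\frac1{s^2}+o(1)=-\frac{1-s^2}{s^2}+o(1)$, which is a negative real number, lying exactly on the branch cut $\R_-$ of the principal square root. Approaching from $\Re\lambda>0$ versus $\Re\lambda<0$ selects the two determinations $\pm i\sqrt{1-s^2}/|s|$ (the sign depending on $\mathrm{sgn}(s)$ and on the side). Plugging these two values into the explicit formula
\[
\frac{1}{D_\zeta(\lambda)}=\frac{\sqrt{1+\frac1{\lambda^2}}}{(1-2\pi\zeta)\sqrt{1+\frac1{\lambda^2}}+2\pi\zeta}
\]
and subtracting, the ``$-1$'' cancels and a short rationalization (multiplying numerator and denominator by the conjugate) gives a jump of the form $\mathrm{const}\cdot\sqrt{1-s^2}\,\dfrac{s}{(4\pi\zeta-1)s^2+(1-2\pi\zeta)^2}$. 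Keeping track of the orientation of the contour (downward on the right side of the cut, upward on the left, or vice versa) and of the prefactor $\frac{1}{2\pi i}$ fixes the constant to be $4\pi\zeta$, and the $\e^{\lambda t}$ factor becomes $\e^{ist}$, yielding exactly \eqref{eq:formulaGreen}.

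The main obstacle I anticipate is bookkeeping the determinations of $\sqrt{1+1/\lambda^2}$ on the two sides of the slit $i[-1,1]$, together with the orientation of the deformed contour, so as to land on the correct overall sign and constant $4\pi\zeta$ rather than its negative or half of it; a clean way is to parametrize $\lambda=is$ with $s$ moving, say, from $-1$ to $1$ and to note that $s\mapsto 1+1/(is)^2=1-1/s^2$ crosses $\R_-$, with the sign of $\Im\lambda'$ determining on which lip of the cut one sits. A secondary point requiring a little care is the endpoint behaviour at $s=\pm1$: there $\sqrt{1-s^2}\to 0$ while the denominator $(4\pi\zeta-1)s^2+(1-2\pi\zeta)^2$ stays bounded away from $0$ (indeed at $s^2=1$ it equals $4\pi\zeta-1+(1-2\pi\zeta)^2=(2\pi\zeta)^2-2\pi\zeta+1-2\pi\zeta\cdot\ldots$; in any case one checks it is positive for $\zeta<\tfrac1{2\pi}$), so the integrand in \eqref{eq:formulaGreen} is integrable and the contour deformation near the endpoints is harmless — the arcs of radius $\eps$ around $\pm i$ contribute $o(1)$ because the integrand is bounded there. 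Once these sign and integrability checks are in place, the identity \eqref{eq:formulaGreen} follows, and the subsequent stationary-phase/boundary-layer analysis of this oscillatory integral (giving the $(1+t)^{-3/2}$ bound \eqref{eq:target-decayGreen}) is a separate matter.
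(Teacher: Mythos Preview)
Your proposal is correct and follows essentially the same route as the paper's own proof: deform the Bromwich contour onto a contour encircling the branch cut $i[-1,1]$, use the $\mathcal{O}(|\lambda|^{-2})$ decay from Lemma~\ref{LM:prop-GreenKernel} to discard the large arcs, show the small arcs around $\pm i$ contribute $o(1)$, and then compute the jump of $1/D_\zeta$ across the cut via the explicit formula \eqref{eq:formula-1/D} to arrive at \eqref{eq:formulaGreen}. The paper carries this out with a rectangular outer contour rather than half-circles and isolates the uniform bound on $|1/D_\zeta(i\tau\pm\eps)|$ (needed to pass the limit $\eps\to 0$ under the integral) into a separate appendix lemma, but the structure and all the key ingredients are exactly those you identified.
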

\begin{proof}
Our proof is in the spirit of the so-called limiting absorption principle (see e.g. \cites{Jia, CZN1, CZN2, HadzicSchrecker}): the integrand in \eqref{def:Green-Kernel1}  being quite explicit in our case, we will provide a direct treatment by an explicit contour deformation.

Fix $t>0$ and $r_0>0$. For $R>0$ large enough and $\varepsilon>0$ small enough, we consider two contours (see Figure \ref{Figure-contours} below): an exterior closed contour $Q_{r_0, R}$ (oriented counterclockwise) defined as the union of
 \begin{itemize}
     \item a vertical segment $V_{r_0, R}=\{r_0+i\tau:|\tau|\le R\}$, which, for $R \rightarrow + \infty$, yields the so-called Bromwich contour for the inverse Laplace transform defining $G(t)$;
     \item  two horizontal segments from $\gamma_0\pm iR$ to $-R\pm iR$, denoted as $\Gamma_{\text{up},R}$ and $\Gamma_{\text{down},R}$;
     \item a left vertical segment $\Gamma_{\text{left},R}=\{-R+i\tau:|\tau|\le R\}$;
 \end{itemize}
 and an inner closed contour $\mathscr{C}_{\eps}$ (oriented counterclockwise) defined as the union of
 \begin{itemize}
     \item  two horizontal segments from $\pm \eps - i$ to $\pm \eps +i$, denoted as $\gamma_{\eps}^+$ and $\gamma_{\eps}^-$;
     \item two caps around $\pm i$ of radius $\eps$, denoted as $c_{\mathrm{up},\eps}$ and $c_{\mathrm{down},\eps}$ 
 \end{itemize}
 
 \begin{figure}[h]
 \centering
 % in the preamble: \usetikzlibrary{decorations.markings}
 \begin{tikzpicture}[scale=1.2]

  % Arrow-on-edge style
  \tikzset{
    ->-/.style={decoration={markings,
                 mark=at position 0.5 with {\arrow{>}}},
                 postaction={decorate}}
  }

  % Parameters
     % position of Re = ±gamma_0
  \def\R{3.3}       % height R
  \def\a{1.8}       % half-length of the cut
  \def\eps{1}     % epsilon = radius of small semicircles

  % Axes
  \draw[->] (-4,0) -- (4,0) node[anchor=west] {$\Re(\lambda)$};
  \draw[->] (0,-4) -- (0,4) node[anchor=south] {$\Im(\lambda)$};

  % Vertical lines Re = ±gamma_0
  \draw[dashed] (2.5,-4) -- (2.5,4);
  \draw[dashed] (-2.5,-4) -- (-2.5,4);

  \node[above] at (2.8,0) {$r_0$};
  \node[below] at (-2.9,0) {$-R$};

  % Labels for big contour pieces
  \node[right]  at (2.5,2.6) {$V_{r_0,R}$};
  \node[left]   at (-2.5,2.6) {$\Gamma_{\text{left},R}$};
  \node[left]   at (-0.7,3.55) {$\Gamma_{\text{up},R}$};
  \node[left]   at (-0.6,-3.6) {$\Gamma_{\text{down},R}$};

  % Branch cut on imaginary axis: [-i a, i a]
  \draw[very thick,red] (0,-\a) -- (0,\a);
  \filldraw[red] (0,\a) circle (1.3pt);
  \filldraw[red] (0,-\a) circle (1.3pt);

  \node[below,red] at (-0.4,\a) { $i$};
  \node[above,red] at (-0.45,-\a) { $-i$};

  % Labels for ± iR
  \node[right] at (0,\R+0.3) {$iR$};
  \node[right] at (0,-\R-0.2) {$-iR$};

  % Key points of big rectangle contour
  \coordinate (GtopR) at (2.5,\R);
  \coordinate (GbotR) at (2.5,-\R);
  \coordinate (GtopL) at (-2.5,\R);
  \coordinate (GbotL) at (-2.5,-\R);

  % Big rectangle contour (with mid-edge arrows)
  \draw[very thick,->-] (GbotR) -- (GtopR);
  \filldraw (GbotR) circle (1pt);
  \filldraw (GtopR) circle (1pt);

  \draw[very thick,->-] (GtopR) -- (GtopL);
  \filldraw (GtopL) circle (1pt);

  \draw[very thick,->-] (GtopL) -- (GbotL);
  \filldraw (GbotL) circle (1pt);

  \draw[very thick,->-] (GbotL) -- (GbotR);

  % Epsilon labels on real axis
  \node[below] at (\eps+0.2,0) {$\varepsilon$};
  \node[below] at (-\eps-0.3,0) {$-\varepsilon$};

  % Points at epsilon around the cut
  \coordinate (EtopR) at (\eps,\a);
  \coordinate (EbotR) at (\eps,-\a);
  \coordinate (EtopL) at (-\eps,\a);
  \coordinate (EbotL) at (-\eps,-\a);

  % Thin vertical contour around the cut (blue)
  \node[right, blue]  at (0.95,\a-0.6) {$\gamma_{\varepsilon}^+$};
  \node[right, blue]  at (-1.55,-\a+0.7) {$\gamma_{\varepsilon}^-$};
    \node[left, blue]  at (0,\a+1.15) {$c_{\mathrm{up},\varepsilon}$};
    \node[left, blue]  at (0,-\a-1.1) {$c_{\mathrm{down},\varepsilon}$};

  \draw[thick,blue,->-] (EbotR) -- (EtopR);
  \draw[thick,blue,->-] (EtopL) -- (EbotL);

  \filldraw[blue] (EtopR) circle (1pt);
  \filldraw[blue] (EbotR) circle (1pt);
  \filldraw[blue] (EtopL) circle (1pt);
  \filldraw[blue] (EbotL) circle (1pt);

  % Small semicircles of radius epsilon, centered on the cut endpoints
  % Upper semicircle around (0,a)
  \draw[blue,thick,->-]
    (\eps,\a) arc [start angle=0, end angle=180, radius=\eps];

  % Lower semicircle around (0,-a)
  \draw[blue,thick,->-]
    (-\eps,-\a) arc [start angle=180, end angle=360, radius=\eps];

  % Dashed radii from the cut endpoints to the semicircle endpoints (length = epsilon)
  \draw[dashed,blue] (0,\a) -- (\eps,\a);
  \draw[dashed,blue] (0,\a) -- (-\eps,\a);
  \draw[dashed,blue] (0,-\a) -- (\eps,-\a);
  \draw[dashed,blue] (0,-\a) -- (-\eps,-\a);

  % Extra dashed radius at 45° on the upper semicircle, with label ε
  % Endpoint at angle 45° relative to center (0,a): (eps/sqrt2, a + eps/sqrt2)
  \coordinate (Rad45End) at ({\eps/sqrt(2)}, {\a + \eps/sqrt(2)});
  \draw[dashed,blue] (0,\a) -- (Rad45End);
  \node[above] at (Rad45End) {$\varepsilon$};

  \coordinate (Rad45End) at ({\eps/sqrt(2)}, {-\a - \eps/sqrt(2)});
  \draw[dashed,blue] (0,-\a) -- (Rad45End);
  \node[below] at (Rad45End) {$\varepsilon$};

 \end{tikzpicture}
 \caption{Contours $Q_{r_0, R}$ and $\mathscr{C}_\eps$}
\label{Figure-contours}
 \end{figure}

Let $\mathcal{R}_\zeta(\lambda) \vcentcolon= \frac{1}{D_\zeta(\lambda)} - 1$.   In the "shell" delimited by $Q_{r_0, R}$ and $\mathscr{C}_\eps$, the integrand $\lambda \mapsto \e^{\lambda t} \mathcal{R}_\zeta(\lambda)$ defining $ G(t)$ is holomorphic (because we avoid the cut $i[-1,1]$), so by Cauchy's formula we get 
 \begin{align}\label{eq:Cauchy-contour-gen}
     \int_{Q_{r_0,R}} \e^{\lambda t}\mathcal{R}_\zeta(\lambda) \,\mathrm{d}\lambda= \int_{\mathscr{C}_\eps}\e^{\lambda t}\mathcal{R}_\zeta(\lambda) \,\mathrm{d}\lambda.
 \end{align}
 The goal is to take the limit as $R \rightarrow + \infty$ on the l.h.s, while taking the limit $\eps \rightarrow 0$ on the r.h.s to obtain the jump through the cut, and then the desired formula.

\medskip

\noindent \textbf{Step 1.} 
The following holds for any $t>0$ and $\eps>0$
\begin{align}\label{eq:Step1-Contour}
    G(t)= \lim_{R \rightarrow + \infty} \int_{Q_{r_0,R}} \e^{\lambda t} \mathcal{R}_\zeta(\lambda) \,\mathrm{d}\lambda= \int_{\mathscr{C}_\eps}\e^{\lambda t}\mathcal{R}_\zeta(\lambda) \,\mathrm{d}\lambda.
\end{align}
Let us prove this fact. The second equality follows directly from \eqref{eq:Cauchy-contour-gen} since the integral on the right-hand side does not depend on $R$. To obtain the first equality, we use the decomposition 
\begin{align*}
    \int_{Q_{r_0,R}} \e^{\lambda t} \mathcal{R}_\zeta(\lambda) \,\mathrm{d}\lambda=\left\lbrace \int_{V_{r_0,R}}+\int_{\Gamma_{\text{up},R}}+\int_{\Gamma_{\text{left},R}}+\int_{\Gamma_{\text{down},R}} \right\rbrace \e^{\lambda t} \mathcal{R}_\zeta(\lambda) \,\mathrm{d}\lambda.
\end{align*}
and we need to prove that the contribution of the three last terms vanishes as $R\rightarrow +\infty$, because we already know that the first one yields by definition
\begin{align*}
    \int_{V_{r_0,R}} \e^{\lambda t} \mathcal{R}_\zeta(\lambda) \,\mathrm{d}\lambda \underset{R \rightarrow +\infty}{\longrightarrow} G(t).
\end{align*}
The role of $\Gamma_{\text{up},R}$ and $\Gamma_{\text{down},R}$ being symmetric, let us only detail the case of $\Gamma_{\text{down},R}$. We have for $R$ large enough and in view of the asymptotics \eqref{eq:asymptotics-1/D}
\begin{align*}
    \left\vert \int_{\Gamma_{\text{down},R}} \e^{\lambda t} \mathcal{R}_\zeta(\lambda) \,\mathrm{d}\lambda\right\vert = \left\vert \int_{-R}^{r_0} \e^{(x-iR) t} \mathcal{R}_\zeta(x-iR) \, \mathrm{d}x \right\vert \lesssim  \int_{-R}^{r_0} \e^{x t} \frac{1}{|x-iR|^2} \, \mathrm{d}x   \lesssim \frac{1}{t R^2} \e^{t r_0} \underset{R \rightarrow +\infty}{\longrightarrow} 0.
\end{align*}
For the last piece from $\Gamma_{\text{left},R}$, we also have for $R$ large enough
\begin{align*}
    \left\vert \int_{\Gamma_{\text{left},R}} \e^{\lambda t} \mathcal{R}_\zeta(\lambda) \,\mathrm{d}\lambda\right\vert = \left\vert \int_{-R}^{R} \e^{(-R+iy) t} \mathcal{R}_\zeta(-R-iy) \, \mathrm{d}y \right\vert \lesssim \e^{-Rt} \int_{-\infty}^{+\infty} \frac{1}{1+y^2} \, \mathrm{d}y \underset{R \rightarrow +\infty}{\longrightarrow} 0.
\end{align*}
We therefore obtain \eqref{eq:Step1-Contour}.
\medskip

\noindent \textbf{Step 2.} 
The following holds for any $t>0$
\begin{align}\label{eq:Step2-Contour}
    G(t)=i\int_{-1}^{1} \lim_{\eps \rightarrow 0^+} \left( \e^{(i\tau+\eps)t}\frac{1}{D_\zeta(i\tau+\eps)}-\e^{(i\tau-\eps)t}\frac{1}{D_\zeta(i\tau-\eps)} \, \mathrm{d}\tau \right) \, \mathrm{d}\tau
\end{align}
In view of \eqref{eq:Step1-Contour}, it is enough to pass to the limit when $\eps \rightarrow 0$ in the right-hand side of this identity to obtain \eqref{eq:Step2-Contour}. By using first Cauchy's formula, we have 
\begin{align*}
    \int_{\mathscr{C}_\eps} \e^{\lambda t} \mathcal{R}_\zeta(\lambda) \,\mathrm{d}\lambda=\int_{\mathscr{C}_\eps} \e^{\lambda t} \frac{1}{D_\zeta(\lambda)} \,\mathrm{d}\lambda =\left\lbrace \int_{\gamma_\eps^+}+\int_{\gamma_\eps^-}+\int_{c_{\mathrm{up},\eps}}+\int_{c_{\mathrm{down},\eps}} \right\rbrace \e^{\lambda t} \frac{1}{D_\zeta(\lambda)}\,\mathrm{d}\lambda,
\end{align*}
and we will prove that the contribution of the two last terms vanishes when $\eps \rightarrow 0$, while the two first ones will give the desired jump through the cut $i[-1,1]$ as $\eps \rightarrow 0$. To this end, let us recall the formula 
\begin{align*}
    \frac{1}{D_\zeta(\lambda)}=\frac{\sqrt{1+\frac{1}{\lambda^2}}}{(1-2\pi \zeta)\sqrt{1+\frac{1}{\lambda^2}}+2\pi \zeta}, \qquad \lambda \in \C \setminus i[-1, 1],
\end{align*}
from \eqref{eq:formula-1/D}. First, if $\lambda = i\tau \pm \eps$, with $0 < \eps \ll 1$ and fixed $\tau \neq 0$, we get 
\begin{align*}
    1 + \frac{1}{(i\tau \pm \eps)^2} = 1 - \frac{1}{\tau^2} \left( 1 \pm \frac{2i\eps}{\tau} + O(\eps^2)\right),  
\end{align*}
from which we infer that 
\begin{align}\label{pikachu}
 \lim_{\eps \rightarrow 0^+}   \sqrt{1 + \frac{1}{(i\tau \pm \eps)^2}} & =  \left\{
    \begin{array}{ll}
        \sqrt{1 - \frac{1}{\tau^2}} & \quad \text{if } |\tau| \geq 1, \\
        \mp i \text{sign($\tau$)} \sqrt{\frac{1}{\tau^2} - 1} &  \quad \text{if } 0<|\tau|  \leq    1.
    \end{array}
\right.
\end{align}
In particular, it proves that the function $\lambda \mapsto \frac{1}{D_\zeta(\lambda)}$ is bounded by a constant $M_{\eps_0}>0$ on the half-disks delimited by $c_{\mathrm{up},\eps}$ and $c_{\mathrm{down},\eps}$ if $\eps<\eps_0$ is small enough. Hence, since we have 
\begin{align*}
    \int_{c_{\mathrm{up},\eps}} \e^{\lambda t} \frac{1}{D_\zeta(\lambda)} \,\mathrm{d}\lambda=\eps \int_0^{\pi} \e^{(i+\eps \e^{i\theta}) t} \frac{1}{D_\zeta(i+\eps \e^{i\theta})} \, \mathrm{d}\theta,
\end{align*}
we directly deduce that
\begin{align*}
    \int_{c_{\mathrm{up},\eps}} \e^{\lambda t} \frac{1}{D_\zeta(\lambda)} \,\mathrm{d}\lambda \underset{\eps \rightarrow 0}{\longrightarrow} 0.
\end{align*}
The same holds for the term with $c_{\mathrm{down},\eps}$. It remains to treat the jump through the cut. To do so, we write
\begin{align*}
    \left\lbrace \int_{\gamma_\eps^+}+\int_{\gamma_\eps^-} \right\rbrace \e^{\lambda t} \frac{1}{D_\zeta(\lambda)} \,\mathrm{d}\lambda  =i\int_{-1}^{1} \left( \e^{(i\tau+\eps)t}\frac{1}{D_\zeta(i\tau+\eps)}-\e^{(i\tau-\eps)t}\frac{1}{D_\zeta(i\tau-\eps)} \, \right) \, \mathrm{d}\tau.
\end{align*}
The goal is therefore to justify that one can interchange the integral and the limit when $\eps \rightarrow 0$, which will yield \eqref{eq:Step2-Contour}. We claim that there exists $\eps_0>0$ and $C_{\eps_0}>0$ such that for all $\eps \in (0,\eps_0)$, for all $\tau \in [-1, 1]$
\begin{align}\label{eq:Bound-DCT}
    \left\vert \frac{1}{D_\zeta(i \tau \pm\eps)} \right\vert \leq C_{\eps_0}.
\end{align}
To ease readability, we postpone the proof of this bound in Appendix \ref{Appendix-proofBOUND-DCT}. This is enough to apply the dominated convergence theorem when $\eps \rightarrow 0$, which yields \eqref{eq:Step2-Contour}. 

\medskip

\noindent \textbf{Step 3.} 
Let us eventually prove that the formula \eqref{eq:formulaGreen} holds true. By using the identity \eqref{eq:Step2-Contour} together with \eqref{pikachu}, we get
\begin{align*}
    G(t)
    &=i\int_{-1}^{1} \lim_{\eps \rightarrow 0^+} \left( \e^{(i\tau+\eps)t}\frac{1}{D_\zeta(i\tau+\eps)}-\e^{(i\tau-\eps)t}\frac{1}{D_\zeta(i\tau-\eps)} \, \mathrm{d}\tau \right) \, \mathrm{d}\tau \\
&=i \int_{-1}^{1} \e^{i \tau t} \left( \frac{-i \text{sign($\tau$)} \sqrt{\frac{1}{\tau^2} - 1}}{2\pi \zeta - (1-2\pi \zeta) i \text{sign($\tau$)}\sqrt{\frac{1}{\tau^2} - 1}} -  \frac{ i \text{sign($\tau$)} \sqrt{\frac{1}{\tau^2} - 1}}{2\pi \zeta + (1-2\pi \zeta) i \text{sign($\tau$)} \sqrt{\frac{1}{\tau^2} - 1}}  \right) \dd\tau.
\end{align*}
By simplifying the expression, we end up with
\begin{align*}
    G(t) & =  \int_{-1}^1 \e^{i s  t}  \sqrt{\frac{1}{s^2}-1} \, \frac{4 \pi \zeta \text{sign($s$)}}{(2\pi \zeta)^2 + (1-2\pi \zeta)^2 \left( \frac{1}{s^2}-1 \right)} \, \mathrm{d}s \\ 
    & = 4\pi \zeta \int_{-1}^1 \e^{i s  t} \sqrt{1-s^2} \frac{s}{(4\pi \zeta-1)s^2 + (1-2\pi \zeta)^2} \, \mathrm{d}s.
\end{align*}
This gives the desired formula for $G(t)$ and concludes the proof.
\end{proof}

We are now in position to prove the desired decay estimate \eqref{eq:target-decayGreen} for the Green's function $G(t)$.

\begin{proof}[Proof of \eqref{eq:target-decayGreen}]
    To prove the claim, we use the formula \eqref{eq:formulaGreen} from Lemma \ref{LM:formulaGreen} and decompose 
$$ G(t) = 4\pi \zeta \big( \mathcal{G}_{+}(t) - \mathcal{G}_{-}(t)\big),$$
with 
$$\mathcal{G}_{+}(t) \vcentcolon= \int_{0}^1 \e^{i s  t}  \frac{\sqrt{1-s^2} s}{(4\pi \zeta-1)s^2 + (1-2\pi \zeta)^2} \, \mathrm{d}s, $$
and $\mathcal{G}_{-}(t) \vcentcolon= \overline{\mathcal{G}_{+}(t)}$. To ease readability, we set
\begin{align*}
    \alpha_\zeta\vcentcolon=4\pi \zeta-1, \qquad \beta_\zeta\vcentcolon=(1-2\pi \zeta)^2,
\end{align*}
so that we have
$$\mathcal{G}_{+}(t)= \int_{0}^1 \e^{i s  t}  \frac{\sqrt{1-s^2} s}{\alpha_\zeta s^2 + \beta_\zeta} \, \mathrm{d}s, \qquad t>0.$$
We claim that the expression $s \mapsto Q_\zeta(s)=(\alpha_\zeta s^2 + \beta_\zeta)^{-1}$ is smooth on $[0,1]$, because the denominator does not vanish on this interval. If $\zeta \ge \tfrac{1}{4\pi}$, then $\alpha_\zeta \ge 0$, and for all $s\in[0,1]$ we have $\alpha_\zeta s^2 + \beta_\zeta \geq \beta_\zeta > 0$. If $0<\zeta<\tfrac{1}{4\pi}$, then $\alpha_\zeta < 0$ so for $s\in[0,1]$ we have $\alpha_\zeta s^2 + \beta_\zeta
        \geq \alpha_\zeta + \beta_\zeta.$
A direct computation gives
    \[
        \alpha_\zeta + \beta_\zeta
        = (4\pi\zeta - 1) + (1 - 4\pi\zeta + 4\pi^2\zeta^2)
        = 4\pi^2\zeta^2 > 0.
    \]
    Therefore $\alpha_\zeta s^2 + \beta_\zeta > 0$ for all $s\in[0,1]$ also in this case.

\medskip

We now need to distinguish between $0<  t\leq 1$ and $  t>1$.

\medskip

\noindent \textbf{Case $ t \leq 1$.} By the smoothness of the integrand on $[0,1$], we directly obtain the crude bound
\begin{align*}
  \vert \mathcal{G}_{+}(t) \vert \lesssim 1.
\end{align*}
Since $t \leq 1$, this yields the desired estimate 
\begin{align*}
  \vert \mathcal{G}_{+}(t) \vert \lesssim \frac{1}{(1+ t)^{3/2}}, \qquad t \le 1 .
\end{align*}
\noindent \textbf{Case $ t > 1$.}
It remains to prove the decay of $\mathcal{G}_{+}(t)$. By integrating by parts once, we find 
\begin{align*}
    \mathcal{G}_{+}(t) 
    &= \frac{1}{it} \int_0^1 \e^{i s t} \partial_s \left(  \frac{\sqrt{1-s^2} s}{\alpha_\zeta s^2 + \beta_\zeta} \right) \, \mathrm{d}s\\
 &=\frac{1}{it} \int_0^1 \e^{i s t} \left( \partial_s \sqrt{1-s^2} \right)   \frac{s}{\alpha_\zeta s^2 + \beta_\zeta} \, \mathrm{d}s + \frac{1}{it} \int_0^1 \e^{i s t}  \sqrt{1-s^2} \partial_s \left(  \frac{s}{\alpha_\zeta s^2 + \beta_\zeta}\right) \, \mathrm{d}s \\
 &= \mathrm{I}(t)+ \mathrm{II}(t).
\end{align*}
since the integrand vanishes at the endpoints of the interval.  The only term that prevents from integrating by parts once more is the first one, that is
\begin{align*}
 \mathrm{I}(t)=\frac{1}{it}\int_0^1 \e^{i s  t} \left( \partial_s \sqrt{1-s^2} \right)   \frac{s}{\alpha_\zeta s^2 + \beta_\zeta}  \, \mathrm{d}s =  \frac{1}{it}\int_0^1 \e^{i s t}  \frac{-1}{\sqrt{(1+s)(1-s) }} \frac{s^2}{\alpha_\zeta s^2 + \beta_\zeta}  \, \mathrm{d}s 
\end{align*}
because of the singularity near $s=1$. For $\delta \in (0,1)$ being a free parameter, we then write 
$$ \mathrm{I}(t) = \frac{1}{it}\left\lbrace \int_0^{1-\delta} + \int_{1-\delta}^1  \right\rbrace \e^{i s t}  \frac{-1}{\sqrt{(1+s)(1-s) }} \frac{s^2}{\alpha_\zeta s^2 + \beta_\zeta}  \, \mathrm{d}s =\mathrm{I}_\delta^{\mathrm{reg}}(t)+\mathrm{I}_\delta^{\mathrm{sing}}(t) $$
and optimize in $\delta$. More precisely, using the bound
\begin{align*}
    \left\vert \frac{1}{\sqrt{(1+s)(1-s)}} \frac{s^2}{\alpha_\zeta s^2 + \beta_\zeta}  \right\vert \lesssim \frac{1}{\sqrt{1-s}}, \qquad s \in [0,1],
\end{align*}
we have 
\begin{align*}
    \vert \mathrm{I}_\delta^{\mathrm{sing}}(t) \vert \lesssim \frac{1}{t}\left|\int_{1-\delta}^1 \frac{1}{\sqrt{1-s}} \, \mathrm{d}s \right| \leq \frac{\delta^{1/2}}{t} .
\end{align*}
For $\mathrm{I}_\delta^{\mathrm{reg}}(t)$, we can safely integrate by parts and get
\begin{align*}
\left|\mathrm{I}_\delta^{\mathrm{reg}}(t)  \right| & \lesssim \frac{1}{t^2} \left( \delta^{-1/2}  \frac{(1-\delta)^2}{\sqrt{2-\delta} (\alpha_\zeta (1-\delta)^2+\beta_\zeta)}+\int_0^{1-\delta} \frac{1}{(1-s)^{3/2}} \, \mathrm{ds} \right.\\
& \left. \qquad \qquad \qquad \qquad \qquad \qquad \qquad \qquad \qquad \quad   + \delta^{-1/2} \int_0^1  \left|\partial_s \left(\frac{s^2}{\sqrt{1+s} (\alpha_\zeta s^2+\beta_\zeta)} \right)\, \right| \mathrm{d}s  \right) \\
&\lesssim \frac{\delta^{-1/2}}{ t^2}.
\end{align*}
All in all, we have obtained
\begin{align*}
    \vert \mathrm{I}(t) \vert \lesssim \frac{\delta^{-1/2}}{ t^2}+ \frac{\delta^{1/2}}{t}, \qquad  t>1.
\end{align*}
Choosing $\delta = 1/t$, it yields for $t>1$
$$\vert \mathrm{I}(t) \vert \lesssim \frac{1}{t^{3/2}}\lesssim \frac{1}{(1+ t)^{3/2}},$$ 
as desired. For the term $\mathrm{II}(t)$, the situation is easier: we can integrate by parts once again and, since the derivative of the amplitude is integrable on $[0,1]$, we directly obtain 
\begin{align*}
    \vert \mathrm{II}(t) \vert \lesssim \frac{1}{t^2} \lesssim \frac{1 }{(1+ t)^{3/2}},
\end{align*}
as before, since $ t >1$. Gathering the bounds for $\mathrm{I}(t)$ and $\mathrm{II}(t)$ together, we end up with
\begin{align*}
    \vert \mathcal{G}_{+}(t) \vert \lesssim \frac{1 }{(1+ t)^{3/2}}, \qquad  t>1.
\end{align*}
This concludes the proof.
\end{proof}

\section{Instability in presence of rotational diffusion}\label{Section-instab-diffusive}
In this section, we study how the linearized instability obtained in Section \ref{Section-spectral-inviscid} for $\nu=0$ (when $\zeta > \frac{1}{2\pi}$)  still holds when rotational diffusion $\nu$ is added, in the regime $0<\nu \ll 1$.

\medskip
We start with the linearized diffusive version of the equation, that is  \eqref{eq:rescaled eq2-MODELA}.
Roughly speaking, we are looking for the ansatz
\begin{align*}
    f(t,\theta)=\e^{\lambda t} \mathrm{f}(\theta), \qquad \mathrm{Re}(\lambda) > 0.
\end{align*}
where $\lambda=\lambda^\nu_\zeta$, $\mathrm{f} = \mathrm{f}^\nu_\zeta$. As in Section \ref{Section-spectral-inviscid},  it has to satisfy
\begin{align}\label{eq:Fourier-viscous}
     (\lambda +i\sin(\theta))\mathrm{f}=i\zeta\sin(\theta) \rho+\nu \partial_\theta^2 \mathrm{f}, \quad \rho = \int_0^{2\pi} \mathrm{f}(\theta) \, \mathrm{d}\theta.
 \end{align}
 We will see \eqref{eq:Fourier-viscous} as a non-singular perturbation problem for $\nu$ small. Rather than using a formal expansion in $\nu$, we will treat this problem by directly working at the level of the dispersion relations for both the non-diffusive and the diffusive problem.

\medskip

Let us now introduce the following diffusive (rescaled) linearized operator on the Hilbert space $L^2_\theta$: 
\begin{align*}
    \mathrm{L}_{\nu}\vcentcolon=i \sin(\theta) -\nu \partial_{ \theta}^2, \qquad \nu \geq 0
\end{align*}
with domain $D(\mathrm{L}_{\nu})=H^2_\theta$. We record the following lemma on its resolvent, which is a direct consequence of Lax-Milgram lemma and elliptic regularity, combined with the fact that the resolvent of an operator is analytic on the resolvent set.
\begin{Lem}\label{LM:resolventModelB}
    For any $\lambda \in \lbrace \mathrm{Re} > 0 \rbrace $ and $\nu>0$, the operator $\lambda+\mathrm{L}_{\nu}$ is invertible with $(\lambda+ \mathrm{L}_{\nu})^{-1}: L^2_\theta \mapsto L^2_\theta$.  Furthermore, the mapping $\lambda \mapsto (\lambda+ \mathrm{L}_{\nu})^{-1}$ is analytic on $\lbrace \mathrm{Re} > 0 \rbrace$.
\end{Lem}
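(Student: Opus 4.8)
The plan is to realize, concretely, the three ingredients announced before the statement: solve the resolvent equation variationally via Lax--Milgram, upgrade the weak solution to the domain $H^2_\theta$ by elliptic regularity, and deduce analyticity from the abstract fact that the resolvent of a closed operator is analytic on its resolvent set. Fix $\nu>0$ and $\lambda$ with $\mathrm{Re}(\lambda)>0$. Given $g\in L^2_\theta$, I would look for $f\in H^1_\theta$ solving, for every test function $\varphi\in H^1_\theta$,
\[
    B_\lambda(f,\varphi)\vcentcolon=\lambda\int_\T f\,\overline{\varphi}\,\mathrm{d}\theta+i\int_\T\sin(\theta)\,f\,\overline{\varphi}\,\mathrm{d}\theta+\nu\int_\T\partial_\theta f\,\partial_\theta\overline{\varphi}\,\mathrm{d}\theta=\int_\T g\,\overline{\varphi}\,\mathrm{d}\theta.
\]
Continuity of $B_\lambda$ on $H^1_\theta\times H^1_\theta$ is immediate, with constant $\lesssim |\lambda|+1+\nu$. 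For coercivity, test with $\varphi=f$: since $\int_\T\sin(\theta)\,|f|^2\,\mathrm{d}\theta$ is real, the drift term is purely imaginary, so $\mathrm{Re}\,B_\lambda(f,f)=\mathrm{Re}(\lambda)\,\|f\|_{L^2_\theta}^2+\nu\,\|\partial_\theta f\|_{L^2_\theta}^2\ge \min(\mathrm{Re}(\lambda),\nu)\,\|f\|_{H^1_\theta}^2$, a genuine coercivity bound since $\mathrm{Re}(\lambda),\nu>0$. Lax--Milgram yields a unique $f\in H^1_\theta$, and testing once more gives the a priori bound $\|f\|_{H^1_\theta}\le (\min(\mathrm{Re}(\lambda),\nu))^{-1}\|g\|_{L^2_\theta}$.

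Next I would bootstrap regularity: rewriting the equation as $\nu\,\partial_\theta^2 f=(\lambda+i\sin\theta)f-g$, the right-hand side lies in $L^2_\theta$, so one-dimensional elliptic regularity on $\T$ gives $f\in H^2_\theta=D(\mathrm{L}_\nu)$, and the identity $(\lambda+\mathrm{L}_\nu)f=g$ then holds in $L^2_\theta$. Uniqueness within $D(\mathrm{L}_\nu)$ follows from the same energy identity as above: if $(\lambda+\mathrm{L}_\nu)f=0$, taking the $L^2_\theta$ inner product with $f$ and the real part forces $\mathrm{Re}(\lambda)\|f\|_{L^2_\theta}^2+\nu\|\partial_\theta f\|_{L^2_\theta}^2=0$, hence $f=0$. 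Therefore $\lambda+\mathrm{L}_\nu\colon H^2_\theta\to L^2_\theta$ is a bijection and, by the bound of the previous step, $(\lambda+\mathrm{L}_\nu)^{-1}$ is bounded from $L^2_\theta$ into itself (in fact into $H^1_\theta$).

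For the last assertion, note that $\mathrm{L}_\nu$ is closed on $L^2_\theta$, being the sum of the closed operator $-\nu\,\partial_\theta^2$ with domain $H^2_\theta$ and the bounded multiplication operator $i\sin\theta$. The previous paragraph shows that $\{\mathrm{Re}(\lambda)>0\}$ is contained in the resolvent set of $-\mathrm{L}_\nu$; by the standard fact that $\lambda\mapsto(\lambda-T)^{-1}$ is analytic on the resolvent set of any closed operator $T$ (via the first resolvent identity and a local Neumann series), the map $\lambda\mapsto(\lambda+\mathrm{L}_\nu)^{-1}$ is analytic on $\{\mathrm{Re}(\lambda)>0\}$ with values in $\mathcal{L}(L^2_\theta)$. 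There is essentially no obstacle here: the only point meriting a word is that the transport term $i\sin\theta$ is skew-symmetric on $L^2_\theta$, so it contributes nothing to $\mathrm{Re}\,B_\lambda(f,f)$ and coercivity is inherited entirely from $\mathrm{Re}(\lambda)$ and from the diffusion $\nu$. One should also observe that the coercivity constant $\min(\mathrm{Re}(\lambda),\nu)$ degenerates as $\mathrm{Re}(\lambda)\to 0^+$, consistent with the fact that the spectrum of $-\mathrm{L}_\nu$ accumulates on the imaginary axis, which is why the statement is confined to the open right half-plane.
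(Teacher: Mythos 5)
Your proof is correct and is exactly the argument the paper sketches in one sentence: Lax--Milgram coercivity (using that the drift $i\sin\theta$ is skew-symmetric so contributes nothing to the real part), one-dimensional elliptic regularity to land in $H^2_\theta=D(\mathrm{L}_\nu)$, and analyticity of the resolvent of the closed operator $\mathrm{L}_\nu$ on its resolvent set. Nothing to add.
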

Let us now proceed with our perturbative approach of \eqref{eq:Fourier-viscous}, always assuming that $\zeta>\frac{1}{2\pi}$:  for $\lambda \in \lbrace \mathrm{Re} > 0 \rbrace $, we rewrite it as
\begin{align*}
    \mathrm{f}(\theta)=i \zeta \rho (\lambda+\mathrm{L}_{\nu})^{-1}[\sin](\theta),
\end{align*}
and by integration in $\theta$, we get
\begin{align*}
    \rho \left(1- i\zeta \int_0^{2\pi} (\lambda+\mathrm{L}_{\nu})^{-1}[\sin](\theta) \, \mathrm{d}\theta \right)=0.
\end{align*}
We are thus led to define the 'diffusive' dispersion relation $D^\nu_\zeta =0$, where for all $\lambda \in \{ \Re > 0 \}$,
\begin{align}\label{def-diffusive-ispersion-rel}
    D^\nu_\zeta(\lambda)\vcentcolon=1- i\zeta \int_0^{2\pi} (\lambda+\mathrm{L}_{\nu})^{-1}[\sin](\theta) \, \mathrm{d}\theta.
 \end{align}
Let us also recall the 'non-diffusive' dispersion relation $D_\zeta = 0$ where from Definition \ref{def-dispersion-rel-inviscid}: 
$$D_\zeta(\lambda)\vcentcolon=1- i\zeta \int_0^{2\pi} (\lambda+\mathrm{L})^{-1}[\sin](\theta) \, \mathrm{d}\theta, \quad L = i \sin(\theta).$$
From Proposition \ref{Prop-instalinear-inviscid}, since $\zeta>\frac{1}{2\pi}$, we know that there exists a non-diffusive unstable eigenvalue $\lambda_\zeta > 0$, that is such that $D_\zeta(\lambda_\zeta)=0$. 

\medskip

We want to prove that for $\lambda$ in a neighborhood of $\lambda_\zeta$ in $\lbrace \mathrm{Re} > 0 \rbrace$, the functions $D_\zeta(\lambda)$ and $D^\nu_\zeta(\lambda)$ are sufficiently close  for small  $\nu$.  More precisely, we will show that for 
\begin{align*}
    r_0 \vcentcolon= \frac{\lambda_\zeta}{2}>0,
\end{align*}
we have the following.
\begin{Prop}\label{Prop-compa-reldispersion}
There exists $C(r_0)>0$ such that for any $\nu >0$, the following holds:
\begin{itemize}
\item the functions $\lambda \mapsto D^\nu_\zeta(\lambda)  $ and $\lambda \mapsto D_\zeta(\lambda) $ are analytic on the ball $B(\lambda_\zeta,r_0)$;
    \item for any $\lambda \in B(\lambda_\zeta,r_0)$, we have 
    \begin{align*}
    \vert D^\nu_\zeta(\lambda)-D_\zeta(\lambda) \vert \leq C(r_0) \nu.
\end{align*}
\end{itemize}
 \end{Prop}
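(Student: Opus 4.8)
The plan is to control both dispersion relations through the difference of the two resolvents acting on $\sin$. Fix $\lambda \in B(\lambda_\zeta, r_0)$; since $r_0 = \lambda_\zeta/2$ we have $\Re(\lambda) \ge r_0 > 0$, hence $|\lambda + i\sin\theta| \ge r_0$ on $\T$. Set $g_0 \vcentcolon= (\lambda + \mathrm{L})^{-1}[\sin] = \frac{\sin\theta}{\lambda + i\sin\theta}$ and $g_\nu \vcentcolon= (\lambda + \mathrm{L}_\nu)^{-1}[\sin] \in H^2_\theta$ (well defined by Lemma \ref{LM:resolventModelB}), so that $D^\nu_\zeta(\lambda) - D_\zeta(\lambda) = -i\zeta \int_0^{2\pi} (g_\nu - g_0)\,\mathrm{d}\theta$. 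Subtracting the two resolvent equations, the difference $w \vcentcolon= g_\nu - g_0$ solves the elliptic equation $(\lambda + i\sin\theta)\, w - \nu\, \partial_\theta^2 w = \nu\, \partial_\theta^2 g_0$ on $\T$, whose source is $O(\nu)$: indeed $g_0$ is smooth with $\|g_0\|_{H^2_\theta} \le C(r_0)$, uniformly for $\lambda$ in the ball, since $|\lambda + i\sin\theta| \ge r_0$ and $|\lambda| \le \tfrac32\lambda_\zeta$ there.

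The analyticity claim I would dispatch first, as it is immediate: $D_\zeta$ is holomorphic on $\C \setminus i[-1,1] \supset B(\lambda_\zeta, r_0)$ by the standard holomorphy-under-the-integral argument recorded after Definition \ref{def-dispersion-rel-inviscid}, while $D^\nu_\zeta$ is holomorphic on $\{\Re > 0\} \supset B(\lambda_\zeta, r_0)$ because $\lambda \mapsto (\lambda + \mathrm{L}_\nu)^{-1}$ is analytic there by Lemma \ref{LM:resolventModelB}, composed with the bounded linear functional $g \mapsto \int_0^{2\pi} g$.

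For the quantitative bound, the natural first move is an $L^2_\theta$ energy estimate on $w$: testing the equation against $\overline{w}$, integrating by parts using periodicity, and taking real parts yields $\Re(\lambda)\|w\|_{L^2_\theta}^2 + \nu\|\partial_\theta w\|_{L^2_\theta}^2 \le \nu \|\partial_\theta g_0\|_{L^2_\theta}\|\partial_\theta w\|_{L^2_\theta}$, whence, after Young's inequality and absorption, the uniform-in-$\nu$ bound $\|\partial_\theta w\|_{L^2_\theta} \le C(r_0)$ together with $\|w\|_{L^2_\theta} \le C(r_0)\,\nu^{1/2}$. The latter is, however, \emph{not} good enough: feeding it into $|\int_0^{2\pi} w| \le \sqrt{2\pi}\,\|w\|_{L^2_\theta}$ only gives $|D^\nu_\zeta - D_\zeta| = O(\nu^{1/2})$. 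Overcoming this loss is the crux of the argument, and the key observation is that only the \emph{angular average} of $w$ enters the dispersion relations. I would therefore divide the equation for $w$ by $\lambda + i\sin\theta$ and integrate in $\theta$, obtaining $\int_0^{2\pi} w\,\mathrm{d}\theta = \nu\int_0^{2\pi} \frac{\partial_\theta^2 g_0}{\lambda + i\sin\theta}\,\mathrm{d}\theta + \nu\int_0^{2\pi} \frac{\partial_\theta^2 w}{\lambda + i\sin\theta}\,\mathrm{d}\theta$; the first term is $O(\nu)$ by smoothness of $g_0$ and $|\lambda + i\sin\theta| \ge r_0$, while in the second I integrate by parts once to put a single derivative on the smooth weight $\frac{1}{\lambda+i\sin\theta}$, leaving a term bounded by $C(r_0)\,\nu\,\|\partial_\theta w\|_{L^2_\theta} \le C(r_0)\,\nu$ thanks to the energy estimate. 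Combining, $\bigl|\int_0^{2\pi} w\,\mathrm{d}\theta\bigr| \le C(r_0)\,\nu$, hence $|D^\nu_\zeta(\lambda) - D_\zeta(\lambda)| \le C(r_0)\,\nu$ uniformly on $B(\lambda_\zeta, r_0)$, as claimed.

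The only genuine obstacle is the one just highlighted: a direct energy estimate on $w$ loses a square root in $\nu$, and recovering the sharp $O(\nu)$ rate requires exploiting that the dispersion relation sees only $\int_0^{2\pi} w$, combined with the uniform-in-$\nu$ $H^1_\theta$ bound on $w$ (rather than its $\nu$-dependent $L^2_\theta$ bound). Everything else is routine bookkeeping: all constants depend on $r_0$ only through the lower bound $|\lambda + i\sin\theta| \ge r_0$ and the resulting upper bound for $\|g_0\|_{H^2_\theta}$ on the ball.
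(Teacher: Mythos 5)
Your proof is correct, and the analyticity argument coincides with the paper's. For the quantitative bound, however, you take a longer route than necessary, driven by a small misconception: you assert that ``a direct energy estimate on $w$ loses a square root in $\nu$,'' but that loss is not intrinsic. It comes from your choice to integrate the source $\nu\partial_\theta^2 g_0$ by parts before applying Cauchy--Schwarz, thereby pairing $\nu\,\partial_\theta g_0$ against $\partial_\theta w$. Since $g_0=\sin\theta/(\lambda+i\sin\theta)$ is explicitly smooth with $\|\partial_\theta^2 g_0\|_{L^2_\theta}\le C(r_0)$ uniformly on the ball, you can instead keep both derivatives on $g_0$ and estimate directly
\begin{align*}
\Re(\lambda)\,\|w\|_{L^2_\theta}^2 \;+\; \nu\,\|\partial_\theta w\|_{L^2_\theta}^2 \;\le\; \nu\,\|\partial_\theta^2 g_0\|_{L^2_\theta}\,\|w\|_{L^2_\theta}.
\end{align*}
Dropping the nonnegative dissipation term and dividing by $\|w\|_{L^2_\theta}$ yields $\|w\|_{L^2_\theta}\le \nu\,\|\partial_\theta^2 g_0\|_{L^2_\theta}/\Re(\lambda)=O(\nu)$ outright, and the Cauchy--Schwarz bound $\bigl|\int_0^{2\pi}w\,\mathrm{d}\theta\bigr|\le\sqrt{2\pi}\,\|w\|_{L^2_\theta}$ that you dismissed as too lossy then finishes the proof. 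This is exactly the paper's argument. Your workaround --- dividing the equation for $w$ by $\lambda+i\sin\theta$, integrating in $\theta$, and integrating by parts once to shift a derivative onto the smooth weight, using the uniform $H^1$ bound on $w$ --- is sound and reaches the same conclusion, but it is an extra layer of machinery whose sole purpose is to recover the $\nu^{1/2}$ you gave away unnecessarily at the energy-estimate stage.
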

Let us directly use this proposition to show that it implies the existence of an unstable eigenvalue for the diffusive problem \eqref{eq:Fourier-viscous}, that is a zero of $D^\nu_\zeta(\lambda)$. We follow here a strategy for instance implemented in \cites{DDGM, HG}. The function $\lambda \mapsto D_\zeta(\lambda)$ being analytic, its zeros are isolated. Since $D_\zeta(\lambda_\zeta)=0$, there exists $\delta \in (0,r_0)$ such that 
\begin{align*}
    \eps \vcentcolon= \underset{\lambda \in \partial B(\lambda_\zeta, \delta) }{\inf} \, \vert D_\zeta(\lambda) \vert>0.
\end{align*}
If $\nu>0$ is such that $C(r_0) \nu \leq \eps/2$, we can conclude by Proposition \ref{Prop-compa-reldispersion} combined with Rouché's theorem that $\lambda \mapsto D^\nu_\zeta(\lambda)$ has a zero $\lambda^\nu_\zeta$ in $B(\lambda_\zeta, \delta)$. 

\medskip

To understand how close to $\lambda_\zeta$ is the new eigenvalue $\lambda_\nu^\zeta$ (in terms of $\nu$), we actually need to refine the argument above. If $m \geq 1$ denotes the order of vanishing of the analytic function $\lambda \mapsto D_\zeta(\lambda)$ at $\lambda=\lambda_\zeta$, there exists $r_1>0$ and an analytic function $\lambda \mapsto \widetilde{D}_\zeta(\lambda)$ on $B(\lambda_\zeta,r_1)$ such that
\begin{align*}
    D_\zeta(\lambda)&=(\lambda-\lambda_\zeta)^m \widetilde{D}_\zeta(\lambda), \qquad \lambda \in B(\lambda_\zeta,r_1), \qquad \chi_1 \vcentcolon= \inf_{ B(\lambda_\zeta, r_1)}\vert \widetilde{D}_\zeta(\lambda) \vert >0.
\end{align*}
Furthermore, we can always suppose that $r_0 \geq r_1$. By Proposition \ref{Prop-compa-reldispersion}, we have
\begin{align*}
    \vert D^\nu_\zeta(\lambda)-D_\zeta(\lambda) \vert \leq C(r_0) \nu, \qquad \lambda \in B(\lambda_\zeta, r_1).
\end{align*}
It follows that for any $\delta > 0$ chosen so that the two constraints
\begin{align*}
    0<\delta<r_1, \qquad C(r_0)\nu < \delta^m \chi_1,
\end{align*}
are satisfied, we have
\begin{align*}
    \vert D^\nu_\zeta(\lambda)-D_\zeta(\lambda) \vert < \vert D_\zeta(\lambda) \vert , \qquad \lambda \in \partial B(\lambda_\zeta, \delta ),
\end{align*}
which by  Rouché's theorem applied with the circle $\partial B(\lambda_\zeta, \delta)$ yields a zero of $D^\nu_\zeta(\lambda)$ in $B(\lambda_\zeta, \delta)$. We choose
\begin{align*}
\delta_\nu\vcentcolon=\left(2\frac{C(r_0)\nu}{\chi_1} \right)^{\frac{1}{m}},
\end{align*}
and for $\nu>0$ small enough, the two previous conditions indeed hold, hence the existence of the zero in $B(\lambda_\zeta, \delta)$.

Let us now prove that we actually have $m=1$ in our case. Recall that by Lemma \ref{LM:formula-dispersionD}, we have the explicit formula 
\begin{align*}
D_\zeta(\lambda)=\frac{(1-2\pi \zeta)\sqrt{1+\frac{1}{\lambda^2}}+2\pi \zeta}{\sqrt{1+\frac{1}{\lambda^2}}}, \qquad \mathrm{Re}(\lambda)>0.
\end{align*}
As  $m$ is the vanishing order of the previous numerator (that is analytic on $\lbrace \mathrm{Re}>0 \rbrace$), it is enough to consider its restriction on the real positive axis. Since the function
\begin{align*}
d: \lambda \mapsto (1-2\pi \zeta)\sqrt{1+\frac{1}{\lambda^2}}+2\pi \zeta
\end{align*}
has its derivative equal to 
\begin{align*}
    d'(\lambda)= -(1-2\pi \zeta) \left(1+\frac{1}{\lambda^2} \right)^{-1/2}\lambda^{-3}, \qquad \lambda>0,
\end{align*}
and since this last expression is non-vanishing, we can conclude that $m=1$. With the former choice of of $\delta_\nu$, we have therefore proven that $\delta_\nu=c_0 \nu$ for some constant $c_0$ only depending on $D_\zeta$ and $\lambda_\zeta$.

\medskip

All in all the former reasoning shows that Proposition \ref{Prop-compa-reldispersion} entails the last part of Theorem \ref{thm-instability-reduced} (for $\nu>0$). We now provide a proof of Proposition \ref{Prop-compa-reldispersion}.
 \begin{proof}[Proof of Proposition \ref{Prop-compa-reldispersion}]
     
Let us prove the first assertion: in view of the definitions \eqref{def-diffusive-ispersion-rel} and \eqref{def-dispersion-rel-inviscid}, and the fact that the involved resolvent operators are analytic on $\lbrace \mathrm{Re} >0 \rbrace$, we easily deduce that $\lambda \mapsto D^\nu_\zeta(\lambda)  $ and $\lambda \mapsto D_\zeta(\lambda)$ are analytic.

Let us now prove the second assertion. First, we rely on a simple stability argument to compare $(\lambda+\mathrm{L}_{\nu})^{-1}[\sin(\cdot)]$ and $(\lambda+\mathrm{L})^{-1}[\sin(\cdot)]$ that define $D^\nu_\zeta(\lambda)$ and $D_\zeta(\lambda)$: for $\nu>0$ and $\lambda \in \lbrace \mathrm{Re} > 0 \rbrace$,  let $h_\nu$ and $h$ be respectively the solutions of 
  \begin{align*}
      (\lambda +i \sin(\theta)-\nu \partial_\theta^2 )h_\nu(\theta)= \sin(\theta),
  \end{align*}
  and \begin{align*}
      (\lambda +i \sin(\theta) )h(\theta)= \sin(\theta). 
  \end{align*}
  The first one exists in view of Lemma \ref{LM:resolventModelB}, which the second one is given explicitly by 
  \begin{align*}
      h(\theta)= \frac{\sin(\theta)}{\lambda +i \sin(\theta) }.
  \end{align*}
By introducing the difference $H_\nu=h_\nu-h$, we see it satisfies
 \begin{align*}
 (\lambda +i \sin(\theta))H_\nu-\nu \partial_\theta^2 H_\nu =\nu \partial_\theta^2 h.
 \end{align*}
 Multiplying by $\overline{H_\nu}$, integrating in $\theta$ and taking the real part, we obtain by Cauchy-Schwarz inequality
 \begin{align*}
     \mathrm{Re}(\lambda) \Vert H_\nu \Vert_{L^2_\theta}^2 \leq \nu \Vert \partial_\theta^2 h \Vert_{L^2_\theta} \Vert H_\nu \Vert_{L^2_\theta}.
 \end{align*}
By dividing by $\Vert H_\nu \Vert_{L^2_\theta}$, we get 
    \begin{align*}
         \left\Vert \big((\lambda+\mathrm{L}_{\nu})^{-1}-(\lambda+\mathrm{L})^{-1}\big)[\sin(\cdot)] \right\Vert_{L^2_\theta}=\Vert h_\nu -h \Vert_{L^2_\theta} \leq \frac{\Vert \partial_\theta^2 h \Vert_{L^2_\theta}  }{\vert \mathrm{Re}(\lambda) \vert}\nu.
\end{align*}
Next, we simply write by Cauchy-Schwarz inequality
 \begin{align*}
     \vert D^\nu_\zeta(\lambda)-D_\zeta(\lambda) \vert &\leq \zeta \int_0^{2\pi} \left\vert \big((\lambda+\mathrm{L}_{\nu})^{-1}-(\lambda+\mathrm{L})^{-1}\big)[\sin(\cdot)](\theta) \right\vert  \, \mathrm{d}\theta \\
     & \leq  \sqrt{2\pi} \zeta  \left\Vert \big((\lambda+\mathrm{L}_{\nu})^{-1}-(\lambda+\mathrm{L})^{-1}\big)[\sin(\cdot)] \right\Vert_{L^2_\theta},
 \end{align*}
 so that we end up with 
\begin{align*}
    \vert D^\nu_\zeta(\lambda)-D_\zeta(\lambda) \vert \leq \sqrt{2\pi} \zeta  \frac{\Vert \partial_\theta^2 h \Vert_{L^2_\theta}  }{\vert \mathrm{Re}(\lambda) \vert}\nu.
\end{align*}
By the explicitly expression of $h$, we can now compute
\begin{align*}
\partial_\theta^2 h(\theta)=-\lambda \left( \frac{ \sin(\theta)}{(\lambda+i\sin(\theta))^2}+2i \frac{(\sin(\theta))^2}{(\lambda+i \sin(\theta))^3} \right),
\end{align*}
so that 
\begin{align*}
    \vert \partial_\theta^2 h(\theta) \vert^2 \leq \vert \lambda \vert^2\left( \frac{2}{\vert \lambda+i\sin(\theta) \vert^4 }+\frac{8}{\vert \lambda+i \sin(\theta) \vert^6 } \right) \leq  \vert \lambda \vert^2 \left( \frac{ 2}{\mathrm{Re}(\lambda)^4 }+\frac{8}{\mathrm{Re}(\lambda)^6 } \right),
\end{align*}
and eventually
\begin{align*}
    \Vert \partial_\theta^2 h \Vert_{L^2_\theta} \lesssim  \vert \lambda \vert \left( \frac{ 1}{\mathrm{Re}(\lambda)^2 }+\frac{1}{\mathrm{Re}(\lambda)^3 } \right).
\end{align*}
If $\lambda \in B(\lambda_\zeta,r_0)$, we have $\mathrm{Re}(\lambda)>r_0$ and $\vert \lambda \vert< 3r_0$. Therefore we conclude that there exists a constant $C(r_0)>0$ such that for all $\lambda \in B(\lambda_\zeta,r_0)$, we have
\begin{align*}
    \vert D^\nu_\zeta(\lambda)-D_\zeta(\lambda) \vert \leq C(r_0)\nu.
\end{align*}
This finally achieves the proof.
 \end{proof}

 \section{Linear stability and decay estimates with rotational diffusion}\label{Section-stab-diffusive}
In this section, we aim at proving the linear stability result from Theorem \ref{thm-stability-reduced}, in the stable case $0<\zeta<\frac{1}{2\pi}$. We restart from the equation  \eqref{eq:rescaled eq2-MODELA} 
and aim at proving that the following estimates
\begin{align}
   \label{bound-to-target1-proof}\vert \rho(t) \vert &\lesssim \left(1+\nu^{-\frac{5}{4}} \right) \e^{-\eta_0 \nu t} \Vert f^{\mathrm{in}} \Vert_{L^2_\theta}, \qquad t \geq 0,  \\ 
    \label{bound-to-target2-proof}\Vert f(t) \Vert_{L^2_\theta} &\lesssim \left(1+\nu^{-\frac{7}{4}}\right)\e^{-\eta_0 \nu t} \Vert f^{\mathrm{in}} \Vert_{L^2_\theta}, \qquad t \geq 0,
\end{align}
for some $\eta_0>0$ and $0<\nu \ll 1$. This would then conclude the proof of the second part of Theorem \ref{thm-stability-reduced}, when $\nu>0$.

\medskip

Recalling the notation
\begin{align*}
    \mathrm{L}_{\nu}=i \sin(\theta) -\nu \partial_{ \theta}^2,
\end{align*}
we first rewrite equation \eqref{eq:rescaled eq2-MODELA}  as
\begin{align*}
    \partial_t f+ \mathrm{L}_{\nu} f=  \zeta i \sin(\theta) \rho, \qquad \rho(t)=\int f(t,\theta)\, \mathrm{d}\theta.
\end{align*}
By Duhamel formula, we obtain
\begin{align*}
    f(t,\theta)=(\e^{-\mathrm{L}_{\nu} t} f^{\mathrm{in}})(\theta)+\zeta  \int_0^t \rho(s) \Big(\e^{-\mathrm{L}_{\nu} (t-s)} i \sin \Big)(\theta) \, \mathrm{d}s.
\end{align*}
Integrating in $\theta$, we classically get the following closed Volterra equation on the density $\rho(t)$:
\begin{align}\label{eq:Volterra-stab}
    \rho(t)-\zeta \int_0^t \mathrm{K}_{\nu}(t-s) \rho(s) \, \mathrm{d}s = \int_0^{2\pi} (\e^{-\mathrm{L}_{\nu} t} f^{\mathrm{in}})(\theta) \, \mathrm{d}\theta, 
\end{align}
where the Volterra kernel is defined as
\begin{align}\label{def:Kernel-stab}
    \mathrm{K}_{\nu}(t)\vcentcolon=\int_0^{2\pi} \Big(\e^{-\mathrm{L}_{\nu} t} i \sin \Big)(\theta) \, \mathrm{d}\theta.
\end{align}
\subsection{Decay estimates for transport--diffusion equations}\label{Section-enhanceddiff}
The purpose of this subsection is to recall a decay estimate for a simple transport--diffusion equation with a non-degenerate critical point in the transport velocity.  
Results of this type are by now standard and appear in various forms in the literature on enhanced dissipation and hypocoercivity; see for instance the works on shear flows and mixing-enhanced dissipation
\cites{MicheleJacob,MicheleThierry,Villringer25,ABN22,BW13}.
The key feature is that the interaction between transport and diffusion leads to a decay rate that is much faster than the purely diffusive one.
We start with the following lemma, which provides an exponential-type decay for the semigroup generated by the operator $\mathrm{L}_\nu$.

\begin{Lem}[Enhanced dissipation]\label{LM-enhanceddiffusion}
There exist constants $C_0>0$, $\varepsilon_0>0$ and $\nu_0>0$ such that the following holds.  
For any $\nu\in(0,\nu_0]$, the solution $g=g(t,\theta)$ of
\[
    \partial_t g + \mathrm{L}_\nu g = 0,
    \qquad
    g|_{t=0}=g_0,
\]
with smooth initial datum $g_0=g_0(\theta)$ satisfies
\begin{equation}\label{eq:enhanced-dissipation}
    \|g(t)\|_{L^2_\theta}
    \;\leq\;
    C_0\, \e^{-\varepsilon_0 \nu^{1/2} t}\,
    \|g_0\|_{L^2_\theta},
\end{equation}
for every $t\geq 0$.
\end{Lem}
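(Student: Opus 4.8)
The plan is to deduce \eqref{eq:enhanced-dissipation} from a resolvent estimate for $\mathrm{L}_\nu$ via a quantitative Gearhart--Prüss theorem, and to obtain that resolvent estimate by a $\lambda$-dependent case analysis in which each regime is modelled on an explicitly solvable one-dimensional operator. Since $\mathrm{L}_\nu = \nu(-\partial_\theta^2) + i\sin\theta$ with $\nu(-\partial_\theta^2)\ge 0$ self-adjoint and $\sin\theta$ acting as a bounded self-adjoint multiplication, $\mathrm{L}_\nu$ is maximal accretive and $-\mathrm{L}_\nu$ generates a contraction semigroup on $L^2_\theta$; a quantitative Gearhart--Prüss theorem (see \cite{Wei}, and the related statements used in \cites{MicheleJacob, MicheleThierry}) then gives universal constants $C,c>0$ such that
\[
    \| \e^{-t\mathrm{L}_\nu} \|_{\mathcal{L}(L^2_\theta)}
    \;\le\; C\, \e^{-c\, t\, \Psi_\nu},
    \qquad
    \Psi_\nu
    := \inf\bigl\{ \| (\mathrm{L}_\nu - i\lambda)g \|_{L^2_\theta} : \lambda\in\mathbb{R},\ g\in H^2_\theta,\ \|g\|_{L^2_\theta}=1 \bigr\}.
\]
Thus it suffices to prove the pseudospectral lower bound $\Psi_\nu \gtrsim \nu^{1/2}$ for $0<\nu\le\nu_0$, which yields \eqref{eq:enhanced-dissipation} with $\varepsilon_0$ a fixed fraction of the implied constant and $C_0=C$; the smoothness of $g_0$ plays no role and one extends to all $g_0\in L^2_\theta$ by density.

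To bound $\Psi_\nu$ from below I would fix $\lambda\in\mathbb{R}$, normalise $\|g\|_{L^2_\theta}=1$, set $F:=(\mathrm{L}_\nu-i\lambda)g$, and argue by contradiction assuming $\|F\|_{L^2_\theta}\le \eta\,\nu^{1/2}$ for a small absolute constant $\eta$. Two elementary identities drive the argument: the real part of $\langle F,g\rangle$ gives $\nu\|\partial_\theta g\|_{L^2_\theta}^2 \le \|F\|_{L^2_\theta}$, while testing $F$ against $i(\sin\theta-\lambda)g$ and integrating by parts gives
\[
    \|(\sin\theta-\lambda)g\|_{L^2_\theta}^2
    \;\le\; \|F\|_{L^2_\theta}\,\|(\sin\theta-\lambda)g\|_{L^2_\theta}
    \;+\; \nu\,\|\partial_\theta g\|_{L^2_\theta}\|g\|_{L^2_\theta},
\]
where the last term is $\lesssim\nu^{1/2}\|F\|_{L^2_\theta}^{1/2}$ by the first identity (and $\|g\|_{L^2_\theta}=1$). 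Combining the two (and noting that $|\lambda|\le 2$, otherwise regime \emph{(i)} below applies with room to spare), $\|(\sin\theta-\lambda)g\|_{L^2_\theta}\lesssim\nu^{3/8}$, so that at least $1-C\nu^{1/4}$ of the $L^2_\theta$-mass of $g$ sits in the thin set $\{\theta: |\sin\theta-\lambda|<\nu^{1/4}\}$, a neighbourhood of the zero set of $\sin\theta-\lambda$.

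I would then distinguish three regimes in $\lambda$. \emph{(i)} If $|\sin\theta-\lambda|\ge\nu^{1/2}$ on all of $\mathbb{T}$ --- in particular when $|\lambda|\ge 1+\nu^{1/2}$ --- then $\|F\|_{L^2_\theta}\ge\|(\sin\theta-\lambda)g\|_{L^2_\theta}\ge\nu^{1/2}$, a contradiction. \emph{(ii)} If $1-|\lambda|\ge\nu^{1/2}$, then $\sin\theta-\lambda$ has two simple zeros $\theta_*$ with $|\cos\theta_*|=\sqrt{1-\lambda^2}\gtrsim\nu^{1/4}$; cutting $g$ off to a neighbourhood of each $\theta_*$ (with negligible error, using the concentration just established and the bound on $\|\partial_\theta g\|_{L^2_\theta}$) and freezing the coefficient leads to the complex Airy operator $\nu(-\partial_y^2)+i\sqrt{1-\lambda^2}\,y$, which rescales under $y=(\nu/\sqrt{1-\lambda^2})^{1/3}z$ to $\nu^{1/3}(1-\lambda^2)^{1/3}(-\partial_z^2+iz)$; since $-\partial_z^2+iz$ has empty spectrum and $\|(-\partial_z^2+iz-i\mu)^{-1}\|$ is bounded uniformly in $\mu\in\mathbb{R}$ (translating $z$ shows it depends only on the real part of the spectral parameter), this forces $\|F\|_{L^2_\theta}\gtrsim\nu^{1/3}(1-\lambda^2)^{1/3}\gtrsim\nu^{1/3}\nu^{1/6}=\nu^{1/2}$, again a contradiction. \emph{(iii)} If $\bigl| |\lambda|-1 \bigr|<\nu^{1/2}$, the relevant point is the nondegenerate critical point $\theta_*=\pm\pi/2$ of $\sin$; writing $\theta=\pi/2+y$ (the point $-\pi/2$ being identical up to complex conjugation), $\sin\theta-\lambda=(1-\lambda)-\tfrac{1}{2}y^2+O(y^4)$, and the leading operator $\nu(-\partial_y^2)-\tfrac{i}{2}y^2$ rescales under $y=\nu^{1/4}z$ to $\nu^{1/2}\bigl(-\partial_z^2-\tfrac{i}{2}z^2\bigr)$, whose numerical range lies in the closed fourth quadrant and, by the uncertainty principle, stays at distance $\gtrsim 1$ from every $i\mu$ with $|\mu|\lesssim 1$ --- which is precisely the range of the rescaled spectral parameter $\mu=(\lambda-1)/\nu^{1/2}$ in this regime --- yielding once more $\|F\|_{L^2_\theta}\gtrsim\nu^{1/2}$. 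Uniformity in $\lambda$ then gives $\Psi_\nu\gtrsim\nu^{1/2}$.

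The step I expect to be the main obstacle is regime \emph{(iii)} (and, to a lesser extent, the choice of cutoff scales in \emph{(ii)} when $\sqrt{1-\lambda^2}$ is close to $\nu^{1/4}$): near the critical values the frozen-coefficient error and the $O(y^4)$ remainder of $\sin$ are of the same order $\nu^{1/2}$ as the leading term once $y$ ranges over the a priori concentration scale, so a single localisation does not close the estimate. Resolving this requires either a short bootstrap --- using the model bound to upgrade the control of $\|(\sin\theta-\lambda)g\|_{L^2_\theta}$ and then re-localising at the finer scale $y\sim\nu^{1/4}$ --- or a dyadic-in-distance-to-$\theta_*$ decomposition, or a Hörmander-type hypoellipticity estimate. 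An alternative that bypasses these technicalities is to invoke directly a general enhanced-dissipation theorem for transport--diffusion by a smooth velocity on $\mathbb{T}$ with finitely many critical points at each of which the derivative vanishes at most to first order, as in \cites{MicheleJacob, MicheleThierry, MicheleMathiasTarek}; for $b(\theta)=\sin\theta$ this produces exactly the rate $\nu^{1/2}$. Either way, once \eqref{eq:enhanced-dissipation} is in hand it feeds into the rest of Section~\ref{Section-stab-diffusive} only through the $L^2_\theta$ operator norm of $\e^{-\mathrm{L}_\nu t}$, hence through the resulting exponential decay of the Volterra kernel $\mathrm{K}_\nu$ and source $S$.
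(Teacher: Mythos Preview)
The paper does not actually prove this lemma: it is stated as a known fact and attributed to the literature on enhanced dissipation for transport--diffusion operators with finitely many nondegenerate critical points \cites{MicheleJacob,MicheleThierry,Villringer25,ABN22,BW13}. Your proposal is therefore not being compared against a proof in the paper but is, in effect, a sketch of the argument underlying those references.

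That sketch is sound: reducing to a pseudospectral bound $\Psi_\nu\gtrsim\nu^{1/2}$ via the quantitative Gearhart--Pr\"uss theorem of \cite{Wei}, and obtaining the bound by a $\lambda$-dependent localisation to Airy or complex harmonic oscillator models, is exactly the approach of \cites{MicheleJacob,MicheleThierry}. Your self-identified difficulty in regime \emph{(iii)} (and the matching of cut-off scales when $\sqrt{1-\lambda^2}\sim\nu^{1/4}$ in regime \emph{(ii)}) is real and is where the technical work in those papers lies; the resolution there is precisely the dyadic-in-distance decomposition or hypocoercivity-type multiplier you mention, yielding the general statement that for $b\in C^\infty(\mathbb{T})$ whose derivative vanishes to order at most $n_0$ at each critical point one has a rate $\nu^{(n_0+1)/(n_0+3)}$, which for $\sin\theta$ ($n_0=1$) gives $\nu^{1/2}$. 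Invoking that general theorem directly, as you suggest as an alternative, is exactly what the paper does.
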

This decay estimate reflects an enhanced dissipation mechanism induced by the transport term in $\mathrm{L}_\nu$, whose velocity field $\sin\theta$ possesses a single non-degenerate critical point.  
Recalling the definition of the kernel $\mathrm{K}_\nu$ in \eqref{def:Kernel-stab}, we infer that for all $t\geq 0$,
\[
    |\mathrm{K}_\nu(t)|
    \;\lesssim\;
    \left\| \e^{-\mathrm{L}_\nu t}\, i\sin\theta \right\|_{L^2_\theta}
    \;\lesssim\;
    \e^{-\varepsilon_0 \nu^{1/2} t}\, \|\sin\theta\|_{L^2_\theta},
\]
and therefore
\begin{equation}\label{decay-kernel}
    |\mathrm{K}_\nu(t)|
    \;\lesssim\;
    \e^{-\varepsilon_0 \nu^{1/2} t},
    \qquad
    \forall\, t\geq 0,
\end{equation}
uniformly for $\nu\leq \nu_0$. Similarly, for the forcing term generated by the initial data, we obtain
\begin{equation}\label{decay-forcing}
    \left|
        \int_0^{2\pi}
        \big( \e^{-\mathrm{L}_\nu t} f^{\mathrm{in}} \big)(\theta)\,
        \mathrm{d}\theta
    \right|
    \;\lesssim\;
    \e^{-\varepsilon_0 \nu^{1/2} t}\,
    \|f^{\mathrm{in}}\|_{L^2_\theta},
    \qquad
    \forall\, t\geq 0,
\end{equation}
again for $\nu\leq \nu_0$.

 \subsection{Stability condition for the Volterra equation}\label{Section-Stab-Volterra}
Let us come back to the analysis of the Volterra equation \eqref{eq:Volterra-stab}. In what follows, $\eps_0$ and $\nu_0$ are constants given by Lemma \ref{LM-enhanceddiffusion}. We can always assume that $\nu_0<1$. %so that in particular we have $\nu < \nu^{1/2}$.
We consider the Laplace transform of the kernel $\mathrm{K}_{\nu}$
\begin{align*}
    \mathcal{L}[\mathrm{K}_{\nu}](\lambda)=\int_0^{+\infty} \e^{-\lambda t}\mathrm{K}_{\nu}(t) \, \mathrm{d}t, 
\end{align*}
In view of the exponential decay \eqref{decay-kernel}, if $\nu \in (0, \nu_0)$, the former expression is well defined and even analytic for $\lambda \in \lbrace \mathrm{Re} > -\eps_0  \nu^{1/2}\rbrace$.

\medskip

Our goal is to prove spectral stability conditions of the form \eqref{Intro-stab-cond} and \eqref{Intro-stab-cond-quantitative} for the Volterra equation \eqref{eq:Volterra-stab},  with $\delta = c \nu$, $c$ small enough  (independent of $\nu$) and $m = c' \nu$, $c'$ small enough and independent of $\nu$.

In what follows, we constantly rely on the following key fact. Since  the kernel $\mathrm{K}_{\nu}$ is defined through the (integral of) the semigroup of $-\mathrm{L}_{\nu}$ (see \eqref{def:Kernel-stab}), its Laplace transform is given by the resolvent: we know that for any  $\lambda \in \lbrace \mathrm{Re} > - \eps_0 \nu^{1/2} \rbrace$, we have
    \begin{align*}
        \mathcal{L}[\mathrm{K}_{\nu}](\lambda) =\int_0^{2\pi} \Big[(\lambda+\mathrm{L}_{\nu})^{-1} i\sin \Big](\theta) \, \mathrm{d}\theta =\int_0^{2\pi} f(\theta) \, \mathrm{d}\theta,
    \end{align*}
    where $f=f_{\lambda, \nu}(\theta)$ satisfies
    \begin{align}\label{spectral-advec-diff}
        (\lambda +i \sin(\theta) -\nu \partial_{ \theta}^2)f=i \sin(\theta).
    \end{align}    

\medskip

Our first result deals with the non-vanishing of the Laplace transform in the half-space $\lbrace \mathrm{Re} \geq 0 \rbrace$.

\begin{Lem}\label{LM:stability-cond-Laplacekernel-REALPART-positive}
Suppose that $\zeta \in (0,\frac{1}{2\pi})$. For all $\nu \in (0, \nu_0)$, we have
\begin{align}
    \forall \lambda \in \lbrace \mathrm{Re} \geq 0 \rbrace, \qquad 1-\zeta \mathcal{L}[\mathrm{K}_{\nu}](\lambda) \neq 0.
\end{align}
\end{Lem}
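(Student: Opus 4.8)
The plan is to show that whenever $\lambda$ has nonnegative real part, the quantity $\mathcal{L}[\mathrm{K}_\nu](\lambda) = \int_0^{2\pi} f_{\lambda,\nu}(\theta)\,\mathrm{d}\theta$ is small enough (in absolute value, with a bound independent of $\nu$) that $\zeta \mathcal{L}[\mathrm{K}_\nu](\lambda)$ cannot equal $1$, using $\zeta < \tfrac{1}{2\pi}$. The starting point is the resolvent equation \eqref{spectral-advec-diff}: $(\lambda + i\sin\theta - \nu\partial_\theta^2) f = i\sin\theta$. First I would perform the basic energy estimate, multiplying by $\overline{f}$, integrating over $\T$, and taking the real part: since $\mathrm{Re}(\lambda) \geq 0$ and $\mathrm{Re}\int i\sin\theta\, |f|^2 = 0$, this gives $\nu \|\partial_\theta f\|_{L^2_\theta}^2 \leq \|\sin\theta\|_{L^2_\theta}\|f\|_{L^2_\theta}$, hence a crude control $\|f\|_{L^2_\theta} \lesssim \nu^{-1/2}$ which, however, is not by itself sufficient because $|\int_\T f|\le\sqrt{2\pi}\|f\|_{L^2_\theta}$ would blow up as $\nu\to0$. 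So a more careful argument is needed.

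The key observation is that one should \emph{not} bound $\int_\T f$ by the $L^2$ norm of $f$ directly, but exploit cancellation. I would take the imaginary part of the same energy identity, or better, test \eqref{spectral-advec-diff} against the constant function $1$: integrating \eqref{spectral-advec-diff} over $\T$ gives
\[
\lambda \int_\T f \,\mathrm{d}\theta + i\int_\T \sin\theta \, f\,\mathrm{d}\theta = i\int_\T \sin\theta\,\mathrm{d}\theta = 0,
\]
so $\lambda \mathcal{L}[\mathrm{K}_\nu](\lambda) = -i\int_\T \sin\theta\, f\,\mathrm{d}\theta$. This relates $\mathcal{L}[\mathrm{K}_\nu]$ to the first Fourier mode of $f$. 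The plan is then to estimate $\int_\T \sin\theta\, f\,\mathrm{d}\theta$ by writing $f = g/(\lambda + i\sin\theta) $-type manipulations are not available with diffusion, so instead I would proceed by a direct weighted estimate or by expanding in Fourier modes in $\theta$: writing $f = \sum_n \hat f_n e^{in\theta}$, the equation becomes a tridiagonal system $(\lambda + \nu n^2)\hat f_n + \tfrac12(\hat f_{n-1} - \hat f_{n+1}) = \tfrac{1}{2i}(\delta_{n,1}-\delta_{n,-1})$, and $\mathcal{L}[\mathrm{K}_\nu](\lambda) = 2\pi \hat f_0$ while $\int_\T \sin\theta\, f = \tfrac{\pi}{i}(\hat f_{-1}-\hat f_1)$. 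One then needs a lower bound showing $\hat f_0$ stays bounded away from $1/(2\pi\zeta)$; I would combine the $n=0$ equation $\lambda\hat f_0 = \tfrac12(\hat f_1 - \hat f_{-1})$ with the energy bound to control $|\hat f_1|,|\hat f_{-1}|$ and hence $|\hat f_0|$ when $|\lambda|$ is not too small, and treat the regime of small $|\lambda|$ separately via the observation that, when $\lambda = 0$ exactly, a symmetry/parity argument forces $\hat f_0$ to be purely imaginary or to have a specific small value.

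The main obstacle I anticipate is the regime $\mathrm{Re}(\lambda) \geq 0$ with $|\lambda|$ small (or $\lambda$ near $0$), where the energy estimate degenerates and the naive resolvent bounds are useless; there one must extract genuine cancellation, presumably using the parity structure of $\sin\theta$ on $\T$ (the map $\theta \mapsto \pi - \theta$ or $\theta\mapsto-\theta$) to show that the relevant averaged quantity has a sign or a controlled modulus. A clean route is to note that $\int_0^{2\pi} f_{\lambda,\nu}\,\mathrm{d}\theta = \int_0^{2\pi} \big[(\lambda+\mathrm{L}_\nu)^{-1} i\sin\big]\,\mathrm{d}\theta$ and that $\mathrm{Re}\,\mathcal{L}[\mathrm{K}_\nu](\lambda)$ can be written, via the energy identity tested suitably, as a manifestly nonpositive (or bounded) quantity, so that $1 - \zeta\,\mathrm{Re}\,\mathcal{L}[\mathrm{K}_\nu](\lambda) \geq 1 - 2\pi\zeta \cdot(\text{something} \le 1) > 0$ under $\zeta < \tfrac{1}{2\pi}$. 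Making that last inequality quantitative and uniform in $\nu$ is the crux; I expect the argument to reduce, after these manipulations, to the elementary fact that $\big|\int_0^{2\pi}\tfrac{i\sin\theta}{\lambda+i\sin\theta}\,\mathrm{d}\theta\big| \le 2\pi$ with equality only in degenerate limits, together with a perturbative control of the diffusive correction by the energy estimate above.
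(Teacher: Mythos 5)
Your proposal correctly identifies the two useful tests of the resolvent equation (against the constant $1$ and against $\overline{f}$), and you correctly anticipate that the threshold $2\pi$ must enter via Cauchy–Schwarz. But as stated there is a genuine gap: you are aiming for a direct, $\nu$-uniform bound of the form $|\mathcal{L}[\mathrm{K}_\nu](\lambda)| < 1/\zeta$, and neither of your two candidate routes actually delivers it. The Fourier-mode (tridiagonal) route is left entirely unexecuted, and the ``parity argument near $\lambda=0$'' is a placeholder: you never explain what cancellation would survive when $\mathrm{Re}(\lambda)$ is small, which is precisely where the resolvent bound $\|f\|\lesssim 1/\mathrm{Re}(\lambda)$ degenerates.

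The cleaner path, which the paper takes, is a contradiction argument that crucially \emph{uses} the hypothesis $\zeta\mathcal{L}[\mathrm{K}_\nu](\lambda)=1$, rather than trying to bound $\mathcal{L}[\mathrm{K}_\nu]$ from above. Under that hypothesis, $\int_\T f\,\mathrm{d}\theta = 1/\zeta$ is \emph{real and positive}. Combining your two tests then gives the identity
\begin{align*}
\lambda\int_\T f\,\mathrm{d}\theta
= \overline{\lambda}\int_\T |f|^2\,\mathrm{d}\theta
- i\int_\T \sin\theta\,|f|^2\,\mathrm{d}\theta
+ \nu\int_\T |\partial_\theta f|^2\,\mathrm{d}\theta ,
\end{align*}
and \emph{because} $\int_\T f$ is real, taking the real part collapses to
\begin{align*}
\mathrm{Re}(\lambda)\int_\T f\,\mathrm{d}\theta
= \mathrm{Re}(\lambda)\int_\T |f|^2\,\mathrm{d}\theta + \nu\int_\T |\partial_\theta f|^2\,\mathrm{d}\theta .
\end{align*}
When $\mathrm{Re}(\lambda)=0$ this forces $\partial_\theta f\equiv 0$, which is incompatible with the equation; when $\mathrm{Re}(\lambda)>0$ one divides by $\mathrm{Re}(\lambda)$ to get $\int_\T f > \int_\T|f|^2$, and Cauchy–Schwarz, $\int_\T f \le \sqrt{2\pi}\,(\int_\T|f|^2)^{1/2}$, then yields $\int_\T f < 2\pi$. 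This contradicts $\int_\T f = 1/\zeta > 2\pi$. Notice that the small-$\nu$ uniformity is automatic (the viscous term is just discarded with the correct sign), precisely the step you flagged as ``the crux'' without resolving it. If you do not first reduce to the case where $\int_\T f$ is real, the real-part identity mixes $\mathrm{Re}(\lambda)\,\mathrm{Re}(\int f)$ with $\mathrm{Im}(\lambda)\,\mathrm{Im}(\int f)$ and the argument does not close.
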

\begin{proof}
As recalled before, for $\delta \in (0, \eps_0 \nu^{1/2})$ and  $\lambda \in \lbrace \mathrm{Re} \geq -\delta \rbrace$, we have
    \begin{align*}
        \mathcal{L}[\mathrm{K}_{\nu}](\lambda) =\int_0^{2\pi} f(\theta) \, \mathrm{d}\theta, \qquad 
        (\lambda +i \sin(\theta) -\nu \partial_{ \theta}^2)f=i \sin(\theta).
    \end{align*}     
We now argue by contradiction: let us suppose that there exists $\lambda \in \lbrace \mathrm{Re} \geq 0 \rbrace$ such that $$1-\zeta \mathcal{L}[\mathrm{K}_{\nu}](\lambda) = 0.$$ It yields the following key identity
    \begin{align}\label{contrad-arg-Laplace0}
        \frac{1}{\zeta} =\int_0^{2\pi} f(\theta) \, \mathrm{d}\theta,
    \end{align}
    so in particular $\int_0^{2\pi} f(\theta) \, \mathrm{d}\theta \in \R^+ {\setminus \lbrace 0 \rbrace}$. Integrating  equation \eqref{spectral-advec-diff} on $f$, we get by periodicity
    \begin{align}\label{eq-average-resolvent}
    \lambda \int_0^{2\pi} f(\theta) \, \mathrm{d}\theta = \int_0^{2\pi} \left( -i \sin(\theta) f(\theta) +\nu \partial_{ \theta}^2 f(\theta)+i \sin(\theta)\right)\, \mathrm{d}\theta =- i\int_0^{2\pi}  \sin(\theta) f(\theta) \, \mathrm{d}\theta.
    \end{align}
    Taking the complex conjugate of the equation on $f$, we also have
    \begin{align*}
        (\overline{\lambda} -i \sin(\theta) -\nu \partial_{ \theta}^2)\overline{f}=-i \sin(\theta),
    \end{align*}
    so that multiplying by $f$ and integrating, we get
    \begin{align*}
        - i\int_0^{2\pi}  \sin(\theta) f(\theta) \, \mathrm{d}\theta&=\int_0^{2\pi} \left( \overline{\lambda} \, \overline{f}(\theta) -i \sin(\theta) \overline{f}(\theta) -\nu \partial_{ \theta}^2 \overline{f}(\theta) \right) f(\theta) \, \mathrm{d}\theta \\
        &=\overline{\lambda} \int_0^{2\pi}\vert f(\theta) \vert^2 \, \mathrm{d}\theta  
        -i \int_0^{2\pi} \sin(\theta) \vert f(\theta) \vert^2 \, \mathrm{d}\theta+\nu \int_0^{2\pi}\vert \partial_\theta f(\theta) \vert^2 \, \mathrm{d}\theta.
    \end{align*}
    thanks to an integration by parts in $\theta$. 
This provides 
\begin{align*}
    \lambda \int_0^{2\pi} f(\theta) \, \mathrm{d}\theta=\overline{\lambda} \int_0^{2\pi}\vert f(\theta) \vert^2 \, \mathrm{d}\theta  
        -i \int_0^{2\pi} \sin(\theta) \vert f(\theta) \vert^2 \, \mathrm{d}\theta+\nu \int_0^{2\pi}\vert \partial_\theta f(\theta) \vert^2 \, \mathrm{d}\theta,
\end{align*}
so that by taking the real part, we end up with
\begin{align}\label{eq:realpart-arg-Laplace0}
    \mathrm{Re}(\lambda) \int_0^{2\pi} f(\theta) \, \mathrm{d}\theta=\mathrm{Re}(\lambda)\int_0^{2\pi}\vert f(\theta) \vert^2 \, \mathrm{d}\theta   +\nu \int_0^{2\pi}\vert \partial_\theta f(\theta) \vert^2 \, \mathrm{d}\theta.
\end{align}
If $\mathrm{Re}(\lambda)=0$, then $\partial_\theta f=0$ so that $f=c_{\lambda}$ is constant in $\theta$. Coming back to the equation on $f$, we get
\begin{align*}
    (\lambda+ i\sin(\theta))c_{\lambda}=i\sin(\theta)
\end{align*}
for all $\theta$, which is not possible. If $\mathrm{Re}(\lambda)>0$, \eqref{eq:realpart-arg-Laplace0} now yields
\begin{align*}
    \int_0^{2\pi} f(\theta) \, \mathrm{d}\theta=\int_0^{2\pi}\vert f(\theta) \vert^2 \, \mathrm{d}\theta   +\frac{\nu}{ \mathrm{Re}(\lambda)} \int_0^{2\pi}\vert \partial_\theta f(\theta) \vert^2 \, \mathrm{d}\theta>\int_0^{2\pi}\vert f(\theta) \vert^2.
\end{align*}
Since we also have \begin{align*}
    \int_0^{2\pi} f(\theta)  \, \mathrm{d}\theta \leq \sqrt{2 \pi} \left( \int_0^{2\pi}\vert f(\theta) \vert^2 \, \mathrm{d}\theta \right)^{1/2},
\end{align*}
we get the bound
\begin{align*}
    \left( \int_0^{2\pi}\vert f(\theta) \vert^2 \, \mathrm{d}\theta \right)^{1/2} <  \sqrt{2 \pi}.
\end{align*}
Using once again the former identity, we end up with
\begin{align*}
    \int_0^{2\pi} f(\theta)  \, \mathrm{d}\theta < 2\pi.
\end{align*}
Finally, coming back to  \eqref{contrad-arg-Laplace0}, we obtain 
\begin{align*}
    \frac{1}{\zeta}=\int_0^{2\pi} f(\theta)  \, \mathrm{d}\theta<2\pi,
\end{align*}
which contradicts the assumption $\zeta \in (0, \frac{1}{2\pi})$. This ends the proof.  
\end{proof}
We now turn to the study of the Laplace transform in the left half-space $\lbrace \mathrm{Re}<0 \rbrace$. For any $\delta \in (0, \eps_0 \nu^{-\frac12})$, we write
    $$\lbrace -\delta \le \mathrm{Re} <0 \rbrace= S_\delta^{\ge} \sqcup S_\delta^{<},$$
    where
    \begin{align*}
        S_\delta^{\ge} &\vcentcolon= \left\lbrace \lambda \in \C \mid -\delta \le  \mathrm{Re}(\lambda) <0 \quad \text{and} \quad \vert \mathrm{Im}(\lambda) \vert \ge 1 + \sqrt{2} \right\rbrace ,\\
        S_\delta^{<} &\vcentcolon= \left\lbrace \lambda \in \C \mid -\delta < \mathrm{Re}(\lambda) <0 \quad \text{and} \quad \vert \mathrm{Im}(\lambda) \vert < 1 + \sqrt{2} \right\rbrace.
    \end{align*}
We further split $S_\delta^{<}$ as
    \begin{align*}
    S_\delta^{<}= \mathrm{A}_{\delta} \sqcup \mathrm{B}_{\delta},
    \end{align*}
    where for all $\nu \in (0, \nu_0)$ we introduce
    \begin{align*}
        \mathrm{A}_{\delta} &\vcentcolon=  S_\delta^{<} \cap \left\lbrace \lambda \in \C \mid \vert 1-\zeta \mathcal{L}[\mathrm{K}_{\nu}](\lambda) \vert  \leq \frac{\nu}{2} \right\rbrace , \\
        \mathrm{B}_{\delta} &\vcentcolon=  S_\delta^{<} \cap \left\lbrace \lambda \in \C \mid \vert 1-\zeta \mathcal{L}[\mathrm{K}_{\nu}](\lambda) \vert > \frac{\nu}{2} \right\rbrace.
    \end{align*}
We refer to Figure \ref{Figure-splitting} below where the different sub-regions of the complex plane are pictured.
\begin{figure}[h]
\centering
    \begin{tikzpicture}[scale=0.9]
   % Parameters
    \def\cnu{1.2}      
    \def\R{1.5}        
    \def\viewH{2.8}    
    \def\viewW{4.5}    
    \def\ahsize{0.2}   % arrowhead size

    % Axes (normal lines)
    \draw[thick] (-\viewW,0) -- (\viewW,0);   % x-axis
    \draw[thick] (0,-\viewH) -- (0,\viewH);   % y-axis

    % Arrowheads manually drawn as thick triangles
    %\fill ( \viewW,0) -- ++(-\ahsize,0.08) -- ++(0,-0.16) -- cycle;   % x-axis arrow
    %\fill (0, \viewH) -- ++(-0.08,-\ahsize) -- ++(0.16,0) -- cycle;   % y-axis arrow

    % Axes labels
    %\node[right] at (\viewW,0) {$\Re(\lambda)$};
    %\node[above] at (0,\viewH) {$\Im(\lambda)$};

    % Right half-plane
    \fill[pink!20,opacity=0.8] (0,-\viewH) rectangle (\viewW,\viewH);

    % Vertical lines: Re=0 dashed and Re=-cν red
    %\draw[very thick] (0,-\viewH) -- (0,\viewH);
    \draw[very thick,red] (-\cnu,-\viewH) -- (-\cnu,\viewH);
    \node[red,anchor=south east,font=\small] at (-\cnu, \viewH) {$\Re=-\delta$};

    % Horizontal dashed lines Im = ±R
    \draw[dashed] (-\viewW,\R) -- (\viewW,\R);
    \draw[dashed] (-\viewW,-\R) -- (\viewW,-\R);
    \node[anchor=west,font=\small] at (\viewW,\R) {$\Im=1 + \sqrt{2}$};
    \node[anchor=west,font=\small] at (\viewW,-\R) {$\Im=-1 - \sqrt{2}$};

    % S_nu^{c,<R} (orange rectangle)
    \fill[orange!30,opacity=0.8] (-\cnu,-\R) rectangle (0,\R);
    \node at ($(-\cnu/2,0.5)$) {$S_\delta^{<}$};
    \node at ($(\cnu+\cnu,0.5)$) {$\Re>0$};

    % S_nu^{c,>R} (cyan rectangles)
    \fill[cyan!30,opacity=0.8] (-\cnu,\R) rectangle (0,\viewH);      % upper
    \fill[cyan!30,opacity=0.8] (-\cnu,-\viewH) rectangle (0,-\R);    % lower
    \node at ($(-\cnu/2,{(\R+\viewH)/2})$) {$S_\delta^{\geq}$};
    \node at ($(-\cnu/2,{-(\R+\viewH)/2})$) {$S_\delta^{\geq}$};

    \draw[->,thick] (-\viewW,0) -- (\viewW,0) node[right] {$\Re(\lambda)$};
    \draw[->,thick] (0,-\viewH) -- (0,\viewH) node[above] {$\Im(\lambda)$};
\end{tikzpicture}
\caption{Splitting of $\lbrace -\delta < \mathrm{Re} <0 \rbrace$.}
\label{Figure-splitting}
\end{figure}

\medskip

The goal is then  to study the behavior of $1-\zeta \mathcal{L}[\mathrm{K}_{\nu}]$ in the two sub-regions $\mathrm{A}_{\delta}$ and $S_\delta^{>}$ (since by definition we already have a lower-bound for this quantity on $\mathrm{B}_{\delta}$). 
We have the following key lemmas.

\begin{Lem}\label{LM:Laplace-bound-Im>R}
Suppose that $\zeta \in (0, \frac{1}{2\pi})$. For all $\nu \in (0, \nu_0)$, for all $\delta \in (0, \eps_0 \nu^{1/2})$, the following holds:
    \begin{align*}
        \forall \lambda \in S_{\delta}^{\ge}, \qquad \vert 1-\zeta \mathcal{L}[\mathrm{K}_{\nu}](\lambda) \vert \ge \frac{1}{2}. 
    \end{align*}
\end{Lem}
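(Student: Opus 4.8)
The plan is to exploit the fact that on $S_\delta^{\ge}$ the modulus of $\mathrm{Im}(\lambda)$ is at least $1+\sqrt 2$, which is large compared with the oscillation amplitude $\|\sin\theta\|_{L^\infty}=1$ of the transport coefficient. This forces the resolvent solution $f=f_{\lambda,\nu}$ of \eqref{spectral-advec-diff} to be small in $L^2_\theta$, and consequently makes $\mathcal{L}[\mathrm{K}_{\nu}](\lambda)=\int_0^{2\pi}f\,\mathrm{d}\theta$ small enough that $\zeta\,|\mathcal{L}[\mathrm{K}_{\nu}](\lambda)|$ stays strictly below $\tfrac12$. This is the ``easy'' complementary region, in contrast with the genuinely delicate region $\mathrm{A}_\delta$ treated afterwards.

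Concretely, I would first note that since $\lambda\in S_\delta^{\ge}$ satisfies $\mathrm{Re}(\lambda)\ge-\delta>-\eps_0\nu^{1/2}$, the representation $\mathcal{L}[\mathrm{K}_{\nu}](\lambda)=\int_0^{2\pi}f(\theta)\,\mathrm{d}\theta$ with $(\lambda+i\sin\theta-\nu\partial_\theta^2)f=i\sin\theta$ is valid, as recalled before the statement. Then I would run the basic $L^2_\theta$ energy identity: multiplying this equation by $\overline f$, integrating over $\T$, and integrating by parts in the diffusion term (no boundary contribution by periodicity) gives
\[
\lambda\,\Vert f\Vert_{L^2_\theta}^2 + i\int_0^{2\pi}\sin\theta\,|f|^2\,\mathrm{d}\theta + \nu\,\Vert\partial_\theta f\Vert_{L^2_\theta}^2 = i\int_0^{2\pi}\sin\theta\,\overline f\,\mathrm{d}\theta.
\]
Taking imaginary parts kills the nonnegative diffusion term and yields that the imaginary part of the left-hand side equals $\mathrm{Im}(\lambda)\,\Vert f\Vert_{L^2_\theta}^2+\int_0^{2\pi}\sin\theta\,|f|^2\,\mathrm{d}\theta$. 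Since $|\sin\theta|\le1$, this is bounded below in absolute value by $(|\mathrm{Im}(\lambda)|-1)\,\Vert f\Vert_{L^2_\theta}^2\ge\sqrt2\,\Vert f\Vert_{L^2_\theta}^2$ on $S_\delta^{\ge}$. Bounding the right-hand side by Cauchy--Schwarz, $\bigl|\int_0^{2\pi}\sin\theta\,\overline f\bigr|\le\Vert\sin\theta\Vert_{L^2_\theta}\Vert f\Vert_{L^2_\theta}=\sqrt\pi\,\Vert f\Vert_{L^2_\theta}$, and combining the two bounds gives $\Vert f\Vert_{L^2_\theta}\le\sqrt{\pi/2}$ (the case $f\equiv0$ being trivial).

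I would then conclude with $|\mathcal{L}[\mathrm{K}_{\nu}](\lambda)|\le\sqrt{2\pi}\,\Vert f\Vert_{L^2_\theta}\le\pi$, whence
\[
\bigl|1-\zeta\,\mathcal{L}[\mathrm{K}_{\nu}](\lambda)\bigr|\ge 1-\zeta\,\bigl|\mathcal{L}[\mathrm{K}_{\nu}](\lambda)\bigr|\ge 1-\pi\zeta>1-\pi\cdot\frac{1}{2\pi}=\frac12,
\]
using $\zeta<\tfrac1{2\pi}$. There is no serious obstacle here; the only point deserving a word of care is that the threshold $1+\sqrt2$ in the definition of $S_\delta^{\ge}$ is precisely what makes the chain of constants close: the factor $\sqrt2$ coming from the imaginary part, the factor $\sqrt\pi=\Vert\sin\Vert_{L^2_\theta}$, and the factor $\sqrt{2\pi}=\Vert 1\Vert_{L^2_\theta}$ multiply out to exactly $\pi$, which is strictly below $1/(2\zeta)$. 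The argument is robust and uses only the crude pointwise bound $|\sin\theta|\le1$ together with the enhanced-dissipation information already recorded in \eqref{decay-kernel}.
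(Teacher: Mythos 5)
Your proof is correct and follows essentially the same approach as the paper's: the same $L^2_\theta$ energy identity for the resolvent equation, the same imaginary-part extraction to discard the diffusion term and obtain $(|\mathrm{Im}(\lambda)|-1)\|f\|_{L^2_\theta}\le\sqrt{\pi}$, and the same pair of Cauchy--Schwarz bounds $\|\sin\|_{L^2_\theta}=\sqrt{\pi}$, $\|1\|_{L^2_\theta}=\sqrt{2\pi}$ combined with the threshold $|\mathrm{Im}(\lambda)|\ge 1+\sqrt{2}$ to land on $1-\pi\zeta\ge\frac12$.
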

\begin{Lem}\label{LM:Laplace-bound-Im<R}
Suppose that $\zeta \in (0, \frac{1}{2\pi})$. Let $c_0 = \frac12 \eps_0 \nu_0^{-\frac12}$. There exists  $c \in (0, c_0)$, $c' > 0$ and $\nu'_0 \in (0, \nu_0)$ such that for all $\nu \in (0, \nu'_0)$, we have
    \begin{align*}
        \forall \lambda \in \mathrm{A}_{c\nu}, \qquad \vert 1-\zeta \mathcal{L}[\mathrm{K}_{\nu}](\lambda) \vert \ge c' \nu.
    \end{align*}
\end{Lem}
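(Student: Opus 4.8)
The plan is to argue by contradiction. Fix a small $\nu\in(0,\nu_0)$, suppose that some $\lambda\in S_{c\nu}^{<}$ satisfies $|1-\zeta\mathcal{L}[\mathrm{K}_{\nu}](\lambda)|<c'\nu$, and derive a contradiction once $c\in(0,c_0)$ and $c'>0$ are chosen small enough, independently of $\nu$ (note $c<c_0$ guarantees $c\nu<\eps_0\nu^{1/2}$, so $\mathcal{L}[\mathrm{K}_\nu]$ is defined on the relevant strip). As in Subsection~\ref{Section-Stab-Volterra}, write $\rho_\lambda\vcentcolon=\mathcal{L}[\mathrm{K}_{\nu}](\lambda)=\int_0^{2\pi}f\,\mathrm{d}\theta$ with $(\lambda+i\sin\theta-\nu\partial_\theta^2)f=i\sin\theta$, and recall the two identities used in the proof of Lemma~\ref{LM:stability-cond-Laplacekernel-REALPART-positive}:
\begin{align*}
\lambda\rho_\lambda=-i\int_0^{2\pi}\sin(\theta)f(\theta)\,\mathrm{d}\theta,\qquad \mathrm{Re}(\lambda)\Vert f\Vert_{L^2_\theta}^2+\nu\Vert\partial_\theta f\Vert_{L^2_\theta}^2=\mathrm{Re}(\lambda\rho_\lambda).
\end{align*}
The contradiction hypothesis forces $|\rho_\lambda-\zeta^{-1}|<c'\nu/\zeta$, hence $\mathrm{Re}(\rho_\lambda)\geq\zeta^{-1}-c'\nu/\zeta$ is close to $\zeta^{-1}>2\pi$, and $|\mathrm{Im}(\rho_\lambda)|\leq c'\nu/\zeta$. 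The goal is to produce, on every portion of $S_{c\nu}^{<}$, an a priori upper bound on $\mathrm{Re}(\rho_\lambda)$ incompatible with this, or to show outright that $|1-\zeta\rho_\lambda|$ is bounded below by a fixed positive constant, hence $\gg c'\nu$.

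\emph{Bulk sub-region.} First I would treat the set where $\lambda$ stays at distance $\ge r$ from both endpoints $\pm i$ of the segment $i[-1,1]$, for a fixed small $r>0$. There the symbol $\theta\mapsto\lambda+i\sin\theta$ only (nearly) vanishes near points bounded away from the critical points $\pm\pi/2$ of $\sin$, so the diffusion acts on a nondegenerate turning point and can be treated perturbatively: an energy estimate for $f-\sin\theta/(\lambda+i\sin\theta)$ in the spirit of Proposition~\ref{Prop-compa-reldispersion}, but carried out uniformly up to $\mathrm{Re}(\lambda)=0$, shows that $1-\zeta\rho_\lambda$ is $o(1)$-close (as $\nu\to0$) to $D_\zeta(\lambda)$, respectively to the half-sum of its two boundary values $D_\zeta(i\tau\pm0)$ if $\lambda$ sits on the segment. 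By the explicit formula \eqref{def:rescaledDispersionRelation} both of these have real part $1-2\pi\zeta>0$. Hence on this sub-region $|1-\zeta\rho_\lambda|\geq\tfrac12(1-2\pi\zeta)\geq c'\nu$ for $\nu$ small; equivalently $\mathrm{A}_{c\nu}$ cannot meet it, and it remains to treat a neighbourhood of $\pm i$.

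\emph{Endpoint sub-region.} The crux is the region $\mathrm{dist}(\lambda,\{\pm i\})<r$, say near $\lambda=-i$, where the two turning points of $\theta\mapsto\lambda+i\sin\theta$ coalesce at the nondegenerate critical point $\theta=\pi/2$ of $\sin$: writing $\lambda=-i+\mu$ one has $\lambda+i\sin\theta\approx\mu-i(\theta-\pi/2)^2/2$, so $\nu\partial_\theta^2$ is of the same order in a layer $|\theta-\pi/2|\sim\nu^{1/4}$ and can no longer be discarded. Here I would resolve $f$ by an inner boundary-layer analysis on that scale (the balance $\nu/\ell^2\sim\ell^2$ forcing $\ell\sim\nu^{1/4}$ and $|\mu|\sim\nu^{1/2}$), matched to the outer non-diffusive profile $\sin\theta/(\lambda+i\sin\theta)$, so as to get two-sided control of the layer and outer contributions to $\rho_\lambda$ and of $\Vert f\Vert_{L^2_\theta}$. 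One then reads off either that $|\rho_\lambda|$ is too large for $|1-\zeta\rho_\lambda|\leq\nu/2$, or, in the thin intermediate zone where $\rho_\lambda$ may be of size $\zeta^{-1}$, that the quantitative form of the two identities above — keeping the positive term $\nu\Vert\partial_\theta f\Vert_{L^2_\theta}^2$ and using the layer bound on $\Vert f\Vert_{L^2_\theta}$, exactly as in Lemma~\ref{LM:stability-cond-Laplacekernel-REALPART-positive} but now tracking $\nu$ — is incompatible with $\mathrm{Re}(\rho_\lambda)\geq\zeta^{-1}-c'\nu/\zeta$. Choosing the strip width $c\nu$ and the constant $c'$ small enough then closes the contradiction on all of $\mathrm{A}_{c\nu}$.

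\emph{Main obstacle.} The whole difficulty is concentrated in the endpoint sub-region: the coalescence of the turning points at a critical point of $\sin$ makes $(\lambda+\mathrm{L}_{\nu})^{-1}[\sin]$ genuinely non-perturbative, with a boundary layer of width $\nu^{1/4}$, and the powers of $\nu$ lost in matching this layer to the outer solution are precisely what confines the argument to a strip of width $\delta=c\nu$ and to the lower bound $m=c'\nu$ — and hence what produces the non-optimal prefactors $\nu^{-5/4},\nu^{-7/4}$ in Theorem~\ref{thm-stability-reduced} (cf.\ Remark~\ref{rem_thm2}-(3) and the footnote there); reaching $\delta\sim\nu^{1/2}$, $m\sim\nu^{1/2}$ seems out of reach by this route. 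By contrast, the bulk sub-region and the half-space $\mathrm{Re}(\lambda)\geq0$ treated in Lemma~\ref{LM:stability-cond-Laplacekernel-REALPART-positive} are comparatively soft.
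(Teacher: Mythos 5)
Your proposal takes a genuinely different route to the central quantitative ingredient. The paper never splits $A_{c\nu}$ by the location of $\lambda$ in the complex plane, and in particular never performs a boundary-layer or turning-point resolution of $f_{\lambda,\nu}$. Instead, its proof runs in two steps. Step~1 establishes the uniform lower bound
\begin{align*}
\liminf_{\nu\to 0^+}\;\inf_{\lambda\in A_{c_0\nu}}\;\int_0^{2\pi}|\partial_\theta f_{\lambda,\nu}|^2\,\mathrm{d}\theta \;>\;0
\end{align*}
by a soft compactness-and-contradiction argument: if a violating sequence $(\nu_n,\lambda_n)$ existed, Poincar\'e would send the oscillatory part $\widetilde f_{\lambda_n,\nu_n}$ to $0$ in $H^1_\theta$, the constraint $\lambda_n\in A_{c_0\nu_n}$ would pin $\langle f_{\lambda_n,\nu_n}\rangle\to\frac{1}{2\pi\zeta}$, and passing to the limit in the resolvent equation in $\mathscr{D}'$ would force $\frac{\lambda}{2\pi\zeta}+\frac{i\sin\theta}{2\pi\zeta}=i\sin\theta$, i.e.\ $\zeta=\frac{1}{2\pi}$, a contradiction. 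Step~2 then feeds this dissipation floor into the real-part energy identity and isolates $\Im\bigl(\int f_{\lambda,\nu}\bigr)$, using $|\Re\lambda|\le c\nu$ and $|\Im\lambda|\le 1+\sqrt{2}$ to conclude $\bigl|\Im\bigl(\int f_{\lambda,\nu}\bigr)\bigr|\ge c'\nu$ for $c$ small. Your endgame (track $\nu$ in the identities of Lemma~\ref{LM:stability-cond-Laplacekernel-REALPART-positive}) is the same in spirit as Step~2; what differs is how the lower bound on $\nu\|\partial_\theta f_{\lambda,\nu}\|^2_{L^2_\theta}$ is obtained.

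On the gaps: the two hard pieces of your sketch — the limiting-absorption comparison $1-\zeta\mathcal{L}[\mathrm{K}_\nu](\lambda)\approx D_\zeta(\lambda)$ uniformly up to the cut away from $\pm i$, and the inner layer of width $\nu^{1/4}$ at the coalesced turning point near $\pm i$ matched to the outer non-diffusive profile — are asserted but not carried out, and neither is easy. In particular, the uniform bulk comparison is exactly the kind of resolvent/kernel comparison that the paper flags as failing in this problem at the $L^1_t$ level because the non-diffusive kernel decays only like $t^{-1/2}$; making the pointwise Laplace-side version work up to $\Re\lambda=0$ requires a genuine Airy/turning-point analysis that is not in the paper and is not done in your sketch. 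The paper's compactness argument is precisely designed to sidestep both constructions: it gets the $\nu$-uniform lower bound on the dissipation without ever resolving the layer, at the price of a non-constructive $\eta$. Your route, if completed, might give sharper quantitative information about the layer and perhaps indicate where the $\nu^{1/2}$ rates conjectured in Remark~\ref{rem_thm2}-(3) could be recovered, but as written it leaves the central estimates unproved.
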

Assuming for a moment that Lemmas \ref{LM:Laplace-bound-Im>R}--\ref{LM:Laplace-bound-Im<R} hold, and appealing to Lemma \ref{LM:stability-cond-Laplacekernel-REALPART-positive}, we can now state the following corollary. Here, the  constants $c,c', \nu'_0$ are those given by Lemma \ref{LM:Laplace-bound-Im<R}.
\begin{Cor}\label{Coro-Laplace-bound}
   Suppose that $\zeta \in (0,\frac{1}{2\pi})$. For all $ \nu \in (0, \nu'_0)$, we have
    \begin{align} \label{eq:non-vanishing-Laplace-whole-half-space}
    \forall \lambda \in \lbrace -c \nu \le  \mathrm{Re} \rbrace, \qquad  1-\zeta \mathcal{L}[\mathrm{K}_{\nu}](\lambda) \vert \neq 0,
\end{align}
and 
    \begin{align} \label{eq:quantitative-bound-Laplace}
        \forall \lambda \in \lbrace -c \nu \le \mathrm{Re} <0 \rbrace, \qquad \vert 1-\zeta \mathcal{L}[\mathrm{K}_{\nu}](\lambda) \vert \ge  \min\left( \frac{1}{2},c'\right)\nu.
    \end{align}
\end{Cor}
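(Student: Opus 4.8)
The plan is to obtain Corollary \ref{Coro-Laplace-bound} by patching together the three preceding lemmas over a partition of the half-plane $\{\mathrm{Re} \ge -c\nu\}$, the constants $c, c', \nu_0'$ being those furnished by Lemma \ref{LM:Laplace-bound-Im<R}. Fix $\nu \in (0, \nu_0')$ and set $\delta := c\nu$. The first step is to check that this choice of $\delta$ is admissible, i.e.\ that $\delta \in (0, \eps_0 \nu^{1/2})$, so that $\mathcal{L}[\mathrm{K}_\nu]$ is analytic on a neighborhood of $\{\mathrm{Re} \ge -\delta\}$ by the decay estimate \eqref{decay-kernel}, and so that Lemma \ref{LM:Laplace-bound-Im>R} is applicable with this $\delta$. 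This is immediate from $c < c_0 = \tfrac12 \eps_0 \nu_0^{-1/2}$ together with $\nu < \nu_0' < \nu_0 \le 1$: indeed $c\nu < c_0\nu = \tfrac12 \eps_0 \nu_0^{-1/2}\,\nu \le \tfrac12 \eps_0 \nu^{1/2} < \eps_0 \nu^{1/2}$.

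Next I would decompose $\{\mathrm{Re} \ge -c\nu\} = \{\mathrm{Re} \ge 0\} \cup S_{c\nu}^{\ge} \cup \mathrm{A}_{c\nu} \cup \mathrm{B}_{c\nu}$ and read off the estimate for $|1 - \zeta\mathcal{L}[\mathrm{K}_\nu](\lambda)|$ region by region. On $\{\mathrm{Re} \ge 0\}$, Lemma \ref{LM:stability-cond-Laplacekernel-REALPART-positive} gives non-vanishing. On $S_{c\nu}^{\ge}$, Lemma \ref{LM:Laplace-bound-Im>R} gives $|1 - \zeta\mathcal{L}[\mathrm{K}_\nu](\lambda)| \ge \tfrac12$. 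On $\mathrm{A}_{c\nu}$, Lemma \ref{LM:Laplace-bound-Im<R} gives $|1 - \zeta\mathcal{L}[\mathrm{K}_\nu](\lambda)| \ge c'\nu$. On $\mathrm{B}_{c\nu}$, the bound $|1 - \zeta\mathcal{L}[\mathrm{K}_\nu](\lambda)| > \nu/2$ holds by the very definition of $\mathrm{B}_{c\nu}$. Since $\nu < \nu_0' < 1$, in each of the last three regions the lower bound is at least $\min(\tfrac12, c')\,\nu$, hence in particular strictly positive; combining this with the $\{\mathrm{Re} \ge 0\}$ case yields \eqref{eq:non-vanishing-Laplace-whole-half-space}, while restricting to $\{-c\nu \le \mathrm{Re} < 0\}$ (where only the three last regions intervene) yields the quantitative bound \eqref{eq:quantitative-bound-Laplace}.

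At the level of this corollary the only genuine care needed is purely bookkeeping: one must make sure the partition $\{-c\nu \le \mathrm{Re} < 0\} = S_{c\nu}^{\ge} \cup S_{c\nu}^{<}$ actually covers the whole strip including the boundary segment $\{\mathrm{Re} = -c\nu,\ |\mathrm{Im}| < 1+\sqrt{2}\}$ (reading the defining inequality of $S_\delta^{<}$ as $-\delta \le \mathrm{Re} < 0$, which does not affect Lemma \ref{LM:Laplace-bound-Im<R}), so that no point of $\{\mathrm{Re} \ge -c\nu\}$ is left out. The substantive difficulty is not here but in Lemmas \ref{LM:Laplace-bound-Im>R}–\ref{LM:Laplace-bound-Im<R}, and especially in the ``central'' region $\mathrm{A}_{c\nu}$, where $1 - \zeta\mathcal{L}[\mathrm{K}_\nu]$ is small and one must extract the linear-in-$\nu$ lower bound from the resolvent identity $\mathcal{L}[\mathrm{K}_\nu](\lambda) = \int_0^{2\pi} f_{\lambda,\nu}\,\mathrm{d}\theta$ with $(\lambda + \mathrm{L}_\nu) f_{\lambda,\nu} = i\sin\theta$; this is precisely what forces the thresholds $\delta = c\nu$ and $m = c'\nu$, and hence the decay rates and prefactors appearing in Theorem \ref{thm-stability-reduced}.
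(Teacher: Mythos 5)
Your proposal is correct and coincides with the argument the paper leaves implicit: the corollary is simply the three lemmas applied on the partition $\{\mathrm{Re}\ge 0\}\cup S_{c\nu}^{\ge}\cup \mathrm{A}_{c\nu}\cup \mathrm{B}_{c\nu}$, with the choice $\delta=c\nu$. Your admissibility check $c\nu < \tfrac12\eps_0\nu_0^{-1/2}\nu \le \tfrac12\eps_0\nu^{1/2}$ (using $\nu<\nu_0$) is exactly the reason the constraint $\delta\in(0,\eps_0\nu^{1/2})$ in Lemma~\ref{LM:Laplace-bound-Im>R} is satisfied, and your observation that the paper's display for $S_\delta^{<}$ reads $-\delta<\mathrm{Re}<0$ (strict) while $S_\delta^{\ge}$ reads $-\delta\le\mathrm{Re}<0$ is a genuine (if harmless) slip in the source: the boundary segment $\{\mathrm{Re}=-\delta,\ |\mathrm{Im}|<1+\sqrt2\}$ would be uncovered unless one reads $S_\delta^{<}$ with $\le$, as the proof of Lemma~\ref{LM:Laplace-bound-Im<R} in fact permits since all the estimates there only use $\lambda\in\{\mathrm{Re}>-\eps_0\nu^{1/2}\}$.
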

Let us now turn to the proof of 
Lemmas \ref{LM:Laplace-bound-Im>R}--\ref{LM:Laplace-bound-Im<R}.
\begin{proof}[Proof of Lemma \ref{LM:Laplace-bound-Im>R}]
As before, for $\lambda \in \lbrace \mathrm{Re} > - \eps_0 \nu^{\frac12} \rbrace$, we have
    \begin{align*}
        \mathcal{L}[\mathrm{K}_{\nu}](\lambda)=\int_0^{2\pi} f(\theta) \, \mathrm{d}\theta, \qquad 
        (\lambda +i \sin(\theta) -\nu \partial_{ \theta}^2)f=i \sin(\theta).
    \end{align*}
To lighten the notation, we set $\Vert \cdot \Vert=\Vert \cdot \Vert_{L^2_\theta}$ in the rest of the section. Multiplying by the conjugate $\overline{f}$ and integrating $\theta$ in the former equation, we obtain after taking the imaginary part: 
\begin{align*}
    \Im(\lambda) \,  \|f\|^2 + \int_0^{2\pi} \sin(\theta) |f(\theta)|^2 \dd\theta = \mathrm{Im}\left(\int_0^{2\pi} i \sin(\theta) \overline{f}(\theta) \, \mathrm{d}\theta \right),
\end{align*}
By Cauchy-Schwarz inequality, we can therefore infer that for $\vert \mathrm{Im}(\lambda) \vert>1$, we have
\begin{align*}
\left(\vert \mathrm{Im}(\lambda) \vert -1\right) \|f\|^2   \le \|\sin\| \, \|f\| = \sqrt{\pi} \|f\|,
\end{align*}
hence the bound
\begin{align*}
\Vert f \Vert \le \frac{\sqrt{\pi}}{\vert \mathrm{Im}(\lambda) \vert-1}.
\end{align*}
We now write by the triangular inequality and Cauchy-Schwarz inequality
\begin{align*}
\vert 1-\zeta \mathcal{L}[\mathrm{K}_{\nu}](\lambda) \vert \geq  1-\zeta \left\vert \int_0^{2\pi} f(\theta) \, \mathrm{d}\theta \right\vert \geq 1-\sqrt{2\pi} \zeta \Vert f \Vert,
\end{align*}
so that for $\lambda \in S_{\delta}^{\ge}$, by  the former upper bound on $\|f\|$ and as $\zeta < \frac{1}{2\pi}$, we infer
\begin{align*}
\vert 1-\zeta \mathcal{L}[\mathrm{K}_{\nu}](\lambda) \vert \ge  1-\frac{\sqrt{2}\pi \zeta}{|\Im \lambda|-1} \geq  1 - \pi \zeta \ge \frac{1}{2}.
\end{align*}
It concludes the proof.
\end{proof}
\begin{proof}[Proof of Lemma \ref{LM:Laplace-bound-Im<R}]
We shall prove a slightly stronger result:  there exists $c \in (0, c_0)$, $c' > 0$ and $\nu'_0 \in (0, \nu_0)$ such that for all $\nu \leq \nu'_0$, we have
    \begin{align*}
        \forall \lambda \in \mathrm{A}_{c\nu}, \qquad \zeta \vert  \mathrm{Im}( \mathcal{L}[\mathrm{K}_{\nu}](\lambda) )   \vert \ge  c' \nu.
    \end{align*}
%In particular, it implies the desired control so we are bound to prove the following inequality. 
Once again, for all  $\lambda \in \lbrace \mathrm{Re} > -\eps_0 \nu^{\frac12} \rbrace$, we have
    \begin{align*}
        \mathcal{L}[\mathrm{K}_{\nu}](\lambda) =\int_0^{2\pi} f_{\lambda, \nu}(\theta) \, \mathrm{d}\theta, \qquad 
        (\lambda +i \sin(\theta) -\nu \partial_{\theta}^2)f_{\lambda, \nu}=i \sin(\theta).
    \end{align*}     
We emphasize the dependency of $f_{\lambda, \nu}$ in $\lambda$ and $\nu$ for the rest of the proof. We also denote
\begin{align*}
  \langle f_{\lambda, \nu} \rangle \vcentcolon= \frac{1}{2\pi} \int_{0}^{2\pi}f_{\lambda, \nu}(\theta) \, \mathrm{d}\theta, \qquad   \widetilde f_{\lambda, \nu} \vcentcolon= f_{\lambda, \nu} - \langle f_{\lambda, \nu} \rangle,
\end{align*}
the mean part and the oscillatory parts of $f_{\lambda, \nu}$ (that are orthogonal in $L^2_\theta$). We now proceed in two steps.

\medskip

\noindent \textbf{Step 1.}  The following holds 
\begin{align}\label{eq:control-liminf-Dissip}
\underset{\nu \rightarrow 0^+}\liminf \, \inf_{\lambda \in A_{c_0\nu}}  \int_0^{2\pi} \vert \partial_\theta  f_{\lambda, \nu}(\theta) \vert^2 \, \mathrm{d}\theta >0. 
\end{align}
Let us prove this fact, arguing by contradiction. We assume that the former limit is zero: there exists a positive sequence $(\nu_n)$ converging to zero and a sequence $(\lambda_n)$ with $\lambda_n \in \mathrm{A}_{c_0 \nu_n}$ such that
\begin{align*}
\int_0^{2\pi} \vert \partial_\theta  f_{\lambda_n, \nu_n}(\theta) \vert^2 \, \mathrm{d}\theta \underset{n \rightarrow + \infty}{\longrightarrow} 0.
\end{align*}
The same holds for $\widetilde{f}_{\lambda_n, \nu_n}$ instead of $f_{\lambda_n, \nu_n}$, so, by Poincaré inequality we can deduce that $\widetilde{f}_{\lambda_n, \nu_n} \overset{n \rightarrow + \infty}{\rightarrow} 0$ in $H^1_\theta$. Furthermore, since $\lambda_n \in \mathrm{A}_{c_0\nu_n}$, we know that
\begin{align*}
\left\vert 1-\zeta \int_0^{2\pi} f_{\lambda_n, \nu_n}(\theta) \, \mathrm{d}\theta](\lambda) \right\vert  \leq \frac{\nu_n}{2},
\end{align*}
therefore $\langle f_{\lambda_n, \nu_n} \rangle  \overset{n \rightarrow + \infty}{\rightarrow} \frac{1}{2\pi \zeta}$. By compactness, we can also assume that $\lambda_n \overset{n \rightarrow + \infty}{\rightarrow} \lambda \in \C$, with $\mathrm{Re}(\lambda)=0$. We can now pass to the limit in the sense of distributions in the equation
\begin{align*}
\lambda_n f_{\lambda_n, \nu_n} +i \sin(\theta) f_{\lambda_n, \nu_n} -\nu_n \partial_{\theta}^2 f_{\lambda_n, \nu_n}=i \sin(\theta),
\end{align*}
that yields
\begin{align*}
\lambda \frac{1}{2\pi \zeta}+ i \sin(\theta)\frac{1}{2\pi \zeta}=i \sin(\theta),
\end{align*}
and in particular $\frac{1}{2\pi \zeta}=1$, that is $\zeta=\frac{1}{2\pi}$. This yields a contradiction.

\medskip

\noindent \textbf{Step 2.}  Let us now bound $\mathrm{Im}( \mathcal{L}[\mathrm{K}_{\nu}](\lambda) ) $ from below. According to \eqref{eq:control-liminf-Dissip} from Step 1, we know that there exists $\nu'_0 \in  (0, \nu_0)$ and $\eta>0$ such that for all $\nu \in (0, \nu'_0)$, we have 
\begin{align}\label{eq:conseq-Step1toStep2-liminf}
 \inf_{\lambda \in \mathrm{A}_{c_0 \nu}}  \int_0^{2\pi} \vert \partial_\theta  f_{\lambda, \nu}(\theta) \vert^2 \, \mathrm{d}\theta \geq \eta.
\end{align}
We now restart from the equation 
$$\lambda f_{\lambda, \nu} +i \sin(\theta) f_{\lambda, \nu} -\nu \partial_{\theta}^2f_{\lambda, \nu}=i \sin(\theta).$$
Taking the complex conjugate, we have
    \begin{align*}
        (\overline{\lambda} -i \sin(\theta) -\nu \partial_{ \theta}^2)\overline{f_{\lambda,\nu}}=-i \sin(\theta),
    \end{align*}
    so that multiplying by $f_{\lambda,\nu}$ and integrating, we get
    \begin{align*}
        - i\int_0^{2\pi}  \sin(\theta) f_{\lambda, \nu}(\theta) \, \mathrm{d}\theta&=\int_0^{2\pi} \left( \overline{\lambda} \, \overline{f_{\lambda, \nu}}(\theta) -i \sin(\theta) \overline{f_{\lambda, \nu}}(\theta) -\nu \partial_{ \theta}^2 \overline{f_{\lambda, \nu}}(\theta) \right) f_{\lambda, \nu}(\theta) \, \mathrm{d}\theta \\
        &=\overline{\lambda} \int_0^{2\pi}\vert f_{\lambda, \nu}(\theta) \vert^2 \, \mathrm{d}\theta  
        -i \int_0^{2\pi} \sin(\theta) \vert f_{\lambda, \nu}(\theta) \vert^2 \, \mathrm{d}\theta+\nu \int_0^{2\pi}\vert \partial_\theta f_{\lambda, \nu}(\theta) \vert^2 \, \mathrm{d}\theta,
    \end{align*}
    thanks to an integration by parts in $\theta$. 
This provides the identity
\begin{align*}
    \lambda \int_0^{2\pi} f_{\lambda, \nu}(\theta) \, \mathrm{d}\theta=\overline{\lambda} \int_0^{2\pi}\vert f_{\lambda, \nu}(\theta) \vert^2 \, \mathrm{d}\theta  
        -i \int_0^{2\pi} \sin(\theta) \vert f_{\lambda, \nu}(\theta) \vert^2 \, \mathrm{d}\theta+\nu \int_0^{2\pi}\vert \partial_\theta f_{\lambda, \nu}(\theta) \vert^2 \, \mathrm{d}\theta.
\end{align*}
Taking the real part in this relation, we obtain
\begin{align*}
&\mathrm{Re}(\lambda) \ \mathrm{Re}\left( \int_0^{2\pi} f_{\lambda, \nu}(\theta) \, \mathrm{d}\theta\right) -\mathrm{Im}(\lambda) \ \mathrm{Im}\left(\int_0^{2\pi} f_{\lambda, \nu}(\theta) \, \mathrm{d}\theta \right)\\ & \qquad=\mathrm{Re}(\lambda) \int_0^{2\pi}\vert f_{\lambda, \nu}(\theta) \vert^2 \, \mathrm{d}\theta +\nu \int_0^{2\pi}\vert \partial_\theta f_{\lambda, \nu}(\theta) \vert^2 \, \mathrm{d}\theta \\
&\qquad=\mathrm{Re}(\lambda) \left( \Vert \langle f_{\lambda, \nu}\rangle \Vert^2+ \Vert \widetilde{f}_{\lambda, \nu}\Vert^2 \right)  +\nu \int_0^{2\pi}\vert \partial_\theta f_{\lambda, \nu}(\theta) \vert^2 \, \mathrm{d}\theta.
\end{align*}
Note that for all $\lambda \in \mathrm{A}_{c_0 \nu}$, we have by definition
\begin{align*}
\langle f_{\lambda, \nu}\rangle=\frac{1}{2\pi \zeta}+\mathcal{O}(\nu),
\end{align*}
as $\nu \rightarrow 0$. As a consequence, for any $0 \le c \le c_0$, if $\lambda \in \mathrm{A}_{c\nu}$ we infer from the previous identities, Poincaré inequality and the bound \eqref{eq:conseq-Step1toStep2-liminf} that
 \begin{align*}
 &\vert \mathrm{Im}(\lambda) \vert \left\vert\mathrm{Im}\left(\int_0^{2\pi} f_{\lambda, \nu}(\theta) \, \mathrm{d}\theta \right)\right\vert \\
 &\qquad\geq \nu \int_0^{2\pi}\vert \partial_\theta f_{\lambda, \nu}(\theta) \vert^2 \, \mathrm{d}\theta
 - \vert \mathrm{Re}(\lambda) \vert \left( \left\vert \mathrm{Re}\left( \int_0^{2\pi} f_{\lambda, \nu}(\theta) \, \mathrm{d}\theta\right) \right\vert+  \Vert \langle f_{\lambda, \nu}\rangle \Vert^2+ \Vert \widetilde{f}_{\lambda, \nu}\Vert^2 \right) \\
 &\qquad\geq (\nu-c C_P\nu) \int_0^{2\pi}\vert \partial_\theta f_{\lambda, \nu}(\theta) \vert^2 \, \mathrm{d}\theta
 - c \nu  \left( \frac{1}{\zeta}+ \frac{1}{(2\pi \zeta)^2}+\mathcal{O}(\nu)\right) \\
 &\qquad\geq
\nu(1-c C_P) \eta
 - c \nu  \left( \frac{1}{\zeta}+ \frac{1}{(2\pi \zeta)^2}+\mathcal{O}(\nu) \right),
 \end{align*}
 for some universal constant $C_P>0$. Choosing $c \in (0, c_0)$ small enough, and up to reducing $\nu'_0$ to control the $\mathcal{O}(\nu)$ remainder, we deduce the lower-bound
 \begin{align*}
 \vert \mathrm{Im}(\lambda) \vert \left\vert\mathrm{Im}\left(\int_0^{2\pi} f_{\lambda, \nu}(\theta) \, \mathrm{d}\theta \right)\right\vert \geq \frac{\eta}{2} \nu,
 \end{align*}
We deduce that 
 \begin{align*}
\left\vert\mathrm{Im}\left(\int_0^{2\pi} f_{\lambda, \nu}(\theta) \, \mathrm{d}\theta \right)\right\vert \geq \frac{\eta \nu}{2 \vert \mathrm{Im}(\lambda) \vert} \geq c' \nu, 
 \end{align*}
 with $c' = \frac{\eta}{2(1+\sqrt{2})}$. This is the desired estimate.
\end{proof}

 \subsection{Final estimates}\label{section-finalestimMODELA}
We aim at proving that the following estimates on the density $\rho(t)$ and on the distribution function $f(t,\theta)$ (under the assumption $0 < \zeta < \frac{1}{2\pi}$):  there exists $\eta_0 \in (0, \eps_0 \nu_0^{-1/2})$ (where $\eps_0, \nu_0$ are defined in Lemma \ref{LM-enhanceddiffusion}) and $\nu'_0>0$ such that for $\nu \in (0, \nu'_0)$, we have 
\begin{align}
   \label{bound-final-rho} \vert \rho(t) \vert &\lesssim \left(1+\nu^{-\frac{5}{4}}\right) \e^{-\eta_0 \nu t} \Vert f^{\mathrm{in}} \Vert_{L^2_\theta}, \qquad t \geq 0,  \\ 
   \label{bound-final-f} \Vert f(t) \Vert_{L^2_\theta} &\lesssim \left(1+\nu^{-\frac{7}{4}}\right)\e^{-\eta_0 \nu t} \Vert f^{\mathrm{in}} \Vert_{L^2_\theta}, \qquad t \geq 0.
\end{align}
%Note that from the last bound \eqref{bound-final-f}, we can infer that for all $t \geq \nu^{-\frac{7}{4}}$, we have 
%\begin{align*}
%    \Vert f(t) \Vert_{L^2_\theta} &\lesssim  (1+t)\e^{-%\eta_0 \nu t} \Vert f^{\mathrm{in}}\Vert_{L^2_\theta}.
%\end{align*}
In what follows, we can always assume that $\nu'_0<1$.
\medskip

\underline{\eqref{bound-final-rho} implies \eqref{bound-final-f}}: let us assume temporarily that \eqref{bound-final-rho} holds. Coming back to the Duhamel formula for $f$, we know that 
\begin{align*}
    f(t,\theta)=(\e^{-\mathrm{L}_{\nu} t} f^{\mathrm{in}})(\theta)+\zeta  \int_0^t \rho(s) \Big(\e^{-\mathrm{L}_{\nu} (t-s)} i \sin \Big)(\theta) \, \mathrm{d}s,
\end{align*} and therefore, relying on the enhanced-diffusion estimate from Lemma \ref{LM-enhanceddiffusion} and on the bound \eqref{bound-final-rho}, we obtain for all $t \geq 0$
\begin{align*}
    \Vert f(t) \Vert_{L^2_\theta} &\lesssim \left\Vert \e^{-\mathrm{L}_{\nu} t} f^{\mathrm{in}} \right\Vert_{L^2_\theta}+ \int_0^t \vert \rho(s) \vert \left\Vert \e^{-\mathrm{L}_{\nu} (t-s)} i \sin \right\Vert_{L^2_\theta} \, \mathrm{d}s \\
    & \lesssim \e^{-\eps_0 \nu^{\frac{1}{2}} t} \left\Vert f^{\mathrm{in}} \right\Vert_{L^2_\theta} 
    + \left(1+\nu^{-\frac{5}{4}}\right)\Vert f^{\mathrm{in}}  \Vert_{L^2_\theta}\e^{-\eps_0 \nu^{\frac{1}{2}}t}\int_0^t   \e^{(\eps_0\nu^{\frac{1}{2}}-\eta_0 \nu) s} \, \mathrm{d}s .
\end{align*}
Since $\eps_0\nu^{\frac{1}{2}}-\eta_0 \nu=\nu^{\frac{1}{2}}\left(\eps_0-\eta_0 \nu^{\frac{1}{2}}\right)$ and since we can always reduce $\nu'_0$ so that for all $\nu \in (0, \nu'_0)$ we have $\eps_0-\eta_0 \nu^{\frac{1}{2}}> \eps_0/10$, we find $\eps_0\nu^{1/2}-\eta_0 \nu> \eps_0\nu^{1/2}/10>0 $. We get for all $t \geq 0$
\begin{align*}
    \Vert f(t) \Vert_{L^2_\theta} &\lesssim \e^{-\eps_0 \nu^{\frac{1}{2}} t} \left\Vert f^{\mathrm{in}} \right\Vert_{L^2_\theta} 
    + 10 \nu^{-\frac{1}{2}} \left( 1+\nu^{-\frac{5}{4}} \right)\eps_0^{-1} \e^{-\eps_0 \nu^{\frac{1}{2}}t}\left(\e^{(\eps_0\nu^{\frac{1}{2}}-\eta_0 \nu) t} - 1\right)\Vert f^{\mathrm{in}} \Vert_{L^2_\theta} \\
    & \lesssim \left(1+\nu^{-\frac{1}{2}}+\nu^{-\frac{7}{4}}\right)\e^{-\min(\eps_0 \nu^{1/2}, \eta_0 \nu) t}\Vert f^{\mathrm{in}} \Vert_{L^2_\theta} \\
    & \lesssim\left(1+\nu^{-\frac{7}{4}}\right)\e^{-\eta_0 \nu t}\Vert f^{\mathrm{in}} \Vert_{L^2_\theta}.
\end{align*}
We have thus obtained the claimed estimate \eqref{bound-final-f}.

\medskip

\underline{Proof of \eqref{bound-final-rho}}: we first recall the Volterra equation \eqref{eq:Volterra-stab} satisfied by $\rho$ and  that reads for all $t>0$
\begin{align*}
    \rho(t)-\zeta \int_0^t \mathrm{K}_{\nu}(t-s) \rho(s) \, \mathrm{d}s = S(t), \qquad S(t)=\int_0^{2\pi} (\e^{-\mathrm{L}_{\nu} t} f^{\mathrm{in}})(\theta) \, \mathrm{d}\theta,
\end{align*}
We shall use notation from Section \ref{Section-Stab-Volterra}, and  \eqref{eq:non-vanishing-Laplace-whole-half-space}-\eqref{eq:quantitative-bound-Laplace}.
In what follows, we set
\begin{align*}
    \gamma=\frac{c}{2},
\end{align*}
and in particular, we have $\gamma \nu <c \nu<\eps_0 \nu^{1/2}$. In view of the decay estimates \eqref{decay-kernel} and \eqref{decay-forcing} for the kernel $\mathrm{K}_\nu(t)$ and the source $S(t)$, we get for all $\nu \in (0, \nu'_0) \subset (0, \nu_0)$
\begin{align*}
    t \mapsto \e^{\gamma \nu t} \mathrm{K}_\nu(t) \in L^1(\R^+), \qquad t \mapsto \e^{\gamma \nu t} S(t) \in L^1(\R^+),
\end{align*} 
Appealing to \eqref{eq:non-vanishing-Laplace-whole-half-space}, and to the Paley-Wiener theorem (see Theorem \ref{thm-Paley-Wiener} from the Appendix), we obtain the fact that the Volterra equation has a unique solution satisfying $t \mapsto \e^{\gamma \nu t} \rho(t) \in L^1(\R^+)$.

\medskip

We now argue with ideas in the spirit of \cite{MouhotVillani}. First, we extend the kernel $K_\nu$, the source $S$ and the solution $\rho$ by zero for negative times $t<0$, so that the Volterra equation now reads
\begin{align*}
    \underline{\rho}(t)-\zeta \int_{\R} \underline{\mathrm{K}_{\nu}}(t-s) \rho(s) \, \mathrm{d}s = \underline{S}(t), \qquad t \in \R,
\end{align*}
where $\underline{\cdot}$ denotes the extension operator by zero. Multiplying by $\e^{\gamma \nu t}$ in both sides, and defining
\begin{align*}
            (\rho_+, \mathrm{K}_{\nu, +}, S_+)(t)\vcentcolon=\e^{\gamma \nu t} (\overline{\rho}, \overline{\mathrm{K}_\nu}, \overline{S})(t), \qquad t>0
        \end{align*}
 we obtain
        \begin{align*}
        \rho_+(t)-\zeta  (\mathrm{K}_{\nu,+} \star \rho_+)(t) \, \mathrm{d}s = S_+(t), \qquad t \in \R,\end{align*}
where the convolution appearing above is the standard convolution in time on $\R$. By the previous integrability properties, it makes sense to take the Fourier transform of such equation, where we use the normalization
\begin{align*}
    \widehat{g}(\xi)=\int_{\R} \e^{-i t \xi}
 g(t) \, \mathrm{d}t.
 \end{align*}
This yields
\begin{align*}
   \widehat{\rho_+}(\xi)\left(1-\zeta  \widehat{\mathrm{K}_{\nu,+}}(\xi) \right) = \widehat{S_+}(\xi), \qquad \xi \in \R.
\end{align*}
Using the classical observation that
\begin{align*}
    \mathcal{L}[\mathrm{K}_\nu](\lambda)=\widehat{\mathrm{K}_{\nu,+}}(\xi), \qquad \lambda=i \xi-\gamma \nu,
\end{align*}
we are left to consider $\mathrm{Re}(\lambda)=-\gamma \nu$ in the previous stability conditions: in particular, the bound from below \eqref{eq:quantitative-bound-Laplace} implies that 
$1-\zeta  \widehat{\mathrm{K}_{\nu,+}}(\xi)$ is bounded away from zero for all $\xi \in \R$, so that we can write
        \begin{align*}
            \widehat{\rho_+}(\xi)=\frac{\widehat{S_+}(\xi)}{1-\zeta  \widehat{\mathrm{K}_{\nu,+}}(\xi)}, \qquad \xi \in \R.
            \end{align*}
Since $\mathrm{Im}(\lambda)=\xi$, the bound \eqref{eq:quantitative-bound-Laplace} and Plancherel theorem entail
\begin{align*}
    \Vert \rho_+ \Vert_{L^2(\R)} \lesssim  \frac{\Vert S_+ \Vert_{L^2(\R)}}{\min\left( \frac{1}{2},c'\right)\nu}.
\end{align*}
Coming back to the definition of the source $S$ and relying on the bound \eqref{decay-forcing}, we also have
\begin{align*}
    \Vert S_+ \Vert_{L^2(\R)} \lesssim \Vert \e^{-\eps_0 \nu^{\frac{1}{2}}t}\Vert_{L^2(\R^+)}  \Vert f^{\mathrm{in}} \Vert_{L^2_\theta} \le \nu^{-\frac{1}{4}} \Vert f^{\mathrm{in}} \Vert_{L^2_\theta},
\end{align*}
so that, since $\nu <\nu'_0<1$, we end up with
\begin{align*}
    \Vert \rho_+ \Vert_{L^2(\R)} \lesssim  \nu^{-\frac{5}{4}}\Vert f^{\mathrm{in}} \Vert_{L^2_\theta}.
\end{align*}
We now aim at improving this $L^2$ bound into a pointwise bound in time. Rather than relying on Young's inequality for convolution (that would yield a worse factor in terms of $\nu$), we proceed as follows (remembering that $\rho_+$ is not in $H^1_\theta$  because of the extension by $0$): considering a mollification $\chi_{\vert \xi \vert \leq R}$ of the indicator function $\mathbf{1}_{\vert \xi \vert \leq R}$, we split between low and high frequencies by writing
\begin{align*}
    \rho_+(t)=\mathcal{F}^{-1}\left(\chi_{\vert \xi \vert \leq R}\frac{\widehat{S_+}(\xi)}{1-\zeta  \widehat{\mathrm{K}_{\nu,+}}(\xi)} \right)
    +\mathcal{F}^{-1}\left((1-\chi_{\vert \xi \vert \leq R})\frac{\widehat{S_+}(\xi)}{1-\zeta  \widehat{\mathrm{K}_{\nu,+}}(\xi)}\right)\vcentcolon=\mathrm{I}(t)+\mathrm{II}(t).
\end{align*}
where $\mathcal{F}^{-1}$ stands for the standard inverser Fourier transform.
For the low frequency part, we have
\begin{align*}
\Vert \mathrm{I} \Vert_{H^1(\R)} \lesssim_R \Vert \mathrm{I} \Vert_{L^2(\R)} \lesssim_R \frac{\Vert S_+ \Vert_{L^2(\R)}}{\nu} \lesssim_R \nu^{-\frac{5}{4}}\Vert f^{\mathrm{in}} \Vert_{L^2_\theta},
\end{align*}
as before. For the high-frequency part, we first observe that the proof of Lemma \ref{LM:Laplace-bound-Im>R} yields the control
\begin{align*}
    \vert \widehat{\mathrm{K}_{\nu,+}}(\xi) \vert \lesssim \frac{1}{\vert \xi \vert-1}, \qquad \vert \xi \vert>R>1,
\end{align*}
since $\mathrm{Im}(\lambda)=\xi$. We take $R = 1 + 2 \zeta$,  hence $\mathbf{1}_{\vert \xi \ge  R} \zeta \vert \widehat{\mathrm{K}_{\nu,+}}(\xi) \vert <\frac{1}{2}$. We can now expand the previous expression in power series:
\begin{align*}
   \widehat{\mathrm{II}}(\xi)=(1-\chi_{\vert \xi \vert \leq R})\frac{\widehat{S_+}(\xi)}{1-\zeta  \widehat{\mathrm{K}_{\nu,+}}(\xi)}
    =\widehat{S_+}(\xi)-\chi_{\vert \xi \vert \leq R}\widehat{S_+}(\xi) 
    +(1-\chi_{\vert \xi \vert \leq R})\widehat{S_+}(\xi)\displaystyle\sum_{k=1}^{+\infty} \zeta^k  \widehat{\mathrm{K}_{\nu,+}}(\xi)^k.
\end{align*}
Note that the last part satisfies
\begin{align*}
    \left\vert (1-\chi_{\vert \xi \vert \leq R})\widehat{S_+}(\xi)\displaystyle\sum_{k=1}^{+\infty} \zeta^k  \widehat{\mathrm{K}_{\nu,+}}(\xi)^k \right\vert \lesssim (1-\chi_{\vert \xi \vert \leq R})\vert \widehat{S_+}(\xi)\vert \frac{\vert \zeta \widehat{\mathrm{K}_{\nu,+}}(\xi) \vert }{1-\zeta \vert \widehat{\mathrm{K}_{\nu,+}}(\xi) \vert} \lesssim (1-\chi_{\vert \xi \vert \leq R})\frac{\vert \widehat{S_+}(\xi)\vert}{\vert \xi \vert}.
\end{align*}
As before, we can infer that
\begin{align*}
\Vert \mathrm{II}-S_+ \Vert_{H^1(\R)} \lesssim \nu^{-\frac{5}{4}}\Vert f^{\mathrm{in}} \Vert_{L^2_\theta},
\end{align*}
and therefore one has
\begin{align*}
\Vert \rho_+-S_+ \Vert_{H^1(\R)} \lesssim \nu^{-\frac{5}{4}}\Vert f^{\mathrm{in}} \Vert_{L^2_\theta}.  
\end{align*}
By Sobolev embedding in 1d, we can now deduce that
\begin{align*}
    \e^{\gamma \nu t}\vert \rho(t) - S(t) \vert \lesssim \nu^{-\frac{5}{4}}\Vert f^{\mathrm{in}} \Vert_{L^2_\theta}, \qquad t >0,
\end{align*}
and according to the bound \eqref{decay-forcing} on the source $S$, we get
\begin{align*}
    \e^{\gamma \nu t}\vert \rho(t) \vert \lesssim \e^{\gamma \nu t}\vert \rho(t)-S(t) \vert+  \e^{\gamma \nu t} \e^{-\eps_0 \nu^{\frac{1}{2} t}}\Vert f^{\mathrm{in}} \Vert_{L^2_\theta} \lesssim \left(1+\nu^{-\frac{5}{4}}\right) \Vert f^{\mathrm{in}} \Vert_{L^2_\theta}.
\end{align*}
We finally end up with with the decay estimate 
\begin{align*}
 \vert \rho(t) \vert \lesssim  \left(1+\nu^{-\frac{5}{4}}\right) \Vert f^{\mathrm{in}} \Vert_{L^2_\theta} \e^{-\gamma \nu t},
\end{align*}
which yields the desired bound \eqref{bound-final-rho} and concludes the proof.

\section{A variant of the main equation \eqref{eq:fulleq-INTRO}}\label{Appendix-Variant-eq}

The purpose of this section is to show how to extend the study performed in the former sections to a variant of the master equation \eqref{eq:fulleq-INTRO}. Since it will share very close features with what has been done before, we will only highlight the main points of the analysis.

\medskip

We consider the equation
    \begin{align} 
    \partial_t f+ \mathrm{div}_x \big[ f(1-\rho_f) e(\theta)+ p_f f\big]=\kappa \mathrm{div}_x[(1-\rho_f)\nabla_x f+ f \nabla_x \rho_f]+\nu \partial^2_{ \theta} f,
\end{align}
where the local density and momentum are defined by 
$$\rho_f \vcentcolon=\int_{0}^{2\pi} e(\theta) f \, \mathrm{d}\theta, \qquad  p_f\vcentcolon= \int_{0}^{2\pi} e(\theta) f \, \mathrm{d}\theta,$$
and where $\kappa, \nu \geq 0$. This model can for instance be found in \cite{Bruna-models} (see also \cite{ASregularity}). Note that here, we only consider the case of an affine decreasing velocity depending on the density. As for \eqref{eq:fulleq-INTRO} before (See (4) in Remark \ref{rmk-THM-instab}), we can choose a proper time-space rescaling (different from \cite{bruna-wellposedness-fulldiff}) according to which the translational and rotational diffusion coefficients $\kappa$ and $\nu$ can be taken different. To ease the parallel with the analysis performed for \eqref{eq:fulleq-INTRO}, we will enforce $\kappa=0$. 

\medskip
In the rest of this section, we will therefore consider 
\begin{align}\label{Model B}
    \partial_t f+ \mathrm{div}_x \big[ f(1-\rho_f) e(\theta)+ p_f f\big] = \nu \partial^2_{ \theta} f.
\end{align}
As for \eqref{eq:fulleq-INTRO}, the equation \eqref{Model B} admits the following class of homogeneous steady states
\begin{align*}
    f^\star=\frac{\phi}{2\pi}, \qquad \rho_{f^\star}=\phi, \qquad p_{f^\star}=0 \qquad \text{for some} \qquad \phi \in (0,1),
\end{align*}
and the linearization of \eqref{Model B} around such an equilibrium reads
\begin{align}\label{eq:MODELB-linearizedEQ-intro-full}
  \partial_t f+ (1-\phi)e(\theta)\cdot \nabla_x f - \frac{\phi}{2 \pi} e(\theta) \cdot \nabla_x \rho+ \frac{\phi}{2\pi} \mathrm{div}_x(p)=\nu \partial_{ \theta}^2 f,
\end{align}
while its non-diffusive counterpart with $\nu=0$ is
\begin{align}\label{eq:MODELB-linearizedEQ-intro-fullINVISCID}
  \partial_t f+ (1-\phi)e(\theta)\cdot \nabla_x f - \frac{\phi}{2 \pi} e(\theta) \cdot \nabla_x \rho+ \frac{\phi}{2\pi} \mathrm{div}_x(p)=0,
\end{align}
where we set
$$\rho \vcentcolon=\int_{0}^{2\pi} e(\theta) f \, \mathrm{d}\theta, \qquad  p\vcentcolon= \int_{0}^{2\pi} e(\theta) f \, \mathrm{d}\theta,$$
for the sake of simplicity. For this model, our main results are gathered in the following theorem, which is in the same spirit as Theorems \ref{thm-instability}--\ref{thm-stability}. Roughly speaking, it asserts the existence of the same stability threshold $\phi=1/2$ for the density. 

\begin{Thm}\label{thm-modelB}
Let $\phi \in (0, 1)$. Then the following alternative holds:
    \begin{itemize}
    \item \underline{Case $\phi \in (1/2, 1)$}: 
    \begin{itemize}
        \item If $\nu=0$, there exists an exponential growing mode solution to the non-diffusive linearized equation \eqref{eq:MODELB-linearizedEQ-intro-fullINVISCID}
        \item If $\nu>0$ is small enough, the same statement as before persists for the diffusive linearized equation \eqref{eq:MODELB-linearizedEQ-intro-full}. %up to some correction of order $\nu$.
    \end{itemize} 
    \item \underline{Case $\phi \in (0,1/2)$}:
    \begin{itemize}
        \item If $\nu=0$, there exists no exponential growing mode solution to the non-diffusive linearized equation \eqref{eq:MODELB-linearizedEQ-intro-fullINVISCID}
        \item If $\nu>0$ is small enough, the same statement as before persists for the diffusive linearized equation \eqref{eq:MODELB-linearizedEQ-intro-full}.
    \end{itemize}
\end{itemize}
\end{Thm}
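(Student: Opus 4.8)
The plan is to replay, almost line by line, the analysis of \eqref{eq:fulleq-INTRO} carried out in Sections~\ref{Section-spectral-inviscid}--\ref{Section-stab-diffusive}; the only structural change is that the linearized feedback now involves the momentum, so the \emph{scalar} dispersion relation $D_\zeta$ is replaced by a $2\times 2$ one. First I would reduce \eqref{eq:MODELB-linearizedEQ-intro-full}: take the $x$-Fourier transform, drop $k=0$, use the rotational invariance to take $k=(0,|k|)$, and rescale time by $(1-\phi)|k|\,t$ (so that $\nu$ becomes $\nu'=\nu/((1-\phi)|k|)$). Setting $\zeta\vcentcolon=\frac{\phi}{2\pi(1-\phi)}$ (so that $\zeta>\frac1{2\pi}\Leftrightarrow\phi>\frac12$) and writing $\rho=\int_\T \mathrm{f}\,\mathrm{d}\theta$, $\sigma=\int_\T\sin\theta\,\mathrm{f}\,\mathrm{d}\theta$ for the density and the $k$-component of the momentum, the linearized equation becomes
\[
    \partial_t \mathrm{f} + i\sin\theta\,\mathrm{f} - \nu\,\partial_\theta^2 \mathrm{f} = i\zeta\bigl(\sin\theta\,\rho - \sigma\bigr),
\]
with the non-diffusive version obtained by setting $\nu=0$; as in Section~\ref{SectionIntro-Proof}, Theorem~\ref{thm-modelB} then reduces to the corresponding (in)stability statements for these reduced equations.

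For $\nu=0$ I would insert the ansatz $\mathrm{f}(t,\theta)=\e^{\lambda t}\mathrm{g}(\theta)$ with $\lambda\notin i[-1,1]$, obtain $\mathrm{g}=\frac{i\zeta(\sin\theta\,\rho-\sigma)}{\lambda+i\sin\theta}$, and close the two moment identities $\rho=\int_\T\mathrm{g}$, $\sigma=\int_\T\sin\theta\,\mathrm{g}$ into a homogeneous $2\times 2$ linear system for $(\rho,\sigma)$ whose coefficients are the integrals $I_j(\lambda)\vcentcolon=\int_0^{2\pi}\frac{\sin^j\theta}{\lambda+i\sin\theta}\,\mathrm{d}\theta$, $j=0,1,2$. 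A growing mode exists iff the determinant $\widetilde D_\zeta(\lambda)\vcentcolon=1+\zeta^2 I_1(\lambda)^2-\zeta^2 I_0(\lambda)I_2(\lambda)$ vanishes. Evaluating the $I_j$ by the same Weierstrass substitution/residue argument as in the proof of Lemma~\ref{LM:formula-dispersionD} (via Lemma~\ref{LM:integral-rationalfrac}) gives $I_0=\tfrac{2\pi}{\lambda\sqrt{1+1/\lambda^2}}$, $I_1=-2\pi i\bigl(1-\tfrac{1}{\sqrt{1+1/\lambda^2}}\bigr)$, $I_2=2\pi\lambda\bigl(1-\tfrac{1}{\sqrt{1+1/\lambda^2}}\bigr)$, whence the closed form
\[
    \widetilde D_\zeta(\lambda)=1-(2\pi\zeta)^2\Bigl(1-\tfrac{1}{\sqrt{1+1/\lambda^2}}\Bigr),
\]
with the principal square root. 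Arguing exactly as in Proposition~\ref{Prop-instalinear-inviscid} ($\widetilde D_\zeta$ is even, and $\widetilde D_\zeta(\lambda)=0\Leftrightarrow\tfrac{1}{\sqrt{1+1/\lambda^2}}=1-(2\pi\zeta)^{-2}$), I would conclude: if $\phi<\frac12$ the right-hand side is negative while the left-hand side has positive real part, so there is no zero in $\C\setminus i[-1,1]$ and no growing mode; if $\phi>\frac12$ one solves $\lambda^2=\frac{((2\pi\zeta)^2-1)^2}{2(2\pi\zeta)^2-1}>0$, producing a genuine unstable eigenvalue $\widetilde\lambda_\zeta>0$. This settles the $\nu=0$ parts of Theorem~\ref{thm-modelB}.

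For the diffusive unstable case ($\phi>\frac12$, $0<\nu\ll1$) I would copy Section~\ref{Section-instab-diffusive}: define $\widetilde D_\zeta^\nu(\lambda)$ as the determinant of the same $2\times 2$ system with $(\lambda+i\sin\theta)^{-1}$ replaced by the resolvent $(\lambda+\mathrm{L}_\nu)^{-1}$, $\mathrm{L}_\nu=i\sin\theta-\nu\partial_\theta^2$ (analytic on $\{\Re\lambda>0\}$ by Lemma~\ref{LM:resolventModelB}); the resolvent energy estimate of Proposition~\ref{Prop-compa-reldispersion}, now run on both $(\lambda+\mathrm{L}_\nu)^{-1}\sin-\frac{\sin\theta}{\lambda+i\sin\theta}$ and $(\lambda+\mathrm{L}_\nu)^{-1}\mathbf{1}-\frac{1}{\lambda+i\sin\theta}$, gives $|\widetilde D_\zeta^\nu-\widetilde D_\zeta|\lesssim\nu$ near $\widetilde\lambda_\zeta$; since a direct computation shows $\widetilde D_\zeta$ vanishes to order $m=1$ at $\widetilde\lambda_\zeta$, Rouché's theorem yields an unstable zero $\widetilde\lambda_\zeta^\nu$ with $|\widetilde\lambda_\zeta^\nu-\widetilde\lambda_\zeta|\lesssim\nu$. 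For the diffusive stable case ($\phi<\frac12$, $0<\nu\ll1$) I would follow Section~\ref{Section-stab-diffusive}: an energy identity for the reduced eigenmode equation confines any eigenvalue with $\Re\lambda\ge0$ to a fixed bounded box around $i[-1,1]$, and one must then prove the Penrose-type non-vanishing of $\widetilde D_\zeta^\nu$ on $\{\Re\lambda\ge0\}$. This last step is the analogue of Lemma~\ref{LM:stability-cond-Laplacekernel-REALPART-positive} together with Lemmas~\ref{LM:Laplace-bound-Im>R}--\ref{LM:Laplace-bound-Im<R}, carried out for the pair of moments $\bigl(\int_0^{2\pi}(\lambda+\mathrm{L}_\nu)^{-1}\mathbf{1}\,\mathrm{d}\theta,\ \int_0^{2\pi}(\lambda+\mathrm{L}_\nu)^{-1}\sin\,\mathrm{d}\theta\bigr)$ in place of $\mathcal{L}[\mathrm{K}_\nu]$, using the enhanced dissipation of Lemma~\ref{LM-enhanceddiffusion}; the corresponding $2\times 2$ Volterra equation for $(\rho(t),\sigma(t))$ would moreover give quantitative decay, although Theorem~\ref{thm-modelB} records only the absence of growing modes.

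The hard part will be precisely this stable diffusive case. Unlike for \eqref{eq:fulleq-INTRO}, the feedback is now a rank-two, non-sign-definite perturbation $\mathrm{g}\mapsto i\zeta(\sin\theta\,\rho-\sigma)$ of $\mathrm{L}_\nu$, so a bare energy estimate does not by itself exclude $\Re\lambda>0$; one genuinely needs the non-perturbative resolvent bounds of Section~\ref{Section-stab-diffusive} in the joint boundary layer $\{\,\Re\lambda,\ \nu\ \text{both small}\,\}$, now adapted to the vector-valued kernel. Everything else should be a routine---if somewhat more bookkeeping-intensive---repetition of Sections~\ref{Section-spectral-inviscid}--\ref{Section-instab-diffusive}.
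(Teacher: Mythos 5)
Your broad outline is correct and matches the paper's own strategy closely: the reduction, the $2\times 2$ system on $(\rho,\sigma)$, the explicit determinant $\widetilde D_\zeta(\lambda)=1-(2\pi\zeta)^2\bigl(1-\tfrac{1}{\sqrt{1+1/\lambda^2}}\bigr)$, the real root $\lambda^2=\tfrac{((2\pi\zeta)^2-1)^2}{2(2\pi\zeta)^2-1}$, and the Rouch\'e argument for the diffusive unstable case all agree, up to notation (your $I_j=\int_0^{2\pi}\tfrac{\sin^j\theta}{\lambda+i\sin\theta}\,d\theta$ differ by a factor of $i$ from the paper's $\mathcal{J}_\lambda,\mathcal{I}_\lambda,\mathcal{K}_\lambda$), with Proposition~\ref{Prop-instalinear-inviscid-MODEL2} and Section~\ref{Appendix-Variant-eq}.

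Where you misjudge the paper is your final paragraph. You claim that, because the feedback $\mathrm{g}\mapsto i\zeta(\sin\theta\,\rho-\sigma)$ is rank-two and sign-indefinite, ``a bare energy estimate does not by itself exclude $\Re\lambda>0$'' and that one ``genuinely needs the non-perturbative resolvent bounds of Section~\ref{Section-stab-diffusive} in the joint boundary layer.'' This is overcautious, and not what the paper does. Proposition~\ref{PROP:stability-cond-Laplacekernel-modelB} establishes the Penrose condition $\det(\mathrm{I}+\zeta\mathcal{L}[\mathcal{K}_\nu](\lambda))\neq 0$ on the whole of $\{\Re\lambda\ge 0\}$ (hence the absence of growing modes, for any $\nu>0$) by exactly a bare energy argument, in direct analogy with Lemma~\ref{LM:stability-cond-Laplacekernel-REALPART-positive}. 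Given a putative zero, one writes $h=(\lambda+\mathrm{L}_\nu)^{-1}\bigl(-i\sin\theta\,\rho+ip\bigr)$ with the moment constraints $\rho+\zeta\int h=0$ and $p+\zeta\int\sin\theta\,h=0$, integrates the equation once, pairs it with $\overline{h}$, and combines the resulting identities into
\[
\Re(\lambda)\Bigl|\int_0^{2\pi} h\Bigr|^2=\Bigl(\pi+\tfrac{1}{2\zeta}\Bigr)\Bigl(\Re(\lambda)\int_0^{2\pi}|h|^2+\nu\int_0^{2\pi}|\partial_\theta h|^2\Bigr);
\]
Cauchy--Schwarz $|\int h|^2\le 2\pi\int|h|^2$ then forces $\zeta>\tfrac1{2\pi}$, a contradiction, while $\Re\lambda=0$ kills the dissipation, so $h$ is constant, hence zero, hence $(\rho,p)=0$. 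The point you miss is that the two moment constraints provide exactly the cancellation needed to turn the rank-two, sign-indefinite source into a single good-sign identity: the cross term $\Re(ip\overline\rho)$ appears with compatible coefficients on both sides of the two energy balances. The boundary-layer lemmas~\ref{LM:Laplace-bound-Im>R}--\ref{LM:Laplace-bound-Im<R} and the enhanced-dissipation strip $\{\Re\ge -c\nu\}$ are only needed to extract the quantitative decay rate $e^{-\eta_0\nu t}$, which the paper explicitly declines to prove for Model~B. So the step you flag as the genuine difficulty is in fact handled by the same elementary mechanism as for Model~A, and there is no gap there.
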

For what concerns stability or instability of homogeneous steady states at the linear level, Model \eqref{Model B} therefore shares close properties with \eqref{eq:fulleq-INTRO}. In the rest of this section, we aim at providing the main steps to prove Theorem \ref{thm-modelB}: since the main strategy and methods are quite close, we only highlight the key features without detailing everything. Note that regarding the stability of \eqref{eq:MODELB-linearizedEQ-intro-full} when $\nu=0$ and $\nu>0$, we only prove that spectral stability holds and do not provide any quantitative time decay rate. We believe that the approaches taken in Sections \ref{Section-Landau-damping} and \ref{Section-stab-diffusive} are robust enough to be adapted in that case and would lead to suitable decay estimates, in the same spirit as the ones of Theorem \ref{thm-stability}. For the sake of conciseness, we prefer not to delve into such refinements.

Performing the same procedure as in Section \ref{Section:reductionScaling-STAB-MODELA}, it is now enough to study the following reduced equation
\begin{align}\label{eq:rescaled eq2-MODELB}
     \partial_t f + i \sin(\theta)f -\nu \partial_\theta^2 f= i\zeta  \sin(\theta) \rho-i\zeta  p,
\end{align}
and its non-diffusive counterpart
\begin{align}\label{eq:rescaled eq2-MODELB-inv}
 \partial_t f + i \sin(\theta)f = i\zeta  \sin(\theta) \rho-i\zeta  p,
\end{align}
where we have set
\begin{align*}
    \zeta\vcentcolon=\frac{\phi}{2\pi(1-\phi)} \in \R^+_\star,
\end{align*}
and
\begin{align}\label{def:normalize-rho-p-ModelB}
    \rho(t)\vcentcolon=\int f(t,\theta)\, \mathrm{d}\theta, \qquad p(t)\vcentcolon=\int \sin(\theta) f(t,\theta)\, \mathrm{d}\theta.
\end{align}
We will infer an equivalent version of Theorems \ref{thm-instability-reduced}--\ref{thm-stability-reduced} from \eqref{eq:rescaled eq2-MODELB}--\eqref{eq:rescaled eq2-MODELB-inv}, the regime $\zeta>\frac{1}{2\pi}$ (resp. $\zeta<\frac{1}{2\pi}$) corresponding to $\phi>\frac{1}{2}$ (resp. $\phi<\frac{1}{2}$). For the sake of conciseness, we do not write these statements, modifications being straightforward.

\subsection{Dispersion relation in the non-diffusive case}
We focus on the non-diffusive linearized equation, that is \eqref{eq:rescaled eq2-MODELA-inv}. The goal is mainly to derive an explicit dispersion relation for this problem, providing suitable unstable modes. 
\begin{Def}
    For $\lambda \notin i [-1,1]$, we define 
    \begin{align}\label{eq:dispersion-rel-inviscidModelB}
\Gamma_\zeta(\lambda)\vcentcolon=\mathrm{det}\left(\mathrm{I}+\zeta \mathbb{A}_{\lambda} \right),
    \end{align}
    where we set
    \begin{align}\label{eq:defMatrixA}
\mathbb{A}_{\lambda} \vcentcolon=  \begin{pmatrix}
-\mathcal{I}_{\lambda} & \mathcal{J}_{\lambda} \\ 
-\mathcal{K}_{\lambda} & \mathcal{I}_{\lambda}
\end{pmatrix},
\end{align}
with
\begin{align*}
\mathcal{I}_{\lambda} \vcentcolon= i\int_{-\pi}^{\pi} \frac{ \sin(\theta) }{ \lambda + i\sin(\theta)} \, \mathrm{d}\theta, \ \ 
\mathcal{J}_{\lambda}\vcentcolon= i\int_{-\pi}^{\pi} \frac{1}{ \lambda +i \sin(\theta)} \, \mathrm{d}\theta, \ \ 
\mathcal{K}_{\lambda}\vcentcolon= i\int_{-\pi}^{\pi} \frac{ \sin(\theta)^2 }{ \lambda + i\sin(\theta)} \, \mathrm{d}\theta.
\end{align*}
\end{Def}
The equivalent of Proposition \ref{Prop-iifcond-growingmode} is now the following.
\begin{Prop}
    For \eqref{eq:rescaled eq2-MODELB-inv} to admit a non-trivial solution of the form 
\begin{align}\label{eq:growing-mode-inviscid-MODELB}
    f(t,\theta)=\e^{\lambda t}  \mathrm{f}(\theta),  \: \lambda \notin i[-1,1],
\end{align}
 it is necessary and sufficient that
    \begin{align}\label{eig-prob-model3}
        \Gamma_\zeta\left (\lambda \right)=0.
    \end{align}
\end{Prop}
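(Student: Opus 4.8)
The plan is to mimic \emph{verbatim} the argument of Proposition~\ref{Prop-iifcond-growingmode}, the only structural difference being that the source term in \eqref{eq:rescaled eq2-MODELB-inv} now couples two moments of the profile, so the dispersion relation becomes the vanishing of a $2\times 2$ determinant rather than of a scalar. First I would substitute the ansatz \eqref{eq:growing-mode-inviscid-MODELB} into \eqref{eq:rescaled eq2-MODELB-inv}: writing $\rho$ and $p$ for $\int_0^{2\pi}\mathrm{f}\,\mathrm{d}\theta$ and $\int_0^{2\pi}\sin(\theta)\mathrm{f}(\theta)\,\mathrm{d}\theta$ as in \eqref{def:normalize-rho-p-ModelB}, the profile $\mathrm{f}$ must satisfy
\[
    (\lambda+i\sin(\theta))\,\mathrm{f}(\theta)=i\zeta\big(\sin(\theta)\,\rho-p\big).
\]
Since $\lambda\notin i[-1,1]$ the factor $\lambda+i\sin(\theta)$ never vanishes on $\T$, so this is equivalent to
\[
    \mathrm{f}(\theta)=\frac{i\zeta\big(\sin(\theta)\,\rho-p\big)}{\lambda+i\sin(\theta)},
\]
and in particular $\mathrm{f}$ is automatically smooth.

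For the necessity direction, I would integrate this identity against $1$ and against $\sin(\theta)$ and recognize the integrals $\mathcal{I}_\lambda,\mathcal{J}_\lambda,\mathcal{K}_\lambda$ introduced before \eqref{eq:defMatrixA}; this yields the linear system
\[
    \begin{pmatrix} 1-\zeta\mathcal{I}_\lambda & \zeta\mathcal{J}_\lambda \\[2pt] -\zeta\mathcal{K}_\lambda & 1+\zeta\mathcal{I}_\lambda \end{pmatrix}\begin{pmatrix}\rho\\ p\end{pmatrix}=\begin{pmatrix}0\\ 0\end{pmatrix}, \qquad\text{i.e.}\qquad (\mathrm{I}+\zeta\mathbb{A}_\lambda)\begin{pmatrix}\rho\\ p\end{pmatrix}=0,
\]
with $\mathbb{A}_\lambda$ exactly the matrix of \eqref{eq:defMatrixA}. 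If $\mathrm{f}$ is non-trivial, then $(\rho,p)\neq(0,0)$, since the displayed formula for $\mathrm{f}$ would otherwise force $\mathrm{f}\equiv 0$; hence $(\rho,p)$ is a non-zero element of $\ker(\mathrm{I}+\zeta\mathbb{A}_\lambda)$, and therefore $\Gamma_\zeta(\lambda)=\det(\mathrm{I}+\zeta\mathbb{A}_\lambda)=0$.

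For the converse, assuming $\Gamma_\zeta(\lambda)=0$ I would pick a non-zero pair $(\rho,p)\in\ker(\mathrm{I}+\zeta\mathbb{A}_\lambda)$, \emph{define} $\mathrm{f}$ by the division formula above, and check self-consistency: the two scalar identities encoded in $(\mathrm{I}+\zeta\mathbb{A}_\lambda)(\rho,p)=0$ are precisely the statements $\int_0^{2\pi}\mathrm{f}\,\mathrm{d}\theta=\rho$ and $\int_0^{2\pi}\sin(\theta)\mathrm{f}(\theta)\,\mathrm{d}\theta=p$, so $\mathrm{f}$ has the prescribed moments; consequently $(\lambda+i\sin(\theta))\mathrm{f}=i\zeta(\sin(\theta)\rho-p)$ says exactly that $f(t,\theta)=\e^{\lambda t}\mathrm{f}(\theta)$ solves \eqref{eq:rescaled eq2-MODELB-inv}. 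Finally $\mathrm{f}\not\equiv 0$: were it identically zero, then $\sin(\theta)\rho-p\equiv 0$ on $\T$, which forces $\rho=p=0$, contradicting the choice of $(\rho,p)$. Smoothness again follows from $\lambda\notin i[-1,1]$.

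There is no genuine analytical obstacle in this proposition; it is a two-dimensional avatar of Proposition~\ref{Prop-iifcond-growingmode}. The only points that require a little care are bookkeeping: matching the two moment equations with the entries of $\mathbb{A}_\lambda$ in \eqref{eq:defMatrixA}, in particular the opposite signs of $\mathcal{I}_\lambda$ in the two diagonal positions, and the non-triviality check $(\rho,p)\neq 0\iff \mathrm{f}\not\equiv 0$ used in both directions. The genuinely substantial work that comes \emph{afterwards}—evaluating $\mathcal{I}_\lambda,\mathcal{J}_\lambda,\mathcal{K}_\lambda$ explicitly by the residue theorem, so as to turn $\Gamma_\zeta(\lambda)=0$ into a tractable algebraic equation in $\lambda$ and locate its unstable roots—is not needed for this statement and would be carried out separately, in the spirit of Lemma~\ref{LM:formula-dispersionD} and Proposition~\ref{Prop-instalinear-inviscid}.
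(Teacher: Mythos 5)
Your proof is correct and follows the same route as the paper's: substitute the ansatz, divide by $\lambda+i\sin\theta$ (nonvanishing since $\lambda\notin i[-1,1]$), take moments against $1$ and $\sin\theta$ to get $(\mathrm{I}+\zeta\mathbb{A}_\lambda)(\rho,p)^{\mathsf T}=0$, and equate the existence of a nontrivial solution with the vanishing of the determinant. The only genuine difference is one of care rather than strategy: the paper states "since we are looking for a non-trivial solution, the condition $\det(\mathrm{I}+\zeta\mathbb{A}_\lambda)=0$ exactly means that $\Gamma_\zeta(\lambda)=0$" and stops, while you actually verify the two pieces this shorthand relies on, namely the equivalence $\mathrm{f}\not\equiv 0 \Leftrightarrow (\rho,p)\neq(0,0)$ (via linear independence of $1$ and $\sin\theta$), and, for the converse, that starting from a nonzero kernel vector $(\rho,p)$ the division formula produces a profile whose moments really are $\rho$ and $p$. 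So your write-up is slightly fuller but is the paper's argument in substance.
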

\begin{proof}
    By looking for a growing mode solution to \eqref{eq:MODELB-linearizedEQ-intro-fullINVISCID}, we observe that it has to satisfy
\begin{align*}
    \left( \lambda + i\sin(\theta) \right)\mathrm{f}(\theta)= i\zeta \sin(\theta) \rho-i\zeta p, \ \ \mathrm{with} \ \ \rho=\int_0^{2\pi} \mathrm{f}(\theta)\, \mathrm{d}\theta, \ \  p=\int_0^{2\pi} \sin(\theta) \mathrm{f}(\theta)\, \mathrm{d}\theta,
\end{align*}
and we therefore get
\begin{align*}
\mathrm{f}(\theta)=\frac{i\zeta \sin(\theta)  }{\lambda + i\sin(\theta)}\rho-\frac{i\zeta}{ \lambda + i\sin(\theta)}p.
\end{align*}
We obtain by successive  integration
\begin{align*}
\rho&=\zeta i \rho \int_0^{2\pi} \frac{ \sin(\theta) }{\lambda + i\sin(\theta)} \, \mathrm{d}\theta
-\zeta i p \int_0^{2\pi} \frac{1}{\lambda + i\sin(\theta)} \, \mathrm{d}\theta, \\
p&=\zeta i \rho \int_0^{2\pi} \frac{  \sin(\theta)^2 }{\lambda + i\sin(\theta)} \, \mathrm{d}\theta
-\zeta i p \int_0^{2\pi} \frac{\sin(\theta)}{\lambda + i\sin(\theta)} \, \mathrm{d}\theta.
\end{align*}
 We can rewrite it as a  $2 \times 2$ system on the vectorial unknown $(\rho, p) \in \R^2$ as
\begin{align}\label{disp-rel-model2-nondiffusive}
\left(\mathrm{I}+\zeta \mathbb{A}_{\lambda} \right) \begin{pmatrix}
\rho \\[1mm] p
\end{pmatrix}=0,
\end{align}
where
\begin{align*}
\mathbb{A}_{\lambda} =  \begin{pmatrix}
-\mathcal{I}_{\lambda} & \mathcal{J}_{\lambda} \\ 
-\mathcal{K}_{\lambda} & \mathcal{I}_{\lambda}
\end{pmatrix},
\end{align*}
using Definition \eqref{eq:defMatrixA}. Since we are looking for a non-trivial solution, the condition $\mathrm{det}\left(\mathrm{I}+\zeta \mathbb{A}_{\lambda} \right)=0$
exactly means that $\Gamma_\zeta\left(\lambda \right)=0$, which is enough to conclude the proof.
\end{proof}
Continuing our computations, we can infer the following result providing the solutions of the dispersion relation.
\begin{Prop}\label{Prop-instalinear-inviscid-MODEL2}
For all $\lambda \in \lbrace \mathrm{Re}>0 \rbrace$ and $k \in \Z^2 {\setminus \lbrace 0 \rbrace}$, we have 
\begin{align*}
    \Gamma_\zeta\left (\lambda \right)=1-4 \pi^2 \zeta^2 \left(1-\frac{1}{\sqrt{1+\frac{1}{\lambda^2}}} \right).
    %\lambda'= \frac{\lambda}{i (1-\phi)}.
\end{align*}
Here, $\sqrt{\cdot}$ is the principal determination of the square-root, that is defined and holomorphic on $\C \setminus \R_-$ and that has a positive real part. In addition, the following alternative holds:
\begin{itemize}
    \item if $\zeta>\frac{1}{2\pi}$ then the function $\lambda \mapsto \Gamma_\zeta\left (\lambda \right)$ has two distinct zeros $\lambda_\zeta^\pm$ in $\C \setminus i[-1, 1]$, which are
    \begin{align*}
        \lambda_\zeta^\pm&=\pm \widetilde{c}_\zeta \in \R^{\pm},
    \end{align*}
    for some $\widetilde{c}_\zeta>0$.
    \item if $ \zeta<\frac{1}{2\pi}$, then the function $\lambda \mapsto \Gamma_\zeta\left (\lambda \right)$ has no zeros in $\C \setminus i[-1, 1]$.
\end{itemize}  
\end{Prop}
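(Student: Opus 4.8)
The plan is to turn the $2\times2$ determinant defining $\Gamma_\zeta$ into a scalar expression involving only the integral already computed in Lemma~\ref{LM:formula-dispersionD}, and then to read off its zeros. Throughout I write $S_\lambda\vcentcolon=\tfrac{1}{\sqrt{1+1/\lambda^2}}$ for the principal branch; as in the proof of Lemma~\ref{LM:formula-dispersionD}, for $\lambda\notin i[-1,1]$ one has $1+\tfrac1{\lambda^2}\in\C\setminus\R_-$, so $\sqrt{1+\tfrac1{\lambda^2}}$ has positive real part and hence $\mathrm{Re}(S_\lambda)>0$. Note also that $\Gamma_\zeta$, $\mathcal{I}_\lambda$, $\mathcal{J}_\lambda\mathcal{K}_\lambda$ and $S_\lambda$ depend on $\lambda$ only through $\lambda^2$, so everything below extends from $\{\mathrm{Re}(\lambda)>0\}$ to $\C\setminus i[-1,1]$ by the symmetry $\lambda\mapsto-\lambda$.

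First I would expand the determinant:
\begin{align*}
\Gamma_\zeta(\lambda)=\det\begin{pmatrix} 1-\zeta\mathcal{I}_\lambda & \zeta\mathcal{J}_\lambda\\[1mm] -\zeta\mathcal{K}_\lambda & 1+\zeta\mathcal{I}_\lambda\end{pmatrix}=1-\zeta^2\big(\mathcal{I}_\lambda^2-\mathcal{J}_\lambda\mathcal{K}_\lambda\big),
\end{align*}
so the whole task is to evaluate $\mathcal{I}_\lambda^2-\mathcal{J}_\lambda\mathcal{K}_\lambda$. For $\mathcal{I}_\lambda$ I would reuse verbatim the computation of Lemma~\ref{LM:formula-dispersionD} (the Weierstrass substitution $t=\tan(\theta/2)$ together with Lemma~\ref{LM:integral-rationalfrac}), which gives $\mathcal{I}_\lambda=2\pi(1-S_\lambda)$; the same substitution applied to $\mathcal{J}_\lambda$ gives $\mathcal{J}_\lambda=\tfrac{2\pi i}{\lambda}\,S_\lambda$. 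For $\mathcal{K}_\lambda$ I would avoid a new residue computation and instead use the algebraic identity $\frac{u^2}{\lambda+iu}=-iu+\frac{i\lambda u}{\lambda+iu}$ with $u=\sin\theta$; since $\int_{-\pi}^{\pi}\sin\theta\,\mathrm{d}\theta=0$, this yields $\mathcal{K}_\lambda=i\lambda\,\mathcal{I}_\lambda=2\pi i\lambda(1-S_\lambda)$. Substituting,
\begin{align*}
\mathcal{I}_\lambda^2-\mathcal{J}_\lambda\mathcal{K}_\lambda=4\pi^2(1-S_\lambda)^2+4\pi^2 S_\lambda(1-S_\lambda)=4\pi^2(1-S_\lambda),
\end{align*}
whence $\Gamma_\zeta(\lambda)=1-4\pi^2\zeta^2\big(1-S_\lambda\big)$, which is the claimed formula.

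It then remains to solve $\Gamma_\zeta(\lambda)=0$, i.e.\ $S_\lambda=1-\tfrac{1}{4\pi^2\zeta^2}$. Since $\mathrm{Re}(S_\lambda)>0$ and the right-hand side is real, there is no solution whenever $1-\tfrac{1}{4\pi^2\zeta^2}\le0$, that is for $\zeta\le\tfrac1{2\pi}$; this settles the stable case. When $\zeta>\tfrac1{2\pi}$, set $a_\zeta\vcentcolon=1-\tfrac{1}{4\pi^2\zeta^2}\in(0,1)$; the equation $S_\lambda=a_\zeta$ with $a_\zeta$ a positive real forces $1+\tfrac1{\lambda^2}=a_\zeta^{-2}$ (uniqueness of the principal root on $\C\setminus\R_-$), hence $\lambda^2=\tfrac{a_\zeta^2}{1-a_\zeta^2}>0$, so $\lambda\in\{\pm\widetilde c_\zeta\}$ with $\widetilde c_\zeta\vcentcolon=\tfrac{a_\zeta}{\sqrt{1-a_\zeta^2}}=\tfrac{4\pi^2\zeta^2-1}{\sqrt{8\pi^2\zeta^2-1}}>0$ after simplification; conversely, for each of these $\lambda$ one has $1+\tfrac1{\lambda^2}=a_\zeta^{-2}\notin\R_-$, so they are genuine zeros in $\C\setminus i[-1,1]$. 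Using the symmetry $\Gamma_\zeta(\lambda)=\Gamma_\zeta(-\lambda)$, this gives exactly the two zeros $\pm\widetilde c_\zeta$.

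The argument is essentially elementary; the two points requiring a little care are the reduction of $\mathcal{K}_\lambda$ to $\mathcal{I}_\lambda$ (rather than a fresh residue calculation) and, more importantly, the branch of the square root: one must invoke that $1+\lambda^{-2}\in\C\setminus\R_-$ for $\lambda\notin i[-1,1]$, so that $S_\lambda$ genuinely has positive real part. This positivity is precisely what makes the stability/instability dichotomy at the threshold $\zeta=\tfrac1{2\pi}$ sharp.
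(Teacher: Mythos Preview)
Your proof is correct and follows essentially the same approach as the paper: expand the determinant, evaluate $\mathcal{I}_\lambda$ and $\mathcal{J}_\lambda$ via Lemma~\ref{LM:formula-dispersionD}, reduce $\mathcal{K}_\lambda$ algebraically, and then locate the zeros using the positivity of $\mathrm{Re}(S_\lambda)$. Your reduction $\mathcal{K}_\lambda=i\lambda\,\mathcal{I}_\lambda$ is slightly more direct than the paper's route (which writes $\mathcal{K}_\lambda=2i\pi\lambda-\lambda^2\mathcal{J}_\lambda$), and you additionally supply the explicit value $\widetilde c_\zeta=\tfrac{4\pi^2\zeta^2-1}{\sqrt{8\pi^2\zeta^2-1}}$ where the paper leaves this to the reader, but these are cosmetic differences only.
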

\begin{proof}
    According to \eqref{eq:dispersion-rel-inviscidModelB}, using the expression for the determinant defining $\Gamma_\zeta\left (\lambda \right)$, we have for all $ \lambda \in \C \setminus i[-1, 1]$
$$ \Gamma_\phi\left (\lambda \right)=(1-\zeta \mathcal{I}_{\lambda})(1+\zeta \mathcal{I}_{\lambda})+\zeta^2 \mathcal{K}_{\lambda} \mathcal{J}_{\lambda}.$$
By the same transformations as in the proof of Lemma \ref{LM:formula-dispersionD} (relying on Lemma \ref{LM:integral-rationalfrac}), we have
\begin{align*}
\mathcal{I}_{\lambda}=2\pi\frac{\sqrt{1+\frac{1}{\lambda^2}}-1}{\sqrt{1+\frac{1}{\lambda^2}}}, \qquad \mathcal{J}_{\lambda}=\frac{2i\pi}{\lambda\sqrt{1+\frac{1}{\lambda^2}}},
\end{align*}
and similarly we can write
\begin{align*}
    \mathcal{K}_{\lambda}=i\int_{-\pi}^{\pi} \frac{ \sin(\theta)^2 }{ \lambda + i\sin(\theta)} \, \mathrm{d}\theta=\int_{-\pi}^{\pi} \frac{ \sin(\theta)^2-(\lambda/i)^2 }{ \lambda/i +\sin(\theta)} \, \mathrm{d}\theta+(\lambda/i)^2\int_{-\pi}^{\pi} \frac{ 1 }{ \lambda/i + \sin(\theta)} \, \mathrm{d}\theta,
\end{align*}
so that
\begin{align*}
    \mathcal{K}_{\lambda}=2i\pi \lambda-\lambda^2\mathcal{J}_{\lambda}=2i\pi \lambda \left( 1-\frac{1}{\sqrt{1+\frac{1}{\lambda^2}}} \right).
\end{align*}
Then, using the former expression of the coefficients, 
we get after a few manipulations
\begin{align*}
\Gamma_\phi\left (\lambda \right)=1-\zeta^2 \mathcal{I}_{\lambda}^2+\zeta^2 \mathcal{K}_{\lambda} \mathcal{J}_{\lambda}&=1-4\pi^2 \zeta^2 \lambda^2 \frac{\left(\sqrt{1+\frac{1}{\lambda^2}}-1\right)^2}{\lambda^2+1}+4\pi^2\zeta^2 \lambda^2\frac{1-\sqrt{1+\frac{1}{\lambda^2}}}{\lambda^2+1}. \\
&=1+\frac{4\pi^2 \zeta^2 \lambda^2}{\lambda^2+1} \left(1-\sqrt{1+\frac{1}{\lambda^2}}  \right) \sqrt{1+\frac{1}{\lambda^2}},
\end{align*}
which gives the claimed formula
\begin{align*}
\Gamma_\phi\left (\lambda \right)&=1-4\pi^2 \zeta^2\left(1-\frac{1}{\sqrt{1+\frac{1}{\lambda^2}}} \right).
\end{align*}
We can now solve the equation $\Gamma_\phi\left (\lambda \right)=0$ as in the proof of Proposition \ref{Prop-instalinear-inviscid}, by observing that the left-hand side of
\begin{align*}
    1-4\pi^2 \zeta^2 +\frac{4\pi^2 \zeta^2}{\sqrt{1+\frac{1}{\lambda^2}}}=0
\end{align*}
has a real part greater than $1-4\pi^2 \zeta^2$, hence the equation has no solution if $1-4\pi^2 \zeta^2>0$. Details are left to the reader.
\end{proof}

\subsection{Instability with small rotational diffusion}
We now study how instabilities persist if $0 < \zeta < \frac{1}{2\pi}$ and under a weakly diffusive regime assumption, that is considering the linearized equation \eqref{eq:rescaled eq2-MODELB} with $0 <\nu \ll 1$.

\medskip

As in Section \ref{Section-instab-diffusive}, we can rewrite this equation as
\begin{align*}
    f=i\zeta \rho\Big(\lambda+i\sin(\theta)-\nu \partial_\theta^2\Big)^{-1}[\sin]  -i\zeta  p  \Big(\lambda+i\sin(\theta)-\nu \partial_\theta^2\Big)^{-1}[1] .
\end{align*}
Integrating in $\theta$ against $1$ and $\sin(\theta)$, we have
\begin{align*}
    \rho&= i\zeta \rho \int_0^{2\pi}\Big(\lambda+i\sin-\nu \partial_\theta^2\Big)^{-1}[\sin](\theta) \, \mathrm{d}\theta  -i \zeta  p \int_0^{2\pi} \Big(\lambda+i\sin-\nu \partial_\theta^2\Big)^{-1}[1](\theta) \, \mathrm{d}\theta, \\
    p&=i\zeta \rho \int_0^{2\pi} \sin(\theta)\Big(\lambda+i\sin-\nu \partial_\theta^2\Big)^{-1}[\sin](\theta) \, \mathrm{d}\theta  -i\zeta  p  \int_0^{2\pi} \sin(\theta)\Big(\lambda+i\sin-\nu \partial_\theta^2\Big)^{-1}[1](\theta) \, \mathrm{d}\theta,
\end{align*}
and it is therefore enough to look for a solution of the system
\begin{align}
(\mathrm{I}+\zeta \mathbb{A}_{\lambda}^{\nu}) \begin{pmatrix}
\rho \\ p
\end{pmatrix}=0, \qquad \mathbb{A}_{\lambda}^{\nu} =  \begin{pmatrix}
-\iota_{\lambda}^{\nu} & j_{\lambda}^{\nu} \\[2mm] 
-\xi_{\lambda}^{\nu} & \ell_{\lambda}^{\nu}
\end{pmatrix},
\end{align}
where
\begin{align*}
\iota_{\lambda}^{\nu}\vcentcolon&= i \int_0^{2\pi}\Big(\lambda+i\sin-\nu \partial_\theta^2\Big)^{-1}[\sin](\theta) \, \mathrm{d}\theta, 
%-\nu \int_0^{2\pi} (\mathcal{L}_{\lambda}^\nu)^{-1}[1](\theta) \, \mathrm{d}\theta,
\qquad j_{\lambda}^{\nu}\vcentcolon=i\int_0^{2\pi} \Big(\lambda+i\sin-\nu \partial_\theta^2\Big)^{-1}[1](\theta) \, \mathrm{d}\theta,
\\
\xi_{\lambda}^{\nu}\vcentcolon&=i \int_0^{2\pi}\sin(\theta)\Big(\lambda+i\sin-\nu \partial_\theta^2\Big)^{-1}[\sin](\theta) \, \mathrm{d}\theta, 
%-\nu \int_0^{2\pi}  \sin(\theta)(\mathcal{L}_{\lambda}^\nu)^{-1}[1](\theta) \, \mathrm{d}\theta, 
\qquad \ell_{\lambda}^{\nu}\vcentcolon=i\int_0^{2\pi} \sin(\theta) \Big(\lambda+i\sin-\nu \partial_\theta^2 \Big)^{-1}[1](\theta) \, \mathrm{d}\theta.
\end{align*}
We therefore define the diffusive function as
\begin{align*}
\Gamma^{\nu}_\zeta(\lambda)\vcentcolon=\mathrm{det}\left(\mathrm{I}+\zeta \mathbb{A}_{\lambda}^{\nu}\right), \qquad \mathrm{Re}(\lambda)>0,
\end{align*}
and we are looking for a zero $\lambda \in \lbrace \mathrm{Re}>0 \rbrace$ of the function $\Gamma_\zeta^{\nu}$. All the coefficients are analytic in $\lambda \in \lbrace \mathrm{Re}>0 \rbrace$ and therefore $\Gamma^{\nu}_\zeta$ as well since
\begin{align*}
\Gamma^{\nu}_\zeta(\lambda)= (1-\iota_{\lambda}^{\nu})(1+\ell_{\lambda}^{\nu})+\xi_{\lambda}^{\nu}j_{\lambda}^{\nu}
 =1+ \zeta\ell_{\lambda}^{\nu}-\zeta\iota_{\lambda}^{\nu} -\zeta^2\iota_{\lambda}^{\nu} \ell_{\lambda}^{\nu}+\zeta^2j_{\lambda}^{\nu}\xi_{\lambda}^{\nu}.
\end{align*}
We can now compare this expression with that of the non-diffusive function from \eqref{eq:dispersion-rel-inviscidModelB}, that is
\begin{align*}
\Gamma_\zeta(\lambda)=\mathrm{det}(\mathrm{I}+\zeta \mathbb{A}_{\lambda})=1-\zeta^2\mathcal{I}_\lambda^2+ \zeta^2 \mathcal{K}_\lambda \mathcal{J}_\lambda.
\end{align*}
We then simply write thanks to the triangular inequality for all $\lambda \in \lbrace \mathrm{Re}>0\rbrace$
\begin{align*}
   \vert \Gamma^\nu_\zeta(\lambda)- \Gamma_\zeta(\lambda)\vert &\lesssim_{\zeta} \vert \ell_{\lambda}^{\nu}-\mathcal{I}_\lambda \vert + \vert \iota_{\lambda}^{\nu}-\mathcal{I}_\lambda \vert + \vert \iota_{\lambda}^{\nu} \ell_{\lambda}^{\nu}-\mathcal{I}_\lambda^2 \vert + \vert j_{\lambda}^{\nu}\xi_{\lambda}^{\nu} - \mathcal{K}_\lambda \mathcal{J}_\lambda \vert \\
   & \leq (1+ \vert \mathcal{I}_\lambda \vert)( \vert \ell_{\lambda}^{\nu}-\mathcal{I}_\lambda \vert + \vert \iota_{\lambda}^{\nu}-\mathcal{I}_\lambda \vert ) + \vert \ell_{\lambda}^{\nu}-\mathcal{I}_\lambda \vert\vert \iota_{\lambda}^{\nu}-\mathcal{I}_\lambda \vert + \vert \xi_{\lambda}^{\nu}-\mathcal{K}_\lambda \vert\vert j_{\lambda}^{\nu}-\mathcal{J}_\lambda \vert\\
   & \quad +\vert \mathcal{K}_\lambda \vert \vert \xi_{\lambda}^{\nu}-\mathcal{K}_\lambda \vert  +\vert \mathcal{J}_\lambda \vert \vert j_{\lambda}^{\nu}-\mathcal{J}_\lambda \vert. 
\end{align*}
We can then perform the same proof as for Proposition \ref{Prop-compa-reldispersion}: from Proposition \ref{Prop-instalinear-inviscid-MODEL2}, there exists $\lambda_\zeta>0$ such that $\Gamma_\zeta(\lambda_\zeta)=0$ so that, if one sets $r_0=\lambda_\zeta/2$ then the equivalent Proposition \ref{Prop-compa-reldispersion} holds: there exists $C(r_0)>0$ such that for any $\nu \in (0,1]$ and for any $\lambda \in B(\lambda_\zeta, r_0)$, we have
\begin{align*}
    \vert \Gamma^{\nu}_\zeta(\lambda)- \Gamma_\zeta(\lambda)\vert \leq C(r_0) \nu.
\end{align*}
Here, we have relied on the proof of Proposition \ref{Prop-compa-reldispersion} and its direct adaptation to $(\lambda+i\sin-\nu\partial_\theta^2)^{-1}[1]$. We can now conclude in the same way as in Section \ref{Section-instab-diffusive}, invoking analyticity and Rouché's theorem to find a zero of $ \Gamma^{\nu}_\zeta$ in $\lbrace \mathrm{Re}>0 \rbrace$, provided that $\nu$ is small enough. The same argument based on the vanishing order of the function $\Gamma_\zeta$ applies and details are left to the reader.

 \subsection{Stability analysis for small rotational diffusion}
We finally study the (spectral) stability for \eqref{eq:rescaled eq2-MODELB}, that is when $\nu>0$, in the stable regime $\zeta<\frac{1}{2\pi}$.

\subsubsection{Volterra equation}
Starting from \eqref{eq:rescaled eq2-MODELB}, we rewrite the equation on $f(t, \theta)$ as
\begin{align}\label{eq:rescaled-MODELB}
     \partial_t f + \mathrm{L}_{\nu} f = i\zeta  \sin(\theta) \rho-i\zeta  p,
     %-\nu  \zeta \rho,
\end{align}
with  $\rho(t)$ and $p(t)$ defined as in \eqref{def:normalize-rho-p-ModelB} and
$    \mathrm{L}_{\nu}=i \sin-\nu \partial_\theta^2$.
Compared to \eqref{eq:rescaled eq2-MODELA}, the new equation \eqref{eq:rescaled-MODELB} contains a new term that does not depend on $\theta$. 

\medskip
By Duhamel formula, we have
\begin{align}\label{eq:Duhamel-MODELB}
    f(t,\theta)=(\e^{-\mathrm{L}_{\nu} t} f^{\mathrm{in}})(\theta) 
      + \zeta  \int_0^t \rho(s) \Big(\e^{-\mathrm{L}_{\nu} (t-s)} i \sin \Big)(\theta) \, \mathrm{d}s-i\zeta \int_0^t p(s)\Big(\e^{-\mathrm{L}_{\nu} (t-s)} 1 \Big)(\theta) \, \mathrm{d}s.
\end{align}
Integrating in $\theta$, we classically get the following closed Volterra equation on the quantities $\rho$ and $p$:
\begin{align}
   \label{eq:Volterra-stab-rho-syst} \rho(t)-\zeta \int_0^t \mathrm{K}_{\nu}^{(1)}(t-s) \rho(s) \, \mathrm{d}s+\zeta \int_0^t \mathrm{K}_{\nu}^{(2)}(t-s) p(s) \, \mathrm{d}s  &= \int_0^{2\pi} (\e^{-\mathrm{L}_{\nu} t} f^{\mathrm{in}})(\theta) \, \mathrm{d}\theta, \\
    p(t)-\zeta \int_0^t \mathrm{K}_{\nu}^{(3)}(t-s) \rho(s) \, \mathrm{d}s+\zeta \int_0^t \mathrm{K}_{\nu}^{(4)}(t-s) p(s) \, \mathrm{d}s  &= \int_0^{2\pi} (\e^{-\mathrm{L}_{\nu} t} f^{\mathrm{in}})(\theta) \sin(\theta)\, \mathrm{d}\theta,
\end{align}
where
\begin{align*}
    \mathrm{K}_{\nu}^{(1)}(t)&\vcentcolon=\int_0^{2\pi} \Big(\e^{-\mathrm{L}_{\nu} t} (i \sin) \Big)(\theta) \, \mathrm{d}\theta, \qquad \mathrm{K}_{\nu}^{(2)}(t)\vcentcolon=\int_0^{2\pi} \Big(\e^{-\mathrm{L}_{\nu} t} i \Big)(\theta) \, \mathrm{d}\theta, \\
    \mathrm{K}_{\nu}^{(3)}(t)& \vcentcolon=\int_0^{2\pi} \sin(\theta)\Big(\e^{-\mathrm{L}_{\nu} t} (i \sin) \Big)(\theta) \, \mathrm{d}\theta, \qquad   \mathrm{K}_{\nu}^{(4)}(t) \vcentcolon=\int_0^{2\pi} \sin(\theta) \Big(\e^{-\mathrm{L}_{\nu} t} i \Big)(\theta) \, \mathrm{d}\theta.
\end{align*}
Hence, if one introduces the vectorial unknown 
\begin{align*}
    \mathrm{U}(t)\vcentcolon=\begin{pmatrix}
        \rho(t) \\ p(t)
    \end{pmatrix}, \qquad \mathrm{V}(t)\vcentcolon=\begin{pmatrix}
        \int_0^{2\pi} (\e^{-\mathrm{L}_{\nu} t} f^{\mathrm{in}})(\theta)\, \mathrm{d}\theta \\[2mm] \int_0^{2\pi} (\e^{-\mathrm{L}_{\nu} t} f^{\mathrm{in}})(\theta) \sin(\theta)\, \mathrm{d}\theta,
    \end{pmatrix}
\end{align*}
we get the following vectorial Volterra equation
\begin{align}\label{eq:Volterra-MODELB}
    \mathrm{U}(t)+\zeta \int_0^t \mathcal{K}_{\nu}(t-s)\mathrm{U}(t) \, \mathrm{d}s=\mathrm{V}(t), \qquad \mathcal{K}_{\nu}(t)\vcentcolon=\begin{pmatrix}
        -\mathrm{K}_{\nu}^{(1)}(t) & \mathrm{K}_{\nu}^{(2)} (t) \\
        -\mathrm{K}_{\nu}^{(3)}(t) & \mathrm{K}_{\nu}^{(4)}(t)
    \end{pmatrix}.
\end{align}
By the enhanced-diffusion estimate for $\e^{-\mathrm{L}_{\nu} t}$ from Lemma \ref{LM-enhanceddiffusion} and the same bound as in the end of Section \ref{Section-enhanceddiff}, we have
\begin{align*}
    \vert \mathcal{K}_{\nu}(t) \vert \lesssim \e^{-\eps_0 \nu^{1/2} t}, \qquad t>0, \ \  \nu < \nu_0,
\end{align*}
since it holds for each entry of the matrix $\mathcal{K}_{\nu}(t)$.

\subsubsection{Stability condition}
In what follows, we aim at proving the following proposition concerning the stability condition associated to the Volterra kernel appearing in \eqref{eq:Volterra-MODELB}.
\begin{Prop}\label{PROP:stability-cond-Laplacekernel-modelB}
Suppose that $\zeta \in (0, \frac{1}{2\pi})$. Then the following holds: 
\begin{align}\label{cond-Volterra-kernel-modelB}
\forall \lambda \in \lbrace \mathrm{Re} \geq 0 \rbrace, \qquad  \mathrm{det}\left( \mathrm{I}+\zeta \mathcal{L}[\mathcal{K}_{\nu}](\lambda) \right) \neq 0.
\end{align}
\end{Prop}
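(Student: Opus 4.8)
The plan is to argue by contradiction, transposing to the $2\times2$ vectorial setting the energy argument behind Lemma~\ref{LM:stability-cond-Laplacekernel-REALPART-positive}. Assume that $\det(\mathrm{I}+\zeta\,\mathcal{L}[\mathcal{K}_{\nu}](\lambda))=0$ for some $\lambda$ with $\mathrm{Re}(\lambda)\ge 0$. As in the scalar case, the enhanced-dissipation bound of Lemma~\ref{LM-enhanceddiffusion} guarantees that $(\lambda+\mathrm{L}_{\nu})^{-1}$ is well defined (indeed on $\{\mathrm{Re}>-\eps_0\nu^{1/2}\}$), so each entry of $\mathcal{L}[\mathcal{K}_{\nu}](\lambda)$ is a resolvent integral. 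Since the determinant vanishes, pick a nonzero $(\rho,p)\in\mathbb{C}^2$ in the kernel of $\mathrm{I}+\zeta\,\mathcal{L}[\mathcal{K}_{\nu}](\lambda)$ and set
\[
f\vcentcolon= i\zeta\big(\rho\,(\lambda+\mathrm{L}_{\nu})^{-1}[\sin]-p\,(\lambda+\mathrm{L}_{\nu})^{-1}[1]\big).
\]
Unwinding the two scalar relations encoded by $(\mathrm{I}+\zeta\,\mathcal{L}[\mathcal{K}_{\nu}])(\rho,p)^{T}=0$ --- together with the definitions of $\mathrm{K}_{\nu}^{(1)},\dots,\mathrm{K}_{\nu}^{(4)}$ --- shows exactly that $\rho=\int_{0}^{2\pi}f\,\mathrm{d}\theta$, $p=\int_{0}^{2\pi}\sin(\theta)f\,\mathrm{d}\theta$, and that $f$ solves
\[
(\lambda+i\sin(\theta)-\nu\partial_{\theta}^{2})f=i\zeta\big(\sin(\theta)\,\rho-p\big).
\]
In particular $f\not\equiv0$, since $f\equiv0$ would give $\rho=p=0$.

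The key new identity --- replacing the scalar normalization $\int f=1/\zeta$ --- is obtained by integrating this equation over $\theta$: periodicity, $\int\sin=0$ and $\int\sin f=p$ yield $\lambda\rho+ip=-2\pi i\zeta p$, i.e.
\[
\lambda\rho=-i(1+2\pi\zeta)\,p,
\]
the analogue of \eqref{eq-average-resolvent}. Then I would run the basic $L^{2}_{\theta}$ estimate: multiplying the equation by $\overline{f}$, integrating, and taking real parts gives
\[
\mathrm{Re}(\lambda)\,\|f\|_{L^{2}_{\theta}}^{2}+\nu\,\|\partial_{\theta}f\|_{L^{2}_{\theta}}^{2}=-2\zeta\,\mathrm{Im}\big(\rho\overline{p}\big),
\]
using $\mathrm{Re}\!\big(i\zeta(\rho\overline{p}-p\overline{\rho})\big)=-2\zeta\,\mathrm{Im}(\rho\overline{p})$.

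If $\mathrm{Re}(\lambda)=0$, the identity $\lambda\rho=-i(1+2\pi\zeta)p$ forces $p$ to be a real multiple of $\rho$ (or $\lambda=p=0$), hence $\mathrm{Im}(\rho\overline{p})=0$; the energy identity then gives $\partial_{\theta}f\equiv0$, so $f$ is constant and $p=\int\sin f=0$, and substituting back one finds $\lambda f+i(1-2\pi\zeta)\sin(\theta)f=0$ for all $\theta$, which --- as $\zeta\neq\frac{1}{2\pi}$ --- forces $f\equiv0$, a contradiction. If $\mathrm{Re}(\lambda)>0$, then $\rho\neq0$ (else $p=0$ and $f\equiv0$) and, writing $p=\frac{i\lambda\rho}{1+2\pi\zeta}$, one computes $\mathrm{Im}(\rho\overline{p})=-\frac{\mathrm{Re}(\lambda)}{1+2\pi\zeta}|\rho|^{2}$; hence the energy identity becomes
\[
\|f\|_{L^{2}_{\theta}}^{2}+\frac{\nu}{\mathrm{Re}(\lambda)}\,\|\partial_{\theta}f\|_{L^{2}_{\theta}}^{2}=\frac{2\zeta}{1+2\pi\zeta}\,|\rho|^{2}.
\]
Using $|\rho|^{2}=\big|\int f\big|^{2}\le 2\pi\|f\|_{L^{2}_{\theta}}^{2}$ and dropping the nonnegative dissipation term, we get $\|f\|_{L^{2}_{\theta}}^{2}\le\frac{4\pi\zeta}{1+2\pi\zeta}\|f\|_{L^{2}_{\theta}}^{2}$, and since $f\not\equiv0$ this forces $1+2\pi\zeta\le4\pi\zeta$, i.e. $\zeta\ge\frac{1}{2\pi}$ --- contradicting $\zeta\in(0,\frac{1}{2\pi})$. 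I do not expect a serious obstacle here: once the identity $\lambda\rho=-i(1+2\pi\zeta)p$ is isolated, the rest is a near-verbatim adaptation of Lemma~\ref{LM:stability-cond-Laplacekernel-REALPART-positive}; the only mildly delicate points are the careful unwinding of the $2\times2$ kernel relation into the scalar equation for $f$ and checking that the boundary case $\mathrm{Re}(\lambda)=0$ does not escape the argument. (The same scheme, combined with the region splitting of Section~\ref{Section-Stab-Volterra}, would give quantitative lower bounds on $|\det(\mathrm{I}+\zeta\mathcal{L}[\mathcal{K}_{\nu}])|$ in a strip $\{\mathrm{Re}\ge -c\nu\}$, but only the non-vanishing on $\{\mathrm{Re}\ge0\}$ is asserted here.)
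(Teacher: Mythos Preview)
Your proof is correct and follows essentially the same strategy as the paper's: contradiction, an energy estimate on the eigenmode, the integrated identity $\lambda\rho=-i(1+2\pi\zeta)p$, and Cauchy--Schwarz to force $\zeta\ge\tfrac{1}{2\pi}$. The only cosmetic difference is normalization: you build $f$ with the factor $i\zeta$ so that $\rho=\int f$ and $p=\int\sin(\theta)f$ directly, whereas the paper works with $h_{\rho,p}=-\zeta^{-1}f$ and the relations $\rho+\zeta\int h=0$, $p+\zeta\int\sin(\theta)h=0$; this makes your combination of the two identities slightly more streamlined (you compute $\mathrm{Im}(\rho\bar p)$ explicitly from $p=\tfrac{i\lambda\rho}{1+2\pi\zeta}$, while the paper substitutes the energy identity into \eqref{eq:comput-rho^2}), but the content is identical.
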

\begin{proof}
First,  invoking the decay of the semigroup from Lemma \ref{LM-enhanceddiffusion}, we observe that the Laplace transform (in time) of each entry of $\mathcal{K}_{\nu}$ is well defined for $\lambda \in \lbrace \mathrm{Re}(\lambda) \geq -C \nu^{1/2} \rbrace$: we have 
\begin{align*}
    \mathcal{L}[\mathcal{K}_{\nu}](\lambda)\vcentcolon=\begin{pmatrix}
        -\mathcal{L}[\mathrm{K}_{\nu}^{(1)}](\lambda) & \mathcal{L}[\mathrm{K}_{\nu}^{(2)}](\lambda) \\[2mm]
        -\mathcal{L}[\mathrm{K}_{\nu}^{(3)}](\lambda) & \mathcal{L}[\mathrm{K}_{\nu}^{(4)}](\lambda),
    \end{pmatrix}
\end{align*}
with 
\begin{align*}
    -\mathcal{L}[\mathrm{K}_{\nu}^{(1)}](\lambda)&=-\int_0^{2\pi} \Big[(\lambda+\mathrm{L}_{\nu})^{-1} (i\sin) \Big](\theta) \, \mathrm{d}\theta =\int_0^{2\pi} f(\theta) \, \mathrm{d}\theta, \\ \mathcal{L}[\mathrm{K}_{\nu}^{(2)}](\lambda)&=\int_0^{2\pi} \Big[(\lambda+\mathrm{L}_{\nu})^{-1} i  \Big](\theta) \, \mathrm{d}\theta=\int_0^{2\pi} g(\theta) \, \mathrm{d}\theta, \\
    -\mathcal{L}[\mathrm{K}_{\nu}^{(3)}](\lambda)& =-\int_0^{2\pi} \sin(\theta)\Big[(\lambda+\mathrm{L}_{\nu})^{-1} (i\sin) \Big](\theta) \, \mathrm{d}\theta=\int_0^{2\pi} \sin(\theta) f(\theta) \, \mathrm{d}\theta , \\ \mathcal{L}[\mathrm{K}_{\nu}^{(4)}](\lambda) &=\int_0^{2\pi} \sin(\theta) \Big[(\lambda+\mathrm{L}_{\nu})^{-1} i \Big](\theta) \, \mathrm{d}\theta=\int_0^{2\pi} \sin(\theta) g(\theta) \, \mathrm{d}\theta.
\end{align*}
Above, the functions $f$ and $g$ are defined through
\begin{align}
        (\lambda +i \sin(\theta) -\nu \partial_{ \theta}^2)f&=-i \sin(\theta), \\
        (\lambda +i \sin(\theta) -\nu \partial_{ \theta}^2)g&=i.
    \end{align}
By contradiction, let us now assume that there exists $\lambda \in \lbrace \mathrm{Re}(\lambda) \geq 0 \rbrace$ such that 
$$\mathrm{det}\left( \mathrm{I}+\zeta \mathcal{L}[\mathcal{K}_{\nu}](\lambda)\right) =0.$$
So there exists a nonzero vector $(\rho, p) \in \C^2$ such that $\left( \mathrm{I}+\zeta \mathcal{L}[\mathcal{K}_{\nu}](\lambda)\right)(\rho,p)=0$. 
By linearity, by introducing $h_{\rho, p}$ the solution to
\begin{align}\label{eq:f-rho-p}
    (\lambda +i \sin(\theta) -\nu \partial_{ \theta}^2)h_{\rho, p}&=-i \sin(\theta) \rho +i p,
\end{align}
the former eigenvector identity rewrites as
\begin{align}\label{eq:integrated1-f-rho-p}
    \rho+ \zeta\int_0^{2\pi} h_{\rho, p}(\theta) \, \mathrm{d}\theta&=0, \\
    \label{eq:integrated2-f-rho-p}p+ \zeta \int_0^{2\pi} h_{\rho, p}(\theta) \sin(\theta) \, \mathrm{d}\theta&=0.
\end{align}
To ease readability, we simply write $h=h_{\rho,p}$ in what follows. By integrating $\eqref{eq:f-rho-p}$ in $\theta$, we find
\begin{align}\label{eq:integrated3-f-rho-p}
    \lambda \int_0^{2\pi}  h+i \int_0^{2\pi} \sin(\theta)h =2i \pi p.
\end{align}
By using \eqref{eq:integrated1-f-rho-p} and \eqref{eq:integrated2-f-rho-p}, this yields
\begin{align}\label{rel-lambda-rho-p}
    -\lambda \frac{\rho}{\zeta}-i \frac{p}{\zeta} =+2i \pi p,
\end{align}
which turns, after multiplying by $\zeta \overline{\rho}$ and taking the real part, into
\begin{align}\label{eq:comput-rho^2}
    -\mathrm{Re}(\lambda) \vert \rho \vert^2 =(2\pi\zeta +1)\mathrm{Re}(i p \overline{\rho}).
\end{align}
Coming back to \eqref{eq:f-rho-p}, by multiplying by $\overline{h}$ and integrating, we get
\begin{align*}
   \lambda \int_0^{2\pi}  \vert h\vert^2 +i \int_0^{2\pi} \sin(\theta) \vert h\vert^2+\nu \int_0^{2\pi} \vert \partial_\theta  h\vert^2 =-i \rho \int_0^{2\pi} \sin(\theta)\overline{h}+ip \int_0^{2\pi} \overline{h}.
\end{align*}
By conjugating the equations \eqref{eq:integrated1-f-rho-p}--\eqref{eq:integrated2-f-rho-p} and replacing in the former r.h.s, we obtain
\begin{align}\label{eq:lambda-f-rho-p}
   \lambda \int_0^{2\pi}  \vert h\vert^2 +i \int_0^{2\pi} \sin(\theta) \vert h\vert^2+\nu \int_0^{2\pi} \vert \partial_\theta  h\vert^2 =i\frac{\rho \overline{p}}{\zeta}-i\frac{p \overline{\rho}}{\zeta},
\end{align}
and then taking the real part
\begin{align*}
   \mathrm{Re}(\lambda) \int_0^{2\pi}  \vert h\vert^2 +\nu \int_0^{2\pi} \vert \partial_\theta  h\vert^2 =-\frac{2}{\zeta}\mathrm{Re}(i p \overline{\rho}).
\end{align*}
Plugging the former expression for $\mathrm{Re}(i p \overline{\rho})$ in \eqref{eq:comput-rho^2}, we get 
\begin{align*}
    -\mathrm{Re}(\lambda) \vert \rho \vert^2 =-\frac{\zeta}{2}(2\pi\zeta +1)\left(\mathrm{Re}(\lambda) \int_0^{2\pi}  \vert h\vert^2 +\nu \int_0^{2\pi} \vert \partial_\theta  h\vert^2 \right),
\end{align*}
and using the fact from \eqref{eq:integrated1-f-rho-p} that $\vert \rho \vert^2=\zeta^2 \vert \int_0^{2\pi} h \vert^2 $, this entails after simplification
    \begin{align*}
    \mathrm{Re}(\lambda) \left\vert \int_0^{2\pi} h\right\vert^2 =\left(\pi+\frac{1}{2 \zeta} \right)\left(\mathrm{Re}(\lambda) \int_0^{2\pi}  \vert h\vert^2 +\nu \int_0^{2\pi} \vert \partial_\theta  h\vert^2 \right).
    % + \frac{\nu}{\zeta}\left(-\pi+ \frac{1}{2 \zeta}\right) \vert \rho \vert^2,
\end{align*}
By the former identity, we obtain
\begin{align*}
     \left\vert \int_0^{2\pi} h\right\vert^2 > \left(\pi+\frac{1}{2 \zeta} \right) \int_0^{2\pi}  \vert h\vert^2 ,
\end{align*}
where we have simplified by $\mathrm{Re}(\lambda)>0$ (the case $\mathrm{Re}(\lambda)=0$ leads to $f_{\rho,p}=0$) and discarded the diffusion term. By Cauchy-Schwarz inequality, we also have
\begin{align*}
    \left\vert \int_0^{2\pi} h\right\vert^2 \leq 2 \pi \int_0^{2\pi}  \vert h\vert^2,
\end{align*}
from which we infer $2\pi> \pi+\frac{1}{2 \zeta}$, and hence $\zeta>\frac{1}{2\pi}$: this is a contradiction since we assumed that $\zeta < \frac{1}{2\pi}$, and therefore concludes the proof.
\end{proof}

\appendix

\section{An integral lemma}
\begin{Lem}\label{LM:integral-rationalfrac}
For all $z \in \C \setminus \left( (-\infty,-1] \cup [1,+\infty) \right)$, we have
\begin{align*}
\int_{-\infty}^{\infty}\frac{ 1}{t^2+2  zt +1} \, \mathrm{d}t =\frac{\pi}{\sqrt{1-z^2}}, 
\end{align*}
where $\sqrt{\cdot}$ is the principal determination of the square-root, that is defined and holomorphic on $\C \setminus \R_-$ (and with positive real part). 
\end{Lem}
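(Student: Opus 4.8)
The statement to prove is the integral identity
\[
\int_{-\infty}^{\infty}\frac{\mathrm{d}t}{t^2+2zt+1}=\frac{\pi}{\sqrt{1-z^2}},\qquad z\in\C\setminus\big((-\infty,-1]\cup[1,+\infty)\big),
\]
with the principal branch of the square root. The plan is to factor the denominator, compute the integral by residues (or equivalently by an explicit antiderivative), and then track branches carefully so that the answer comes out with the principal determination on the stated cut domain.

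\textbf{Step 1: roots of the denominator.} Write $t^2+2zt+1=(t-t_+)(t-t_-)$ with $t_\pm=-z\pm\sqrt{z^2-1}$, and note $t_+t_-=1$, $t_++t_-=-2z$. The first task is to show that, for $z$ in the cut domain $\C\setminus((-\infty,-1]\cup[1,+\infty))$, neither root is real: if $t_\pm\in\R$ then $z=-\tfrac12(t_\pm+1/t_\pm)$ would be real with $|z|\ge 1$, contradicting the hypothesis. Hence exactly one root lies in the open upper half-plane and one in the open lower half-plane (they cannot both be on the same side since their product is $1>0$, which forces $\Im t_+$ and $\Im t_-$ to have opposite signs unless both are real). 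Label the upper one $t_{\mathrm{up}}$.

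\textbf{Step 2: residue computation.} Since the integrand decays like $|t|^{-2}$, close the contour in the upper half-plane and apply the residue theorem:
\[
\int_{-\infty}^{\infty}\frac{\mathrm{d}t}{t^2+2zt+1}=2\pi i\cdot\frac{1}{t_{\mathrm{up}}-t_{\mathrm{down}}}=\frac{2\pi i}{\pm 2\sqrt{z^2-1}}=\frac{\pm\pi i}{\sqrt{z^2-1}},
\]
where $\sqrt{z^2-1}$ denotes \emph{some} square root of $z^2-1$ and the sign depends on which root was in the upper half-plane. To pin down the sign and branch in one clean formula, rewrite $\sqrt{z^2-1}=i\sqrt{1-z^2}$ (to be checked) so the answer becomes $\dfrac{\pi}{\sqrt{1-z^2}}$ up to sign, and then fix the sign by a continuity/connectedness argument: the cut domain is connected, both sides of the claimed identity are holomorphic on it, so it suffices to verify the identity at one convenient point — e.g. $z=0$, where the integral equals $\pi$ and $\pi/\sqrt{1-0}=\pi$. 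By analytic continuation the identity then holds throughout the connected cut domain.

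\textbf{Step 3: branch bookkeeping — the main obstacle.} The only real subtlety is to justify that the function $z\mapsto \pm\pi i/\sqrt{z^2-1}$ obtained from the residue calculation coincides, as a holomorphic function on the \emph{connected} set $\C\setminus((-\infty,-1]\cup[1,+\infty))$, with $z\mapsto\pi/\sqrt{1-z^2}$ for the principal branch of $\sqrt{\cdot}$ on $\C\setminus\R_-$. The point is that $1-z^2$ maps the cut domain into $\C\setminus(-\infty,0]$ (indeed $1-z^2\in(-\infty,0]$ would force $z^2\ge 1$, i.e. $z\in(-\infty,-1]\cup[1,+\infty)$), so $\sqrt{1-z^2}$ is well-defined and holomorphic there with positive real part, and the map $z\mapsto\pi/\sqrt{1-z^2}$ is single-valued on the connected domain. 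Since the residue computation produces a function agreeing with it up to an $a$ priori locally constant sign, and the domain is connected, a single sign determination — done at $z=0$ in Step 2 — settles everything. I would present Steps 1–2 briefly and spend most of the write-up making Step 3 airtight, since that is where an unwary reader would worry about multivaluedness.
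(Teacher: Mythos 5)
Your proof is correct and follows essentially the same route as the paper: compute by residues, then exploit holomorphy in $z$ on the connected cut domain to nail down the branch. The only difference is how the branch is fixed — the paper restricts to the ray $z=i\alpha$, $\alpha>0$, where the roots and their half-plane assignment are explicit, then invokes analytic continuation of the full identity; you instead do the residue computation for general $z$, observe that the resulting sign ambiguity is locally constant, and resolve it by the single check at $z=0$, which is an equally valid way of using connectedness.
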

\begin{proof}
We first claim that if $z \in U\vcentcolon= \C \setminus \left( (-\infty,-1] \cup [1,+\infty) \right)$, then $t^2 + 2zt + 1 \neq 0$ for all $t \in \R$. Indeed, if one of the roots $t_1,t_2$ of the polynomial is real, relation $t_1 t_2 = 1$ shows that both roots are, and so is $z = -\frac{t_1+t_2}{2}$, hence $z$ belongs to $U \cap \R = (-1,1)$. But the discriminant of the polynomial is then $\Delta \vcentcolon= 4z^2 - 4 < 0$, contradiction. It follows that the l.h.s. of the formula is well-defined. So is the r.h.s, because for all $z \in U$, $1-z^2 \notin \C \setminus \R_-$. Moreover, both sides of the equality  define holomorphic functions of $z$ in $U$: this is obvious for the right-hand side, and follows from classical holomorphy under the integral for the left-hand side.  By analytic continuation, it is then enough to prove the lemma for $z =i\alpha$ with $\alpha >0$. We have 
\begin{align*}
\int_{-\infty}^{\infty}\frac{ 1}{t^2+2  i \alpha t +1} \, \mathrm{d}t=\int_{-\infty}^{\infty}\frac{ 1}{(t+i \alpha)^2+1 +\alpha^2} \, \mathrm{d}t =\int_{-\infty}^{\infty}\frac{ 1}{(t-t_1)(t-t_2)} \, \mathrm{d}t,
\end{align*}
with 
\begin{align*}
t_1 \vcentcolon= -i\alpha+i\sqrt{1 +\alpha^2}, \qquad\mathrm{Im}(t_1) >0, \\
t_2 \vcentcolon= -i\alpha-i\sqrt{1 +\alpha^2}, \qquad\mathrm{Im}(t_2) < 0.
\end{align*}
A standard application of residue theorem yields
\begin{align*}
\int_{-\infty}^{\infty}\frac{ 1}{t^2+2  i \alpha t +1} \, \mathrm{d}t=2i\pi\frac{1}{t_1-t_2}=\frac{\pi}{\sqrt{1+\alpha^2}}.
\end{align*}
Coming back to the original variable $z$, we therefore get
\begin{align*}
\int_{-\infty}^{\infty}\frac{ 1}{t^2+2  zt +1} \, \mathrm{d}t=\frac{\pi}{\sqrt{1-z^2}},
\end{align*}
which ends the proof.
\end{proof}

\section{Proof of the bound \eqref{eq:Bound-DCT}}\label{Appendix-proofBOUND-DCT}
Let us recall that 
  \begin{align*}
    \frac{1}{D_\zeta(\lambda)}=\frac{\sqrt{1+\frac{1}{\lambda^2}}}{(1-2\pi \zeta)\sqrt{1+\frac{1}{\lambda^2}}+2\pi \zeta}, \qquad\lambda \in \C \setminus i[-1, 1].
\end{align*}
We will study $\tau \mapsto D_\zeta(i\tau +\eps)^{-1}$ for $\tau \geq 0$ near $\tau = 1$ and $\tau=0$ separately (the case of $\tau \leq 0$ and/or of $-\eps$ being symmetric). More precisely, we write
\begin{align*}
    [0, 1]=I_0 \cup I_{\mathrm{int}}\cup I_{\mathrm{edges}}  ,
\end{align*}
where
\begin{align*}
    I_0=[0, \delta_0], \qquad I_{\mathrm{int}}=[\delta_0, 1-\delta_1], \qquad I_{\mathrm{edges}}=[1-\delta_1, 1].  
\end{align*}
for two parameters $\delta_0, \delta_1>$ to be chosen later on. We will use the fact that 
\begin{align*}
   \left\vert 1+\frac{1}{(i\tau+\eps)^2}\right\vert
   =\left\vert \frac{(i\tau+\eps)^2+1}{(i\tau+\eps)^2} \right\vert
   =\frac{\sqrt{(\Delta(\tau)+\eps^2)^2+4\eps^2 \tau^2}}{\tau^2+\eps^2}, \qquad\Delta(\tau)=1-\tau^2.
\end{align*}

\medskip

\textbf{Edge part $I_{\mathrm{edges}}$}. If $\tau \in I_{\mathrm{edges}}$ then $\tau^2+\eps^2 \geq (1-\delta_1)^2>0$ and $\Delta(\tau) \leq 1-(1-\delta_1)^2$ therefore for $\eps<\eps_0$ and $\tau \in I_{\mathrm{edges}}$ we have 
\begin{align*}
    \left\vert 1+\frac{1}{(i\tau+\eps)^2} \right\vert \le  \frac{\sqrt{(1-(1-\delta_1)^2+\eps_0^2)^2+4\eps_0^2 1}}{(1-\delta_1)^2} .
\end{align*}
This last quantity can be made as small as we want by taking $\eps_0>0$ and $\delta_1>0$ small enough. Hence, the exact same fact holds for the quantity $\sqrt{1+\frac{1}{(i\tau+\eps)^2}}$ since $$\left\vert \sqrt{1+\frac{1}{(i\tau+\eps)^2}} \right\vert = \sqrt{\left\vert 1+\frac{1}{(i\tau+\eps)^2} \right\vert}.$$
We infer that for $\eps_0>0$ and $\delta_1>0$ small enough, we have for all $\eps<\eps_0$ and $\tau \in I_{\mathrm{edges}}$
\begin{align*}
    \left\vert \frac{1}{D_\zeta(i\tau+\eps)} \right\vert \leq \frac{\left\vert \sqrt{1+\frac{1}{(i\tau+\eps)^2}} \right\vert}{ 2\pi\zeta  -  (1-2\pi\zeta)\left\vert \sqrt{1+\frac{1}{(i\tau+\eps)^2}} \right\vert}  \leq C_{\zeta, \eps_0, \delta_1},
\end{align*}
for some constant $C_{\zeta, \eps_0, \delta_1}>0$.

\medskip

\textbf{Zero part $I_{0}$}.  We have
\begin{align*}
    \frac{1}{D_\zeta(i\tau+\eps)}=\frac{1}{(1-2\pi \zeta) + \frac{\zeta}{\sqrt{1+\frac{1}{(i\tau+\eps)^2}}}}.
\end{align*}
If $\tau \in I_0$, then $\Delta(\tau) \geq 1-\delta_0^2$ and $\tau^2+\eps^2 \leq \delta_0^2+\eps_0^2$ so for $\eps<\eps_0$ and $\tau \in I_{0}$
\begin{align*}
    \left\vert 1+\frac{1}{(i\tau+\eps)^2} \right\vert \geq \frac{1-\delta_0^2}{\delta_0^2+\eps_0^2},
\end{align*}
which can be made large enough by taking $\eps_0>0$ and $\delta_0>0$ small enough. Hence, we deduce from
\begin{align*}
    \left\vert \frac{1}{D_\zeta(i\tau+\eps)} \right\vert \leq \frac{1}{(1-2\pi\zeta) -\frac{\zeta}{\left\vert \sqrt{1+\frac{1}{(i\tau+\eps)^2}} \right\vert }},
\end{align*}
that for $\eps_0>0$ and $\delta_0>0$ small enough, we have for all $\eps<\eps_0$ and $\tau \in I_{0}$
\begin{align*}
    \left\vert \frac{1}{D_\zeta(i\tau+\eps)} \right\vert   \leq C'_{\zeta, \eps_0, \delta_1}.
\end{align*}
for some constant $C'_{\zeta, \eps_0, \delta_1}>0$.

\medskip

\textbf{Intermediate part $I_{\mathrm{int}}$}. We now work with $\tau \in I_{\mathrm{int}}=[\delta_0, 1-\delta_1]$, having fixed the parameters $\delta_0, \delta_1>0$ from before, reducing $\delta_0$ if necessary so that $\delta_0^2 \leq  1$. Note that we can still reduce $\eps_0>0$ if necessary. We write for $\eps_0$ small enough and $0<\eps<\eps_0$
\begin{align*}
    \vert i \tau + \eps \vert \geq \tau-\eps \geq \frac{\delta_0}{2},
\end{align*}
hence for all $0<\eps<\eps_0$ and $\tau \in I_{\mathrm{int}}$
\begin{align*}
     \left\vert 1+\frac{1}{ (i\tau+\eps)^2} \right\vert \leq 1+4\frac{1}{\delta_0^2}=M_{\delta_0}.
\end{align*}
Furthermore, $\lambda \mapsto 1+\frac{1}{\lambda^2} $ vanishes only at $\lambda= \pm i$. By continuity on this function on the compact set $\lbrace \lambda=i\tau+\eps \mid \tau \in I_{\mathrm{int}}, 0 \leq \eps \leq \eps_0 \rbrace$ (which does not contain these zeros), it has a minimum $m_{\eps_0, \delta_0, \delta_1}>0$ and therefore for all $0<\eps<\eps_0$ and $\tau \in I_{\mathrm{int}}$ we have
\begin{align*}
     m_{\eps_0, \delta_0, \delta_1} \leq \left\vert 1+\frac{1}{ (i\tau+\eps)^2} \right\vert \leq M_{\delta_0}.
\end{align*}
We deduce that for all $0<\eps<\eps_0$ and $\tau \in I_{\mathrm{int}}$, there holds
\begin{align*}
     m_{\eps_0, \delta_0, \delta_1}^{1/2} \leq \left\vert \sqrt{1+\frac{1}{(i\tau+\eps)^2}} \right\vert \leq M_{\delta_0}^{1/2}.
\end{align*}
Similarly, we would obtain a uniform lower bound for the denominator $\tau \mapsto (1-2\pi\zeta)\sqrt{1+\frac{1}{(i\tau+\eps)^2}} + 2 \pi \zeta$ if we know that it does not vanish for $\tau \in I_{\mathrm{int}}$: as a matter of fact, we know from \eqref{pikachu} that
\begin{align*}
 \lim_{\eps \rightarrow 0^+}   \left((1-2\pi\zeta)\sqrt{1 + \frac{1}{(i\tau + \eps)^2}} + 2\pi\zeta \right)= -(1-2\pi\zeta)i  \sqrt{\frac{1}{\tau^2} - 1} + 2\pi\zeta \neq 0,
\end{align*}
because it has a non-zero imaginary part. Since the denominator of $1/D_\zeta(\lambda)$ is holomorphic and non-vanishing on $\lbrace \lambda=i\tau+\eps \mid \tau \in I_{\mathrm{int}}, 0< \eps \leq \eps_0 \rbrace$, and continuous up to the cut (approaching from the right) with a non-zero value there, we deduce by compactness that its modulus has a positive lower bound independent of $\eps$. Hence, we infer that for all $\eps<\eps_0$ and $\tau \in I_{\mathrm{int}}$
\begin{align*}
    \left\vert \frac{1}{D_\zeta(i\tau+\eps)} \right\vert   \leq C''_{\zeta, \eps_0, \delta_1}.
\end{align*}
for some constant $C''_{\zeta, \eps_0, \delta_1}>0$.

\medskip

We can now conclude that the bound \eqref{eq:Bound-DCT} holds by taking
\begin{align*}
    C_{\eps_0}=\max\left(C_{\zeta, \eps_0, \delta_1},C'_{\zeta, \eps_0, \delta_1},C''_{\zeta, \eps_0, \delta_1} \right).
\end{align*}

\section{Toolkit on Volterra equations}\label{Appendix-Volterra}
For a matrix kernel $K:\R^+ \rightarrow M_n(\C)$ and a forcing $f: \R^+ \rightarrow \C^n$, we consider the 
general Volterra equation
\begin{align}\label{eq:Volterra-appendix}
    u(t)+K \star u(t) =f(t), \qquad t \geq 0,
\end{align}
with unknown $u : \R^+ \rightarrow \C^n$. Here, the convolution is defined by
\begin{align*}
    M \star v(t)\vcentcolon= \int_0^t M(t-s) v(s) \, \mathrm{d}s.
\end{align*}
Typically, all the previous operations make sense when $u,f \in L^1_{\mathrm{loc}}(\R^+, \C^n)$ and $K \in L^1_{\mathrm{loc}}(\R^+, M_n(\C))$. Note that the convolution product does not commute in general if $n \neq 1$.

As explained in \cite{gripenberg1990volterra}, a classical way to solve \eqref{eq:Volterra-appendix} is to use the so called resolvent kernel $R :\R^+ \rightarrow M_n(\C)$ associated with $K$, and defined as a solution of 
\begin{align}\label{eq:resolvent-appendix}
    R(t)+K \star R(t)=R(t)+ R \star K(t) =K(t), \qquad t \geq 0.
\end{align}
 In that case, the variation of constant formula formally yields a solution $u(t)$ of $\eqref{eq:Volterra-appendix}$ given by $u=f-R \star f$. More precisely, we have the following theorem.
\begin{Thm}\label{Thm-solvVolterraLocally}
    Let $K \in L^1_{\mathrm{loc}}(\R^+, M_n(\C))$. Then the following holds.
    \begin{enumerate}
        \item There exists a unique resolvent $R \in L^1_{\mathrm{loc}}(\R^+, M_n(\C))$ associated to the kernel $K$, that is solving \eqref{eq:resolvent-appendix}. 
        \item For all $f \in L^1_{\mathrm{loc}}(\R^+, \C^n)$, there exists a unique solution $u \in L^1_{\mathrm{loc}}(\R^+, \C^n)$ to the Volterra   equation \eqref{eq:Volterra-appendix}. Moreover, it is given by
        \begin{align*}
            u(t)=f(t)-R \star f(t), \qquad t \geq 0.
        \end{align*}
    \end{enumerate}
\end{Thm}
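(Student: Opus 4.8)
\medskip

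\noindent\textbf{Proof proposal.} This is the classical existence/uniqueness theorem for resolvents of $L^1_{\mathrm{loc}}$ Volterra kernels, and the plan is to follow the standard Neumann-series argument (see \cite{gripenberg1990volterra}). First I would note that the whole statement is local in time: if for each $T>0$ there is a unique $R_T\in L^1([0,T];M_n(\C))$ solving \eqref{eq:resolvent-appendix} on $[0,T]$, then by uniqueness these restrictions are compatible and glue into a single $R\in L^1_{\mathrm{loc}}(\R^+;M_n(\C))$; the same reduction applies to \eqref{eq:Volterra-appendix}. So I would fix $T>0$ and work in the Banach algebra of bounded operators on $L^1([0,T];\C^n)$, where the causal convolution operator $C_g\colon v\mapsto g\star v$ satisfies $\|C_g\|\le\|g\|_{L^1([0,T])}$ (Young on the half-line) and $C_gC_h=C_{g\star h}$; everything then reduces to inverting $\Id+C_K$ \emph{within the subalgebra of convolution operators}.

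The heart of the argument, and the step I expect to be the main obstacle, is precisely this inversion: $K$ is only $L^1$, so $\|C_K\|$ may exceed $1$ and $K$ need not be bounded, and, since $n>1$, the convolution algebra is non-commutative. I would handle the size by splitting $K=A+B$ with $B:=K\,\1_{\{|K|\le M\}}$ bounded and $A:=K-B$, where $\|A\|_{L^1([0,T])}\to0$ as $M\to\infty$ by dominated convergence, so one may fix $M$ with $\|A\|_{L^1([0,T])}<1$. Then $\Id+C_A$ is invertible by its Neumann series, with inverse $\Id+C_{R_A}$, $R_A=\sum_{m\ge1}(-1)^mA^{\star m}\in L^1$; writing $\widetilde B:=B+R_A\star B\in L^\infty([0,T])$ one gets the factorization $\Id+C_K=(\Id+C_A)(\Id+C_{\widetilde B})$, and $\Id+C_{\widetilde B}$ is invertible because the convolution of a \emph{bounded} kernel is quasi-nilpotent, $|\widetilde B^{\star m}(t)|\le\|\widetilde B\|_{L^\infty}^m\,t^{m-1}/(m-1)!$, so the Neumann series converges again. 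Composing gives $(\Id+C_K)^{-1}=\Id+C_S$ for some $S\in L^1([0,T];M_n(\C))$. I would then set $R:=-S$ and read off the resolvent equations by comparing kernels in the two identities $(\Id+C_S)(\Id+C_K)=\Id$ and $(\Id+C_K)(\Id+C_S)=\Id$, which yield $R+R\star K=K$ and $R+K\star R=K$ respectively --- this is how the non-commutativity is dealt with, using that the constructed inverse is automatically two-sided. Uniqueness of the resolvent is then immediate, since any solution $R'$ of \eqref{eq:resolvent-appendix} satisfies $(\Id+C_K)R'=K$, hence $R'=(\Id+C_K)^{-1}K=R$; gluing over $T$ gives part (1).

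Finally, for part (2) I would invoke the variation-of-constants formula: set $u:=f-R\star f$ and compute
\[
u+K\star u=f-R\star f+K\star f-K\star R\star f=f+(K-R-K\star R)\star f=f,
\]
using the resolvent equation $R+K\star R=K$, so $u$ solves \eqref{eq:Volterra-appendix}. For uniqueness, the difference $w$ of two solutions satisfies $(\Id+C_K)w=0$ on every $[0,T]$, and since $\Id+C_K$ was shown to be invertible (in particular injective) there, $w\equiv0$. All the norm estimates entering the Neumann series are routine; the only genuinely delicate point is the factorization-plus-quasi-nilpotency step used to invert $\Id+C_K$ for a merely locally integrable, matrix-valued $K$.
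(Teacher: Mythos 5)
The paper gives no proof of this theorem: it is cited as a standard result from \cite{gripenberg1990volterra}, and the text moves on immediately. So there is nothing in the paper to compare against; the only question is whether your argument is correct, and it is.

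Your truncation-plus-factorization route works. The reduction to $[0,T]$ by causality is fine. Writing $K=A+B$ with $B=K\,\1_{\{|K|\le M\}}\in L^\infty$ and $\|A\|_{L^1([0,T])}<1$ (dominated convergence) is legitimate. The Neumann series gives $(\Id+C_A)^{-1}=\Id+C_{R_A}$ with $R_A=\sum_{m\ge1}(-1)^m A^{\star m}\in L^1$, and your $\widetilde B=B+R_A\star B=(\Id+C_{R_A})B$ is in $L^\infty$ by Young, and satisfies $(\Id+C_A)\widetilde B=B$, which is exactly what the identity $\Id+C_K=(\Id+C_A)(\Id+C_{\widetilde B})$ requires. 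The factorial bound $|\widetilde B^{\star m}(t)|\le\|\widetilde B\|_{L^\infty}^m t^{m-1}/(m-1)!$ makes $\sum_m\|\widetilde B^{\star m}\|_{L^1([0,T])}\le \e^{\|\widetilde B\|_\infty T}-1$ finite, so $\Id+C_{\widetilde B}$ is invertible with $L^1$ resolvent kernel. Composing gives $(\Id+C_K)^{-1}=\Id+C_S$, and one checks that $R:=-S$ solves $R+K\star R=R+R\star K=K$ (your sign bookkeeping, $-S=K+S\star K$ and $-S=K+K\star S$, is right). Uniqueness and part (2) follow as you say from injectivity of $\Id+C_K$ and the variation-of-constants computation. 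For context, the cited reference typically proves this by choosing a partition $0=t_0<t_1<\dots$ fine enough that $\|K\|_{L^1}$ is small on each subinterval, inverting by Neumann series there, and gluing step by step in time; your version replaces the temporal gluing with a single $L^\infty/L^1$-split of the kernel plus the factorization, which is a perfectly valid and somewhat more self-contained alternative. The one thing worth flagging as a matter of presentation is that your $R_A$ (defined so that $\Id+C_{R_A}$ is the inverse) carries the opposite sign from the paper's resolvent convention $R+K\star R=K$; you are consistent about it and correct it at the end with $R:=-S$, but it is the kind of detail that can trip readers up.
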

A more subtle question concerns the global in time solvability of \eqref{eq:Volterra-appendix}, together with the decay in time of the solution $u$ under some decay assumptions on the kernel $K$ and forcing $f$.

Let us recall the definition of the Laplace transform: for a function $u: \R^+ \rightarrow \C$ such that $t \mapsto u(t) \e^{-c_0 t}$ is in $L^1(\R^+)$ for some $c_0 \in \R$, we define
\begin{align*}
    \mathcal{L}[u](\lambda)=\int_0^{+\infty} \e^{-\lambda t} u(t) \, \mathrm{d}t, \qquad \lambda \in \lbrace \mathrm{Re} \geq c_0 \rbrace.
\end{align*}
For a vector valued or matrix valued function, its Laplace transform is defined coefficient by coefficient. 

\medskip

Regarding the solvability of the Volterra equation \eqref{eq:Volterra-appendix} for large times, we have the following theorem (see again \cite{gripenberg1990volterra}).
\begin{Thm}[Paley-Wiener]\label{thm-Paley-Wiener}
 Let $K \in L^1(\R^+, M_n(\C))$ and let us assume that the following spectral condition
 \begin{align}\label{eq:spectral-condTHM}
     \forall \lambda \in \lbrace \mathrm{Re} \geq 0 \rbrace, \qquad \mathrm{det}\left( \mathrm{I}+ \mathcal{L}[K](\lambda) \right) \neq 0. 
 \end{align}
 holds. Then, there exists a unique resolvent kernel $R \in L^1(\R^+, M_n(\C))$. Moreover, for any $f \in L^1(\R^+, \C^n)$, the Volterra equation \eqref{eq:Volterra-appendix} has a unique solution $u  \in L^1(\R^+, \C^n)$ given by 
 \begin{align*}
            u(t)=f(t)-R \star f(t), \qquad t \geq 0.
        \end{align*}
 \end{Thm}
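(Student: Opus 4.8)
The plan is to follow the classical Banach-algebra treatment of Volterra equations (as in \cite{gripenberg1990volterra}). Local existence and uniqueness of a resolvent $R$ in $L^1_{\mathrm{loc}}(\R^+,M_n(\C))$ are already furnished by Theorem~\ref{Thm-solvVolterraLocally}, so the genuinely new point is to upgrade $R$ to a \emph{globally} integrable kernel under the spectral assumption \eqref{eq:spectral-condTHM}. I would do this by realizing the resolvent inside a convolution algebra whose invertible elements are detected by the Laplace symbol.

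First I would introduce the unital Banach algebra $\mathcal{A}=\{A\delta_0+g : A\in M_n(\C),\ g\in L^1(\R^+,M_n(\C))\}$, where $\delta_0$ is the Dirac mass at the origin, equipped with the convolution product $\star$, the norm $\|A\delta_0+g\|=|A|+\|g\|_{L^1}$, and unit $I\delta_0$. The resolvent identity \eqref{eq:resolvent-appendix} is then equivalent to $(I\delta_0+K)\star(I\delta_0-R)=(I\delta_0-R)\star(I\delta_0+K)=I\delta_0$. Moreover, matching the atomic parts of any inverse of $I\delta_0+K$ (the only multiple of $\delta_0$ appearing on the left of such an identity is its $\delta_0$-component, the remaining terms lying in $L^1$) forces that component to equal $I\delta_0$; hence producing a global $L^1$ resolvent is exactly equivalent to proving that $I\delta_0+K$ is invertible in $\mathcal{A}$.

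To decide this invertibility, I would pass to the commutative scalar algebra $\mathcal{A}_0=\{c\delta_0+h : c\in\C,\ h\in L^1(\R^+,\C)\}$, for which $\mathcal{A}\cong M_n(\mathcal{A}_0)$, and invoke the adjugate formula: an element of $M_n(\mathcal{A}_0)$ is invertible if and only if its determinant is invertible in $\mathcal{A}_0$. The maximal ideal space of $\mathcal{A}_0$ is the one-point compactification of the closed right half-plane $\{\mathrm{Re}\,\lambda\ge 0\}$, with the character attached to $\lambda$ acting by $c\delta_0+h\mapsto c+\mathcal{L}[h](\lambda)$ and the character at infinity by $c\delta_0+h\mapsto c$. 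Consequently the Gelfand transform of $\det(I\delta_0+K)\in\mathcal{A}_0$ is the function $\lambda\mapsto \det\bigl(I+\mathcal{L}[K](\lambda)\bigr)$ on $\{\mathrm{Re}\,\lambda\ge 0\}$, together with the value $1$ at infinity. By \eqref{eq:spectral-condTHM} this never vanishes, so $\det(I\delta_0+K)$ is invertible in $\mathcal{A}_0$ (invertibility in a commutative unital Banach algebra being equivalent to non-vanishing of the Gelfand transform), hence $I\delta_0+K$ is invertible in $\mathcal{A}$. Writing its inverse as $I\delta_0-R$ with $R\in L^1(\R^+,M_n(\C))$ gives a global $L^1$ resolvent satisfying the two-sided identity \eqref{eq:resolvent-appendix}, and uniqueness is inherited from Theorem~\ref{Thm-solvVolterraLocally}. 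Finally, for $f\in L^1(\R^+,\C^n)$, Young's inequality yields $u:=f-R\star f\in L^1(\R^+,\C^n)$, and a direct computation gives $u+K\star u=f+(K-R-K\star R)\star f=f$ by \eqref{eq:resolvent-appendix}; uniqueness of $u$ in $L^1$ again follows from Theorem~\ref{Thm-solvVolterraLocally}.

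The main obstacle is the harmonic-analytic input: the exact identification of the characters of $\mathcal{A}_0$, i.e.\ that there are no maximal ideals beyond the Laplace-transform ones and the one at infinity. This is precisely the Wiener/Paley--Wiener phenomenon that makes non-vanishing of the symbol equivalent to the inverse remaining within the algebra; once it is granted, the reduction of matrix invertibility to the scalar determinant and the bookkeeping of the $\delta_0$-part are routine.
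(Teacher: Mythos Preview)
The paper does not actually prove this theorem: it simply states it and refers the reader to \cite{gripenberg1990volterra}. Your proposal is a correct outline of the classical Banach-algebra proof (unitize $L^1(\R^+)$, reduce matrix invertibility to invertibility of the scalar determinant via the adjugate, then identify the Gelfand spectrum of $\C\delta_0\oplus L^1(\R^+)$ with the compactified right half-plane so that the spectral condition \eqref{eq:spectral-condTHM} becomes non-vanishing of the Gelfand transform). This is precisely one of the standard arguments given in the cited reference, and you correctly flag the identification of the maximal ideal space as the substantive harmonic-analytic step.
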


Concerning quantitative exponential decay estimates, we also have the following statement. Roughly speaking, it says that under some exponential decay of the source and the kernel, and \textit{modulo} a stronger stability condition than \eqref{eq:spectral-condTHM} (holding in an enlarged half-plane), the solution to the Volterra equation \eqref{eq:Volterra-appendix} also enjoys some exponential decay. In that case, the decay rate is less than those of the kernel/source, and of the width of the stable part of the strip. For our purpose, it will be enough to provide some integrability properties, without quantifying any pointwise decay.
\begin{Thm}\label{THM-expo-decayVolterra}. 
 Let $K \in L^1(\R^+, M_n(\C))$ and assume that the following spectral condition holds for some $\delta>0$:
 \begin{align}\label{eq:spectral-cond-enlarged}
     \forall \lambda \in \lbrace \mathrm{Re} \geq -\delta \rbrace, \qquad \mathrm{det}\left( \mathrm{I}+ \mathcal{L}[K](\lambda) \right) \neq 0. 
 \end{align}
 If there exist $C, \gamma_K>0$ such that
 \begin{align}\label{decay:kernelK-Volterra}
     \vert K(t) \vert \leq C \e^{-\gamma_K t}, \qquad t \geq 0,
 \end{align}
then for any $f \in  L^1(\R^+, \C^n)$, the unique solution $u=u(t)$ to the Volterra equation \eqref{eq:Volterra-appendix} satisfies the integrability condition 
\begin{align*}
    \e^{\gamma \bullet}u \in L^1(\R^+), \qquad 0 \leq \gamma < \min(\gamma_K, \delta).
\end{align*}

% \begin{align*}
%      \underset{t >0}{\sup} \, \e^{ \gamma t} \vert u(t) \vert \lesssim_K \underset{t>0}{\sup} \, \e^{ \gamma t} \vert f(t) \vert, \qquad 0 \leq \gamma < \min(\gamma_K, \delta).
%  \end{align*}.
 \end{Thm}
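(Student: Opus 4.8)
The plan is to reduce the statement to the Paley--Wiener theorem (Theorem~\ref{thm-Paley-Wiener}) by an exponential shift. Fix $\gamma$ with $0 \le \gamma < \min(\gamma_K, \delta)$ and introduce the shifted data
\[
K_\gamma(t) \vcentcolon= \e^{\gamma t} K(t), \qquad f_\gamma(t) \vcentcolon= \e^{\gamma t} f(t), \qquad u_\gamma(t) \vcentcolon= \e^{\gamma t} u(t), \qquad t \ge 0 .
\]
Multiplying the Volterra equation \eqref{eq:Volterra-appendix} by $\e^{\gamma t}$ and writing $\e^{\gamma t} = \e^{\gamma(t-s)}\e^{\gamma s}$ inside the convolution, one sees that $u_\gamma$ solves the Volterra equation $u_\gamma + K_\gamma \star u_\gamma = f_\gamma$, with kernel $K_\gamma$ and forcing $f_\gamma$. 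The whole point is that the pair $(K_\gamma, f_\gamma)$ falls within the scope of Theorem~\ref{thm-Paley-Wiener}, so the decay of $u$ will follow from the $L^1$ theory for the shifted kernel.

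Concretely, I would proceed in the following steps. First, the bound \eqref{decay:kernelK-Volterra} gives $|K_\gamma(t)| \le C\,\e^{(\gamma - \gamma_K)t}$, hence $K_\gamma \in L^1(\R^+, M_n(\C))$ since $\gamma < \gamma_K$; the forcing $f$ being assumed to decay exponentially (as in the informal description preceding the statement, and as is the case in the application with $f = S$, cf.\ \eqref{decay-forcing}), one also gets $f_\gamma \in L^1(\R^+,\C^n)$. Second, since $K_\gamma$ has an integrable exponential tail, $\mathcal{L}[K_\gamma]$ is holomorphic on $\{\mathrm{Re} > -(\delta-\gamma)\}$ and equals $\mathcal{L}[K](\lambda - \gamma)$ there; consequently, for any $\lambda$ with $\mathrm{Re}(\lambda) \ge 0$ one has $\mathrm{Re}(\lambda - \gamma) \ge -\gamma > -\delta$, and the enlarged spectral condition \eqref{eq:spectral-cond-enlarged} yields
\[
\mathrm{det}\left( \mathrm{I} + \mathcal{L}[K_\gamma](\lambda) \right) = \mathrm{det}\left( \mathrm{I} + \mathcal{L}[K](\lambda - \gamma) \right) \ne 0, \qquad \mathrm{Re}(\lambda) \ge 0,
\]
which is precisely the spectral condition \eqref{eq:spectral-condTHM}. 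Third, I would apply Theorem~\ref{thm-Paley-Wiener} to $(K_\gamma, f_\gamma)$ to obtain a unique solution $v \in L^1(\R^+,\C^n)$ of $v + K_\gamma \star v = f_\gamma$, given by $v = f_\gamma - R_\gamma \star f_\gamma$ with $R_\gamma \in L^1$ the resolvent kernel of $K_\gamma$.

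Finally, I would identify $v$ with $u_\gamma$, which is just a uniqueness matter. By Theorem~\ref{Thm-solvVolterraLocally} the original equation \eqref{eq:Volterra-appendix} has a unique solution $u \in L^1_{\mathrm{loc}}(\R^+,\C^n)$; the shift computation shows that $u_\gamma \in L^1_{\mathrm{loc}}$ also solves the Volterra equation with kernel $K_\gamma$ and forcing $f_\gamma \in L^1_{\mathrm{loc}}$, so the uniqueness part of Theorem~\ref{Thm-solvVolterraLocally}, applied now to $K_\gamma$, forces $u_\gamma = v \in L^1(\R^+,\C^n)$, that is $\e^{\gamma \bullet} u \in L^1(\R^+)$, as claimed. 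I do not expect any serious analytic difficulty here, since all the substance is outsourced to Paley--Wiener; the only steps requiring a bit of care are the bookkeeping ones --- checking that the shifted resolvent $R_\gamma = \e^{\gamma\bullet}R$ is indeed the resolvent of $K_\gamma$ (multiply \eqref{eq:resolvent-appendix} by $\e^{\gamma t}$), that the uniqueness classes $L^1_{\mathrm{loc}}$ and $L^1$ match up correctly, and that the enlargement of the stable half-plane by exactly $\delta$ is what makes the shifted spectral condition hold on all of $\{\mathrm{Re}(\lambda) \ge 0\}$.
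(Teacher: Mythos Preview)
Your proposal is correct and follows essentially the same route as the paper: perform the exponential shift $u_\gamma = \e^{\gamma\bullet}u$, $K_\gamma = \e^{\gamma\bullet}K$, $f_\gamma = \e^{\gamma\bullet}f$, check that $K_\gamma\in L^1$ and that the spectral condition \eqref{eq:spectral-condTHM} for $K_\gamma$ follows from \eqref{eq:spectral-cond-enlarged} via $\mathcal{L}[K_\gamma](\lambda)=\mathcal{L}[K](\lambda-\gamma)$, and then invoke Paley--Wiener together with uniqueness in $L^1_{\mathrm{loc}}$. You also correctly flagged that one really needs $f_\gamma\in L^1$ (i.e.\ exponential decay of $f$), which the statement as written omits but which holds in the application and is implicitly used in the paper's own argument.
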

 \begin{proof}
According to Theorem \ref{thm-Paley-Wiener}, since the condition \eqref{eq:spectral-cond-enlarged} implies \eqref{eq:spectral-condTHM}, the Volterra equation  \eqref{eq:Volterra-appendix} has a unique solution $u \in L^1(\R^+, \C^n)$. Setting $$(u_\gamma, f_\gamma)(t)\vcentcolon=\e^{\gamma t}(u,f)(t), \qquad \text{for} \qquad \gamma < \min(\gamma_K, \delta),$$ 
we have
\begin{align*}
    u_\gamma(t)+ \int_0^t K_\gamma(t-s) u_\gamma(s) \, \mathrm{d}s=f_\gamma(t), \qquad t \geq 0,
\end{align*}
where $K_\gamma(\tau)\vcentcolon=\e^{\gamma \tau }K(\tau)$. Since $\gamma<\gamma_K$, we observe that $K_\gamma \in L^1(\R^+, M_n(\C))$. The former equation being a new Volterra equation, a direct uniqueness argument shows that, if $R$ is the resolvent kernel associated to $K$, then $R_\gamma: t \mapsto \e^{\gamma t}  R(t)$ is the resolvent kernel associated to $K_\gamma$. By Theorem \ref{Thm-solvVolterraLocally}, we know that $u_\gamma=\e^{\gamma \bullet}u \in L^1(\R^+)$ so, if we show that $R_\gamma \in L^1(\R^+, \C^n)$, this would conclude the proof. By Theorem \ref{thm-Paley-Wiener}, it is therefore enough to check that 
\begin{align*}
     \forall \lambda \in \lbrace \mathrm{Re} \geq 0 \rbrace, \qquad \mathrm{det}\left( \mathrm{I}+ \mathcal{L}[K_\gamma](\lambda) \right) \neq 0. 
 \end{align*}
 By properties of the Laplace transform, we have $ \mathcal{L}[K_\gamma](\lambda)=\mathcal{L}[K](\lambda- \gamma )$ and we therefore have to ensure that  
\begin{align*}
     \forall \lambda \in \lbrace \mathrm{Re} \geq 0 \rbrace, \qquad \mathrm{det}\left( \mathrm{I}+ \mathcal{L}[K](\lambda- \gamma ) \right) \neq 0. 
 \end{align*}
 But the this shifted stability condition is exactly the assumed condition \eqref{eq:spectral-cond-enlarged}, and it therefore concludes the proof.
  \end{proof}

 \section*{Acknowledgements}
The research of MCZ was partially supported by the Royal Society URF\textbackslash R1\textbackslash 191492 and the ERC-EPSRC Horizon Europe Guarantee EP/X020886/1.
LE acknowledges partial financial support from the ERC-EPSRC through grant Horizon Europe Guarantee EP/X020886/1 while starting this project, as well as the hospitality of the Mathematics Department at Imperial College London where part of this work was undertaken. DGV acknowledges the support of the ANR Project Bourgeons, grant ANR- 23-CE40-0014-01 and of the ANR-DFG Project Suspensions, grant ANR-24-CE92-0028. He also benefited from the Project Complexflows  (ANR-23-EXMA-0004) and from the InidEx Complexcit\'e at Universit\'e Paris Cit\'e.

%LE would also like to thank Charlotte Perrin for valuable discussions and references.

\bibliography{biblio}
 \bibliographystyle{abbrv}

\end{document}